\DeclareMathOperator{\dM}{DM}
\DeclareMathOperator{\acl}{acl}
\DeclareMathOperator{\dcl}{dcl} 
 \DeclareMathOperator{\id}{id}
 \DeclareMathOperator{\fr}{Fr}
 \DeclareMathOperator{\dom}{dom}
\DeclareMathOperator{\tp}{tp}
\DeclareMathOperator{\mr}{RM}
\DeclareMathOperator{\cl}{cl}
\newtheorem{introtheorem}{Theorem}
\newtheorem{theorem}{Theorem}[section]
\newtheorem{claim}{Claim}[theorem]
\newtheorem{corollary}[theorem]{Corollary}
\newtheorem{fact}[theorem]{Fact}
\newtheorem{lemma}[theorem]{Lemma}
\newtheorem{proposition}[theorem]{Proposition}
\newtheorem*{assumption}{Assumption}
\newcommand{\efi}{\hyperlink{Aefbi}{(EfI)}}
\newtheorem*{efbi}{\fbox{{\large A}} \hypertarget{Aefbi}{EfI}}
\newcommand{\gendif}{\hyperlink{Agen-dif}{(Gen-Dif)}}
\newtheorem*{gen-dif}{\fbox{{\large A}} \hypertarget{Agen-dif}{Gen-Dif}}
\newcommand{\VJP}{\hyperlink{Avjp}{(VJP)}}
\newtheorem*{vjp}{\fbox{{\large A}} \hypertarget{Avjp}{VJP}}
\newcommand{\MVTay}{\hyperlink{Amvtay}{(MVTay)}}
\newtheorem*{mvtay}{\fbox{{\large A}} \hypertarget{Amvtay}{MVTay}}
\newcommand{\BCen}{\hyperlink{Abcen}{(B-Cen)}}
\newtheorem*{bcen}{\fbox{{\large A}} \hypertarget{Abcen}{B-Cen}}
\newcommand{\minballn}{\hyperlink{Amin-ball}{(Cballs)}}
\newtheorem*{min-balln}{\fbox{{\large A}} \hypertarget{Amin-ball}{Cballs}}
\theoremstyle{definition}
\newtheorem{definition}[theorem]{Definition}
\newtheorem{example}[theorem]{Example}
\newtheorem{remark}[theorem]{Remark}
\newtheorem{notation}[theorem]{Notation}
\newcommand{\Cc}{{\mathbb{C}}}
\newcommand{\Rr}{{\mathbb{R}}}
\newcommand{\Nn}{{\mathbb{N}}}
\newcommand{\Qq}{{\mathbb{Q}}}
\newcommand{\m}{\textbf{m}}
\newcommand{\bk}{\textbf{k}}
\newcommand{\CD}{{\mathcal D}}
\newcommand{\CL}{{\mathcal L}}
\newcommand{\CK}{{\mathcal K}}
\newcommand{\CN}{{\mathcal N}}
\newcommand{\CM}{{\mathcal M}}
\newcommand{\CC}{{\mathcal C}}
\newcommand{\CO}{{\mathcal O}}
\newcommand{\CF}{{\mathscr F}}
\newcommand{\CG}{{\mathcal G}}
\newcommand{\0}{\emptyset}
\renewcommand{\phi}{\varphi}
\newenvironment{claimproof}[1][\proofname]
  {%
    \proof[#1]%
  }
  {%
    \endproof%
  }
\newcommand{\dclindep}[1][]{%
  \mathrel{
    \mathop{
      \vcenter{
        \hbox{\oalign{\noalign{\kern-.3ex}\hfil$\vert$\hfil\cr
              \noalign{\kern-.7ex}
              $\smile$\cr\noalign{\kern-.3ex}}}
      }
    }\displaylimits_{#1}
  }
}
\long\def\symbolfootnote[#1]#2{\begingroup%
\def\thefootnote{\fnsymbol{footnote}}\footnote[#1]{#2}\endgroup}
\def\Ind#1#2{#1\setbox0=\hbox{$#1x$}\kern\wd0\hbox to 0pt{\hss$#1\mid$\hss}
\lower.9\ht0\hbox to 0pt{\hss$#1\smile$\hss}\kern\wd0}
\def\Notind#1#2{#1\setbox0=\hbox{$#1x$}\kern\wd0\hbox to 0pt{\mathchardef
\nn=12854\hss$#1\nn$\kern1.4\wd0\hss}\hbox to
0pt{\hss$#1\mid$\hss}\lower.9\ht0 \hbox to
0pt{\hss$#1\smile$\hss}\kern\wd0}
\def\la{\langle}
\def\ra{\rangle}
\def\qp{\mathbb Q_p}
\def\oD{\overline{D}}
\def\dpr{\mathrm{dp\text{-}rk}}
\def\sub{\subseteq}
\title{Interpretable Fields in Various Valued Fields}
\date{\today}
\author{Yatir Halevi}
\address{The Fields Institute for Research in Mathematical Sciences, Toronto, Canada}
\email{ybenarih@fields.utoronto.ca}
\author{Assaf Hasson}
\address{Department of Mathematics, Ben Gurion University of the Negev, Be'er-Sheva 84105, Israel}
\email{hassonas@math.bgu.ac.il}
\author{Ya'acov Peterzil}
\address{Department of Mathematics, University of Haifa, Haifa, Israel}
\email{kobi@math.haifa.ac.il}
\thanks{The first author was supported by the Kreitman foundation fellowship and the Fields Institute for Research in Mathematical Sciences. The third author was partially supported by Israel Science Foundation grant number 290/19 }
\begin{document}

\begin{abstract}
Let $\CK=(K,v,\ldots)$ be a dp-minimal expansion of a non-trivially valued field of characteristic $0$ and  $\CF$  an infinite field interpretable in $\CK$.

Assume that $\CK$ is one of the following: (i)  $V$-minimal,  (ii) power bounded $T$-convex,  or (iii) $P$-minimal (assuming additionally in (iii) generic differentiability of definable functions). Then $\CF$ is definably isomorphic to a finite extension  $K$ or, in cases (i) and (ii), its residue field.  In particular, every infinite field interpretable in $\Qq_p$ is definably isomorphic to a finite extension of $\Qq_p$, answering a question of Pillay's.

Using Johnson's work on dp-minimal fields and the machinery developed here, we conclude that if $\CK$ is an infinite dp-minimal pure field then every field definable in $\CK$ is definably isomorphic to a finite extension of $K$.

The proof avoids elimination of imaginaries in $\CK$ replacing it with a reduction of the problem to certain distinguished quotients of $K$.
\end{abstract}

\maketitle

\tableofcontents

\section{Introduction}

We consider families of valued fields of characteristic $0$, such as algebraically closed valued fields, real closed valued fields, or p-adically closed fields. The goal of this work is to classify infinite fields interpretable in certain dp-minimal expansions of such a field $K$, namely fields whose universe is given as a quotient $X/E$ of a definable set $X\sub K^n$ by a definable equivalence relation $E$.

The main result of our paper is:

\begin{introtheorem}\label{T: main interp}[Theorem \ref{T: main thoerem}]
	Let $\CK=(K,v, \dots)$ be a dp-minimal valued field with residue field $\bk$ and let $\CF$ be an infinite field interpretable in $\CK$. Then:
	\begin{enumerate}
\item  If $\CK$ is a $V$-minimal valued field then $\CF$ is definably isomorphic to $K$ or $\textbf{k}$. 
\item If $\CK$ is a power bounded $T$-convex valued field then $\CF$ is definably isomorphic to one of $K$, $K(\sqrt{-1})$, $\textbf{k}$ or $\textbf{k}(\sqrt{-1})$. In particular, the result holds if $\CK$ is a real closed valued field.
		\item If $\CK=(K,v,\dots)$ is a P-minimal valued field with the property that any definable function is differentiable outside of nowhere dense subset of its domain, then $\CF$ is definably isomorphic to a finite extension of $K$.  In particular, the result holds if $\CK$ is $p$-adically closed.
		
	\end{enumerate}
\end{introtheorem}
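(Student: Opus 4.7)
The plan is to realize the interpretable field $\CF = X/E$ (with $X \sub K^n$) as a finite extension of either $K$ or the residue field $\bk$, by producing a definable embedding of $\CF$ into a matrix algebra over one of these rings. The embedding will arise from the left-regular action of $\CF$ on itself: multiplication $m_a : b \mapsto ab$ is a definable bijection, and differentiating it at a generic point converts the abstract field structure into linear data that lives inside $K^n$ or $\bk^n$. This approach avoids full elimination of imaginaries in $\CK$ because it only needs germs of these multiplication maps, not a classification of all interpretable sets.

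The argument would proceed in three steps. First, using dp-minimality together with the dimension theory in each geometry (V-minimal, power bounded $T$-convex, or $P$-minimal with generic differentiability), I would show that $\CF$ has finite dp-rank and that its operations are coded, locally around a generic representative $\bar x_0 \in X$, by definable maps between open subsets of $K^n$. Second, for each $a \in \CF$ I would lift $m_a$ to a definable bijection $\tilde m_a$ between open neighborhoods in $K^n$, well-defined modulo $E$, and then apply the paper's differentiation toolkit (valuative Jacobian properties and mean-value Taylor-type approximations) to produce a Jacobian $D\tilde m_a(\bar x_0) \in \gl_n(R)$, where $R$ is $K$ or $\bk$ depending on where the derivatives naturally live. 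The chain rule $m_{ab} = m_a \circ m_b$ turns $a \mapsto D\tilde m_a(\bar x_0)$ into a definable ring homomorphism $\CF \to M_n(R)$, and since $\CF$ is a field this map is injective, exhibiting $\CF$ as a definable finite division subalgebra. Third, I would invoke the classification of finite division subalgebras in each setting: algebraic closure in the $V$-minimal case collapses the options to $K$ and $\bk$; real closure in the power bounded $T$-convex case allows additionally $K(\sqrt{-1})$ and $\bk(\sqrt{-1})$; and finite extensions of $p$-adically closed fields are classified in the $P$-minimal case, with $\bk$ ruled out because it is finite.

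The main obstacle will be the lifting step. The multiplication maps live on the quotient $X/E$, not on $K^n$ itself, so differentiating them intrinsically requires a canonical and coherent lift. This is presumably where the paper's announced reduction to \emph{distinguished quotients} of $K$ enters: before invoking any derivatives, one replaces $X/E$ by an isomorphic field whose underlying set is a quotient of a $K$-definable set by a tame (for instance open or algebraic) subgroup, on which Jacobians are intrinsically defined. Identifying the right notion of distinguished quotient and verifying, uniformly across the three geometries, that every interpretable infinite field embeds definably into one is likely the most delicate piece of the argument; once that is in place, the remaining ingredients are linear algebra and classical finite-extension theory.
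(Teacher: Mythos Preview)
Your endgame is right: once you have an infinite subset of $\CF$ definably embedded in $K^n$ (or $\bk^n$), the paper does exactly what you describe---differentiate the multiplication maps $\lambda_a$ at a generic point to obtain a ring embedding $\CF\hookrightarrow M_n(K)$ (Theorem~\ref{prop-main}, following \cite{OtPePi}). But the route to that point is not what you have guessed, and two genuine pieces are missing.

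First, the ``distinguished quotients'' are not tame subgroup quotients on which Jacobians make intrinsic sense; they are four specific sorts $K$, $\Gamma$, $\bk$, $K/\CO$. The paper never replaces $\CF$ by an isomorphic field living in a nicer quotient. Instead it shows (Proposition~\ref{new-main}, Proposition~\ref{P:s-i to sorts, c-min and v-min}) that some infinite \emph{subset} $Y\subseteq\CF$ injects definably into a power of one of these four sorts. This is a reduction via unary imaginaries and an analysis of equivalence relations on $K$ using {\minballn}, not a global re-presentation of $\CF$. Your lifting step---pulling $m_a$ back to a bijection $\tilde m_a$ on open subsets of $K^n$ well-defined modulo $E$---does not work in general: if $E$-classes are infinite there is no reason such a lift exists or is differentiable. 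The paper sidesteps this by building, from a $D$-critical subset $Y$, an \emph{infinitesimal subgroup} $\nu$ of $(\CF,+)$ (Proposition~\ref{P:infinit}) which is invariant under every $\lambda_a$, and differentiating $\lambda_a$ there. The key technical input (Lemma~\ref{L:generic linearity for K}) is that criticality of $Y$ forces enough closure under field operations to make $\nu$ a group and $\lambda_a$-invariant; this is where dp-rank maximality is really used.

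Second, you have not addressed what happens when the subset $Y$ lands in $\Gamma$ or $K/\CO$ rather than $K$ or $\bk$. These cases cannot be handled by your Jacobian argument (there is no field to differentiate into), and ruling them out is a substantial part of the paper (all of Section~\ref{S:K/O} and Proposition~\ref{no Gamma}). The mechanism is Corollary~\ref{C:not-internal-to locally affine}: if every definable function on $G^n$ is generically affine, then no infinite field can be locally strongly internal to $G$. For $\Gamma$ this is immediate from o-minimality of ordered vector spaces; for $K/\CO$ it requires showing definable functions are locally affine (Proposition~\ref{P:locally affine}), which is where {\VJP} and {\MVTay} are actually used---not for differentiating $m_a$ on $\CF$, but for controlling functions on $K/\CO$.
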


This type of  classification of definable quotients originates in Poizat's model theoretic consideration  of the Borel-Tits theorem, \cite{PoiFields}.
In the setting of (pure) algebraically closed valued fields, a study of interepretable groups and fields  was carried out by Hrushovski and Rideau-Kikuchi in \cite{HrRid}. The result for V-minimal fields generalizes the characteristic $0$ case of \cite[Theorem 6.23]{HrRid}.

In \cite{PilQp} Pillay showed that every infinite field definable in $\mathbb Q_p$ is definably isomorphic to a finite extension of $\mathbb Q_p$, and asked whether the same is true for interpretable fields. The above solution to his question was part of the motivation for this work.\footnote{A positive answer to Pillay’s question on interpretable fields in $\qp$ was announced also by E. Alouf, A. Fornasiero and J. de la Nuez Gonzalez.}

Theorem \ref{T: main interp} classifies in particular fields  \emph{definable} in $\CK$. In that regard it generalises the analogous result of Bays and the third author for  real closed valued fields, \cite{BaysPet}. \\

Using Johnson's theorem on dp-minimal fields (see Fact \ref{F:Johnson DP} below), and the machinery developed here, we also prove:

\begin{introtheorem}\label{T:main def}[Corollary \ref{dp minimal fields}]
	Let $\CK=(K,+,\cdot)$ be a pure dp-minimal field of characteristic $0$. Then every field definable in $\CK$ is definably isomorphic to a finite extension of $K$.
\end{introtheorem}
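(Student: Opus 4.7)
The plan is to deduce Theorem \ref{T:main def} from Theorem \ref{T: main interp} by combining it with Johnson's classification of pure dp-minimal fields of characteristic $0$ (Fact \ref{F:Johnson DP}). Johnson's theorem implies that such a $\CK$ either is algebraically closed, or is real closed, or carries a non-trivial $\emptyset$-definable henselian valuation $v$ such that the valued field $(K,v,\dots)$ (in a suitable expansion) is dp-minimal and belongs to one of the three families covered by Theorem \ref{T: main interp}; in the $P$-minimal case Johnson's work additionally provides the generic differentiability of definable functions needed for Theorem \ref{T: main interp}(iii).

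With this classification in hand, I would argue by cases. If $\CK$ is algebraically closed, every infinite interpretable field in $\CK$ is definably isomorphic to $K$, a classical result of Poizat, so the conclusion is immediate. Otherwise, $v$ is already definable in the pure field $\CK$, so any field $\CF$ definable in $\CK$ is a fortiori definable in $(K,v,\dots)$; applying Theorem \ref{T: main interp} identifies $\CF$ up to definable isomorphism with a finite extension of $K$ or, in cases (i) and (ii) of that theorem, with a finite extension of the residue field $\bk$. In case (iii) — the $P$-minimal case — no further work is needed.

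The remaining and more delicate point is to rule out the residue-field alternative when $\CF$ is \emph{definable} (rather than merely interpretable) in the pure field. Since the universe of $\CF$ is a definable subset of some $K^n$, a definable isomorphism with a finite extension of $\bk$ would yield a definable bijection, in $\CK^{\eq}$, between $\bk$ and a definable subset of $K^n$. The plan here is to exploit the stable embeddedness and orthogonality to $K$ of the residue-field sort in $V$-minimal and power-bounded $T$-convex theories to preclude such a bijection: no infinite definable subset of $K^n$ can be put in definable bijection with the residue field, because the former inherits a non-trivial valuation through coordinate projections whereas the latter, as a pure algebraically closed or o-minimal field, carries none. I expect this last step to be the main technical point of the argument; it should either follow from direct inspection of the relevant theories or from previous work on imaginaries in these settings, such as \cite{HrRid} for the V-minimal case.
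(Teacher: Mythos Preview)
Your reduction to Theorem \ref{T: main interp} has a gap. Johnson's theorem (Fact \ref{F:Johnson DP}) asserts only that a pure dp-minimal field is algebraically closed, real closed, or admits a definable henselian valuation; it does \emph{not} place the resulting valued field into one of the three specific families (V-minimal, power-bounded $T$-convex, $P$-minimal) covered by Theorem \ref{T: main interp}. Those classes impose further constraints --- equicharacteristic $(0,0)$ for V-minimality, an underlying o-minimal order for $T$-convexity, a $\mathbb Z$-group value group and $p$-adically closed field for $P$-minimality --- that a generic pure dp-minimal henselian valued field need not satisfy. Even the pure real closed case is not literally covered by Theorem \ref{T: main interp}, which assumes a non-trivial valuation throughout. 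So the first step of your plan is unjustified, and the ``delicate point'' of ruling out the residue-field option is a difficulty you created by taking this detour.

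The paper's argument bypasses Theorem \ref{T: main interp} altogether and applies the more elementary Theorem \ref{prop-main} directly. That result asks only that $K$ be an SW-uniform field satisfying \gendif{} and that $\CF$ be locally strongly internal to $K$; the latter is automatic for a \emph{definable} field, whose universe lies in some $K^n$. For the former: if $K$ is algebraically closed one invokes Proposition \ref{P:SI to SM} instead; if $K$ is real closed, o-minimality supplies both the SW-uniformity and \gendif{}; and if $K$ carries a definable henselian valuation, the valuation topology gives the SW-uniformity while $1$-h-minimality of pure henselian valued fields of characteristic $0$ (see Corollary \ref{C:field in dp-min h-min}) gives \gendif{}. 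Since Theorem \ref{prop-main} outputs a finite extension of $K$ and never offers $\bk$ as an alternative, there is nothing further to rule out.
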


Note that without the purity assumption the result fails, in the case of strongly minimal fields, \cite{Hr2}. In the o-minimal case, however, purity is superfluous, as shown by Otero, Pillay and the third author, \cite{OtPePi}. In fact, in the notation of Theorem 2, if $\CK$ is unstable then both the purity and characteristic assumptions can be replaced by the requirement that definable functions are generically differentiable (Proposition \ref{prop-main}).
\subsection{Strategy of the proof and structure of the paper}

The study of imaginaries in algebraically closed valued fields was initiated by Holly in \cite{HolEOI1} and first studied in depth and in full generality, in the setting of algebraically closed valued fields, by Haskell, Hrushovski and Macpherson in \cite{HaHrMac1} and \cite{HaHrMac2}.
Similar results were later obtained by Mellor for real closed valued fields, \cite{MelRCVFEOI},  and by Hrushovski, Martin and Rideau-Kikuchi for p-adically closed fields \cite{HrMarRid}.
The work on interpretable fields in \cite{HrRid} uses these theorems on elimination of imaginaries, and accomplishes  considerably more than the classification of such fields.


 We adopt a different approach circumventing elimination of imaginaries, and avoiding almost completely the so called \emph{geometric sorts}.
In fact, our main result covers  expansions of valued fields by analytic functions where no elimination of imaginaries results are currently available  (see \cite{HaHrMac3}). Our proof is based on the analysis of  dp-minimal subsets of the interpreted field $\CF$, and as such it borrows ideas from Johnson's work on fields of finite dp-rank (see for example  \cite{JohnDpFin1A} and \cite{JohnDpFin1B}), as well as Otero-Peterzil-Pillay \cite{OtPePi}.

The general setting in which we carry out most our work is that of dp-minimal uniformities, suggested and axiomatized by
Simon and Walsberg in \cite{SimWal} (called here {\em SW-uniformities}). Their work yields some sort of cell decomposition, generic continuity of definable functions and other properties similar to those of o-minimal structures. We discuss and expand their results  in Section  \ref{S:SW-uniformities}.
Another general setting which fits all three cases in our main theorem is that of \emph{1-h-minimal valued fields}
as developed, in characteristic $0$, in \cite{hensel-min,hensel-minII} by Cluckers, Halupczok, Rideau-Kikuchi and Vermeulen. 

In Section \ref{S:def fields}, under the additional assumption that definable functions in $\CK=(K,v\dots)$ are generically differentiable we use ideas of \cite{MarikovaGps} and \cite{OtPePi} in the o-minimal setting to show that if $\CF$ is an infinite field \emph{definable} in $\CK$ then $\CF$ can be definably embedded into a subring of $M_n(K)$, the ring of $n\times n$ matrices over $K$, for some $n\in \Nn$. One of the main technical results of our work shows that in fact, to obtain the same result it suffices that $\CF$ is \emph{locally strongly internal to $K$}, namely, that there is an infinite subset of $\CF$  in definable bijection with a subset of $K^m$ (some $m$).

This observation allows us, in the cases covered by Theorem \ref{T: main interp}, to circumvent elimination of imaginaries and generalise the result to fields interpretable in $\CK$. In Section \ref{S:the distinguished sorts} we prove, using a reduction to unary imaginaries, that any interpretable field is (almost) locally strongly internal to one of four distinguished sorts: the valued field, $K$, the value group, $\Gamma$, the residue field, $\bk$, or the closed balls of radius $0$, $K/\CO$. Our results on definable fields are used to obtain the main theorem when  $\CF$ is locally strongly internal to $K$ or to $\bk$. Under the assumptions of Theorem \ref{T: main interp}, local strong internality to $\Gamma$ can be eliminated by known results from o-minimality (the $P$-minimal case being simpler). Section \ref{S:K/O} is dedicated to eliminating the case of $K/\CO$.  In the final section of the paper we combine all the results obtained in this work to prove Theorem \ref{T: main interp}.\\

\noindent\textbf{Remark} The current article generalizes and replaces two previous preprints, \cite{HaPetRCVF}, \cite{HAHaPe}. \\

\noindent\textbf{Acknowledgments}
We thank Immanuel Halupczok, Ehud Hrushovski, Itay Kaplan and Dugald Macpherson for several discussions during the work on the article.
We thank Pablo Cubides Kovacsics and the model theory group in D{\"u}ssseldorf for their careful reading of previous preprints.

\section{Notation and preliminaries}

\subsection{Model theory}
We use standard model theoretic notation see e.g. \cite{TZ,SiBook}. Lower case  Latin letters $a,b,c$ are used to denote elements and  tuples, capital letters $A,B,C$ are used for sets. Abusing  notation we write $a\in A$ for tuples when the length of the tuple is immaterial or understood from the context.

We use $\mathcal M,\mathcal K$, etc. to denote structures whose universe is $M$, $K$, respectively.
We allow multi-sorted structures but from Section \ref{S:the distinguished sorts} on we focus on one-sorted structures expanding a value field,  where all other sorts are  taken from $\mathcal{M}^{eq}$ (i.e. $\mathcal{M}$ together with a sort for every quotient $M^n/E$, where $E$ is a $\emptyset$-definable equivalence relation, and a symbol for the projection $M^n\to M^n/E$). In such a setting by {\em an interpretable set}, we  mean a set definable in $\mathcal{M}^{eq}$, whereas by a definable set we mean a definable subset of $M^n$ for some $n$ (possibly with parameters). We may also refer to definable subsets in $S$  for some (imaginary) sort $S$ of $\CM$, by which we mean a subset of $S^n$ (for some $n$) definable in $\CM$, possibly with parameters. 


A \emph{monster model} for a first order theory $T$ is a large sufficiently saturated and sufficiently homogeneous model containing all sets and models (as elementary substructures) we will encounter. All subsets and models are \emph{small}, i.e. of cardinality smaller than the saturation level of the monster. When $\CM$ is $\kappa$-saturated subsets $A\sub M$ are always assumed to be of cardinality $<\kappa$.

Throughout the paper we apply consequences of dp-minimality mostly as a black box. However, since the actual definition is used in a couple of places,  we remind: 
\begin{definition}
Let $T$ be a complete theory with some monster model $\mathbb{U}$, $P$ a partial type over a set $A\subseteq \mathbb{U}$ and  $\kappa$  a cardinal. We say that $\dpr(P)<\kappa$ if for every family $\la I_t: t<\kappa\ra$ of $A$-mutually indiscernible sequences and $b\models P$, there is $t<\kappa$ such that $I_t$ is indiscernible over $Ab$. We say that $\dpr(P)=\kappa$ if $\dpr(P)<\kappa^+$ but not $\dpr(P)<\kappa$.
For any $a\in \mathbb{U}$ and small set $A$, $\dpr(a/A)$ is defined to be $\dpr(\tp(a/A))$.

A (one-sorted) structure has \emph{NIP} if $\dpr(x=x)<|T|^+$, it is \emph{dp-minimal} if $\dpr(x=x)=1$ and \emph{dp-finite} (or \emph{of finite dp-rank}) if $\dpr(x=x)=n$ for some $n\in \Nn$. See \cite[Section 4.2]{SiBook}.
\end{definition}

It follows from sub-additivity of dp-rank below that any set interpretable in a dp-minimal sort has finite dp-rank.  We will use this fact throughout the paper without further reference. 

We refer the reader to \cite[Section 4]{SiBook} for the basic properties of dp-rank out which we  emphasize sub-additivity: 

\begin{fact}\cite{KaOnUs} 
For every $a,b\in \CM$ and small set $A$
\[\dpr(a,b/A)\leq \dpr(a/bA)+\dpr(b/A).\]
\end{fact}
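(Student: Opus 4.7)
The plan is to unfold the definition of dp-rank and perform a two-stage reduction. Set $\kappa_1=\dpr(a/bA)$ and $\kappa_2=\dpr(b/A)$, so by definition $\dpr(a/bA)<\kappa_1^+$ and $\dpr(b/A)<\kappa_2^+$. To show $\dpr(a,b/A)\leq \kappa_1+\kappa_2$, I start with an arbitrary family $\langle I_t: t\in X\rangle$ of mutually $A$-indiscernible sequences with $|X|>\kappa_1+\kappa_2$, and aim to exhibit some $t\in X$ for which $I_t$ is indiscernible over $Aab$.

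First I would bound how many $I_t$ can fail to be indiscernible over $Ab$. If a subfamily $\langle I_t:t\in Y\rangle$ of size $\kappa_2^+$ consisted entirely of sequences not indiscernible over $Ab$, then this subfamily would still be mutually $A$-indiscernible (a sub-family of a mutually indiscernible family is mutually indiscernible), and would contradict $\dpr(b/A)<\kappa_2^+$. Hence the bad set $T=\{t\in X: I_t \text{ is not }Ab\text{-indiscernible}\}$ has cardinality at most $\kappa_2$, and the good set $X\setminus T$ has cardinality strictly greater than $\kappa_1$.

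The main obstacle is now the following standard but not entirely obvious fact, which I would invoke (or prove via an averaging argument using mutual indiscernibility): if $\langle I_t:t\in X\setminus T\rangle$ are mutually $A$-indiscernible and each $I_t$ is individually indiscernible over $Ab$, then the whole family is mutually indiscernible over $Ab$. (The point is to push $b$ into the base while preserving mutual indiscernibility.) Granting this, the restricted family is a mutually $Ab$-indiscernible family of size $>\kappa_1$.

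Applying the hypothesis $\dpr(a/bA)<\kappa_1^+$ to this family yields some $t\in X\setminus T$ such that $I_t$ is indiscernible over $Aba=Aab$. Since $t\in X$ was produced from an arbitrary family of size $>\kappa_1+\kappa_2$, this shows $\dpr(a,b/A)<(\kappa_1+\kappa_2)^+$, i.e.\ $\dpr(a,b/A)\leq \dpr(a/bA)+\dpr(b/A)$, as required. The only genuinely non-trivial step is the upgrade from pointwise $Ab$-indiscernibility plus mutual $A$-indiscernibility to mutual $Ab$-indiscernibility; everything else is a counting/pigeonhole argument directly from the definition.
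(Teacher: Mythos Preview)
The paper does not supply a proof of this fact; it is cited as a result from \cite{KaOnUs} (Kaplan--Onshuus--Usvyatsov) and used as a black box. So there is nothing in the paper to compare your argument against.

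That said, your sketch is the standard proof from the cited reference. The two-stage pigeonhole reduction is correct, and you have correctly isolated the one substantive step: upgrading a family that is mutually $A$-indiscernible and individually $Ab$-indiscernible to a family that is mutually $Ab$-indiscernible. This lemma is true in general and is exactly the claim driving the Kaplan--Onshuus--Usvyatsov argument (see also Simon's NIP book), so invoking it is appropriate; if you were writing this out in full you would need to supply its proof, which is short but not a one-liner.
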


A consequence of dp-minimality is that dp-rank is local (\cite[Theorem 0.3(2)]{Simdp}) in the sense that $\dpr(a/A)=\min\{\dpr(\phi): \phi\in \tp(a/A)\}$. We make use of this fact at some points.

\begin{definition}
For a partial type $P$ over $A$ and some $A$-definable function $f$, with $P\vdash \dom(f)$, we let $f_*(P)$ be the partial type $\{f(X): X\in P\}$.
\end{definition}

The choice of working with dp-rank and dp-minimal structures permits us to study  a rich variety of examples. It is also influenced by the foundational result of Johnson on arbitrary dp-minimal fields. While most of our work does not rely on his theorem (with the exception of Corollary \ref{dp minimal fields}), we refer to it several times in this work:
\begin{fact}[\cite{JohnDpJML}]\label{F:Johnson DP}
    Let $\CK=(K,+,\cdot,\dots) $ be a dp-minimal field. Then $K$ is either algebraically closed, real closed or $\CK$ admits a definable henselian valation.
\end{fact}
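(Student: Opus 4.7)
The plan is to build on any infinite dp-minimal field $\CK$ a canonical Hausdorff field topology $\tau$ and then extract from it either a definable linear order (yielding real closed) or a definable henselian valuation, with the degenerate ``no nontrivial topology'' case forcing algebraic closure. I would organise the argument in three stages.

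\emph{Stage 1: the canonical topology.} Following the NIP-field philosophy going back to Prestel--Ziegler, one singles out a neighborhood basis of $0$ from the definable family of ``large-but-small'' subsets of $K$, namely infinite definable sets with infinite complement (or the differences $U-U$ of such sets). Dp-minimality is the crucial structural ingredient: it forces any two such sets to interact coherently via sub-additivity of dp-rank, giving that intersections of neighborhoods contain neighborhoods and that scaling is continuous. The generic continuity of definable functions on SW-uniformities (Section \ref{S:SW-uniformities}) then promotes this to continuity of the full field structure, producing a $\emptyset$-definable, non-discrete Hausdorff field topology $\tau$.

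\emph{Stage 2: from topology to a valuation or order.} Consider the ``infinitesimal'' partial type $\mu$ concentrating on every $\tau$-neighborhood of $0$, and use it to define a relation on $K^{\times}$ by
\[
v(a)\leq v(b) \iff b\cdot\mu \subseteq a\cdot\mu.
\]
Dp-minimality together with the multiplicative homogeneity of $\tau$ should force this to be a total preorder; its quotient is then an ordered abelian group $\Gamma$ and $v:K^{\times}\to\Gamma$ a valuation with $\emptyset$-interpretable valuation ring $\CO$. If an asymmetry appears that forces $\mu$ to carry order-like rather than ball-like data (e.g.\ $\mu$ has a unique ``sign'' under some definable involution), one instead recovers a definable linear ordering on $K$.

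\emph{Stage 3: the trichotomy and the obstacles.} If a definable ordering is produced, the dp-minimal analogue of o-minimal Pillay--Steinhorn arguments (or directly the unstable SW case) forces $K$ to be real closed. If $v$ is nontrivial, one must upgrade the ``topological Hensel-like behavior'' of $\CO$ to genuine henselianity. If both degenerate, one is in the stable dp-minimal setting, where classical theorems in the vein of Macintyre yield algebraic closure. The hard step by a wide margin is producing henselianity in the middle case: it requires a delicate analysis of definable families of polynomials, Newton-polygon-style reasoning at the level of $\mu$, and the interaction between the additive and multiplicative infinitesimal types to rule out anomalies. This is where I would expect the bulk of the work, and indeed the technical core of Johnson's original argument, to lie.
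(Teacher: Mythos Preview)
The paper does not prove this statement at all: it is quoted as a \emph{Fact} from Johnson's work \cite{JohnDpJML} and used as a black box (see also Example~\ref{E:SW-uniformities}(4), where the canonical topology is briefly recalled). So there is no ``paper's own proof'' to compare your proposal against.

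That said, your three-stage outline is broadly faithful to the architecture of Johnson's original argument: construct the canonical topology from sets $X-X$; show it is a V-topology and extract from it either an ordering or a definable valuation; then upgrade the valuation to henselian, with the degenerate (strongly minimal) case giving algebraic closure via Macintyre. A few points where your sketch diverges from, or glosses over, what Johnson actually does: in Stage~2 the passage from ``non-discrete field topology'' to ``V-topology'' is the substantive step, and Johnson does not proceed by directly writing down the preorder $b\mu\subseteq a\mu$ and hoping dp-minimality makes it total---rather, he first verifies the Prestel--Ziegler axioms for a V-topology and then invokes the classical correspondence. Your ``asymmetry forces an ordering'' clause is also vaguer than the actual dichotomy (real closed versus admits a definable valuation) that comes out of the V-topology analysis. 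You are right that henselianity is where the serious work lies; Johnson's argument there is a direct lifting-of-roots analysis using the dp-minimal structure of definable sets in one variable, rather than Newton-polygon reasoning per se. None of this constitutes an error in your plan, but it is a plan rather than a proof, and the paper under review simply does not attempt it.
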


\subsection{Valued fields}\label{ss:valued field}
A valued field $(K,v)$ is a field $K$ together with a surjective group homomorphims $v:K^\times\to \Gamma$, where $\Gamma$ is an ordered abelian group (we set $v(0)=\infty$, where $\infty$ has the obvious properties), satisfying the non-archimedean inequality:
\[v(x+y)\geq \min\{v(x),v(y)\},\]
for any $x,y\in K$. The group $\Gamma$ is the \emph{value group} of the valued field $K$. 

For $\gamma\in \Gamma$ and $a\in K$ we let
\[B_{>\gamma}(a)=\{x\in K:v(x-a)>\gamma\}\] and
\[B_{\geq \gamma}(a)=\{x\in K:v(x-a)\geq \gamma\}\]
 be the open and closed balls  of radius $\gamma$ centered at $a$, respectively. For us, a ball is always infinite, i.e. $\gamma\neq \infty$.
 
The closed ball $B_{\geq 0}(0)$ is a ring called the \emph{valuation ring of $K$},  which we denote here by $\mathcal{O}$. It is a local ring with maximal ideal $\m:=B_{>0}(0)$. The quotient $\bk:=\CO/\m$ is the  \emph{residue field}.  We refer to \cite{EnPr} for the basics of valuation theory.

Some texts on valuation theory use multiplicative notation (and reverse ordering) for $\Gamma$, and $|x|$ instead of $v(x)$. Thus for example, $\CO$ becomes $\{x\in K:|x|\leq 1\}$ in this notation. We will comment on  the multiplicative writing when we find it more intuitive (e.g. when we discuss Taylor approximations of functions).

Throughout, $\CK=(K,+,\cdot,\dots)$ will denote a dp-minimal expansion of an infinite field $K$. If $\CK$ expands a valued field we let $\Gamma$ and $\bk$ denote  the imaginary sorts whose universes are the value group  and the residue field respectively.

The imaginary sort $K/\CO$ of all closed balls of radius $0$ will play an important role in Section \ref{S:K/O}.
For ease of reference we use the following \emph{ad hoc} definition: 
\begin{definition}
    Given a structure $\mathcal K$ expanding a valued field $K$,  the \emph{distinguished sorts of $\CK$} are $K$, $\Gamma$, $\bk$ and $K/\CO$. 
\end{definition}

We also refer at several points to the  sort $\mathrm{RV}=K^\times/(1+\m)$ and the associated exact sequence of abelian groups:
\[1\to \bk^\times\to \mathrm{RV}\to \Gamma\to 0.\]

\subsection{The main examples}\label{ss:main examples}
For the reader's convenience we remind the definitions of those theories of valued fields appearing in the statement of Theorem \ref{T: main interp}.

\subsubsection{P-minimal valued fields}\label{sss:p-minimal}
 The notion of $P$-minimality was introduced by Haskell and Macpherson, \cite{HasMac}:
\begin{definition}
An expansion of a $p$-adically closed valued field  $\CK:=(K,v,\dots)$ is \emph{P-minimal} if $\Gamma$ is a $\mathbb{Z}$-group and in every structure $\CL\equiv \CK$  every definable subset of $L$ is quantifier-free definable in the Macintyre language of valued fields. I.e., every such set can be written as a disjunction of sets of the form
\[
\{x\in L: \gamma_1< v(x-a)<\gamma_2 \wedge P_n(\lambda\cdot (x-a))\},
\]
where $P_n$ is the $n$-th power predicate, $\lambda\in L$ and $\gamma_1,\gamma_2\in \Gamma_L\cup\{\infty,-\infty\}$, $n\in\mathbb{N}$ and $a\in L$.
\end{definition}

By \cite[Section 6]{DoGoLi}, P-minimal  valued fields are dp-minimal.

\begin{remark}
     It follows directly that every definable subset of $\Gamma^n$, $n\in \mathbb N$,  in a $P$-minimal field is definable in Presburger Arithmetic, and that any such bounded subset of $\Gamma$ has an infimum. 
 \end{remark}

\subsubsection{C-minimal valued fields}\label{sss:c-minimal}

Let $(K,v)$ be a valued field.
\begin{definition}
A \emph{Swiss cheese} is a set of the form $b\setminus \bigcup_i b_i$, where $b$ is a ball and the $b_i$ are finitely many subballs of $b$ (where all balls may be either closed or open).

A Swiss cheese $b\setminus \bigcup_i b_i$ is \emph{nested} in another Swiss cheese $d\setminus \bigcup_i d_i$ if there exists  $i$ such that $b=d_i$.
\end{definition}


\begin{fact}[Holly, \cite{holly}]\label{F:holly-swiss}
Any definable subset of an ACVF has a unique decomposition as a finite disjoint union of non-nested Swiss cheeses.
\end{fact}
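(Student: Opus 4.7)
The plan is to leverage quantifier elimination in ACVF to reduce to a combinatorial analysis of Boolean combinations of balls, and then to extract a canonical decomposition using the ultrametric tree structure on balls. First I would invoke quantifier elimination: since $K$ is algebraically closed, every polynomial factors linearly, so each atomic valued-field formula $v(P(x))\,\square\,v(Q(x))$ reduces to a Boolean combination of formulas $v(x-a)\,\square\,\gamma$, each defining a ball, the complement of a ball, or a single point. Hence any definable $D\sub K$ is a finite Boolean combination of balls.

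For existence, I would show by induction on the number of balls involved that any such combination is a finite disjoint union of Swiss cheeses. The ultrametric dichotomy---any two balls are either disjoint or nested---reduces $S\cap c$ and $S\setminus c$ (for $S=b\setminus\bigcup b_i$ a Swiss cheese and $c$ a ball) to a short case analysis on the position of $c$ relative to $b$ and to the holes $b_i$; in every case the result is empty or a single Swiss cheese. Closure under union then follows from $S_1\cup S_2=S_1\sqcup(S_2\setminus S_1)$. To arrange non-nestedness, I iteratively merge offending pairs: whenever the outer ball $b_j$ of some piece $S_j=b_j\setminus\bigcup_i b_{j,i}$ equals a hole $d_{k,\ell}$ of another piece $S_k=d_k\setminus\bigcup_i d_{k,i}$, absorb $S_j$ into $S_k$ by replacing the hole $d_{k,\ell}$ with the subballs $b_{j,i}$. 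The number of outer balls strictly decreases, so the process terminates.

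For uniqueness, the crucial observation is that in a non-nested decomposition the outer ball through a given point $x\in D$ is intrinsic to $D$: it is the unique maximal ball $b\ni x$ such that $b\cap D$ has the form $b$ minus finitely many disjoint proper subballs of $b$. Any strictly larger ball $b'$ would force $b$ to appear as a hole of the Swiss cheese through $x$ inside $b'$, contradicting non-nestedness. Once the outer balls have been recovered from $D$ alone, the holes in each piece are determined as the maximal proper subballs of $b$ contained in $K\setminus D$, and the decomposition is forced.

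The main obstacle I expect is the uniqueness step, specifically verifying the existence and well-definedness of the maximal ball $b$ above. This requires appealing to the finiteness of the underlying tree of balls coming from any Boolean representation of $D$ (so that the chain of candidate outer balls through $x$ actually has a maximum), and then checking that the maximal ball produced in this way agrees with the outer ball of whichever non-nested decomposition one is comparing against. The non-nestedness hypothesis is essential in both directions: it prevents artificial subdivision of a genuine Swiss cheese into smaller pieces, and simultaneously ensures that every outer ball appearing in any valid decomposition is maximal in the intrinsic sense above.
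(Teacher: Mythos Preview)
The paper does not prove this fact; it is quoted as a black box with a citation to Holly, so there is no argument in the paper to compare against. Your existence argument is essentially correct and standard: quantifier elimination reduces to Boolean combinations of balls, the ultrametric dichotomy makes the case analysis for $S\cap c$ and $S\setminus c$ go through, and the merging procedure for nested pieces terminates.

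The uniqueness argument, however, has a genuine gap. Your proposed intrinsic description of the outer ball through $x\in D$ --- the maximal ball $b\ni x$ such that $b\cap D$ equals $b$ minus finitely many proper subballs --- is not correct. Take $b_2\subsetneq c_1\subsetneq b_1$ and $D=(b_1\setminus c_1)\sqcup b_2$; this is a non-nested decomposition with two pieces. For $x\in b_1\setminus c_1$ the outer ball is $b_1$, but $b_1\cap D=D$ and $b_1\setminus D=c_1\setminus b_2$, which in ACVF (infinite residue field, dense value group) is \emph{not} a finite union of balls. So $b_1$ fails your test, yet it is the outer ball you are trying to recover. The property you want is weaker: $b\setminus D$ should be \emph{contained in} a finite union of proper subballs of $b$ (equivalently, some Swiss cheese with outer ball $b$ lies inside $D$), not equal to one. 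With that correction one must still argue that no strictly larger ball $b'\supsetneq b_j$ has the property, and this is where non-nestedness is actually used: one shows that for such $b'$ the part of $b'\setminus D$ lying outside $b_j$ cannot be covered by finitely many proper subballs of $b'$, which requires tracking how the other pieces $S_k$ sit inside $b'$. This step is not as immediate as your sketch suggests, and is the substance of Holly's argument.
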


\begin{definition}
An expansion  $\CK=(K,v,\dots)$ of a valued field is \emph{C-minimal} if in every  $\CK'\equiv \CK$, every definable subset of $K'$ is a finite boolean combination of balls.\footnote{Macpherson and Steinhorn's original definition of a C-minimal field is quite different, but the two definitions coincide for valued fields, see \cite{MacSte}.}
\end{definition}

It follows that every definable $X\sub K$
 has a unique decomposition as a finite disjoint union of non-nested Swiss cheeses.

Haskel and Macpherson showed, \cite{HasMacCell}, that every C-minimal valued field is an algebraically closed valued field. In addition, $\Gamma$ is o-minimal and $\bk$ is strongly minimal so in particular, the residue field is algebraically closed, and the value group is divisible.  By \cite[Corollary 4.3]{DoGoLi}, any C-minimal field is dp-minimal.

\subsubsection{V-minimal valued fields}\label{sss:v-minimal}
Although we only use the following  as a black box, we  introduce here the notion of V-minimality, from Hrushovoski-Kazhdan's \cite{HrKa}:

\begin{definition} 
A {\em C-minimal} theory expanding ACVF$_{0,0}$ is {\em V-minimal} if  for every $\CK=(K,v,\dots)\models T$,

(1) Every definable relation on RV$^n$, $n\in \mathbb N$,   is definable in the language of  valued fields. 

(2) For every definable \textbf{chain} of closed balls $W$, we have $\bigcap W\neq \0.$

(3) For every $A\sub K$, if $Y$ is an $A$-definable finite set of closed balls then for each $b\in Y$, $\acl(A)\cap b\neq \0$.
\end{definition} 

\subsubsection{$T$-convex valued fields}\label{sss:t-convex}
We now recall the notion of a $T$-convex theory, due to van den Dries and Lewenberg, \cite{vdDriesLewen}, \cite{vdDries-Tconvex}.

\begin{definition}\label{D:t-convex}
Let $T$ be an o-minimal expansion of a real closed field in a signature $\CL$. A \emph{T-convex valued field} is an expansion of $T$ by a predicate for a convex valuation ring $\CO$ that is closed under all $\0$-$\CL$-definable continuous functions.

The resulting theory, $T_{conv}$, is called {\em power bounded} if $T$ is, namely if every $\CL$-definable function from $(0,\infty)$ to $K$ is eventually bounded by a definable $\CL$-automorphism of $K^{>0}$ (such automorphisms are called {\em power functions}).
\end{definition}

Recall that a linearly ordered structure $\CM=(M,<,\cdots)$ is \emph{weakly o-minimal} if every definable subset of $M$ is a finite union of convex sets.
By \cite[Corollary 3.3]{DoGoLi}, weakly o-minimal structures are dp-minimal.

T-convex power-bounded valued fields satisfy the various properties used here:

\begin{fact}\label{F:facts on t-convex}
\begin{enumerate}
    \item Every T-convex valued field is weakly o-minimal \cite[Corollary 3.14]{vdDriesLewen}.
    \item The residue field of any T-convex valued field is o-minimal \cite[Theorem A]{vdDries-Tconvex}.
    \item The value group of any T-convex power-bounded valued field is a pure ordered vector space over the ordered field of powers of $K$ \cite[Theorem B]{vdDries-Tconvex},  hence it is o-minimal. See also \cite[Proposition 4.3]{vdDries-Tconvex}.
\end{enumerate}
\end{fact}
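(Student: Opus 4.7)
My plan is to prove each of the three parts of the fact by leveraging a suitable quantifier elimination available for $T_{conv}$, as established in \cite{vdDriesLewen,vdDries-Tconvex}. The three statements are of rather different flavours but each reduces to extracting structural information from the relative QE.

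For (1), the approach is to show that, modulo $T_{conv}$, every formula is equivalent to a boolean combination of $\CL$-formulas and atomic valuation conditions of the form $v(t_1(\x)) \leq v(t_2(\x))$ for $\CL$-terms $t_1,t_2$. In one free variable, the $\CL$-part contributes a finite union of intervals by o-minimality of $T$. A valuation condition $v(f(x)) \leq v(g(x))$ is analysed using the o-minimal monotonicity theorem to decompose the domain into finitely many intervals on which $f$ and $g$ are continuous and monotone, and where $g$ either vanishes identically or is nonzero; on each such interval the condition reduces to $f(x)/g(x)\in \CO$, and convexity of $\CO$ together with monotonicity of $f/g$ produces finitely many convex pieces. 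Intersecting with the finitely many intervals coming from the $\CL$-part yields weak o-minimality.

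For (2), the residue field $\bk=\CO/\m$ inherits a linear order from the convexity of $\CO$ and $\m$. Every $\emptyset$-$\CL$-definable continuous function $f\colon K\to K$ maps $\CO$ to itself by definition of $T_{conv}$; once one checks that $f$ descends to a well-defined function $\bar f\colon \bk\to \bk$, one shows that $(\bk,<,\{\bar f\})$ satisfies the axioms of $T$, hence is o-minimal. For (3), power-boundedness means every $\CL$-definable function $f\colon K^{>0}\to K$ is eventually dominated by a power function $x\mapsto x^\lambda$ with $\lambda$ in a fixed field of exponents $\Lambda$ of $T$. Applying $v$ turns this multiplicative asymptotic into $v(f(x))\sim \lambda\, v(x)$, so the pushforward to $\Gamma$ of any definable set in $K^n$ is piecewise $\Lambda$-linear. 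This forces the induced structure on $\Gamma$ to be that of a pure ordered $\Lambda$-vector space, hence o-minimal.

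The main obstacle I would expect is in (1), where the interaction between $\CL$-terms and valuation conditions must be tracked carefully so that the boolean combination does not produce non-convex sets; edge cases (zeros of $g$, level sets where $f/g$ is constant, or nested valuation comparisons) have to be absorbed into a suitable refinement of the o-minimal cell decomposition. In (3), the delicate point is that without power-boundedness, $\CL$-definable functions of super-polynomial growth would induce richer, exponentially scaled structure on $\Gamma$; power-boundedness is precisely the hypothesis that rules this out and pins down $\Gamma$ as a pure $\Lambda$-vector space.
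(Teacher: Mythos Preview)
The paper does not prove this statement at all: it is recorded as a \emph{Fact} with citations to \cite{vdDriesLewen} and \cite{vdDries-Tconvex}, and the authors use it as a black box. So there is no ``paper's own proof'' to compare against.

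Your sketch is a reasonable outline of how the cited results are actually established in the literature, and the main ideas are correct. A few comments on accuracy: for (1), the relative quantifier elimination in \cite{vdDriesLewen} is formulated in terms of the predicate for $\CO$ rather than valuation comparisons $v(t_1)\le v(t_2)$, but since $\CO$ is convex your monotonicity-based argument goes through just as well. For (2), the precise statement in \cite{vdDries-Tconvex} is that $\bk$ carries a canonical $\CL$-structure making it a model of $T$; the step you gloss over---that $\emptyset$-definable continuous functions send $\m$ into $\m$ and hence descend---is exactly the content that needs care, and it relies on $T$-convexity (closure of $\CO$ under such functions) together with the valuation-theoretic fact that a continuous function fixing $0$ and mapping $\CO$ into $\CO$ must map $\m$ into $\m$. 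For (3), your heuristic is right, but the actual argument in \cite{vdDries-Tconvex} goes via a structure theorem (the ``Wilkie inequality'' and residue/value-group embedding results) rather than by directly pushing forward definable sets; nonetheless the conclusion and the role of power-boundedness are as you describe.
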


\section{Simon-Walsberg uniformities and their properties}\label{S:SW-uniformities}

\subsection{Preliminaries on definable uniform structures}
One of the challenges of this article was to find a proper framework which fits a variety of dp-minimal expansions of valued fields. Simon and Walsberg in  \cite{SimWal} (and in somewhat greater generality  Dolich and Goodrick, \cite{DolGodViscerality})
provide an elegant setting of dp-minimal uniformities which suits well our needs. We present here the basic definitions and properties and develop further some local properties of such uniformities.

\begin{assumption}
Throughout \underline{entire} Section $3$ we assume that $\CM$ is $|T|^+$-saturated, where $T=\mathrm{Th}(\CM)$.
\end{assumption}

\begin{definition}
A definable set $D$ in a (possibly multi-sorted) structure $\CM$ has a \emph{definable uniform structure}, or \emph{a definable uniformity}, if there is a formula $\theta(x,y, z)$ such that $\mathscr{B}=\{\theta(D^2, t):t\in T\}$ satisfies the following:
    \begin{enumerate}
    \item the intersection of the elements of $\mathscr{B}$ is $\{(x,x): x\in D\}$;
    \item if $U\in \mathscr{B}$ and $(x,y)\in U$ then $(y,x)\in U$;
    \item for all $U,V\in \mathscr{B}$ there is a $W\in \mathscr{B}$ such that $W\subseteq U\cap V$;
    \item for all $U\in \mathscr{B}$ there exists $V\in \mathscr{B}$ such that
    \[\{(x,z)\in D^2: (\exists y\in D)\left( (x,y)\in V, (y,z)\in V\right)\}\subseteq U.\]
\end{enumerate}
\end{definition}
A definable uniform structure  induces a definable topology on $D$ whose basic open sets are $U[x]:=\{y: (x,y)\in U\}$, as $U$ ranges over $\mathscr B$.  

For the purposes of the present work the main source of examples of such structures are expansions of definable (abelian) groups equipped with a definable neighbourhood basis $\mathscr B_0$ at $0$ and the associated uniformity
$$\mathscr B=\{(x,y)\in G: xy^{-1}\in U\}_{U\in \mathscr B_0}.$$   



Based on \cite{SimWal}, we define:
\begin{definition} A set $D$, equipped with a definable uniformity, is called \emph{an SW-uniform structure} (or {\em an SW-uniformity})
if:
\begin{itemize}
    \item $D$ is dp-minimal,
    \item $D$ has no isolated points,
    \item every infinite definable subset of $D$ has nonempty interior.
\end{itemize}
\end{definition}
Throughout this section, whenever we say that $D$ is an SW-uniformity, we tacitly assume that $D$ is a definable set in some ambient (multi-sorted) structure. Note that any SW-uniformity is unstable.

The following examples emphasis the relevance of this setting to the present work: 
\begin{example}\label{E:SW-uniformities}
\begin{enumerate}
    \item A dp-minimal expansion of a divisible ordered abelian group is an SW-uniformity \cite{SimDPMinOrd}.
    \item In particular, every weakly o-minimal expansion of an ordered group is an SW-uniformity \cite[Theorem 5.1]{MacMaSt}.
    \item A dp-minimal expansion of a non-trivially valued field is an SW-uniformity. Indeed, it has no isolated points since the field topology is not trivial. Every infinite definable subset has non empty interior by, e.g., \cite[Proposition 3.6]{JaSiWa2015}.
    \item 
    Johnson in his work \cite{JohnDpJML} defines a topology on every dp-minimal expansion of a non strongly minimal field $K$. This topology, which he calls {\em the canonical topology}, has as a base the family of sets ${\bf B}=\{X-X:X\sub K \mbox{ definable and infinite}\}$, \cite[Theorem 6.5]{JohnDpJML}.  It  turns out to yield an SW-uniform structure,  see also \cite[Proposition 1.1]{SimWal}. The converse is true as well: Assume that  $\CK$ is a dp-minimal expansion of a topological field, with a definable basis for its topology $\tau$ making it into an SW-uniform structure. Then $\tau$ equals the canonical topology of Johnson.
    Indeed, since every infinite definable $X\sub K$ has a non-empty interior it follows that the family ${\bf B}$ forms  a base for $\tau$.
    \item In the sequel (Lemma \ref{L:K/O is SW}) we prove that if $\CK=(K,v,\dots)$ is a dp-minimal valued field with a dense value group then (under an additional technical assumption) $K/\CO$ admits the structure of an SW-uniformity.
\end{enumerate}
\end{example}

It follows from the work of Simon and Walsberg that, in SW-uniformities, the notion of dp-rank coincides with other notions of dimension.
For $X\sub D^n$, {\em the topological dimension of $X$} is defined to be the maximal $k\leq n$ such that some projection of $X$ onto $k$ of the coordinates contains an open set.
The {\em $\acl$-dimension} of a tuple $a\in M^n$  over $A\sub M$ (even when $\acl$ does not satisfy exchange) is the minimal  $k\leq n$ for which there exists a sub-tuple
$a'\sub a$, such that $a\in \acl(a'A)$. Then, the   {\em $\acl$-dimension}  of an $A$-definable set $X\sub M^n$ is defined to be the maximum of the $\acl$-dimension of $a/A$, for all $a\in X$.

By Theorem (\cite[Proposition 2.4]{SimWal}), if $D$ is an SW-uniformity in a $|T|^+$-saturated structure and $X\sub D^n$ is definable then $\dpr(X)$ equals the topological dimension and the $\acl$-dimension of $X$. The locality of dp-rank implies that the $\acl$-dimension of a tuple $a$ over $A$ equals $\dpr(a/A)$.
Using the definition of topological dimension we obtain definability of $\dpr$ in parameteres:

\begin{fact}\label{defofrank}\cite[Corollary 2.5]{SimWal} Let $X\sub D^{n+k}$ be $A$-definable. Then for every $k\in \mathbb N$, the set $\{a\in D^k:\dpr(X_a)=k\}$ is definable over $A$, where $X_a=\{b\in D^n:(b,a)\in X\}$.\end{fact}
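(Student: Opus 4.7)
The plan is to reduce the definability of dp-rank to the definability of topological dimension, the latter being essentially a first-order statement once we have the defining formula $\theta(x,y,z)$ of the uniformity at hand. (I read the statement as asking for definability of $\{a\in D^k : \dpr(X_a)=j\}$ for each fixed $j$; the proof is identical whatever the intended range of $j$ is.)

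First I would invoke the Simon--Walsberg identification cited just above: for each $a$, $\dpr(X_a)$ equals the topological dimension of $X_a\sub D^n$. Hence it suffices to show that, for each $j\le n$, the set $E_j=\{a\in D^k:\dim_{\mathrm{top}}(X_a)\ge j\}$ is $A$-definable; then $\{a:\dpr(X_a)=j\}=E_j\setminus E_{j+1}$ gives the conclusion. The condition $\dim_{\mathrm{top}}(X_a)\ge j$ unfolds as a finite disjunction over the $\binom{n}{j}$ coordinate projections $\pi=\pi_{i_1,\dots,i_j}\colon D^n\to D^j$: $\dim_{\mathrm{top}}(X_a)\ge j$ iff there exists some such $\pi$ for which $\pi(X_a)\sub D^j$ has non-empty interior.

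Next I would express ``$\pi(X_a)$ has non-empty interior'' in the language, using the formula $\theta$ that defines the base $\mathscr{B}$. Recall that a basic open neighbourhood of a point $c\in D^j$ is of the form $\theta(c,\cdot,t_1)\wedge\cdots\wedge\theta(c,\cdot,t_j)$ restricted to the appropriate coordinate in $D^j$ (one coordinate at a time, using the product uniformity on $D^j$); equivalently, using condition (3) in the definition of a definable uniformity, one can choose a single parameter $t$ for $D^j$ after passing to the product. Either way, ``$\pi(X_a)$ contains a basic open set'' is expressible as $\exists c\,\exists \bar t\,\forall y\,\big(\bigwedge \theta(c,y,\bar t)\to y\in \pi(X_a)\big)$, which is a first-order formula in $a$ with parameters in $A$ (since $X$ is $A$-definable and $\theta$ is over $\emptyset$). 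Taking the finite disjunction over the projections $\pi$ yields that $E_j$ is $A$-definable, completing the proof.

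The only genuinely delicate point is transferring the uniformity from $D$ to $D^j$: one must check that the product of the SW-uniformity on $D$ gives a uniformity on $D^j$ whose basic neighbourhoods are definable by a single formula (with parameters) in a uniform way in $a$. This is immediate from the fact that products of formulas remain formulas, and that property (3) of the definition lets us replace a finite intersection of elements of $\mathscr{B}$ by a single element of $\mathscr{B}$. I do not expect any other obstacle: saturation is used only via the identification $\dpr=\dim_{\mathrm{top}}$ that we are quoting, and the rest is a straightforward translation of ``has non-empty interior'' into a first-order formula.
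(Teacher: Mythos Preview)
Your argument is correct. Note, however, that the paper does not prove this statement at all: it is recorded as a \emph{Fact} with a citation to \cite[Corollary 2.5]{SimWal}, so there is no in-paper proof to compare against. What you have written is essentially the argument one expects (and the one given in the cited reference): the preceding paragraph identifies $\dpr$ with topological dimension, and the latter is manifestly first-order since the basic open sets of $D$---and hence of $D^j$ via the product topology---are uniformly definable through the formula $\theta$.

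One small cosmetic point: your appeal to clause (3) of the definition of a uniformity is unnecessary here. You are not intersecting basic entourages in $D$; you are taking a product of basic open sets in $D^j$, and for that you simply need $j$ separate parameters $t_1,\dots,t_j$ (one per coordinate), exactly as in the displayed formula you wrote. The ``either way'' aside about collapsing to a single parameter is a red herring, but harmless.
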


 In addition,  definable functions in SW-uniformities are generically continuous: 
\begin{fact}\label{F:cont}\cite[Proposition 3.7]{SimWal}
    Let  $D$ be an $SW$-uniformity, $f:U\to W$ a definable function.  Then 
    \[
    C_f:=\{x\in U: f \text{ is continuous at } x \}
    \] 
    is definable and $\dpr(U\setminus C_f)<\dpr(U)$. 
\end{fact}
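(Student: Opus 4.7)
The plan is to split the claim into definability of $C_f$ and the rank inequality $\dpr(U\setminus C_f)<\dpr(U)$, and handle them separately.

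For definability, both the uniformity on $D$ (and hence, via the product, on $U\sub D^n$) and the uniformity on $W$ are, by assumption, given by definable families of entourages $\theta_U(\cdot,\cdot,s)$ and $\theta_W(\cdot,\cdot,t)$. Continuity of $f$ at $x\in U$ is expressible as
\[
\forall t\, \exists s\, \forall y\in U\,\bigl(\theta_U(x,y,s)\to \theta_W(f(x),f(y),t)\bigr),
\]
a first-order formula with the same parameters as $f$, so $C_f$ is definable.

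For the rank bound I would argue by contradiction, using the identification of dp-rank with $\acl$-dimension in an SW-uniformity (\cite[Proposition 2.4]{SimWal}) together with subadditivity of dp-rank. Assume $\dpr(U\setminus C_f)=\dpr(U)=k$ and let $A$ be a parameter set over which $U$, $W$, $f$ and the two uniformities are defined. By Fact \ref{defofrank} and locality of dp-rank, pick $a\in U\setminus C_f$ with $\dpr(a/A)=k$. Discontinuity at $a$ yields an $A$-definable entourage $V\in\mathscr{B}_W$ around $f(a)$ such that every $Aa$-definable neighborhood of $a$ meets $E_V:=\{y\in U:(f(a),f(y))\notin V\}$. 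By saturation, pick $b\in E_V$ lying in every $Aa$-definable neighborhood of $a$; then $b\notin\acl(Aa)$, so $\dpr(b/Aa)\geq 1$ and subadditivity gives $\dpr(ab/A)\geq k+1$. The next step is to realize the pair $(a,b)$ as the generic type of a pair of mutually $A$-indiscernible sequences, one approaching $a$ and one approaching $b$, and derive a contradiction from dp-minimality of $D$: the proximity of the $b$-sequence to the $a$-sequence combined with the $V$-separation of their $f$-images would force a simultaneous failure of $Aa$-indiscernibility on both sides.

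The cleanest case is $n=1$, in which $a$ is a generic point of some open subset of $D$, $b$ is an independent generic infinitely close to $a$, and a direct swap of $a$ and $b$ along their common type over $A$ produces a conjugate of $a$ whose $f$-image lies in $V[f(a)]$, contradicting the choice of $V$. For $n>1$ I would reduce to the one-dimensional case by slicing $U$ along generic coordinate fibres, invoking the SW-property that an infinite definable subset of $D$ has non-empty interior in order to upgrade ``positive dp-rank in a generic slice'' to ``open slice''. The main obstacle, in my view, is the construction of the pair of mutually $A$-indiscernible sequences witnessing simultaneously the convergence of $b$ to $a$ and the separation of $f(b)$ from $f(a)$, without sacrificing the full dp-rank $k$ of $a$ over $A$.
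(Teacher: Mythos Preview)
The paper does not supply its own proof of this statement: it is recorded as a Fact with a citation to \cite[Proposition 3.7]{SimWal}, so there is nothing in the paper to compare your argument against.

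On your proposal itself: the definability of $C_f$ is fine and is exactly the routine observation you give. The rank inequality, however, is not established. Two concrete gaps:
\begin{itemize}
    \item You assert that the entourage $V$ witnessing discontinuity at $a$ can be taken $A$-definable. It cannot, in general; discontinuity only provides \emph{some} entourage parameter $t_0$, and you then need $\dpr(a/At_0)=k$. This is repairable (shrink the entourage using the analogue of Lemma \ref{L:generic uniformity parameters}(1) for the uniformity on the target), but you have not done it.
    \item More seriously, your ``swap'' argument in the one-dimensional case does not go through as written. An $A$-automorphism sending $a$ to $b$ carries $V$ to some other entourage $\sigma(V)$, so from $(f(a),f(b))\notin V$ you do not get any statement about $V[f(a)]$ after the swap. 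The alternative you suggest---building two mutually $A$-indiscernible sequences that simultaneously witness the convergence $b\to a$ and the separation $(f(a),f(b))\notin V$---is exactly the hard part, and you explicitly flag it as unresolved.
\end{itemize}
So as it stands the proposal is an outline with the main step missing, not a proof. If you want to complete it along these lines you will need either a genuine indiscernible-sequence construction or (more in the spirit of \cite{SimWal}) a reduction via coordinate projections and the frontier inequality, rather than a direct dp-rank computation on the pair $(a,b)$.
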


We also need the following results:
\begin{fact} \cite[Lemma 4.6]{SimWal}\label{localhom} 
Let $X\sub D^n$ be a definable set in an SW-uniformity with $\dpr(X)=k$. Then there exists a definable subset $Y\subseteq X$ with $\dpr(Y)<k$ such that for every $a\in X\setminus Y$ there is a coordinate projection $\pi: X\to D^k$ that is a local homeomorphism at $a$. 
\end{fact}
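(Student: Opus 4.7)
The plan is to define, for each $I \subseteq \{1, \ldots, n\}$ with $|I| = k$, the set $X_I := \{a \in X : \pi_I|_X \text{ is a local homeomorphism at } a\}$, and to show that $Y := X \setminus \bigcup_{|I|=k} X_I$ has $\dpr(Y) < k$. Definability of each $X_I$ is immediate from the definable uniformity: the three clauses ``$\pi_I|_X$ is injective on some neighbourhood of $a$'', ``$\pi_I(U)$ contains an open neighbourhood of $\pi_I(a)$'', and ``$(\pi_I|_U)^{-1}$ is continuous at $\pi_I(a)$'' can all be expressed by quantifying over the definable basis of neighbourhoods.

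I argue by contradiction: suppose $\dpr(Y) = k$, and choose $a \in Y$ generic over a small set $A$ defining $Y$, so $\dpr(a/A) = k$. By the coincidence of dp-rank with $\acl$-dimension in SW-uniformities (\cite[Proposition 2.4]{SimWal}), there exists $I$ with $|I| = k$ such that $a \in \acl(A, \pi_I(a))$. Write $a' := \pi_I(a)$. Since $a \in \acl(Aa')$, subadditivity of dp-rank forces $\dpr(a'/A) = k$, so $a'$ is generic in $D^k$ over $A$. I aim to show that for this specific $I$, $\pi_I|_X$ is a local homeomorphism at $a$, contradicting $a \in Y$.

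For local injectivity, the key step is showing that the fibre $F := X \cap \pi_I^{-1}(a')$ is finite. By Fact \ref{defofrank}, the set $E := \{b \in X : \dpr(X \cap \pi_I^{-1}(\pi_I(b))) \geq 1\}$ is $A$-definable; a fibre-dimension argument (a positive-dimensional fibre sitting over a full-dimensional image would force $\dpr(X) > k$) gives $\dpr(E) < k$. Since $a$ is generic over $A$, $a \notin E$, so $F$ is finite, say $F = \{a = a_1, a_2, \ldots, a_s\}$. Hausdorffness of the uniformity topology (a formal consequence of the axioms) separates $a$ from the other $a_j$'s by an open neighbourhood; a further shrinking, again using Fact \ref{defofrank} applied to the definable condition ``fibre of size $\leq 1$ inside $U$'', produces an open neighbourhood $U$ of $a$ in $X$ on which $\pi_I$ is injective.

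For openness, put $f := \pi_I|_U$. Injectivity together with $\dpr(U) = k$ yields $\dpr(f(U)) = k$ in $D^k$, so $f(U)$ has nonempty interior; genericity of $a'$ (its boundary in $f(U)$ has strictly smaller dp-rank) then forces $a' \in \inte(f(U))$. Applying generic continuity (Fact \ref{F:cont}) to the definable inverse $f^{-1} : f(U) \to U$ and invoking genericity of $a'$ one final time, we see $f^{-1}$ is continuous at $a'$, i.e.\ $f$ is open at $a$. Combined with the automatic continuity of $\pi_I$, this shows $\pi_I|_X$ is a local homeomorphism at $a$, contradicting $a \in Y$. The delicate step is the fibre-finiteness: promoting the local $\acl$-information (a specific finite piece of the fibre contains $a$) to the global statement that the entire fibre is finite requires a form of dp-rank additivity across definable projections, which is packaged via the definability of dp-rank in Fact \ref{defofrank}.
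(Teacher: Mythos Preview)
The paper does not give a proof of this statement; it is quoted as a fact from \cite[Lemma 4.6]{SimWal}, so there is no paper-proof to compare against and I only assess correctness.

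Your overall strategy is reasonable, but the fibre-finiteness step contains a genuine gap. You claim $\dpr(E) < k$ for $E := \{b \in X : \dpr(X \cap \pi_I^{-1}(\pi_I(b))) \geq 1\}$, justified by ``a positive-dimensional fibre sitting over a full-dimensional image would force $\dpr(X) > k$''. That principle is a form of additivity of dp-rank which \emph{fails} in SW-uniformities without exchange, and the paper explicitly works in that generality (see the discussion preceding Lemma \ref{L:generic uniformity parameters}). Concretely: let $D = K/\CO$ and let $g:D\to D$ be a definable surjection with infinite fibres (Example \ref{example-descent} supplies one, namely $\tilde\lambda_\alpha$ for $\alpha\in\m$). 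Put
\[
X = (D \times \{0\}) \;\cup\; \{(g(y), y) : y \in D\} \ \subseteq\ D^2,
\]
so $\dpr(X) = 1$. A generic $a = (a_1, 0)$ satisfies $a \in \dcl(A, a_1)$, so your chosen $I$ is $\{1\}$ (and $I=\{2\}$ is unavailable since $\pi_2(a)=0\in\acl(A)$). Yet the fibre $X \cap \pi_1^{-1}(a_1)$ contains $\{a_1\} \times g^{-1}(a_1)$, which is infinite; thus $a \in E$ and $\dpr(E)=k$, so your deduction that $F$ is finite breaks down.

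The repair is to abandon finiteness of the full fibre and use Fact \ref{rel interior} instead. From the formula $\psi(x,y)$ witnessing $a \in \acl(A, a')$, the set
\[
X' = \{b \in X : \psi(b, \pi_I(b)) \text{ and } |\psi(D^n, \pi_I(b))| \leq N\}
\]
(for suitable $N$) is $A$-definable, contains $a$, has $\dpr(X') = k$, and $\pi_I\restriction X'$ has uniformly bounded finite fibres. Since $a$ is generic, by Fact \ref{rel interior} it lies in the relative interior of $X'$ in $X$, so locally near $a$ one has $X = X'$; your Hausdorff-separation, openness and inverse-continuity arguments then go through with $X'$ in place of $X$. (A secondary point you gloss over: the neighbourhood $U$ you eventually produce is defined over extra parameters, so invoking genericity of $a'=\pi_I(a)$ for the openness and inverse-continuity steps needs care of the kind provided by Proposition \ref{P:genos}.)
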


For $X\sub Y\sub D^n$, the {\em relative interior of $X$ in $Y$}, $Int_Y(X)$, is the set of $a\in X$ such that for some open $V\ni a$ in $D^n$, 
$V\cap Y\sub X$ (hence $V\cap X=V\cap Y$).

\begin{fact}\cite[Corollary 4.4]{SimWal}\label{rel interior}
If $X\sub Y\sub D^n$ are definable in an SW-uniformity $D$ and $\dpr(X)=\dpr(Y)$  then $\dpr(Y\setminus Int_Y(X))<\dpr(Y)$.\end{fact}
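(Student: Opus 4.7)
The plan is to argue by contradiction. Let $k := \dpr(Y) = \dpr(X)$ and $W := Y \setminus Int_Y(X)$; assume $\dpr(W) = k$. The strategy is to produce a point $a \in W$ admitting a simultaneous local description of $X$ and $Y$ via a common coordinate projection $\pi : D^n \to D^k$, and then to derive that $a \in Int_Y(X)$, contradicting the choice of $a$.

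Apply Fact~\ref{localhom} separately to $X$, $Y$ and $W$: each admits, away from a subset of dp-rank $<k$, some coordinate projection that is a local homeomorphism at every remaining point. There are only finitely many coordinate projections $D^n \to D^k$, and, using the identification of $\dpr$ with topological dimension together with Fact~\ref{defofrank} (so that dp-rank behaves as a maximum over finite definable unions), one can fix a single $\pi$ serving a dp-rank-$k$ subset of each of $X$, $Y$, and $W$. Choose a generic $a \in W$ in this common locus together with an open $V \ni a$ in $D^n$ such that $\pi|_{Y \cap V}$ is a homeomorphism onto an open $U_Y \subseteq D^k$ and $\pi|_{X \cap V}$ is a homeomorphism onto an open $U_X \subseteq U_Y$.

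Because $\pi|_{Y \cap V}$ is injective, $X \cap V = (\pi|_{Y \cap V})^{-1}(U_X)$ is open in $Y \cap V$. Shrinking to an open $V' \ni a$ in $D^n$ with $\pi(V') \subseteq U_X$ then forces $V' \cap Y \subseteq X$, so $a \in Int_Y(X)$, contradicting $a \in W$.

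The main obstacle is the simultaneous-genericity step: arranging a single $\pi$ to be a local homeomorphism of both $X$ and $Y$ at $a$, and ruling out the subcase $a \in Y \setminus X$ (where $\pi|_{X \cap V}$ need not be defined on a neighborhood of $\pi(a)$). This likely requires working inside $W \cap X$ when it has full dp-rank, and otherwise establishing separately that $\dpr(Y \setminus X) < k$ under $\dpr(X) = \dpr(Y)$, perhaps by contrasting the local homeomorphism structures of $X$ and $Y \setminus X$ through the same projection of $Y$ and invoking the injectivity of $\pi$ on $Y \cap V$ to force the two sets to occupy disjoint open pieces of $U_Y$.
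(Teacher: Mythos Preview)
The paper does not give a proof of this fact; it is simply cited from \cite[Corollary 4.4]{SimWal}. More importantly, the statement as printed contains a typo: it should read $\dpr(X \setminus Int_Y(X)) < \dpr(Y)$, not $\dpr(Y \setminus Int_Y(X)) < \dpr(Y)$. The version with $Y$ is false: in any o-minimal expansion of an ordered field take $Y = (0,2)$ and $X = (0,1)$; then $Int_Y(X) = X$ and $Y \setminus Int_Y(X) = [1,2)$ has dp-rank $1 = \dpr(Y)$. Every application of the fact in the paper (Lemma~\ref{L:locally the same}, the proof of Lemma~\ref{L:generic linearity for K}, Proposition~\ref{P:infinit}) only needs the corrected version, since one always uses it for a point $d \in X$ of full rank to conclude $d \in Int_Y(X)$.

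Your ``main obstacle'' --- the subcase $a \in Y \setminus X$ --- is exactly the manifestation of this typo: you are attempting to prove a false statement, and the obstruction you correctly isolated is precisely where it fails. For the corrected statement one has $W = X \setminus Int_Y(X) \subseteq X$, so $a \in X$ automatically, and your strategy of combining local homeomorphisms on $X$ and on $Y$ at a generic $a \in W$ can then be completed (though the simultaneous-projection bookkeeping still requires some care).

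A much shorter argument for the corrected statement, likely close to what \cite{SimWal} actually does: since $a \in X \setminus Int_Y(X)$ means every open neighbourhood of $a$ meets $Y \setminus X$, one has
\[
X \setminus Int_Y(X) \;=\; X \cap \cl(Y \setminus X) \;\subseteq\; \cl(Y \setminus X) \setminus (Y \setminus X) \;=\; \fr(Y \setminus X),
\]
and \cite[Proposition 4.3]{SimWal} (invoked in the proof of Lemma~\ref{L:generic uniformity parameters}(2)) gives $\dpr(\fr(Y \setminus X)) < \dpr(Y \setminus X) \le \dpr(Y)$.
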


\begin{remark}\label{R:product of one-dim in SW}
 It is easy to see that if $\CM$ is any dp-minimal structure then every infinite subset of $M^n$  has a definable subset of $\dpr=1$.

For an SW-uniformity $D$, if $X\subseteq D^n$ with $\dpr(X)=k$ then by Fact \ref{localhom},  there are definable open $X_1,\dots,X_k\subseteq D$ and a definable injection
of $X_1\times\dots\times X_k$ into $X$, possibly over additional parameters. If $X$ is defined over a model then so are the $X_i$ and the injection.
\end{remark}


\subsection{Local analysis in SW-uniformities}

The  main goal of this section is to prove Proposition \ref{P:genos}, which generalizes a similar, useful, result from the theory of o-minimal structures. 
It says that 
given an $A$-definable set $X\sub D^n$ and $a\in X$ with $\dpr(a/A)=\dpr(X)$, 
there are arbitrarily small neighborhoods $V$ of $a$, defined over $B\supseteq A$, such that $\dpr(a/B)=\dpr(a/A)$.

If $\acl$ satisfies exchange then the  proposition would be quite easy to prove, as in the o-minimal setting. However, our eventual aim is to apply it in the SW-structure structure on $K/\CO$, where exchange fails.

The first step is due to Simon and Walsberg: 

\begin{lemma}\label{L:locally the same}
    Let $D$ be a definable SW-uniform structure.
    Let $X\subseteq Z\subseteq D^n$ be $\emptyset$-definable sets with $\dpr(X)=\dpr(Z)$. For every $d\in X$ if $\dpr(d)=\dpr(X)$ then there exists an open neighborhood $U\subseteq D^n$ of $d$ such that  $U\cap X=U\cap Z$.
\end{lemma}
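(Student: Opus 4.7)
The proof should be essentially a one-line consequence of the relative interior fact (Fact \ref{rel interior}) combined with the locality of dp-rank in SW-uniformities, so the plan is short.

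My plan is to apply Fact \ref{rel interior} to the pair $X \subseteq Z$. Since $\dpr(X) = \dpr(Z) = k$, that fact gives
\[
\dpr\bigl(Z \setminus \mathrm{Int}_Z(X)\bigr) < k.
\]
Crucially, the set $Z \setminus \mathrm{Int}_Z(X)$ is $\emptyset$-definable because $X$, $Z$, and the uniformity (hence the topology on $D^n$) are all $\emptyset$-definable, so the relative interior is defined by a formula over $\emptyset$.

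The next step is to use that $d \in X \subseteq Z$ with $\dpr(d) = k$. If $d$ belonged to $Z \setminus \mathrm{Int}_Z(X)$, then the $\emptyset$-definable formula cutting out $Z \setminus \mathrm{Int}_Z(X)$ would lie in $\tp(d/\emptyset)$. By the locality of dp-rank in dp-minimal (and more generally SW) contexts,
\[
\dpr(d) = \dpr(d/\emptyset) \le \dpr\bigl(Z \setminus \mathrm{Int}_Z(X)\bigr) < k,
\]
contradicting $\dpr(d)=k$. Therefore $d \in \mathrm{Int}_Z(X)$.

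Unfolding the definition of relative interior, there is an open neighborhood $U \subseteq D^n$ of $d$ with $U \cap Z \subseteq X$; combined with $X \subseteq Z$ this yields $U \cap X = U \cap Z$, as required. The only subtlety worth flagging is that one must be careful that the relative interior is $\emptyset$-definable (no hidden parameters creep in from the uniformity formula $\theta(x,y,z)$), but this is guaranteed by the hypothesis that $X,Z$ and the uniform structure on $D^n$ are $\emptyset$-definable; there is no serious obstacle in the argument.
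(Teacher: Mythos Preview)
Your proof is correct and follows exactly the same approach as the paper: apply Fact~\ref{rel interior} to get $\dpr(Z\setminus \mathrm{Int}_Z(X))<\dpr(X)$, then conclude $d\in \mathrm{Int}_Z(X)$ since $\dpr(d)=\dpr(X)$. The paper's proof is terser (it omits the definability check and the explicit appeal to locality of dp-rank), but the argument is identical.
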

\begin{proof}
     By Fact \ref{rel interior}, $\dpr(Z\setminus Int_Z(X))<\dpr(Z)$ and hence $d\in Int_Z(X)$.
\end{proof}

The following is probably well known.

\begin{lemma}\label{L:dp-rank bounded by externally definable}
Let $\CM$ be a structure of finite dp-rank,  $\CN\succ \CM$ an elementary extension and $X, Y$ definable over $\CM$ and $\CN$, respectively. If $X(\CM)\subseteq Y(\CM)$ then $\dpr(X)\leq \dpr(Y)$.
\end{lemma}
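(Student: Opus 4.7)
The plan is to deduce the inequality $\dpr(X) \le \dpr(Y)$ by locating a witness of the dp-rank of $X$ inside $\CM$ via saturation, and then converting it into a witness of the dp-rank of $Y$ using a Ramsey-type transfer. Working inside a monster $\mathbb{U}$ of $\mathrm{Th}(\CM)=\mathrm{Th}(\CN)$, assume $\dpr(X)\ge k$. By the definition of dp-rank together with locality, there exist a realization $b\models X$ and $k$ mutually $a$-indiscernible sequences $(I_j)_{j<k}$, where $a\in M$ defines $X$, such that no $I_j$ is indiscernible over $ab$. This entire configuration is described by a countable partial type over $a\subseteq M$, so the $|T|^+$-saturation of $\CM$ allows us to realize it inside $\CM$; in particular we may take $b\in X(\CM)$ and $I_j\subseteq M$ for every $j$. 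The hypothesis then yields $b\in Y(\CM)$, i.e. $\psi_Y(b,c)$ holds where $\psi_Y$, with parameters $c\in N$, defines $Y$.

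To witness $\dpr(Y)\ge k$ inside $\mathbb{U}$ I need $k$ mutually $c$-indiscernible sequences that $b$ breaks. The sequences $(I_j)$ are $a$-mutually indiscernible but may fail to be $c$-mutually indiscernible (since $c$ lies outside $M$). I would first extend each $I_j$, via compactness, to an $a$-indiscernible sequence of sufficient length, and then apply a standard Ramsey extraction in $\mathbb{U}$ to obtain mutually $c$-indiscernible sub-sequences $(J_j)_{j<k}$ of length $\omega$. Combined with $b\models Y$, such $(J_j)$ would witness $\dpr(Y)\ge k$.

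The main obstacle is that Ramsey extraction tends to \emph{homogenize} sequences, and could in principle render the $(J_j)$ themselves indiscernible over $ab$, thereby destroying the ``broken by $b$'' property. A standard NIP remedy is to apply shrinking of indiscernibles, which guarantees that each $(I_j^*)$ decomposes into boundedly many maximal $ab$-indiscernible convex intervals; the extraction can then be arranged to pick points of $(J_j)$ straddling the boundaries between such intervals, ensuring $b$ continues to break $(J_j)$. A slicker alternative is to argue inside the Shelah expansion $\CM^{Sh}$: by the theorems of Shelah (NIP) and Onshuus--Usvyatsov (preservation of dp-rank), $Y(\CM)=Y\cap M^n$ is definable in $\CM^{Sh}$ with $\dpr_{\CM^{Sh}}(Y(\CM))=\dpr_{\CN}(Y)$, while $\dpr_{\CM^{Sh}}(X)=\dpr_{\CM}(X)$; the inclusion $X\subseteq Y(\CM)$ is then a definable containment in $\CM^{Sh}$, and monotonicity of dp-rank delivers the result.
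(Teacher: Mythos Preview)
Your proposal sketches two routes, but neither is complete, and both are heavier than the paper's argument.

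In approach 1 you correctly isolate the real obstacle: once you pull the indiscernible sequences and $b$ down into $\CM$, you still have to make the $I_j$ mutually indiscernible over the $\CN$-parameter $c$, and the Ramsey/Erd\H{o}s--Rado extraction that does this may well make them $ab$-indiscernible, killing the witness. Your suggested fix via NIP shrinking of indiscernibles is in the right spirit, but you do not explain how to run the extraction so that the resulting $J_j$ straddle the finitely many break-points; this is not automatic and needs a genuine argument.

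In approach 2 the load-bearing step is the inequality $\dpr_{\CM^{Sh}}(Y(\CM)) \le \dpr_{\CN}(Y)$. The Onshuus--Usvyatsov result you cite says that passing to the Shelah expansion does not raise the dp-rank of $\CM$-\emph{definable} sets; it does not directly bound the $\CM^{Sh}$-rank of an \emph{externally} definable trace by its rank in $\CN$. Justifying that inequality is essentially the content of the lemma you are trying to prove, so the argument is close to circular as written.

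The paper avoids all of this by switching from the indiscernible-sequence definition of dp-rank to the finitary ict-pattern characterization (\cite[Proposition 4.22]{SiBook}). If $\dpr(X)\ge k$, then for every $n$ there exist parameter-free formulas $\phi_\alpha(x_\alpha,y)$, an array $(a_i^\alpha)_{i<n,\alpha<k}$ and witnesses $b_\eta\in X$ realizing the pattern. For fixed $n$ this is a first-order existential condition over the defining parameters of $X$, hence already realized in $\CM$ with $b_\eta\in X(\CM)\subseteq Y(\CM)$. The very same array and formulas then witness $\kappa_{\mathrm{ict}}(Y)>k$, so $\dpr(Y)\ge k$. The point is that an ict-pattern carries no indiscernibility requirement over the parameters of the ambient set, so the transfer from $X$ to $Y$ is immediate and there is nothing to extract.
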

\begin{proof}
We will use a finitary version of ict-patterns, see \cite[Definition 4.21]{SiBook}, in order to calculate the dp-rank, see \cite[Proposition 4.22]{SiBook}.

Assume that $\dpr(X)\geq k$, so $\kappa_{ict}(X)>k$ and we can find formulas $\{\phi_\alpha (x_\alpha ,y)\}_{\alpha<k}$, such that for any integer $n$ there is an array $\la a_i^{\alpha}:i<n,\,\alpha<k\ra$ of tuples from $M$ with $|a_i^\alpha|=|x_\alpha|$, such that for every $\eta:k\to n$ there is a tuple $b_\eta\in X(\CM)$ such that
\[\phi_\alpha(a_i^\alpha,b_\eta)\iff \eta(\alpha)=i.\]
The same pattern gives, since $X(\CM)\subseteq Y(\CM)$, that $\kappa_{ict}(Y)>k$ so $\dpr(Y)\geq k$.
\end{proof}

We fix the uniformity on $D$, given by $$\mathscr{B}=\{U_t=\theta(D^2,t):t\in T\},$$ and consider  the $\0$-definable directed partial order defined by $t_1\leq t_2$ if $U_{t_1}\subseteq U_{t_2}$.
\begin{lemma}\label{L:generic uniformity parameters}
Let $D$ be an SW-uniformity (in some structure $\CM$), $a=(a_1,\dots, a_n)\in D^n$ and $A\sub D$. 
\begin{enumerate}
    \item For every $t_0\in T$, there exists $t\leq t_0$ such that $\dpr(a/tA)=\dpr(a/A)$.
    
    \item For every $t\in T$, there exists $a_1'\in U_{t}[a_1]\setminus \{a_1\}$  such that $\dpr(a/Ata_1')=\dpr(a/At)$.
\end{enumerate}
\end{lemma}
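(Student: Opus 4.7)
Both parts combine the topological description of dp-rank in an SW-uniformity (Facts \ref{defofrank}, \ref{localhom}, \ref{rel interior}) with the existence of global invariant extensions of types in NIP theories preserving dp-rank, which applies here since dp-minimal structures are NIP.

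For part~(1), set $k=\dpr(a/A)$. By locality of dp-rank, pick an $A$-definable $X\ni a$ with $\dpr(X)=k$. Applying Fact~\ref{localhom}, after reordering coordinates and discarding an exceptional subset of lower dp-rank (which cannot contain $a$ by genericity), the projection $\pi:X\to D^k$ onto the first $k$ coordinates is a local homeomorphism at $a$; Fact~\ref{rel interior} then places $\pi(a)=(a_1,\dots,a_k)$ in the topological interior $V$ of $\pi(X)\sub D^k$. This guarantees that arbitrarily small $t\in \CT$ satisfy $U_t[a_1]\times\cdots\times U_t[a_k]\sub V$, and by directedness of $\leq$ we may ensure $t\leq t_0$. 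To boost this topological fact into the rank equality $\dpr(a/At)=k$, I would extend $\tp(a/A\cup\{t_0\})$ to a global $A\cup\{t_0\}$-invariant type preserving dp-rank, realize it at any auxiliary parameter $t_*\leq t_0$ to obtain some $a^*\equiv_{A\cup\{t_0\}}a$, and transport $t_*$ to $t$ via an $A\cup\{t_0\}$-automorphism sending $a^*$ back to $a$; the automorphism fixes $t_0$, so the condition $t\leq t_0$ is preserved.

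Part~(2) follows the same template, with $a_1$ playing the role that $t_0$ played in part~(1). Fix $t\in \CT$. Since $D$ has no isolated points, $U_t[a_1]\setminus\{a_1\}$ is infinite, so one may pick any $a_1^*$ in it. Extend $\tp(a/Ata_1)$ to a global $Ata_1$-invariant type preserving dp-rank, realize it at $a_1^*$, and use an $Ata_1$-automorphism sending the realization to $a$ to transport $a_1^*$ to the desired $a_1'$; since this automorphism fixes both $t$ and $a_1$, the image $a_1'$ still lies in $U_t[a_1]\setminus\{a_1\}$. A standard subadditivity computation, using that $a_1^*$ can be chosen generic over $Ata$, then yields $\dpr(a/Ata_1')=\dpr(a/At)$.

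The main obstacle arises in part~(1) when $\dpr(a/A\cup\{t_0\})<\dpr(a/A)$: the NIP invariance machinery anchored at $A\cup\{t_0\}$ then only preserves the smaller rank $\dpr(a/A\cup\{t_0\})$, and recovering the full $k=\dpr(a/A)$ requires combining the topological picture above with a local argument near $\pi(a)$, applying Lemma~\ref{L:locally the same} with the $A$-definable open set $V$ to rule out $At$-definable sets through $a$ of dp-rank strictly less than $k$.
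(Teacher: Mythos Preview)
Your approach is genuinely different from the paper's, and the obstacle you flag in part~(1) is fatal to the strategy as stated. The invariant-extension machinery you invoke requires the base to be a model (or at least that the type be finitely satisfiable there); over an arbitrary set $A\cup\{t_0\}$ or $Ata_1$ there is no reason a global invariant extension preserving dp-rank exists. Even granting such extensions, the tension you identify is real: anchoring at $A$ preserves $\dpr(a/A)$ but the transporting automorphism need not fix $t_0$, so $t\le t_0$ is lost; anchoring at $A\cup\{t_0\}$ fixes $t_0$ but only preserves $\dpr(a/A\cup\{t_0\})$. Your proposed repair via Lemma~\ref{L:locally the same} does not close this gap: that lemma produces, for a \emph{single} $At$-definable set of full rank, an open neighbourhood on which it agrees with the ambient set, but gives no uniformity over the family of all $t\le t_0$ and all possible $At$-definable small sets through $a$. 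In part~(2), the ``standard subadditivity computation'' you appeal to would need \emph{additivity} of dp-rank (i.e.\ $\dpr(a,a_1^*/At)=\dpr(a_1^*/Ata)+\dpr(a/At)$) to conclude $\dpr(a/Ata_1')=\dpr(a/At)$, and additivity is equivalent to exchange, which is precisely what is \emph{not} assumed here (cf.\ the remark following Lemma~\ref{L:itaywom}).

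The paper avoids all of this with a direct compactness argument. For (1), one collects into a set $\Delta$ all $A$-formulas $\psi(\bar x,u)$ whose every instance $\psi(D,t)$ has rank $<\dpr(a/A)$, and shows the partial type $\{s\le t_0\}\cup\{\neg\psi(a,s):\psi\in\Delta\}$ is consistent; inconsistency would yield an $A$-definable $\chi(\bar x)\ni a$ with $\dpr(\chi)\ge\dpr(a/A)$, yet $\chi(D)$ is trapped inside a set of smaller rank in a saturated extension, contradicting Lemma~\ref{L:dp-rank bounded by externally definable}. Definability of rank (Fact~\ref{defofrank}) is what makes $\Delta$ manageable. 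Part~(2) is analogous, with the frontier-dimension inequality $\dpr(\mathrm{fr}(Z))<\dpr(Z)$ replacing the externally-definable bound.
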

\begin{proof}
(1) Let $\Delta$ be the collection of all $A$-formulas $\psi(\bar x,u)$, for $\bar x=(x_1,\ldots,x_n)$ and $u$ a $T$-variable, such that for every $t\in T$, 
$\dpr(\psi(D,t))<\dpr(a/A).$

We want to realize the type
\[P(s)=\{s\leq t_0\}\cup \{\neg \psi(a,s):\psi\in \Delta\}.\]

If $P$ is consistent, and realized by $s_1\in T$ then obviously $s_1\leq t_0$.  If $\dpr(a/s_1A)<\dpr(a/A)$ then by the locality of dp-rank in $D$,  we can find some $A$-formula $\psi(\bar x,y)$, such that $\psi(\bar x,s_1)\in \tp(a/S_1A)$ and $\dpr(\psi(D,s_1))<\dpr(a/A)$.
By Fact \ref{defofrank}, there is a formula $\theta(u)$ over $A$ such that for every $t\in T$,  $\theta(t)$ holds if and only if  $\dpr(\psi(D,t))<\dpr(a/A)$. Consequently, $\psi(\bar x,u)\wedge \theta(u)\in \Delta$, contradicting the fact that $(a,s_1)$ satisfies it. Therefore a realisation of $P$ must satisfy the conclusion of the lemma.  

Assume toward a contradiction that $P$ is inconsistent. Thus there are $\phi_1,\ldots,\phi_r\in \Delta$, such that 
\[\forall t(t\leq t_0 \to \bigvee_{i} \phi_i(a,t)).\]

Let $\psi(\bar x,u):= \bigvee_i \phi_i(\bar x,u)$. Then it is still the case that $\psi\in \Delta$ and we have $\forall t(t\leq t_0\to \psi(a,t)).$
Let \[\chi(\bar x):= \exists s \forall t(t\leq s\to \psi(\bar x,t)).\]
Then $\chi$ is a formula over $A$ and as $\chi(a)$ holds then necessarily $\dpr(\chi)\geq \dpr(a/A)$.

We will reach a contradiction by showing that, in fact,  $\dpr(\chi)<\dpr(a/A)$.
Let 
\[
\widehat \psi(\bar x,u):=\psi(\bar x,u)\wedge (\forall t_1\leq u) \psi(\bar x,t_1).
\]
It is easy to verify that $\chi(\bar x)  \leftrightarrow \exists u \widehat \psi(\bar x,u).$

Let $\CM\prec\CM^*$ be an $|\CM|^+$-saturated elementary extension and let $D^*=D(\CM^*)$. By saturation  and the fact that $T$ is directed by $\le$  there is $t^*\in T(\CM^*)$ such  $t^*\leq T(\CM)$.

It follows that $\chi(D)\sub \widehat \psi(D^*,t^*)$. Indeed, if $b\in \chi(D)$ then there is $t\in T(\CM)$ such that for all $t_1\in T(\CM)$, if $t_1\leq t$ then $\psi(b,t_1)$. This remains true in $\CM^*$ hence $\widehat \psi(b,t^*)$ holds.

However, since $\psi(\bar x,u)\in \Delta$ then $\widehat \psi(\bar x,u)\in \Delta$. As a result, for every $t\in T$ $\dpr(\widehat\psi (D,t))<\dpr(a/A)$.
In particular $\dpr(\widehat \psi(D^*,t^*))<\dpr(a/A)$ and by  Lemma \ref{L:dp-rank bounded by externally definable}, $\dpr(\chi(D))<\dpr(a/A)$, with the desired contradiction.

(2) The proof follows similar lines to (1). Let $A_1=At$ and let $\Delta$ be now all formulas $\phi(\bar x,y)$ over $A_1$, with $y$ in  the $D$-sort,  such that for all $y\in D$, $\dpr(\phi(D,y))<\dpr(a/A_1).$

As in (1) we want to realize the type 
\[P(y)=\{y\in U_{t}[a_1]\}\cup\{y\neq a_1\}\cup\{\neg\phi(a,y):\phi\in \Delta\}.\]

Assume the type is inconsistent. Then there are $\phi_1,\ldots,\phi_r\in \Delta$ such that 
\[\forall y((y\in U_{t}[a_1]\wedge y\neq a_1)\to \bigvee_i \phi_i(a,y)).\]
Let $\psi(\bar x,y)$ be the formula  $\bigvee_i \psi_i(\bar x,y) \wedge y\neq x_1$, then $\psi\in \Delta$.
For $X\sub D^k$ let $\fr(X):=\cl(X)\setminus X$ and note that if $a_1$ satisfies $\forall y((y\in U_{t}[a_1]\wedge y\neq a_1)\to \psi(a,y))$ then, as $D$ has no isolated points, $a_1\in \fr(\psi(a,D))$ 
and therefore $(a,a_1)\in \fr (\psi(D^{n+1}))$. So, as $\dpr(a_1,a/A_1)=\dpr(a/A_1)$, we see that $\dpr(a/A_1)\leq \dpr(\fr(\psi(D^{n+1})))$

On the other hand,  by \cite[Proposition 4.3]{SimWal}, \[\dpr(\fr(\psi(D^{n+1})))<\dpr(\psi(D^{n+1})).\] Thus, it must be the case that $\dpr(\psi(D^{n+1}))\geq \dpr(a/A_1)+1$.
However, $\psi\in \Delta$ hence for every $b\in \pi_{n+1}(\psi(D^{n+1}))$ (the projection to the last coordinate), we have $\dpr(\psi(D^n,b))<\dpr(a/A_1)$, contradicting subadditivity of dp-rank.
\end{proof}

We can now prove the main proposition of this section. Note that in the following $V$ is not necessarily $A$-definable.
\begin{proposition}\label{P:genos}
Let $D$ be an SW-uniformity. For every open $V\sub D^n$, $a\in V$,  and $A$ a small set of parameters, there exists $B\supseteq A$ and a $B$-definable open subset $U=U_1\times\dots\times U_n\subseteq V$ such that $a\in U$ and  $\dpr(a/B)=\dpr(a/A)$.
\end{proposition}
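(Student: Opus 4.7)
The plan is to construct a product neighborhood of $a$ inside $V$ whose defining ``centers'' are \emph{perturbations} of the coordinates of $a$, not the $a_i$'s themselves. Using $a_i$ directly as a parameter could drop $\dpr(a/\cdot)$ by one per coordinate (via subadditivity), whereas centers chosen via Lemma \ref{L:generic uniformity parameters}(2) will keep the dp-rank invariant.

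First, since $V$ is open in the product topology on $D^n$ and contains $a$, and since for each $x\in D$ the sets $U_t[x]$ form a neighborhood basis at $x$, I would use the directedness of $\mathscr{B}$ to pick a single $t_0\in T$ with $\prod_{i=1}^n U_{t_0}[a_i]\subseteq V$. Next, I would apply axiom~(4) of the uniformity to find $t_1\leq t_0$ with $U_{t_1}\circ U_{t_1}\subseteq U_{t_0}$, and then invoke Lemma \ref{L:generic uniformity parameters}(1) (with bound $t_1$) to find $t\leq t_1$ satisfying $\dpr(a/At)=\dpr(a/A)$. In particular $U_t\circ U_t\subseteq U_{t_0}$.

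Then I would iterate Lemma \ref{L:generic uniformity parameters}(2) once per coordinate: pick $c_1\in U_t[a_1]\setminus\{a_1\}$ with $\dpr(a/Atc_1)=\dpr(a/At)$, next $c_2\in U_t[a_2]\setminus\{a_2\}$ with $\dpr(a/Atc_1c_2)=\dpr(a/Atc_1)$, and so on up to $c_n$. Set $B=A\cup\{t,c_1,\ldots,c_n\}$ and $U_i:=U_t[c_i]$. By symmetry of $U_t$ we have $c_i\in U_t[a_i]$ iff $a_i\in U_t[c_i]=U_i$, so $a\in\prod_i U_i$; and for any $y\in U_i$ the relations $(a_i,c_i),(c_i,y)\in U_t$ combine via $U_t\circ U_t\subseteq U_{t_0}$ to give $(a_i,y)\in U_{t_0}$, hence $U_i\subseteq U_{t_0}[a_i]$ and $U=\prod_i U_i\subseteq V$. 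The sets $U_i$ are open because basic open sets of the induced topology on $D$ are exactly the $U_t[x]$. The desired rank equality $\dpr(a/B)=\dpr(a/A)$ is automatic from the construction.

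The main (minor) obstacle is that Lemma \ref{L:generic uniformity parameters}(2) is formulated only for the first coordinate of $a$; however, its proof is entirely symmetric in the coordinates (the role of $a_1$ is purely notational), so the analogous statement for any fixed coordinate $a_i$ holds verbatim, which is what legitimizes the iteration above. Everything else is a routine assembly of the uniform-space ``triangle inequality'' with the two parts of Lemma \ref{L:generic uniformity parameters}.
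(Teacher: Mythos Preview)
Your proof is correct and follows essentially the same approach as the paper: both arguments use Lemma~\ref{L:generic uniformity parameters}(1) to choose a uniformity parameter $t$ that does not lower $\dpr(a/\cdot)$, and then Lemma~\ref{L:generic uniformity parameters}(2) to pick perturbed centers $c_i\in U_t[a_i]$ (rather than $a_i$ itself) so that the resulting basic open sets $U_t[c_i]$ are defined over parameters preserving the rank. The only cosmetic difference is that the paper packages this as an induction on $n$ via a single-coordinate statement $(\dagger)$ (choosing a fresh $t$ at each step), whereas you fix one $t$ up front and iterate Lemma~\ref{L:generic uniformity parameters}(2) across all coordinates; your observation that the proof of part~(2) is symmetric in the coordinates is correct and exactly what the paper's inductive step also relies on implicitly.
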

\begin{proof}
Assume that $a=(a_1,\ldots, a_n)$. By induction it is sufficient to prove:
\begin{itemize}
    \item[($\dagger$)] Given any open set $W\ni a_1$ there exists $B\supseteq A$ and a $B$-definable open $U\sub W$ containing $a_1$ such that $\dpr(a/B)=\dpr(a/A)$.
\end{itemize}
Indeed, given $a=(a_1,\ldots, a_n)$, and given $W\ni a$ we may assume that $W=W_1\times\dots\times W_n$. Using ($\dagger$), there exists a finite $b_1$, and an $Ab_1$-definable $U_1\sub W_1$ containing $a_1$ such that $\dpr(a/b_1A)=\dpr(a/A)$. Now replace $A$ with $A_1=b_1A$ and apply the inductive hypothesis to $W_2\times \dots \times W_n$ and $A_1$.

So we now turn to proving ($\dagger$). Fix $s_0\in T $  such that $U_{s_0}[a_1]\subseteq W$ and by the definition of a uniformity, we find $t_0\in T$, such that  $\{(x,z)\in D^2:(\exists y\in D)\left( (x,y)\in U_{t_0},\, (y,z)\in U_{t_0}\right)\}\subseteq  U_{s_0}.$ By Lemma \ref{L:generic uniformity parameters}, there exists $t_1\leq t_0$ with $\dpr(a/At_1)=\dpr(a/A)$ and $a_1'\in U_{t_1}[a_1]$ such that $\dpr(a/At_1a_1')=\dpr(a/At_1)=\dpr(a/A)$. We claim that $U_{t_1}[a_1']\subseteq U_{s_0}[a_1]\subseteq W$.

Indeed, assume that $x\in U_{t_1}[a_1']$ then $(a_1',x)\in U_{t_1}$ so  $(a_1',x)\in U_{t_0}$, as $t_1\le t_0$.
By construction $a_1'\in U_{t_1}[a_1]$, hence $(a_1,a_1')\in U_{t_1}$, so also $(a_1,a_1')\in U_{t_0}$.
It follows that $(a_1,x)\in U_{s_0}$ by our choice of $t_0$.
But then $x\in U_{s_0}[a_1]$, as we wanted.

It follows that the set $W_1=U_{t_1}[a_1']$ is a subset of $W$ containing $a_1$ (by symmetry of the uniformity) and is defined over $Ata_1'$.
\end{proof}

We shall also need the following technical corollary.

\begin{corollary}\label{C:strong GenOS}
Let $D$ be an SW-uniformity.
 For every definable $X\subseteq D^n$, $Y\subseteq X$ a definable subset, $a$ in the relative interior of $Y$ in $X$, $b\in D^k$, and $A$ a small set of parameters, there exists $B\supseteq A$ and a $B$-definable open subset $U=U_1\times\dots\times U_n\subseteq D^n$ such that $a\in U\cap X\subseteq Y$ and  $\dpr(a,b/B)=\dpr(x,y/A)$.
\end{corollary}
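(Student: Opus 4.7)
The plan is to reduce directly to Proposition \ref{P:genos} by applying it in a larger ambient product. The relative interior hypothesis provides the local containment $U\cap X\subseteq Y$, while the proposition, applied to the combined tuple $(a,b)$, guarantees that the defining parameters of the box do not decrease the dp-rank we need to preserve.

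\textbf{Step 1: Use the relative interior.} Since $a\in\mathrm{Int}_X(Y)$, by definition there exists an open set $V\subseteq D^n$ with $a\in V$ and $V\cap X\subseteq Y$. I may assume $V=V_1\times\cdots\times V_n$ is a box by shrinking.

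\textbf{Step 2: Move to $D^{n+k}$.} Consider the point $(a,b)\in D^{n+k}$ and the open set $V':=V\times D^k\subseteq D^{n+k}$, which contains $(a,b)$.

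\textbf{Step 3: Apply Proposition \ref{P:genos}.} Applied to the SW-uniformity $D$, the open set $V'$, the point $(a,b)$, and the parameter set $A$, the proposition yields a set $B\supseteq A$ and a $B$-definable open box
\[
U'=U_1\times\cdots\times U_n\times U_{n+1}'\times\cdots\times U_{n+k}'\subseteq V'
\]
containing $(a,b)$, with $\dpr(a,b/B)=\dpr(a,b/A)$.

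\textbf{Step 4: Project back.} Set $U:=U_1\times\cdots\times U_n$. Then $a\in U$, and since $U'\subseteq V\times D^k$ with each $U_{n+j}'$ nonempty, projecting onto the first $n$ coordinates gives $U\subseteq V$. Consequently $U\cap X\subseteq V\cap X\subseteq Y$, and $U$ is $B$-definable, completing the proof.

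There is no real obstacle here: the only point to be careful about is that one really needs the proposition applied in $D^{n+k}$ rather than in $D^n$, so that the chosen parameters $B$ are constrained to preserve the rank of the joint tuple $(a,b)$ rather than just of $a$. The mild shrinking of $V$ to a box in Step 1 is permissible because the topology on $D^n$ has a base of such boxes coming from the definable uniformity on $D$.
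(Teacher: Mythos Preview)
Your proof is correct and follows essentially the same approach as the paper: obtain an open $V$ from the relative interior hypothesis, form $V'=V\times D^k$, and apply Proposition~\ref{P:genos} to $(a,b)$ and $A$. The paper's version is slightly terser, omitting the explicit projection in Step~4, but the argument is identical.
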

\begin{proof}
  Since $a$ is in the relative interior of $Y$ in $X$, there exist an open subset $V\subseteq D^n$ such that $a\in X\cap V\subseteq Y$. Consider the open set $V'=V\times D^k$. Now apply Proposition \ref{P:genos} to $V'$, $(a,b)$ and $A$.
\end{proof}

The next lemma plays an important role in our analysis of infinitesimal neighbourhoods in Section \ref{S:infi}.  Clause (1) of the lemma is well known for $n=1$. We thank Itay Kaplan for the general case. As the concepts needed for the proof are not exactly inline with the rest of the paper we postpone the proofs to the appendix.

\begin{lemma}\label{L:itaywom}
    Let $\CM$ be a structure of finite dp-rank and $\mathbb{U}\succ \CM$ a monster model.
    \begin{enumerate}
    \item Let $D$ be an SW-uniformity in $\CM$ and let $b_1,\dots,b_n$ be some tuples in $\mathbb{U}$. For every $\CM$-definable $X$, there exists $a\in X$, with $\dpr(a/M)=\dpr(X)$, such that $\dpr(ab_i/M)=\dpr(a/M)+\dpr(b_i/M)$ for all $1\leq i\leq n$.
    \item For $A\sub \mathbb U$ and $a\in \CM^n$,  there exists a small model $\CN\prec \CM$,  $A\subseteq N$, such that $\dpr(a/A)=\dpr(a/N)$.
\end{enumerate}
\end{lemma}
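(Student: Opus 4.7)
My plan is to prove both parts using invariant-type techniques in NIP theories of finite dp-rank, together with the locality of dp-rank noted earlier in the text.

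For part (2), I would construct $\CN$ as the union of an elementary chain $A = N_0 \subseteq N_1 \subseteq \cdots$ of small sets, each satisfying $\dpr(a/N_i) = r$, where $r = \dpr(a/A)$. At each stage I verify the Tarski--Vaught test: for every existential formula $\exists y\, \psi(y,d)$ with $d \in N_i$ holding in $\CM$, I need a witness $b \in M$ with $\psi(b,d)$ and $\dpr(a/N_i b) = r$. Existence of such a witness is the main technical content of the step: were every witness to drop the rank, then by saturation and NIP the rank-dropping set would cover $\psi(\CM, d)$ in a controlled manner, forcing (via locality of dp-rank) some $N_i$-formula of dp-rank $< r$ to already hold of $a$, contradicting the induction hypothesis. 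Taking the union of the chain then yields $\CN \prec \CM$ with $A \subseteq N$ and, by locality at the limit, $\dpr(a/N) = r$.

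For part (1), I would pick a generic $a_0 \in X(\mathbb{U})$ with $\dpr(a_0/M) = r := \dpr(X)$, set $p := \tp(a_0/M)$, and (since $\CM$ is a model) extend $p$ to a global $M$-invariant type $p^* \in S(\mathbb{U})$ with $\dpr(p^*) = r$ via a coheir construction (coheirs over models are $M$-invariant and preserve dp-rank in NIP). Realising $a \models p^*|_{M b_1 \cdots b_n}$ in $\mathbb U$, I get $\dpr(a/M b_i) = r$ for each $i$ (by $M$-invariance plus monotonicity) and $a \in X$ (as $X \in p$). To prove $\dpr(ab_i/M) = r + \dpr(b_i/M)$, I would explicitly build an ict-pattern of depth $r + \dpr(b_i/M)$ in $\tp(ab_i/M)$: start with an ict-pattern of depth $\dpr(b_i/M)$ for $\tp(b_i/M)$ over $M$, and use the $M$-invariance of $p^*$ to append an ict-pattern of depth $r$ for $p^*$ realised over the first pattern's parameters, producing a single mutually $M$-indiscernible array. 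Sub-additivity gives the reverse inequality.

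The main technical obstacle is the additivity step in part (1): sub-additivity of dp-rank always runs in the wrong direction, giving only $\dpr(ab_i/M) \leq \dpr(a/M) + \dpr(b_i/M)$, and the genuine content is the reverse inequality, which requires actively assembling ict-patterns in the combined type. It is precisely the invariance of $p^*$ that makes this amalgamation possible: realisations of an invariant type are ``freely independent'' from any fixed tuple, in exactly the sense needed to combine the patterns without cross-interference between the $a$-coordinate and the $b_i$-coordinate. In part (2), the analogous obstacle is the existence of a rank-preserving witness at each Tarski--Vaught step, where NIP and locality of dp-rank enter essentially.
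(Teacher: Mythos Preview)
Your overall strategy for part (1) --- realize $a$ as a coheir over $M b_1 \cdots b_n$ and then assemble a joint ict-pattern --- is the right idea and is exactly what the paper does in the base case $\dpr(X)=1$. There, a single Morley sequence $J$ in the coheir is appended to the witnessing sequences $I_{i,0},\dots,I_{i,n_i}$ for $b_i$; mutual $M$-indiscernibility of $\{I_{i,j}\}\cup\{J\}$ is the standard fact about Morley sequences of invariant types, and $J$ fails to be indiscernible over $Ma_0$ simply because $a_0\in J$. The gap in your proposal is the passage to $\dpr(X)=r>1$: a single $M$-invariant type $p^*$ yields \emph{one} Morley sequence, hence one extra row in the array, not $r$. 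You assert that one can ``append an ict-pattern of depth $r$ for $p^*$ realised over the first pattern's parameters'', but it is not explained where these $r$ sequences --- mutually indiscernible with one another and with the $I$'s, and each non-indiscernible over $Ma$ --- are to come from; there is no general mechanism for splitting an invariant type of dp-rank $r$ into $r$ independent directions. The paper resolves this by induction on $r$, using the SW-uniformity hypothesis to replace $X$ by a product $X_1\times\cdots\times X_r$ of dp-rank-$1$ sets and then applying the base case coordinate by coordinate. Your plan never invokes the SW hypothesis, which is precisely what supplies the missing product structure.

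For part (2), your Tarski--Vaught chain is quite different from the paper's argument, which is a one-step automorphism trick: take mutually $A$-indiscernible sequences $(I_t)$ witnessing $\dpr(a/A)\ge\kappa$, use the standard extraction lemma to find sequences $(J_t)$ of the same type over $A$ but mutually indiscernible over some small model $M'\supseteq A$, then conjugate by an automorphism of $\mathbb U$ sending $(J_t)$ back to $(I_t)$; the image $N$ of $M'$ is the desired model. Your chain approach may be salvageable, but the crucial step --- that every nonempty $N_i$-definable set contains a witness $b$ with $\dpr(a/N_i b)=r$ --- is only sketched. To run a compactness argument here one seems to need uniform definability of the condition $\dpr(\theta(x,c,b))<r$ in the parameter $b$, which is available in SW-uniformities but not assumed in the generality of part (2). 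The paper's argument avoids this issue entirely.
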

\begin{proof}
 See Appendix \ref{A:itay-proof}.
\end{proof}

\begin{remark}
The above lemma, too, can be viewed as a partial substitute for exchange. Indeed, it is straightforward to see that if $\dpr$ is additive in $D$, i.e. for all tuples $a,b$ from $D$ and $A$ an arbitrary set of parameters
\[\dpr(a,b/A)=\dpr(a/Ab)+\dpr(b/A),\]
then Lemma \ref{L:itaywom}(1) is true over any parameter set (not only over a model). Additivity of dp-rank is equivalent, in the context of dp-minimal structures, to exchange (see e.g., \cite[Observation 3.1, Proposition 3.2]{Simdp}). 
\end{remark}

\section{Fields locally strongly internal to various dp-minimal fields}\label{S:def fields}
The aim of this section to classify all fields $\CF$ of finite dp-rank such that some infinite definable subset of $\CF$ can be definably injected into a field $\CK$ that is either an SW-uniformity  or strongly minimal.  We show that under various assumptions such fields are definably isomorphic to  finite extensions of $K$. We work in somewhat greater generality that will be useful in the sequel.

\begin{assumption}
Throughout the \underline{entire} of Section 4 we assume that $\CM$ is $|T|^+$-saturated.
\end{assumption}

\subsection{Strong internality, and the main technical lemma}

\begin{definition}
Let $\CM$ be any (multi-sorted) structure.
\begin{enumerate}
    \item An $A$-definable set $X$ is $\emph{strongly internal to (a definable) set $Y$ over $A$}$ if there exists an  $A$-definable
    injection $f:X\to Y^n$, for some  $n\in \mathbb N$. We may omit the reference to $A$ and just say \emph{$X$ is strongly internal to $Y$.}

    $X$ is called \emph{locally strongly internal to $Y$ over $A$} if there exists some $A$-definable infinite $X'\sub X$ which is strongly internal to $Y$ over $A$. Again, we may omit the reference to $A$.

    \item Following Johnson \cite{JohnDpFin1A},  we define:  For  $X$  locally strongly internal to $Y$,  a definable set $Z\sub X$ is \emph{$Y$-critical (in $X$)} if $Z$ is strongly internal to $Y$ of maximal dp-rank, i.e., $\dpr(Z)\ge \dpr(Z')$ for all $Z'\sub X$ that is strongly internal to $Y$.
\end{enumerate}
\end{definition}

We start our investigation by proving an important technical lemma. Roughly, the lemma states that if a field of finite dp-rank is locally strongly internal to an SW-uniform structure $D$ then there exists a subset of the field strongly internal to $D$ and sufficiently closed under the field operations.

\begin{lemma}\label{L:generic linearity for K}
Let $D$ be an SW-uniformity in some (possibly multi-sorted) structure $\CM$. Let $(\CF,+,\cdot)$ be an infinite field of finite dp-rank definable field in $\CM$ and assume that $\CF$ is locally strongly internal to $D$ over $A$.
Let $Y\sub \CF$ be a $D$-critical set (over $A$) and $I\sub Y$ an $A$-definable set with $\dpr(I)=1$.
Let $(b,c,d) \in I\times Y\times Y$ be such that $\dpr(b,c,d/A)=2\dpr(Y)+1$.

Then there is $B\supseteq A$ and infinite $B$-definable sets $J\subseteq I$  and $S=Y_1\times Y_2\subseteq Y^2$, with $(b,c,d)\in J\times S$ and $\dpr(b,c,d/B)=\dpr(J\times S)=2\dpr(Y)+1$, such that for every $(x,y,z)\in J\times S$, we have $(x-b)y+z\in Y$.
\end{lemma}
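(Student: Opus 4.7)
The proof plan hinges on embedding $I^2 \times Y^2$ into the ambient SW-uniformity $D^{4n}$ via the $A$-definable injection $Y \hookrightarrow D^n$ coming from strong internality of $Y$ to $D$; henceforth open neighborhoods, interiors, etc.\ refer to this uniformity. The central device is the auxiliary $A$-definable set
\[
\tilde E := \{(u, x, y, z) \in I^2 \times Y^2 : (x-u)y + z \in Y\},
\]
which contains the entire diagonal $\Delta = \{(u,u,y,z) : u \in I,\, y, z \in Y\}$ (because $(u-u)y+z = z \in Y$), so $(b, b, c, d) \in \tilde E$ with $\dpr(b,b,c,d/A) = \dpr(b,c,d/A) = 2\dpr(Y) + 1$. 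The reason for working with $\tilde E$ rather than the $Ab$-definable set $E_b := \{(x,y,z) : (x-b)y + z \in Y\}$ is that any parameter set $B$ over which a product box inside $E_b$ is defined would have to contain $b$, collapsing $\dpr(b,c,d/B) \le 2\dpr(Y)$ and contradicting the conclusion.

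The first step is to show $\dpr(\tilde E) = \dpr(I^2 \times Y^2) = 2\dpr(Y) + 2$---equivalently, $\dpr(E_b) = 2\dpr(Y) + 1$ for our generic $b$. This uses $D$-criticality of $Y$ together with the generic behaviour of $(x-u)y + z$ (roughly: for generic $(u,x,y,z)$ the fibers over $(u,x,y)$ of the condition $z \mapsto (x-u)y + z \in Y$ remain of full dp-rank $\dpr(Y)$); once in hand, Fact \ref{rel interior} yields $\dpr\bigl((I^2 \times Y^2) \setminus \inte_{I^2 \times Y^2}(\tilde E)\bigr) < 2\dpr(Y) + 2$. The principal obstacle is then to verify that $(b, b, c, d) \in \inte_{I^2 \times Y^2}(\tilde E)$, which is not automatic since $\dpr(b, b, c, d / A) = 2\dpr(Y) + 1$ matches the maximum possible dp-rank of the complement. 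I would resolve it by combining generic continuity (Fact \ref{F:cont}) of the $A$-definable map $\tilde E \to D^n$ given by $(u,x,y,z) \mapsto (x-u)y + z$ followed by the injection $Y \hookrightarrow D^n$, with the fact that $\Delta \subseteq \tilde E$: near a diagonal point the ``deviation'' $(x-u)y$ is forced small and $D$-criticality of $Y$ keeps the value inside $Y$, once appropriate parameters are adjoined (possibly via Lemma \ref{L:itaywom} to reduce to a small model and extract auxiliary tuples independent from $(b,c,d)$).

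Once $(b, b, c, d) \in \inte_{I^2 \times Y^2}(\tilde E)$ is established, pick an open $V \subseteq D^{4n}$ containing $(b,b,c,d)$ with $V \cap (I^2 \times Y^2) \subseteq \tilde E$. Proposition \ref{P:genos} applied to $V$, $(b,b,c,d)$, and $A$ produces $B \supseteq A$ and a $B$-definable open product $U = U_0 \times U_1 \times U_2 \times U_3 \subseteq V$ (grouping coordinates so each $U_i$ is a box in $D^n$) with $(b, b, c, d) \in U$ and $\dpr(b,b,c,d/B) = 2\dpr(Y) + 1$; crucially, $B$ need not contain $b$. Setting $J := U_1 \cap I$, $Y_1 := U_2 \cap Y$, $Y_2 := U_3 \cap Y$ gives $B$-definable sets with $(b,c,d) \in J \times Y_1 \times Y_2$ and $\dpr(J \times Y_1 \times Y_2) = 1 + 2\dpr(Y)$. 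Since $b \in U_0 \cap I$, for any $(x,y,z) \in J \times Y_1 \times Y_2$ the tuple $(b, x, y, z)$ lies in $U \cap (I^2 \times Y^2) \subseteq \tilde E$, which is exactly the required $(x-b)y + z \in Y$. Finally, $\dpr(b,c,d/B) = \dpr(b,b,c,d/B) = 2\dpr(Y) + 1$ by discarding the duplicated coordinate, completing the proof.
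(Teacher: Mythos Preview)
Your identification of the ``principal obstacle'' is accurate, but the proposed resolution does not work, and the gap is essential. The map $(u,x,y,z)\mapsto (x-u)y+z$ takes values in the field $\CF$, and $\CF$ carries no $D$-topology: only $Y$ does, via the given injection into $D^n$. Composing with that injection yields a map defined only on $\tilde E$, and continuity of this partial map at $(b,b,c,d)$ says nothing about points of $I^2\times Y^2$ lying outside $\tilde E$; it cannot be used to show that nearby points belong to $\tilde E$. Your phrase ``the deviation $(x-u)y$ is forced small'' tacitly assumes that the field operations on $\CF$ are continuous in some $D$-topology, but this is precisely what is unavailable, since $\CF$ is only \emph{locally} strongly internal to $D$. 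Invoking $D$-criticality does not help either: criticality is a rank-maximality condition and gives no topological closure property for $Y$. Your earlier step, that $\dpr(\tilde E)=2\dpr(Y)+2$, is likewise unjustified: for generic $(u,x,y)$ with $u\neq x$ the fiber $\{z\in Y:(x-u)y+z\in Y\}=Y\cap(Y-(x-u)y)$ need not have rank $\dpr(Y)$, and if it is generically smaller then $\tilde E$ has empty relative interior in $I^2\times Y^2$ and your whole strategy collapses. Note also that the relative-interior claim is strictly stronger than the lemma: it would give $(x-u)y+z\in Y$ for all $u$ near $b$, whereas only $u=b$ is needed.

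The paper's argument uses $D$-criticality in a completely different, concrete way. Set $e=bc-d$ and prove $b\notin\acl(c,e)$: otherwise one constructs a definable injection from a set of rank $\dpr(Y)+1$ into $I\times Y$ (which is strongly internal to $D$), contradicting criticality of $Y$. Hence the first-coordinate projection of $\{(x,z)\in I\times Y:xc-z=e\}$ is an infinite set $I'\ni b$ on which $x\mapsto (x-b)c+d$ lands in $Y$. Since $\dpr(b/ce)=1$, Fact~\ref{rel interior} and Corollary~\ref{C:strong GenOS} give $A_1\supseteq A$ and an $A_1$-definable $I''\subseteq I'$ with $b\in I''$ and $\dpr(b,c,d/A_1)=2\dpr(Y)+1$. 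Then $S=\{(y,z)\in Y^2:(\forall x\in I'')\,(x-b)y+z\in Y\}$ is $bA_1$-definable, contains $(c,d)$, and $\dpr(c,d/bA_1)=2\dpr(Y)$, so $(b,c,d)$ is in the relative interior of $I''\times S$ in $I\times Y^2$; a second application of Corollary~\ref{C:strong GenOS} produces the $B$-definable box $J\times Y_1\times Y_2$ with $\dpr(b,c,d/B)=2\dpr(Y)+1$. The idea you are missing is that criticality buys exactly a one-dimensional slice $I'$ via the $\acl$ argument, and one must proceed in two stages rather than attempting a full-dimensional box at once.
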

	\begin{proof}
	For simplicity of notation assume $A=\emptyset$. Denote  $\dpr(Y)=n$. Let $(b,c,d) \in I\times Y^2$ be as in the statement. Since dp-rank is preserved under definable bijections, we may replace $Y, I$ and, correspondingly, $(b,c,d)$ by their images under any $\0$-definable bijections. As we are considering $I\times Y$ with all of its induced structure, and  $I, Y$ are strongly internal to $D$,  witnessed by $\0$-definable functions, we may identify $I$ and $Y$ (and therefore also $I\times Y$) and $(b,c,d)$ with definable sets and tuples in $D$.
	
		Note that
	\[
	    2n+1=\dpr(b,c,d)\leq \dpr(b,d/c)+\dpr(c)\leq \dpr(b,d)+\dpr(c)\leq 2n+1
	\]
	so that $\dpr(b,d/c)=n+1$, $\dpr(c)=n$. Similarly, $\dpr(b,c)=n+1$ and $\dpr(b/c)=1$.
	
	For $(x,y, z) \in I\times Y\times Y$ consider the function $f_y(x,z)=xy-z$. Let $e=f_c(b,d)$.

\begin{claim}
 $b\notin \acl(c,e)$.
\end{claim}
\begin{claimproof}
Assume towards a contradiction that $b\in\acl(c,e)$. By Proposition \ref{P:genos}, we can find some $B$ such that $b\in\dcl(Bc,e)$ and $\dpr(b,c,d/B)=\dpr(b,c,d)$. Indeed, we can first find $U\ni b$ such that $b$ is the only realization of $\tp(b/c,e)$ in $U$ and then apply Proposition \ref{P:genos} (for the tuple $(b,c,d)$ and any open box whose projection onto the $b$-coordinate is $U$).

Let $\varphi(x,e,c)$ be the algebraic formula isolating $\tp(b/Bc,e)$, in particular, $\varphi(x,e,c)$ implies that $x\in I$. By the definition of $f_c$, $d\in \dcl(b,c,e)$, therefore  $\phi(x,e,c)\wedge f_c(x,y)=e$ is an algebraic formula isolating $\tp(b,d/Bc,e)$. In fact, $(b,d)$ is the only pair of elements realizing this type. Hence, $\exists!(x,y)(\phi(x,e,c)\wedge f_c(x,y)=e)$. By compactness, there is a formula $\psi(z,c)\in \tp(e/Bc)$ implying $\exists!(x,y)(\phi(x,z,c)\wedge f_c(x,y)=z)$. In other words, for $X:=\psi(\CF,c)$, there is a $Bc$-definable injective function $F$ from $X$ into $I\times Y$, sending $e$ to $(b,d)$.

The image of $F$ in $I\times Y$ is a $Bc$-definable set containing $(b,d)$ and since $\dpr(b,d/Bc)=n+1$, we have
 $\dpr(\mathrm{Im}(F))=n+1$ and consequently $\dpr(\dom(F))>n$.
As $\dpr(\mathrm{dom}(F))>\dpr(Y)$ and $\dom(F)\sub \CF$ this contradicts $Y$ being $D$-critical.
\end{claimproof}

We  conclude that $b\notin \acl(c,f_c(b,d))=\acl(c,e)$ and in particular the projection of $f_c^{-1}(e)\subseteq I\times Y$ on the first coordinate,  call it $I'$,  is infinite and contains $b$.
So $\dpr(b/ce)=\dpr(I')=1$, and by definition for every $x\in I'$ there is $z\in Y$ with $xc-z=bc-d=e$.  Since $\CF$ is a field, the map from $I'$ to $Y$, taking $x\in I'$ to $z=xc-e=(x-b)c+d$, is injective.

By Fact \ref{rel interior}, $b$ is in the relative interior of $I'$ in $I$. We may now apply Corollary \ref{C:strong GenOS} to the definable sets $I'\subseteq I$ and the elements $b$ and $(c,d)$, to get a set of parameters $A_1$ and an $A_1$-definable subset $I''\subseteq I'$ containing $b$, such that $\dpr(b,c,d/A_1)=1+2n$.

 Let
\[
S=\{(y,z)\in Y^2: (\forall x\in I'')((x-b)y+z\in Y)\}.
\]
Note that $(c,d)\in S$ and that $S$ is $bA_1$-definable. Because $(c,d)\in S$ and $\dpr(c,d/bA_1)=2n$ clearly $\dpr(S)=\dpr(Y^2)=2n$. Also, by Fact \ref{rel interior} again, $(c,d)$ is in the relative interior of $S$ in $Y^2$.

Now consider $I''\times S\subseteq I\times Y^2$. By the above, $\dpr(I''\times S)=\dpr(I\times Y^2)$, and $(b,c,d)$ is in the relative interior of $I''\times S$ in $I\times Y^2$ so Corollary \ref{C:strong GenOS} applies (with $x=(b,c,d)$). We can thus find a set $B\supseteq A_1$ and $B$-definable subsets $J\subseteq I''$, $Y_1\times Y_2\subseteq S$, with $(b,c,d)\in J\times Y_1\times Y_2$ such that $\dpr(b,c,d/B)=\dpr(b,c,d)=2n+1$.
\end{proof}

The next corollary will help us late on to show that our  field $\CF$ is not locally strongly internal to various sorts. It is an analogue of the fact that a linear o-minimal structure cannot support a definable field structure.

\begin{corollary}\label{C:not-internal-to locally affine}
Let $(G,\oplus)$ be an SW-uniformity (in $\CM$) supporting a definable abelian group structure.  Assume that $G$ satisfies the following: For every definable (partial) $f:G^m\to G$, $m\in \mathbb N$,    whose domain is open there exists an open definable $R\subseteq \dom(f)$, a definable group homomorphism  $L:G^n\to G$ and $e\in G$ such that for every $y\in R$, $f(y)=L(y)\oplus e$. Then no definable infinite field is locally strongly internal to $G$.
\end{corollary}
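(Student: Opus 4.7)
The plan is to assume for contradiction that $\CF$ is locally strongly internal to $G$, and to combine Lemma~\ref{L:generic linearity for K} with the hypothesised local affine-linearity of definable functions $G^m\to G$ to derive a contradiction. First I would fix a $G$-critical $Y\subseteq\CF$ of dp-rank $n$ witnessed by a definable injection $\rho\colon Y\to G^m$, pick $I\subseteq Y$ of dp-rank one via Remark~\ref{R:product of one-dim in SW}, and apply Lemma~\ref{L:generic linearity for K} with $D=G$ to obtain $b\in J\subseteq I$ and $Y_1\times Y_2\subseteq Y\times Y$ satisfying $(x-b)y+z\in Y$ for all $(x,y,z)\in J\times Y_1\times Y_2$. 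Using Fact~\ref{localhom} and Fact~\ref{rel interior} I would then shrink $Y,Y_1,Y_2,J$ (preserving dp-ranks and the closure property) so that, through definable homeomorphisms in the SW-topology, $\tilde J\subseteq G$ and $\tilde Y,\tilde Y_1,\tilde Y_2\subseteq G^n$ are open, and $\mu(x,y,z)=(x-b)y+z$ pulls back to a definable partial map $\mu^*$ with open domain in $G^{1+2n}$ taking values in $\tilde Y$.

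Applying the hypothesis to each coordinate of $\mu^*$ and intersecting the $n$ resulting open sets yields an open $R\subseteq\tilde J\times\tilde Y_1\times\tilde Y_2$, group homomorphisms $L_1\colon G\to G^n$ and $L_2,L_3\colon G^n\to G^n$, and $e\in G^n$ with
\[
\mu^*(\tilde x,\tilde y,\tilde z) = L_1(\tilde x)\oplus L_2(\tilde y)\oplus L_3(\tilde z)\oplus e \quad\text{on } R.
\]
The key observation is that this representation is additively separable in its three arguments, while $\CF$-multiplication is genuinely bilinear. I would next extract a local additivity statement for $\rho$ as follows. Fixing generic $x_0$ and varying $y_0$ on a $z$-slice of $R$, the difference $\rho((x_0-b)y_0+z)-\rho((x_0-b)y_0'+z)$ equals the constant $L_2(\tilde y_0-\tilde y_0')$ independently of $z$; after reindexing $w=(x_0-b)y_0'+z$, $\eta=(x_0-b)(y_0-y_0')$, this produces a definable, injective, locally additive map $\Psi\colon V\to G^n$ on an open neighborhood $V$ of $0$ in $\CF$ satisfying $\rho(w+\eta)-\rho(w)=\Psi(\eta)$ on an open subset of $Y$.

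Finally, slicing $R$ at generic $(x_0,z_0)$ with an open $y$-slice $U_y\subseteq\tilde Y_1$ gives $\rho((x_0-b)y+z_0)=L_2(\tilde y)+C_{x_0}$ on $U_y$. Computing the $\rho$-difference for two close $y_1,y_2\in U_y$ in two ways (via this $L_2$ identity and via $\Psi$) produces
\[
\Psi((x_0-b)s)=L_2(\Psi(s)) \quad \text{for small }s\in\CF.
\]
Since the same $L_2$ arises uniformly in $x_0$ ranging over an open subset of $\tilde J$, taking two distinct $x_0,x_0'$ with any fixed small $s\neq 0$ gives $\Psi((x_0-b)s)=\Psi((x_0'-b)s)$, and injectivity of $\Psi$ then forces $(x_0-b)s=(x_0'-b)s$, hence $x_0=x_0'$ — the sought contradiction.

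The main obstacle is the alignment of the various open sets across the above applications: I must arrange that the $y$-slice formula, the $z$-slice-derived local additivity, and the test points $y_1,y_2\in U_y$ all live on a common open region where $\Psi$'s formula is valid (in particular at the base point needed to convert $\tilde y_1-\tilde y_2$ into $\Psi(y_1-y_2)$), while still permitting $x_0$ to vary freely over an open subset of $\tilde J$. Once this alignment is secured the injectivity–versus–uniformity tension in the final step is clean.
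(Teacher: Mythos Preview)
Your approach is plausible and, with sufficient care about the domain alignments you flag, could be pushed through; but it is considerably more elaborate than necessary. The paper uses the same setup through the point where $f(x,y,z)=(x-b)y+z$ is identified, via the embedding into $G^{1+2n}$, with $L(x,y,z)\oplus e$ on some $J_0\times S_0$ with $\dpr(J_0)=1$ and $\dpr(S_0)=2n$, but then finishes in three lines by a rank collision rather than by constructing any auxiliary map $\Psi$.

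Concretely: for fixed $x_0\in J_0$ the map $(y,z)\mapsto L(x_0,y,z)\oplus e=f(x_0,y,z)$ sends $S_0$ (of dp-rank $2n$) into $Y$ (of dp-rank $n$), so it cannot be injective; pick $(y_1,z_1)\neq(y_2,z_2)$ in $S_0$ with $L(x_0,y_1,z_1)=L(x_0,y_2,z_2)$. Additivity of $L$ then forces $L(x,y_1,z_1)=L(x,y_2,z_2)$ for \emph{every} $x\in G$, hence $f(x,y_1,z_1)=f(x,y_2,z_2)$ for all $x\in J_0$. But in $\CF$ the equation $(x-b)y_1+z_1=(x-b)y_2+z_2$ with $(y_1,z_1)\neq(y_2,z_2)$ has at most one solution $x$, contradicting $|J_0|=\infty$.

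This sidesteps entirely the well-definedness of $\Psi$ (which in your sketch requires showing that $\rho(w+\eta)-\rho(w)$ is independent of $w$, a nontrivial step you do not justify) and the alignment of $y$-slices as $x_0$ varies. Both arguments ultimately exploit the tension between the separated-additive form $L_1(x)\oplus L_2(y)\oplus L_3(z)$ and the genuine bilinearity of $(x,y)\mapsto (x-b)y$; your route does so by building a local additive chart and showing it simultaneously conjugates all scalings $s\mapsto(x_0-b)s$ to the single map $L_2$, while the paper's route exposes the same incompatibility more cheaply: additivity of $L$ makes one collision in the $(y,z)$-variables propagate uniformly across all $x$, which the field structure forbids. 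The rank-drop trick ($2n$-dimensional source into an $n$-dimensional target) is the simplification you are missing.
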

\begin{proof}
Assume  towards contradiction that an infinite definable field $\CF$ is locally strongly internal to $G$ and let $Y\sub \CF$ be $\CG$-critical in $\CF$ with $\dpr(Y)=n$.
 By Remark \ref{R:product of one-dim in SW}, there is a definable dp-minimal $I\sub \CF$. Assume that $Y, I$ and the injection of $Y$  into $G^k$ for some $k$,  are all defined over $A$.

By Lemma \ref{L:generic linearity for K} (applied after fixing an appropriate 
tuple  $(b,c,d)\in I\times Y^2$), there are  definable  $J\subseteq I$ and $S\subseteq Y^2$ such that $\dpr(J\times S)=1+2n$ and $f(x,y,z)=(x-b)y+z$ maps $J\times S$ into $Y$. By strong internality combined with Fact \ref{localhom} we can, after possibly replacing $I$ and $Y$ with  subsets of the same dp-rank (defined, possibly, over additional parameters)  identify $I$ with a subset of $G$ and $Y$ with a subset of $G^n$. Thus,  we may assume that $f:G^{1+2n}\to G^n$.

By our assumption, we may inductively find an additive (with respect to $\oplus$) definable function $L:G^{1+2n}\to G^n$ and $e\in G^n$ such that $f(x,y,z)$ and $L(x,y,z)\oplus e$ agree on some $X\subseteq J\times S$ with $\dpr(X)=2n+1$. Fix some $J_0\times S_0\subseteq X$ with $\dpr(J_0)=1$ and $\dpr(S_0)=2n$.

The map $(y,z)\mapsto f(0,y,z)$ maps $S_0$ into $Y$, where here $0=0_G$. Since $\dpr(S_0)=2n>n=\dpr(Y)$ the map cannot be injective and hence there are $(y_1,z_1)\neq (y_2,z_2)$ for which $f(0,y_1,z_1)=f(0,y_2,z_2)$.

It follows that $L(0,y_1,z_1)=L(0,y_2,z_2)$. But then for every $x\in G$, \[L(x,y_1,z_1)=L(0,y_1,z_1)\oplus L(x,0,0)=L(x,y_2,z_2)\] Thus also $f(x,y_1,z_1)= f(x,y_2,z_2)$ for any $x\in J_0$.

On the other hand, by the definition of $f$ in the field $\CF$, for every $(y_1,z_1) \neq (y_2,z_2)$ there is at most one $x$ such that $f(x,y_1,z_1)=f(x,y_2,z_2)$. Contradicting the fact that $J_0$ is infinite.
\end{proof}

\subsection{The subgroup of infinitesimals}\label{S:infi}

\begin{assumption}
In addition to our $|T|^+$-saturation assumption, throughout this section $\CM$ is any first order (multi-sorted) structure and  $D$ an $\CM$-definable SW-uniformity. For ease of presentation, assume that $\CM$ has one distinguished sort whose universe is $M$.
\end{assumption}

\begin{definition}
For any $Z\subseteq M^n$ and definable injective  $g:Z\to D^m$, let
$\tau_{Z,g}$ be the topology on $Z$ given by $\{g^{-1}(U): U\subseteq D^m \text{ is $M$-definable open}\}$
\end{definition}

We observe that because $D^n$ has a definable basis for its topology so does $\tau_{Z,g}$ (for any $Z$ and $g$). Moreover, if $Z_1,Z_2\subseteq M^n$ and $g_i:Z_i\to D^{m_i}$ are definable injections (for $i=1,2$) then  the topology $\tau_{Z_1\times Z_2,g_1\times g_2}$ is the product topology $\tau_{Z_1,g_1}\times \tau_{Z_2,g_2}$.

It follows immediately from Fact \ref{F:cont} and the above observation that:
\begin{lemma}\label{F:cont-points}
    If $g_i: Z_i\to D^{m_i}$ are definable injections ($i=1,2$) and  $f: Z_1^k\to Z_2^l$ is a definable (partial)  function then the set of continuity points $C_f$ of $f$ with respect to $\tau_{Z_i,g_i}$ is definable and  $\dpr(\dom(f)\setminus C_f)<\dpr(\dom(f))$.
\end{lemma}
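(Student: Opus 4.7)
The plan is to reduce the statement to a direct application of Fact \ref{F:cont} by transporting $f$ via the injections $g_1, g_2$ to a definable (partial) function between powers of $D$, and then pulling back the conclusion. The topologies $\tau_{Z_i,g_i}$ were engineered precisely so that this transport is a homeomorphism onto the image.

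More concretely, I would first assemble the ``tensor power'' injections $h_1 := g_1^{\times k} : Z_1^k \to D^{km_1}$ and $h_2 := g_2^{\times l} : Z_2^l \to D^{lm_2}$. These are definable injections and, by the observation made immediately before the lemma, $\tau_{Z_1^k, h_1}$ coincides with the $k$-fold product of $\tau_{Z_1,g_1}$, and similarly for $\tau_{Z_2^l, h_2}$. Next, I would consider the definable partial function $\tilde f := h_2 \circ f \circ h_1^{-1} : h_1(\dom(f)) \to D^{lm_2}$. Unwinding the definitions of the topologies, a point $z \in \dom(f)$ is a continuity point of $f$ (with respect to $\tau_{Z_1,g_1}^k$ on the source and $\tau_{Z_2,g_2}^l$ on the target) if and only if $h_1(z)$ is a continuity point of $\tilde f$ (with respect to the ambient $D$-topologies on source and target). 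Hence $C_f = h_1^{-1}(C_{\tilde f})$.

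Now I would apply Fact \ref{F:cont} to $\tilde f$, which is a definable partial function whose source and target lie in powers of the SW-uniformity $D$. It yields that $C_{\tilde f}$ is definable and $\dpr(\dom(\tilde f) \setminus C_{\tilde f}) < \dpr(\dom(\tilde f))$. Since $h_1$ is a definable injection, $C_f = h_1^{-1}(C_{\tilde f})$ is definable; and since $h_1$ is a definable bijection of $\dom(f)$ onto $\dom(\tilde f)$, dp-rank is preserved under this correspondence, so
\[
\dpr(\dom(f) \setminus C_f) = \dpr(\dom(\tilde f) \setminus C_{\tilde f}) < \dpr(\dom(\tilde f)) = \dpr(\dom(f)),
\]
as required.

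There is essentially no obstacle once the correct identification of topologies is in place: the only minor point requiring care is to confirm that Fact \ref{F:cont} applies uniformly to partial functions with source and target in (different) powers of $D$, which follows either directly from its proof in \cite{SimWal} or by a further coordinatewise reduction to the case $\tilde f : D^N \to D$. After that, the argument is a formal transport along $h_1, h_2$.
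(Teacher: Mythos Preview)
Your proposal is correct and is essentially the same as the paper's approach: the paper simply states that the lemma follows immediately from Fact~\ref{F:cont} together with the observation that the topologies $\tau_{Z_1^k,g_1^{\times k}}$ and $\tau_{Z_2^l,g_2^{\times l}}$ are the product topologies. You have just made explicit the transport along $h_1,h_2$ that the paper leaves to the reader.
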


We thus have:
\begin{lemma}\label{unique topology}
Let  $Z\sub M^n$ be definable and  $g:Z\to D^m$, $h:Z\to D^k$ two $A$-definable injections. Then,
$\tau_{Z,g}$ and $\tau_{Z,h}$ agree at every $z\in Z$ with $\dpr(z/A)=\dpr(Z)$.
Namely, there is a common basis for the $\tau_{Z,g}$-neighbourhoods and the $\tau_{Z,h}$-neighbourhoods for such a $z\in Z$.
\end{lemma}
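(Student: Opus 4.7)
The plan is to reduce the statement to continuity of the transition maps $h\circ g^{-1}\colon g(Z)\to D^k$ and $g\circ h^{-1}\colon h(Z)\to D^m$ at the points $g(z)$ and $h(z)$ respectively, and then use the $A$-definability of the set of continuity points together with the genericity assumption $\dpr(z/A)=\dpr(Z)$.

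First I would set up the two transition maps. Since $g$ and $h$ are $A$-definable injections, both $F:=h\circ g^{-1}$ and $G:=g\circ h^{-1}$ are $A$-definable (partial) functions between subsets of powers of $D$. By Lemma \ref{F:cont-points}, the set $C_F\subseteq g(Z)$ of $\tau$-continuity points of $F$ (with respect to the product topology on $D^m$ restricted to $g(Z)$ and the product topology on $D^k$) is $A$-definable and satisfies $\dpr(g(Z)\setminus C_F)<\dpr(g(Z))$, and similarly for $C_G\subseteq h(Z)$. Since $g$ and $h$ are $A$-definable bijections, $\dpr(g(Z))=\dpr(h(Z))=\dpr(Z)$.

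The next step is to observe that $g(z)\in C_F$ and $h(z)\in C_G$. Indeed, because $g$ is an $A$-definable bijection, $\dpr(g(z)/A)=\dpr(z/A)=\dpr(Z)$. If $g(z)$ belonged to the $A$-definable set $g(Z)\setminus C_F$, then $\dpr(g(z)/A)\le \dpr(g(Z)\setminus C_F)<\dpr(Z)$, a contradiction; hence $g(z)\in C_F$. The same argument applies to $h(z)\in C_G$.

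Finally, I would translate continuity into a statement about neighbourhood bases. Continuity of $F$ at $g(z)$ says that for every $\tau_{Z,h}$-basic neighbourhood $h^{-1}(V)$ of $z$ (with $V\subseteq D^k$ open, $V\ni h(z)$) there is an open $U\subseteq D^m$ with $U\ni g(z)$ and $F(U\cap g(Z))\subseteq V$; unwinding, $g^{-1}(U)\subseteq h^{-1}(V)$, so every $\tau_{Z,h}$-neighbourhood of $z$ contains a $\tau_{Z,g}$-neighbourhood of $z$. Continuity of $G$ at $h(z)$ gives the reverse inclusion by symmetry. Hence the neighbourhood filters of $z$ in $\tau_{Z,g}$ and $\tau_{Z,h}$ coincide, which is exactly the claim. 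I do not expect a genuine obstacle here: once Lemma \ref{F:cont-points} is granted, the only subtle point is the observation that the exceptional sets are $A$-definable, so the generic dp-rank hypothesis on $z$ does rule out $g(z)$ and $h(z)$ landing in them.
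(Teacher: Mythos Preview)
Your proposal is correct and is essentially the same argument as the paper's one-line proof, which reads ``Apply Lemma~\ref{F:cont-points} to $\id:Z\to Z$.'' You have simply unwound this: the continuity of $\id:(Z,\tau_{Z,g})\to(Z,\tau_{Z,h})$ at $z$ is by definition the continuity of $F=h\circ g^{-1}$ at $g(z)$, so the two formulations coincide. Your explicit observation that the exceptional sets are $A$-definable (since $g,h$ and the uniformity are) is the one point the paper leaves implicit.
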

\begin{proof}
Apply Lemma \ref{F:cont-points} to $\id:Z\to Z$.
\end{proof}

\begin{definition}
For $Z\sub M^n$ definable, $g:Z\to D^m$ a definable injection, and $d\in Z$,
 let $\nu_{Z,g}(d)$ be the partial global type given by all definable $\tau_{Z,g}$-open sets containing $d$.
We call it the {\em infinitesimal neighborhood of $d$ with respect to $\tau_{Z,g}$}.
\end{definition}

It follows from the above discussion that:
\begin{remark}\label{R:product of nu}
If $g_i:Z_i\to D^{m_i}$ (for $i=1,2$) are $A$-definable injections  $(d_1,d_2) \in Z_1\times Z_2$ then $\nu_{Z_1,g_1}(d_1)\times \nu_{Z_2,g_2}(d_2)=\nu_{Z_1\times Z_2,g_1\times g_2}(d_1,d_2)$.
\end{remark}

By Lemma \ref{unique topology}, we have:
\begin{corollary}\label{C:not depend on injection}
If $Z$ is strongly internal to $D$ over $A$ and $d\in Z$ is such that $\dpr(d/A)=\dpr(Z)$ then $\nu_{Z,g}(d)$ does not depend on the choice of the definable injection $g$ (over $A$).
 \end{corollary}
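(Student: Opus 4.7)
The plan is to reduce the corollary directly to Lemma \ref{unique topology}, which already does the heavy lifting. The statement of Lemma \ref{unique topology} says that for any two $A$-definable injections $g:Z\to D^m$ and $h:Z\to D^k$, the topologies $\tau_{Z,g}$ and $\tau_{Z,h}$ share a common basis of neighborhoods at every point $d\in Z$ of maximal dp-rank over $A$. The point of Corollary \ref{C:not depend on injection} is just to translate this topological statement into the language of infinitesimal types.

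Concretely, I would fix $g$ and $h$ as above and a point $d\in Z$ with $\dpr(d/A)=\dpr(Z)$, and show the equivalence of the partial types $\nu_{Z,g}(d)$ and $\nu_{Z,h}(d)$. Each is generated (as a partial type) by the collection of definable open neighborhoods of $d$ in the respective topology. By Lemma \ref{unique topology}, these two families of definable open neighborhoods are mutually cofinal at $d$: every $\tau_{Z,g}$-open neighborhood of $d$ contains a $\tau_{Z,h}$-open neighborhood of $d$, and vice versa. Hence every formula appearing in $\nu_{Z,g}(d)$ is implied (in the ambient theory) by some formula in $\nu_{Z,h}(d)$, and symmetrically, so the two partial types have the same set of realizations in the monster. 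This is precisely the statement that $\nu_{Z,g}(d)$ does not depend on the choice of $g$.

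The only point that requires a brief remark is that Lemma \ref{unique topology} itself is applied after observing (as already noted in the excerpt) that $\tau_{Z,g}$ always admits a definable basis, so the partial type $\nu_{Z,g}(d)$ is actually well-defined as a partial type over the parameters defining $Z$, $g$, and $d$. There is no real obstacle here; the corollary is essentially a reformulation of Lemma \ref{unique topology}, and the short proof is just the observation that ``same basis of definable neighborhoods at $d$'' is the same thing as ``same infinitesimal type at $d$''.
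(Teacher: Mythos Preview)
Your proposal is correct and matches the paper's approach exactly: the paper simply invokes Lemma~\ref{unique topology}, and your argument is precisely the unpacking of why agreement of neighborhood bases at $d$ yields equality of the partial types $\nu_{Z,g}(d)$ and $\nu_{Z,h}(d)$.
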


\begin{notation}
    If $Z$ is strongly internal to $D$ over $A$ and $d\in Z$ with $\dpr(d/A)=\dpr(Z)$, we let $\nu_Z(d):=\nu_{Z,g}(d)$ for some (equivalently, any) $A$-definable injection $g:Z\to D^m$ (some $m$). By Corollary \ref{C:not depend on injection} this is well defined.
\end{notation}

We observe also that $\nu_Z(d)$ does not depend on $Z$ (but only on its germ at $d$) in the following sense:
\begin{lemma}\label{internal-locally the same}
Assume that $Z\sub M^n$ is strongly internal to $D$ over $A$, witnessed by $g$, and $Z_1\sub Z$ is $A$-definable with $\dpr(Z_1)=\dpr(Z)$.
If $d\in Z_1$ is such that $\dpr(d/A)=\dpr(Z)$ then $\nu_{Z_1}(d)=\nu_{Z}(d)$.
\end{lemma}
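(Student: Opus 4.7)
My plan is to transfer the problem from $Z$ and $Z_1$ (which live in $M^n$) to their images in $D^m$ via the injection $g$, where the tools for SW-uniformities apply directly, and then pull the conclusion back through $g^{-1}$. As a first reduction, I will invoke Corollary \ref{C:not depend on injection} on $Z_1$ with the $A$-definable injection $g|_{Z_1}:Z_1\to D^m$: this allows me to compute both $\nu_Z(d)$ and $\nu_{Z_1}(d)$ using the same map $g$. Their basic formulas then have the forms $g^{-1}(U)$ and $g^{-1}(U)\cap Z_1$ respectively, as $U$ ranges over definable open sets in $D^m$ containing $g(d)$.

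The inclusion $\nu_{Z_1}(d)\vdash\nu_Z(d)$ is automatic from $g^{-1}(U)\cap Z_1\subseteq g^{-1}(U)$. For the converse, I will reduce to exhibiting a single formula $\phi(x)\in\nu_Z(d)$ with $\phi(x)\vdash x\in Z_1$: once such a $\phi$ is found, conjoining it with any $g^{-1}(U)\in\nu_Z(d)$ produces a formula in $\nu_Z(d)$ that implies $g^{-1}(U)\cap Z_1\in\nu_{Z_1}(d)$, which suffices.

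To produce such a $\phi$, I apply Lemma \ref{L:locally the same} (essentially a restatement of Fact \ref{rel interior}) to the $A$-definable subsets $g(Z_1)\subseteq g(Z)\subseteq D^m$. The hypotheses are met: since $g$ is injective, $\dpr(g(Z_1))=\dpr(Z_1)=\dpr(Z)=\dpr(g(Z))$; and $\dpr(g(d)/A)=\dpr(d/A)$ equals this common dp-rank by assumption on $d$. The lemma then yields an open $U_0\ni g(d)$ in $D^m$ with $U_0\cap g(Z)\subseteq g(Z_1)$, and injectivity of $g$ gives $g^{-1}(U_0)\subseteq Z_1$, so $\phi(x):=(x\in g^{-1}(U_0))$ is the desired formula.

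The main (and essentially only) technical point to flag is that Lemma \ref{L:locally the same} produces an open $U_0$ which is not a priori \emph{definable}, whereas members of $\nu_Z(d)$ must be definable. This is easily remedied: the uniformity on $D$ supplies a definable basis for the topology, and hence so does its $m$-fold product on $D^m$, so I can shrink $U_0$ to a definable basic open neighborhood of $g(d)$ while preserving the containment in $g(Z_1)$. Once this minor subtlety is handled the argument closes cleanly.
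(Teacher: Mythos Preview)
Your proof is correct and follows essentially the same approach as the paper. The paper's one-line argument (``By Lemma \ref{L:locally the same}, the topologies $\tau_{Z,g}$ and $\tau_{Z_1,g}$ agree on a neighborhood of $d$, thus $\nu_Z(d)=\nu_{Z_1}(d)$'') is exactly what you unpack: push forward via $g$, apply Lemma \ref{L:locally the same} to $g(Z_1)\subseteq g(Z)$ at $g(d)$, and pull back the resulting relative-interior neighborhood to see the two infinitesimal types coincide.
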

\begin{proof}
By Lemma \ref{L:locally the same}, the topologies $\tau_{Z,g}$ and $\tau_{Z_1,g}$ agree on a neighborhood of $d$, thus $\nu_Z(d)=\nu_{Z_1}(d)$.
\end{proof}

Finally, we note;
\begin{lemma} \label{function}
Let $Y_1,Y_2$ be definable sets  strongly internal to $D$ over $A$. If $f:Y_1\to Y_2$ is an $A$-definable partial function, $\dpr(\dom(f))=\dpr(Y_1)$,  and $a\in \dom(f)$ with $\dpr(a/A)=\dpr(\dom(f))$, then $\nu_1(a)\vdash f^{-1}(\nu_2(f(a))):=\{f^{-1}(U): U\in \nu_2(f(a))\}$. I.e., if $b\models \nu_1(a)$ then $f(b)\models \nu_2(f(a))$

    \end{lemma}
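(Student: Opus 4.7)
The plan is to reduce the statement to generic continuity of $f$ at $a$, transferred from the subspace topology on $\dom(f)$ to the ambient topology on $Y_1$ via the fact that $\dom(f)$ is, locally at $a$, indistinguishable from $Y_1$.

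First, choose $A$-definable injections $g_1\colon Y_1\to D^{m_1}$ and $g_2\colon Y_2\to D^{m_2}$ witnessing strong internality, and equip the $Y_i$ with the pullback topologies $\tau_{Y_i,g_i}$. By Corollary~\ref{C:not depend on injection} the types $\nu_i$ are independent of this choice. Applying Lemma~\ref{F:cont-points} to the partial function $f$, the set $C_f$ of continuity points of $f$ (with respect to the pullback topologies) is $A$-definable and satisfies $\dpr(\dom(f)\setminus C_f)<\dpr(\dom(f))$. Since $\dpr(a/A)=\dpr(\dom(f))$, the point $a$ cannot lie in the $A$-definable set $\dom(f)\setminus C_f$, so $a\in C_f$.

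Next, by Lemma~\ref{internal-locally the same}, the equality $\dpr(\dom(f))=\dpr(Y_1)$ together with $\dpr(a/A)=\dpr(Y_1)$ yields $\nu_{\dom(f)}(a)=\nu_{Y_1}(a)=\nu_1(a)$; in particular there is an $A$-definable $\tau_{Y_1,g_1}$-open neighborhood $W$ of $a$ with $W\subseteq \dom(f)$. Now fix any definable open $V\subseteq Y_2$ with $f(a)\in V$. Continuity of $f$ at $a$ in the subspace topology on $\dom(f)$ gives a definable $\tau_{Y_1,g_1}$-open $U'\subseteq Y_1$ containing $a$ such that $f(U'\cap \dom(f))\subseteq V$. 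Intersecting with $W$, one has $U'\cap W\subseteq \dom(f)$, hence $U'\cap W = (U'\cap \dom(f))\cap W \subseteq f^{-1}(V)$. Thus $f^{-1}(V)$ contains a definable $\tau_{Y_1,g_1}$-open neighborhood of $a$, i.e., $f^{-1}(V)\in \nu_1(a)$. As $V$ was arbitrary, this proves $\nu_1(a)\vdash f^{-1}(\nu_2(f(a)))$.

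There is no serious obstacle here: the only point that requires care is that continuity produced by Lemma~\ref{F:cont-points} is with respect to the subspace topology on $\dom(f)$, while the conclusion is phrased in terms of the topology on $Y_1$. This gap is bridged precisely by the genericity of $a$ and Lemma~\ref{internal-locally the same}, which ensures that $\dom(f)$ and $Y_1$ have the same germ at $a$.
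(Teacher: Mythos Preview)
Your proof is correct and follows the same approach as the paper: both argue via generic continuity of $f$ at $a$ (Lemma~\ref{F:cont-points}). The paper's proof is a one-liner that glosses over exactly the point you take care to address, namely that continuity is with respect to the subspace topology on $\dom(f)$ while $\nu_1(a)$ refers to the topology on $Y_1$; your use of Lemma~\ref{internal-locally the same} (ultimately Lemma~\ref{L:locally the same}) to bridge this gap is appropriate.

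One small imprecision: you assert that the open neighborhood $W\subseteq \dom(f)$ is $A$-definable, but Lemma~\ref{L:locally the same} only produces some definable open $U$ in $D^{m_1}$ with $U\cap g_1(Y_1)=U\cap g_1(\dom(f))$, not necessarily one over $A$. This does not matter for your argument, since $\nu_1(a)$ is the partial type of \emph{all} definable open neighborhoods of $a$, not just the $A$-definable ones; so $W:=g_1^{-1}(U)$ being merely definable suffices.
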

    \begin{proof}
    By Lemma \ref{F:cont-points}, $f$ is continuous at $a$ with respect to $\tau_{Y_1,g}$ and $\tau_{Y_2,h}$ (for any $A$-definable $g,h$ witnessing the strong internality of $Y_1$, $Y_2$, respectively). The conclusion follows.
    \end{proof}

We now prove a general statement.
\begin{lemma} \label{addition closure}
Let $(H,\cdot)$ be a definable group in  $\CM$ and
assume that $Y_1,Y_2,Y_3\sub H$ are  $\CM$-definable sets, strongly internal to $D$, all definable over some model $\CN\prec \CM$, with $\dpr(Y_1)=\dpr(Y_2)=\dpr(Y_3)=k$, and assume that $Y_1\cdot Y_2\sub Y_3$.
Then
\begin{enumerate}
\item  For every $c\in Y_1$ and $d\in Y_2$ such that $\dpr(c,d/N)=2k$, we have $c^{-1}\cdot \nu_{Y_1}(c)=\nu_{Y_2}(d)\cdot d^{-1}$, and for every $d_1\in Y_2$ with $\dpr(d_1/A)=k$ we have  $\nu_{Y_2}(d)\cdot d^{-1}=\nu_{Y_2}(d_1)\cdot d_1^{-1}$.
    \item  For every $d\in Y_2$ such that $\dpr(d/N)=k$, the partial type $\nu_{Y_2}(d)\cdot d^{-1} $ is a type definable subgroup of $H$.
    \end{enumerate}
\end{lemma}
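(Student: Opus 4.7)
My plan is to exploit the hypothesis $Y_1\cdot Y_2\subseteq Y_3$ together with the fact that multiplication in $H$ is continuous (in the topologies inherited from $D$) at sufficiently generic points. The symmetry built into the SW-uniformity on $D$ (pulled back to $Y_1$ via any strong internality witness) will be the key ingredient for closure under inversion in (2), playing the role that commutativity would play if $H$ were abelian.

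For the first identity in (1), I would fix $(c,d)$ with $\dpr(c,d/N)=2k$ and consider the $Nc$-definable injection $y\mapsto cy$ from $Y_2$ into $Y_3$, whose image $c\cdot Y_2$ has dp-rank $k$, and the analogous $Nd$-definable injection $x\mapsto xd$ from $Y_1$, whose image $Y_1\cdot d$ also has dp-rank $k$. Since $cd$ is generic in both $c\cdot Y_2$ and $Y_1\cdot d$, Lemma \ref{internal-locally the same} identifies $\nu_{Y_3}(cd)$ with both $\nu_{c\cdot Y_2}(cd)$ and $\nu_{Y_1\cdot d}(cd)$. Applying Lemma \ref{function} to these injections and to their inverses at the generic points $d$ and $c$ shows that they induce bijections on infinitesimals, giving $c\cdot\nu_{Y_2}(d)=\nu_{Y_3}(cd)=\nu_{Y_1}(c)\cdot d$; rearranging yields $c^{-1}\cdot\nu_{Y_1}(c)=\nu_{Y_2}(d)\cdot d^{-1}$. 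For the second clause of (1), given $d_1\in Y_2$ with $\dpr(d_1/N)=k$, I would use Lemma \ref{L:itaywom}(1) (applied to the tuples $d,d_1$) to pick $c\in Y_1$ with $\dpr(c/N)=k$ and $\dpr(c,d/N)=\dpr(c,d_1/N)=2k$, and then apply the first identity to each of the pairs $(c,d)$ and $(c,d_1)$. The symmetric argument shows also that $c^{-1}\cdot\nu_{Y_1}(c)$ is independent of the choice of generic $c\in Y_1$.

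For (2), set $\mu:=\nu_{Y_2}(d)\cdot d^{-1}$; by (1) this equals $c^{-1}\cdot\nu_{Y_1}(c)$ and is independent of the generic representative chosen on either side. Given $a,b\models\mu$, I would pick, via Lemma \ref{L:itaywom}(1), an element $c_1\in Y_1$ generic over $M\cup\{a,b\}$. Then $a\models\mu=c_1^{-1}\nu_{Y_1}(c_1)$ gives $c_1a\models\nu_{Y_1}(c_1)$; in particular $c_1a\in Y_1$, and a short sub-additivity calculation using the bijection $x\mapsto c_1x$ yields $\dpr(c_1a/N)=k$. By the independence clause, $\mu=(c_1a)^{-1}\nu_{Y_1}(c_1a)$, and $b\models\mu$ therefore gives $c_1ab\models\nu_{Y_1}(c_1a)$. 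The crucial observation is that every definable open around $c_1$ in $Y_1$ already contains $c_1a$ (because $c_1a\models\nu_{Y_1}(c_1)$), and is thus also an open around $c_1a$; hence $\nu_{Y_1}(c_1a)\vdash\nu_{Y_1}(c_1)$, so $c_1ab\models\nu_{Y_1}(c_1)$ and $ab\models\mu$. For inversion, I would invoke the symmetry of the entourages of the SW-uniformity on $D$: pulled back through the strong internality witness on $Y_1$, it yields that $c_1a\models\nu_{Y_1}(c_1)$ if and only if $c_1\models\nu_{Y_1}(c_1a)$; hence $c_1=(c_1a)\cdot a^{-1}\models\nu_{Y_1}(c_1a)$, which is $a^{-1}\models(c_1a)^{-1}\nu_{Y_1}(c_1a)=\mu$.

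The hard part will be the bookkeeping of parameters in (2): one needs the witness $c_1$ to be generic enough over $N,a,b$ for all the rank equalities to hold and for the independence clause of (1) to apply with $ca$ in place of $d$, and one must carefully distinguish between the topology and the full uniformity on $D$, since only the latter provides the symmetry that drives the inversion step.
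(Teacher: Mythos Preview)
Your treatment of (1) is essentially the paper's argument. You use the one-variable translations $y\mapsto cy$ and $x\mapsto xd$ together with Lemma~\ref{internal-locally the same} to identify $\nu_{c\cdot Y_2}(cd)=\nu_{Y_3}(cd)=\nu_{Y_1\cdot d}(cd)$; the paper instead applies Lemma~\ref{function} directly to the two-variable map $F(y_1,y_2)=y_1y_2$ and its partial inverse $R(x,z)=x^{-1}z$. These are equivalent packagings of the same continuity argument, and the second clause of (1) is handled identically in both via Lemma~\ref{L:itaywom}(1).

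For (2) the two arguments diverge. The paper does not introduce any fresh generic and does not separate product from inverse. Instead, in the course of proving (1) it records a slightly stronger fact: for \emph{every} $b\in\nu_{Y_2}(d)$ (not just $b=d$) the map $x\mapsto xb$ is a bijection $\nu_{Y_1}(c)\to\nu_{Y_3}(cd)$. Given $a,b\in\nu_{Y_2}(d)$ one then has $ca\in\nu_{Y_3}(cd)$, so there is $c_1\in\nu_{Y_1}(c)$ with $c_1b=ca$, whence $ab^{-1}=c^{-1}c_1\in c^{-1}\nu_{Y_1}(c)=\mu$. This single line gives closure under $gh^{-1}$, and everything takes place over the original pair $(c,d)$ fixed in (1); no appeal to the independence clause at a new point is needed.

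Your route---fresh generic $c_1$, the inclusion $\nu_{Y_1}(c_1a)\vdash\nu_{Y_1}(c_1)$, and symmetry of the entourages for inversion---is correct, and is closer in spirit to Pillay's o-minimal argument. The cost is exactly the bookkeeping you flag: to invoke the independence clause at $c_1a$ you must know it applies to elements of the extension, not just of $M$. This does go through (your sub-additivity computation gives $\dpr(c_1a/M)=k$, and then Lemma~\ref{L:itaywom}(1) produces a suitable $d$ for the pair $(c_1a,d)$), but it is extra work that the paper's ``triangle'' trick sidesteps entirely. What the paper's approach buys is brevity and a single model level; what yours buys is that it uses only the bare statement of (1), not the auxiliary bijections hidden in its proof.
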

\begin{proof}
It is convenient to work in an $|\CM|^+$-saturated elementary extension $\widehat \CM$ of $\CM$ and with the realizations in $\widehat \CM$ of the infinitesimal types. We start with some general observations.

Consider the function $F:Y_1\times Y_2\to Y_3$, $F(y_1,y_2)=y_1\cdot y_2$.
Applying Lemma \ref{function} to $F$ (and using Remark \ref{R:product of nu}), we see that $F(\nu_{Y_1}(c)\times \nu_{Y_2}(d))\sub \nu_{Y_3}(c\cdot d)$. Because the dp-rank is preserved under the definable bijection $(x,y)\mapsto (x,xy)$, then $\dpr(c,c\cdot d)=\dpr(c,d)=2k=\dpr(Y_1\times Y_3)$.

 Consider also the function $R:Y_1\times Y_3\to H$, $R(x,z)=x^{-1}\cdot z$. Since $R(c,c\cdot d)=d\in Y_2$, it follows that the set
$W=\{(x,z) \in Y_1\times Y_3:R(x,z)\in Y_2\}$ contains $(c,c\cdot d)$ and has $\dpr=2k$. Thus, by Lemma \ref{internal-locally the same}, $\nu_{W}(c,c\cdot d)=\nu_{Y_1\times Y_3}(c,c\cdot d)$ and by Lemma \ref{function} (and Remark \ref{R:product of nu}), $R$ sends (realisations of) $\nu_{Y_1}(c)\times \nu_{Y_3}(c\cdot d)$ to (realisations of) $\nu_{Y_2}(d)$.
It follows that for every $c_1\in \nu_{Y_1}(c)$, the function $y\mapsto c_1\cdot y$ is a bijection between $\nu_{Y_2}(d)$ and $\nu_{Y_3}(c\cdot d)$. Indeed, $F(c_1,-)$ is a function from $\nu_{Y_2}(d)$ into $\nu_{Y_3}(c\cdot d)$, whose inverse is $R(c_1,-)$.

Since $F$ maps realisations of $\nu_{Y_1}(c)\times \nu_{Y_2}(d)$ onto realisations of $\nu_{Y_3}(c\cdot d)$ it follows that for every $d'\in \nu_{Y_2}(d)$, the function $x\mapsto x\cdot d'$ is a bijection between realisations of $\nu_{Y_1}(c)$ and realisations of $\nu_{Y_3}(c\cdot d)$.
In particular,  $c\cdot \nu_{Y_2}(d)=\nu_{Y_1}(c)\cdot d=\nu_{Y_3}(c\cdot d)$ and we can therefore conclude that $\nu_{Y_2}(d)\cdot d^{-1}=c^{-1}\cdot \nu_{Y_1}(c)$, proving the first clause of (1).

Assume next that $d_0,d_1\in Y_2$ and $\dpr(d_0/N)=\dpr(d_1/N)=k$. By Lemma \ref{L:itaywom}(1), we can find $c\in Y_1$ such that $\dpr(c,d_0/N)=\dpr(c,d_1/N)=2k$.
And then, by what we just saw,  $\nu_{Y_2}(d_0)\cdot d_0^{-1}=c^{-1}\cdot \nu_{Y_1}(c)=\nu_{Y_2}(d_1)\cdot d_1^{-1}$.

To prove (2)  we need to show that  $\nu_{Y_2}(d)\cdot d^{-1}$ is a subgroup of $H$, for every $d\in Y_2$ with $\dpr(d/N)=k$. Given $a,b\in \nu_{Y_2}(d)$, we need to show that $(a\cdot d^{-1})\cdot (b\cdot d^{-1})^{-1} =a\cdot b^{-1}$ is also in $\nu_{Y_2}(d)\cdot d^{-1}$.

By our above observations, the maps $y\mapsto c\cdot y$  is a bijection of $\nu_{Y_2}(d)$ and $\nu_{Y_3}(c\cdot d)$ and the map $x\mapsto x\cdot b$ is a bijection of $\nu_{Y_1}(c)$ and $\nu_{Y_3}(c\cdot d)$.   Hence, there is $c_1\in \nu_{Y_1}(c)$ such that $c\cdot a=c_1\cdot b$.

It follows that $a\cdot b^{-1}=c^{-1}\cdot c_1\in c^{-1}\cdot \nu_{Y_1}(c)$.
By what we just showed, $c^{-1}\cdot \nu_{Y_1}(c)=\nu_{Y_2}(d)\cdot d^{-1}$, hence $a\cdot b^{-1}\in \nu_{Y_2}(d)\cdot d^{-1}$.
\end{proof}

We now apply the above to definable fields.

\begin{proposition}\label{P:infinit}
Let $(\CF,+,\cdot)$ be a definable field in $\CM$.
\begin{enumerate}
    \item Assume that $\CF$ and $D$ are definable over a small model $\CN_0\prec \CM$ and let $Y\sub \CF$ be $D$-critical and strongly internal to $D$ over $\CN_0$. Then for every $d\in Y$ such that $\dpr(d/N_0)=\dpr(Y)$, the partial type $\nu_{Y}(d)-d$ is a  subgroup of $(\CF,+)$. Moreover, the subgroup is independent of the choice of $d$, and we denote it $\nu_{Y}$.

    \item For every $D$-critical $Y_1,Y_2\subseteq \CF$,  $\nu_{Y_1}=\nu_{Y_2}$.

    \item Let $\nu:=\nu_Y$ be the partial type associated to some (any) $D$-critical $Y$, as implied by (2). Then  $\nu$ is invariant under multiplication by scalars from $\CF$.

\end{enumerate}
\end{proposition}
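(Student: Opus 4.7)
The plan is to derive (1) from Lemma \ref{addition closure} applied to $(\CF, +)$, using Lemma \ref{L:generic linearity for K} to produce the requisite set-theoretic containment, and then deduce (2) and (3) from (1).

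For (1), I apply Lemma \ref{L:generic linearity for K} with a generic triple $(b, c, d_*) \in I \times Y \times Y$ over $N_0$, where $I \subseteq Y$ has dp-rank $1$ (provided by Remark \ref{R:product of one-dim in SW}). This yields $J \subseteq I$ and $Y_1 \times Y_2 \subseteq Y^2$ definable over some $B \supseteq N_0$ with $(x - b) y + z \in Y$ for every $(x, y, z) \in J \times Y_1 \times Y_2$. The key observation is to specialize the variable $x$ rather than $y$: picking a generic $x_0 \in J$ over $B \cup \{b, c, d_*\}$ and setting $\alpha := x_0 - b \ne 0$, the containment becomes $\alpha Y_1 + Y_2 \subseteq Y$, where all three of $\alpha Y_1, Y_2, Y$ are strongly internal to $D$ of dp-rank $k$ and definable over $\CN := B \cup \{\alpha\}$. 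Lemma \ref{addition closure} then yields that $\nu_{Y_2}(d') - d'$ is a type-definable subgroup $G$ of $(\CF, +)$ for every generic $d' \in Y_2$ over $\CN$, independent of $d'$; Lemma \ref{internal-locally the same} identifies it with $\nu_Y(d') - d'$.

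For the original $d$ in the statement, I repeat the construction with a triple $(b^{(d)}, c^{(d)}, d)$ generic over $N_0$ (taking $b^{(d)}, c^{(d)}$ generic over $N_0 d$), producing an auxiliary set $Y_2^{(d)} \ni d$ definable over some $\CN^{(d)}$ with $d$ generic in $Y_2^{(d)}$ over $\CN^{(d)}$, and a subgroup $G^{(d)} = \nu_Y(d) - d$. To match $G$ with $G^{(d)}$, by saturation I choose $d^{**} \in Y$ with $\dpr(d^{**}/\CN \CN^{(d)}) = k$; Fact \ref{rel interior} applied to $Y \setminus Y_2$ and $Y \setminus Y_2^{(d)}$ (each of dp-rank $< k$) forces $d^{**}$ to lie in $Y_2 \cap Y_2^{(d)}$ as a generic of both, and then $G = \nu_Y(d^{**}) - d^{**} = G^{(d)}$. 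Hence $\nu_Y(d) - d = G$ is a subgroup independent of $d$, proving (1).

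Parts (2) and (3) follow formally. For (2), given two $D$-critical sets $Y_1, Y_2 \subseteq \CF$, the union $Y_1 \cup Y_2$ is itself $D$-critical (strong internality via a disjoint encoding into $D^{m_1 + m_2 + 1}$, dp-rank $k$), and Lemma \ref{internal-locally the same} identifies $\nu_{Y_i}$ with $\nu_{Y_1 \cup Y_2}$ at generic points, so (1) applied to $Y_1 \cup Y_2$ gives $\nu_{Y_1} = \nu_{Y_1 \cup Y_2} = \nu_{Y_2}$. For (3), given $\lambda \in \CF^\times$, $\lambda Y$ is $D$-critical and Lemma \ref{function} applied to $y \mapsto \lambda y$ and its inverse yields $\lambda \cdot \nu_Y(d) = \nu_{\lambda Y}(\lambda d)$ for generic $d$, so $\lambda \cdot (\nu_Y(d) - d) = \nu_{\lambda Y}$, which equals $\nu_Y$ by (2); the case $\lambda = 0$ is immediate. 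The principal technical obstacle is the specialization of $x$ in Lemma \ref{L:generic linearity for K} to obtain the needed containment with all three sets of dp-rank $k$, together with the common-generic saturation argument for the independence step in (1).
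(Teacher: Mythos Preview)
Your overall strategy for (1) --- apply Lemma \ref{L:generic linearity for K}, specialize the $x$-variable to obtain a containment $\alpha Y_1 + Y_2 \subseteq Y$ with all three sets of dp-rank $k$, and then invoke Lemma \ref{addition closure} --- is exactly the paper's route, and your treatment of (2) and (3) matches the paper as well.

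The gap is in your independence argument. You assert that Fact \ref{rel interior} gives $\dpr(Y\setminus Y_2)<k$, so that any $d^{**}$ generic in $Y$ over $\CN\CN^{(d)}$ automatically lands in $Y_2\cap Y_2^{(d)}$. This is false: the Simon--Walsberg result behind Fact \ref{rel interior} bounds $\dpr\bigl(Y_2\setminus \mathrm{Int}_Y(Y_2)\bigr)$, not $\dpr(Y\setminus Y_2)$; the paper's statement contains a typo. A trivial counterexample in $\Rr$ is $Y=(0,1)$, $Y_2=(0,\tfrac12)$, where $Y\setminus Y_2$ has full rank. Since your $Y_2$ and $Y_2^{(d)}$ come from two unrelated applications of Lemma \ref{L:generic linearity for K}, there is no reason they share a generic point (or even intersect), and your linking step collapses.

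The paper handles independence by treating two generics $d,d'$ of $Y$ \emph{simultaneously}. One first chooses $(b,c)$ with $\dpr(b,c,d,d'/N_0)=1+3n$, applies Lemma \ref{L:generic linearity for K} twice with the \emph{same} pair $(b,c)$, and intersects the resulting $J$'s and $Y_1$'s. After passing to a model (via Lemma \ref{L:itaywom}(2)) and specializing $x=b_1$, a single set $W_1=(b_1-b)\widehat Y_1$ satisfies both $W_1+W_2\subseteq Y$ and $W_1+W_2'\subseteq Y$, so Lemma \ref{addition closure}(1) yields
\[
\nu_{W_2}(d)-d \;=\; -c'+\nu_{W_1}(c') \;=\; \nu_{W_2'}(d')-d'.
\]
The bridge runs through the common $W_1$-side, not through a hoped-for common element of the two $Y_2$-sides. (A secondary point: Lemma \ref{addition closure} requires the base to be a model, so your $\CN=B\cup\{\alpha\}$ must be extended via Lemma \ref{L:itaywom}(2) while keeping the relevant elements generic; the paper carries this out explicitly.)
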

\begin{proof}
(1) Since $Y$ is strongly internal to $D$ over $\CN_0$, and $D$ is dp-minimal there exists some $\CN_0$-definable $I\sub Y$ that is dp-minimal (Remark \ref{R:product of one-dim in SW}).
We assume that $Y$ is a subset of $D^k$ for some $k$ and let $n=\dpr(Y)$. By Corollary \ref{unique topology}, for every  $d\in Y$ with $\dpr(d/N_0)=n$, the infinitesimal neighbourhood $\nu_Y(d)$ (and therefore also $\nu_Y(d)-d$) does not depend on the choice of the embedding of $Y$ in $D$ over $N_0$.

We  prove simultaneously that $\nu_Y(d)-d$ is a group and that it does not depend on the choice of $d$. Our intention is to apply Lemma \ref{addition closure} to the definable group $(\CF,+)$. So let $d,d'\in Y$  be such that $\dpr(d/N_0)=\dpr(d'/N_0)=n$. Our first observation is that we may assume that $\dpr(d,d'/N)=2n$. Indeed, by Lemma \ref{L:itaywom}(1) we can find $c\in Y$ such that $\dpr(d,c/N)=\dpr(d',c/N)=2n$. So if we prove the  equality of groups for $d,c $ and $d',c$ then we would get that $\nu_Y(d)-d=\nu_Y(c)-c=\nu_Y(d')-d'$. So from now on we assume $\dpr(d,d'/N)=2n$.

By Lemma \ref{L:itaywom}(1), there are $(b,c)\in I\times Y$ such that $\dpr(b,c,d,d'/N_0)=1+3n$. In particular
\[
\dpr(b,c,d/N_0)=\dpr(b,c,d'/N_0)=1+2n.
\]
We first prove:
\begin{claim}
There are relatively open sets $\bar J\sub I$, containing $b$, $\bar Y_1\sub Y$ containing $c$, and $Y_2,Y_2'\sub Y$ containing $d$ and $d'$, respectively, such that for every $(x,y,z)\in \bar J\times \bar Y_1\times (Y_2\cup Y_2')$, we have
\[(x-b)\cdot y+z\in Y.\]
\end{claim}
\begin{claimproof}
Applying Lemma \ref{L:generic linearity for K} to  $(b,c,d)$ we obtain $B\supseteq N_0$ and $B$-definable $J\times Y_1\times Y_2\subseteq I\times Y^2$, with $(b,c,d)\in J\times Y_1\times Y_2$ such that $\dpr(b,c,d/B)=1+2n$ and such that the map $(x,y,z)\mapsto (x-b)y+z$ sends $J\times Y_1\times Y_2$ into $Y$.
Similarly we obtain $B'\supseteq N_0$ and $B'$-definable $J'\times Y'_1\times Y'_2\subseteq I\times Y^2$, with $(b,c,d')\in J'\times Y'_1\times Y'_2$ such that $\dpr(b,c,d'/B')=1+2n$ and such that $(x,y,z)\mapsto (x-b)y+z$  sends $J'\times Y'_1\times Y'_2$ into $Y$.

Since $b\in J$ and $\dpr(b/B)=1$, $b$ is in the relative interior of $J$ in $I$, by Fact \ref{rel interior}. Likewise, $b\in J'$ is in the relative interior of $J'$ in $I$. Hence $b$ is in the relative interior of $\bar J=J\cap J'$ in $I$. By similar arguments, $c\in Y_1\cap Y_1'$ is in the relative interior of $\bar Y_1=Y_1\cap Y_1'$ in $Y$ and $d\in Y_2$ (respectively $d'\in Y_2'$) is in the relative interior of $Y_2$ (respectively $Y_2'$) in $Y$. Consequently, $(b,c,d,d')$ is in the relative interior of $\bar J\times \bar Y \times Y_2\times Y_2'$ in $I\times Y^3$, and by replacing the sets with their relative interior we may assume that $\bar J$ is realtively open in $J$ and  $\bar Y_1$,  $Y_2$, $Y_2'$ are realtively open in  $Y$. This ends the proof of the claim.
\end{claimproof}

By Corollary \ref{C:strong GenOS} and Lemma \ref{L:itaywom}(2) there is some small model $\CL\supseteq \CN_0$ and $\CL$-definable subsets $\widehat J\subseteq \bar J$, $\widehat Y_1\subseteq \bar Y_1$, $W_2\subseteq Y_2$ and $W_2'\subseteq Y_2'$ such that \[(b,c,d,d')\in \widehat J\times \widehat Y_1\times W_2\times W_2',\] and $
\dpr(b,c,d,d'/L)=\dpr(b,c,d,d'/N)=3n+1$. Thus for all $(x,y)\in \widehat J\times \widehat Y_1$ and for every $z\in W_2\cup W_2'$, \[(x-b)y+z\in Y.\]

By the sub-additivity of dp-rank, we must have $\dpr(c,d,d'/Lb)=3n$, so by Lemma \ref{L:itaywom}(2), we can find a small model $\CK\supseteq Lb$ such that $\dpr(c,d,d'/K)=\dpr(c,d,d'/Lb)=3n$.

Let $b_1\in \widehat J(\CK)$, with $b_1\neq b$, let $W_1=(b_1-b)\widehat Y_1$ and let $c'=(b-b_1)c\in W_1$. By our assumptions, $W_1+W_2\sub Y$ and $W_1+W_2'\sub Y$.
Note that $W_1,W_2,W_2'$ are $\CK$-definable and since $b_1-b$ is invertible (i.e. $x\mapsto (b_1-b)x$ is invertible) we have $\dpr(W_1)=\dpr(\widehat Y_1)=n$ and also $\dpr(W_2)=\dpr(W_2')=\dpr(Y)=n$.

Finally, $\dpr(d/K)=\dpr(d'/K)=n$, $\dpr(c,d/K)=\dpr(c,d'/K)=2n$ and since $c'=(b-b_1)c$ and $c$ are interdefinable over $K$ (which contains $b,b_1$), it follows that $\dpr(c',d/K)=\dpr(c',d'/K)=2n$.

We can now apply Lemma \ref{addition closure} to both $(c',d)$ and $(c',d')$,
in $W_1\times W_2$ and $W_1\times W_2'$, respectively. The assumptions of the lemma are satisfied (with $W_1,W_2$ in the respective roles of  $Y_1,Y_2$,  and the parameter set $K$ replacing $N$). Thus, $\nu_{W_2}(d)-d$ is a subgroup of $(\CF,+)$ which equals  $\nu_{W_1}(c')-c'$. Repeating the same argument with $W_1, W_2'$ we get that  $\nu_{W_2}(d')-d'$ is a subgroup of $(\CF,+)$ which equals  $\nu_{W_1}(c')-c'$. It follows that \[\nu_{W_2}(d)-d=\nu_{W_2'}(d')-d'.\]
 Finally, since $W_2,W_2'\subseteq Y$, $\dpr(W_2)=\dpr(W_2')=\dpr(Y)=n$ and $\dpr(d/K)=\dpr(d'/K)=n$, it follows from Lemma \ref{internal-locally the same}  that $\nu_{W_2}(d)=\nu_Y(d)$ and $\nu_{W_2'}(d')=\nu_{Y}(d)$, and therefore \[\nu_Y(d)-d=\nu_Y(d')-d'.\]

 We conclude that for every $d\in Y$ such that $\dpr(d/N_0)=n$, the set $\nu_Y(d)-d$ is a subgroup of $(\CF,+)$ and that this group is independent of the choice of $d$. We denote this subgroup by $\nu_Y$.

(2) Let $Y_1,Y_2\subseteq \CF$ be $D$-critical sets over small models $\CN_0$ and $\CM_0$, respectively. Let $Y'=Y_1\cup Y_2$. Note that $Y'$ is also $D$-critical and strongly internal to $D$ over $\CL$, some small model containing $N_0M_0$.
For every $d\in Y_1$ with $\dpr(d/L)=n$, it follows from  Lemma \ref{internal-locally the same} that
$\nu_{Y_1}(d)=\nu_{Y_1\cup Y_2}(d)$. Starting with $d'\in Y_2$ with $\dpr(d'/L)=n$ we similarly conclude that $\nu_{Y_2}(d)=\nu_{Y_1\cup Y_2}(d)$. What we have proved in (1) implies that
 $\nu_{Y_1}=\nu_{Y_1\cup Y_2}=\nu_{Y_2}$.

(3) Let $c\in \CF$ and let $Y\subseteq \CF$ be a $D$-critical set over some small model $\CN_0$.
    Let $\CK\supseteq N_0c$ be a small model and $d\in Y$ be such that $\dpr(d/K)=n$. So also $\dpr(c\cdot d/K)=n$. The function $x\mapsto cx$ sends $Y$ to $cY$, and by Lemma \ref{function}, it sends $\nu_Y(d)$ onto $\nu_{cY}(cd)$. Hence,
    \[c(\nu_Y(d)-d)=c\nu_Y(d)-cd=\nu_{cY}(cd)-cd=\nu.\]
    \end{proof}
    
\begin{remark}
 One can also show that if $\widehat \CM$ is an $|M|^+$-saturated elementary extension of $\CM$ then $\nu(\widehat M)$ is closed under addition and multiplication. As this result will not be needed, we omit the proof.
\end{remark}

\subsection{Differentiability in SW-uniform fields}
Our goal in this section is, under suitable assumptions, to put a differential structure on the group of infinitesimal $\nu$ constructed in the previous subsection. Before doing that, we have to discuss briefly the notion of differntiability in the context of SW-uniform fields, which are either real closed or valued.

For a real closed field  $L$ and $a=(a_1,\dots,a_n)\in L^n$, we let $|a|=\max_i \{|a_i|\}$. If $(L,v)$ is a valued field we let $v(a)=\min_i \{v(a_i)\}$.

\begin{definition}
Let $L$ be either a real closed field  or a valued field with valuation $v$.  Given $U\subseteq L^n$ open, a map $f:U\to L^m$ is \emph{differentiable} at $x_0\in U$ if there exists a linear map $D_{x_0}f:L^n\to L^m$ such that:

\noindent In the real closed case:
\[\lim_{x\to x_0}\frac{|f(x)-f(x_0)-(D_{x_0}f)\cdot (x-x_0)|}{|x-x_0|}=0,\]
and in the valued case:
\[\lim_{x\to x_0}\left[ v(f(x)-f(x_0)-(D_{x_0}f)\cdot (x-x_0))-v(x-x_0)\right]=\infty.\]

If $f$ is differentiable at a point $x_0$ we  say that $x_0$ is a $\CD^1$-point of $f$ and that $f$ is $\CD^1$ at $x_0$.
 \end{definition}

Exactly as in real analysis,  if  $f$ is differentiable at $x_0$ then $D_{x_0}f$ is represented by the Jacobian matrix of $f$ at $x_0$. It follows that, in expansions of both valued fields and real closed fields, if $f$ is definable, the set of points in its domain where $f$ is  $\CD^1$ is definable as well.

The next remark is not needed in the sequel, but simplifies the discussion, as it shows that the notion of differentiability does not depend on the valuation, but only on the topology it generates. Thus, if $K$ is a convexly valued real closed field this notion of differentiability coincides with the standard definition: 

\begin{remark} Notice that if $f:K\to K$ is a one-variable function then the above definition of $d:=D_{x_0}(f)$ is purely topological, since it requires the limit of the quotient $\frac{f(x_0)-f(x_0)-d}{x-x_0}$ to be $0$ in $K$.
Thus, if $v,v'$ valuations on $K$ (possibly one of them the absolute value, if $K$ is real closed)  which generate the same topology, then for one variable functions, $D_{x_0}(f)$ is the same when computed with respect to $v,v'$.
It follows, for $f:K^n\to K^m$, that the Jacobian Matrix of $f$ at $x_0$ does not depend on $v$, and as a result $D_{x_0}(f)$ does not depend on $v$ either.

\end{remark}

 In the classical setting of $\Rr$ or $\Cc$ (and also in  o-minimal expansions of fields) the existence of continuous partial derivatives (denoted $\CC^1$) is a sufficient condition for differentiability. We do not know whether  this is a sufficient condition in the more general setting we are interested in.

To address this we need an additional axiom (which is elementary in any expansion of a real closed or a valued field):
\begin{gen-dif}
For every definable open $D\subseteq K^n$, and definable $f:D\to K$,  the set of points $x\in D$ such that $f$ is not $\CD^1$ at $x$ has empty interior.
\end{gen-dif}

\begin{assumption}
From now on, whenever we say a definable field satisfies {\gendif} we implicitly mean that it is either real closed or supports a (fixed) definable valuation.
\end{assumption}

\begin{remark}
 As we noted in Example \ref{E:SW-uniformities}(4), it follows from Johnson's work that a dp-minimal field supports a unique field topology (with definable basis), which gives rise to an SW-uniformity. Thus, by Example \ref{E:SW-uniformities}(3), any two definable valuations give rise to two will generate the same topology, and if $K$ is a real closed valued field, then the valuation and the order will generate the same topology. 
\end{remark}


We are now ready to prove that under {\gendif} the group $\nu$ of infinitesimals can be endowed with a structure of a ``Lie-group'' with respect to $K$, namely, that $\nu$ has a structure of a differential manifold respecting the group operations. Conveniently, the manifold structure can be given by a single chart. 
The proof follows a strategy due to Ma\v{r}\'{\i}kov\'{a} \cite{MarikovaGps}, generalising Pillay's original work  \cite{Pi5}, where she proved that invariant groups in o-minimal structures admit a unique definable group topology.  In our proof we replace continuity of functions in her work with differentiability.
\begin{proposition}\label{P:C1 structure}
Let $\CM$ be a $|T|^+$-saturated multi-sorted structure, $(K,+,\cdot)$ be a definable SW-uniform field satisfying {\gendif} and let $\CF$ be a infinite definable field locally strongly internal to $K$.

    Let $Y\subseteq \CF$ be $K$-critical, $g:Y\to K^n$ an injective $A$-definable map with open image, and let  $\widehat \CM\succ \CM$ be  an $|\CM|^+$-saturated elementary extension. Then there exists $c\in Y$, with $\dpr(c/A)=\dpr(Y)$, such that,
    \begin{enumerate}
        \item The map $x\mapsto g(x+c)$ from $\nu(\widehat{M})$ to $\nu_{g(Y)}(c)(\widehat{M})$ induces on $\nu(\widehat \CM)$ a $\CD^1$-group structure.
        \item Let $\alpha:(\CF,+)\to (\CF,+)$ be an $\CM$-definable endomorphism leaving the type $\nu$ invariant (namely, $\alpha_*(\nu)=\nu)$).
        Then $\alpha:\nu(\widehat \CM)\to \nu(\widehat \CM)$ is a $\CD^1$-map with respect to the above differential structure on $\nu$.
        \item For every $a\in \CF$, the function $\lambda_a:x\mapsto a\cdot x$ is differentiable at $0$ with respect to the above $\CD^1$-structure on $\nu$.  
    \end{enumerate}
\end{proposition}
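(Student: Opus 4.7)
The strategy adapts Ma\v{r}\'{\i}kov\'{a}'s approach for invariant group topologies in the o-minimal setting, replacing continuity arguments with differentiability arguments via \gendif. My plan is to pick $c \in Y$ of maximal dp-rank over $A$ so that the chart $\phi_c(x) = g(x+c)$ transports the group operations on $\nu$, as well as the induced endomorphism maps, to $\CD^1$ functions at the identity $g(c)$.

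\textbf{Setup.} Let $n = \dpr(Y)$. By Proposition \ref{P:infinit}, for any $c\in Y$ with $\dpr(c/A) = n$ we have $\nu = \nu_Y(c) - c$, and $\phi_c$ is a bijection of partial types $\nu(\widehat M) \to \nu_{g(Y)}(g(c))(\widehat M)$. In the chart, addition becomes $F_c(u,v) = g(g^{-1}(u)+g^{-1}(v)-c)$, defined near $(g(c),g(c))$, and inversion becomes $I_c(u) = g(2c - g^{-1}(u))$, defined near $g(c)$. The identity property $F_c(u, g(c)) = u = F_c(g(c), u)$ forces any linear approximation of $F_c$ at $(g(c),g(c))$ to be $(k_1,k_2) \mapsto k_1+k_2$.

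\textbf{Part (1).} To establish $\CD^1$-ness of $F_c$ at $(g(c),g(c))$, I pass to the $3n$-variable function $F^*(u,v,w) = g(g^{-1}(u) + g^{-1}(v) - g^{-1}(w))$ on an open subset of $g(Y)^3$, satisfying the diagonal identity $F^*(u,u,u) = u$ and the relation $F_c(u,v) = F^*(u,v,g(c))$. By \gendif\ applied coordinate-wise and Fact \ref{rel interior}, the non-$\CD^1$ locus of $F^*$ has dp-rank strictly less than $3n$. Using the diagonal identity (which anchors partial derivatives along the diagonal) together with Lemma \ref{L:itaywom} and additional generic parameters, I locate $c$ of maximal dp-rank over $A$ such that $(g(c),g(c),g(c))$ avoids this lower-rank locus. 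Restricting to the slice $w=g(c)$ then yields $F_c$ is $\CD^1$ at $(g(c),g(c))$; a parallel argument establishes $I_c$ is $\CD^1$ at $g(c)$.

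\textbf{Parts (2) and (3).} Let $\alpha$ be an $\CM$-definable endomorphism with $\alpha_*\nu=\nu$. Its chart image $\tilde\alpha_c(u) = g(\alpha(g^{-1}(u)) - \alpha(c) + c)$ is a definable group homomorphism for the $\CD^1$-group structure on $\nu_{g(Y)}(g(c))$ from Part (1). By \gendif, $\tilde\alpha_c$ is $\CD^1$ at some generic point $p$ in its domain. Using the homomorphism identity $\tilde\alpha_c(F_c(u,w)) = F_c(\tilde\alpha_c(u),\tilde\alpha_c(w))$ with $w=I_c(p)$, and composing with the $\CD^1$ maps $F_c$ and $I_c$ supplied by Part (1) at the relevant auxiliary points, the chain rule transfers $\CD^1$-ness from $p$ to the identity $g(c)$. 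Part (3) is then immediate from Part (2) applied to $\alpha = \lambda_a$, which leaves $\nu$ invariant by Proposition \ref{P:infinit}(3). The main obstacle lies in Part (1): ensuring that the diagonal $\{(g(c),g(c),g(c))\}$, of dp-rank $n$, is not fully contained in the non-$\CD^1$ locus of $F^*$, which only has dp-rank $< 3n$. A priori such containment is possible, and ruling it out requires essential use of the diagonal identity $F^*(u,u,u)=u$ together with the translation structure in $(\CF,+)$; an analogous difficulty arises in Part (2), where the chain-rule transport depends on $\CD^1$-ness of $F_c$ at several auxiliary points varying with $\alpha$, handled by additional generic choices.
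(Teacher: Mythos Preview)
Your overall strategy and your treatment of Parts (2) and (3) match the paper's approach: use \gendif\ to obtain $\CD^1$-ness at a generic point, then transport via the homomorphism identity and the chain rule. Part (3) follows from Part (2) exactly as you say.

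The gap is in Part (1), and you have correctly located it yourself: the diagonal $\{(g(c),g(c),g(c)) : c \in Y\}$ has dp-rank $n$, while \gendif\ only guarantees the non-$\CD^1$ locus of $F^*$ has rank $<3n$, so the diagonal could in principle lie entirely inside that locus. Your proposed resolution, however, does not close this gap. The diagonal identity $F^*(u,u,u)=u$ constrains the differential \emph{if it exists} (forcing the sum of partials to be the identity) but says nothing about existence of the differential at diagonal points. Invoking ``translation structure and additional generic parameters'' points in the right direction but is circular as stated: translating by a generic $b$ in the chart means applying $u \mapsto g(g^{-1}(u)+b)$, and you need this map itself to be $\CD^1$ at the relevant (non-generic) point --- the same obstacle again.

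The paper resolves this not by arguing that some diagonal point avoids the bad locus, but by expressing $(x,y,z)\mapsto x-y+z$ (in the chart) as a composition
\[
(x,y,z)\ \longmapsto\ (b+x,-y,z)\ \longmapsto\ (b+x-y,z)\ \longmapsto\ (-b,\,b+x-y+z)\ \longmapsto\ x-y+z,
\]
where $b$ is an auxiliary element chosen so that $\dpr(a,b)=2n$ for some $a\in Y$. Each constituent map is built from the two-variable addition and the one-variable negation, and each is evaluated at a point of \emph{full} dp-rank in its domain (e.g.\ addition at $(a,b)$ with rank $2n$, negation at $a$ with rank $n$). By \gendif\ each factor is $\CD^1$ there, and the chain rule yields $\CD^1$-ness of $x-y+z$ at $(a,a,a)$. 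Specializing $z=a$ then gives the chart addition $x+y-a$ as $\CD^1$ at $(a,a)$, and specializing $x=z=a$ gives chart inversion. The point $c$ in the statement is this $a$. This telescoping decomposition is the missing mechanism behind your phrase ``translation structure and additional generic parameters''; without it, your Part (1) is a restatement of the problem rather than a solution.
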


\begin{proof}
All group operations appearing in the proof are the restriction to $Y$ of $\CF$-addition (and subtraction).
Let us explain what we mean by (1): Let $\sigma: \nu(\widehat{M})\to \nu_{g(Y)}(c)(\widehat{M})$ be given by $x\mapsto g(x+c)$. Consider the push-forward of $x+y$ via $\sigma$. Namely, the function on  $\nu_{g(Y)}(c)\times \nu_{g(Y)}(c)$ given by
\[(z,w)\mapsto g((g^{-1}(z)-c)+(g^{-1}(w)-c)+c)=g(g^{-1}(z)+g^{-1}(w)-c).\]
Similarly, consider the push-forward of $x\mapsto -x$, given by
\[z\mapsto g(-(g^{-1}(z)-c)+c).\]
We claim that both functions are $\CD^1$ in $K$, in a neighborhood of $(g(c),g(c))\in K^2$ and $g(c)\in K$, respectively.

Let $d\in Y$ be such that $\dpr(d/A)=\dpr(Y)$. By replacing $Y$ with $Y-d$, we may assume that $\nu \vdash Y$. Now absorb $A$ and $d$ into the language. In order to keep notation simple we identify $Y$ with its image under $g$ (so $g=\id$).

\noindent (1)
        Since $\nu$ is a group, type definable over $\CM$, there are, by compactness, $\CM$-definable open sets $V_1, V_0$, such that $\nu\vdash V_1\subseteq V_0\sub K^n$ and
        \[\phi_4:=(x,y) \mapsto x+y \text{ maps } V_1^2 \text{ into } V_0.\]
        Similarly, we find $V_2\subseteq V_1$ such that
        \[\phi_3:= (x,y,z)\mapsto (-z,x+y) \text{ maps } V_2^3 \text{ into } V_1^2.\] We also find
        $V_3\subseteq V_2$ such that
        \[\phi_2:= (x,y,z)\mapsto (x+y,z) \text{ maps } V_3^3\text{ into } V_2^2,\]
        and $V_4\subseteq V_3$ such that
        \[\phi_1:= (x,y,z,w)\mapsto (w+x,-y,z) \text{ maps } V_4^4\text{ into }  V_3^3.\]
        We may assume that all the above are $\emptyset$-definable. Let $a,b\in V_4$ with $\dpr(a,b)=2\dpr(Y)=2\dpr(V_4)$. By {\gendif}, $x+y$ is $\CD^1$ at $(a,b)$ and the function $y\mapsto -y$ is $\CD^1$ at $a$, hence $\varphi_1(x,y,z,b)$ is $\CD^1$ at $(a,a,a)$. Similarly, $\varphi_2$ is $\CD^1$ at $(b+a,-a,a)$, $\varphi_3(x,y,b)$ at $(b,a)$ and $\varphi_4$ at $(b^{-1},b+a)$. Composing, we obtain $\varphi_4\circ\varphi_3\circ\varphi_2\circ\varphi_1(x,y,z)=x-y+z$, so we have shown that the map $(x,y,z) \mapsto x-y+z$ is $\CD^1$ at $(a,a,a)$.
        In fact, the proof provides an open set, $U\sub V_4$ which, by compactness, we may take to be $\CM$-definable, with $a\in U$, such that $(x,y,z)\mapsto x-y+z:U^3\to V_0$ is $\CD^1$.

        Our goal is to show that the push-forward of addition restricted to $\nu^2$ and of the inverse function restricted to $\nu$ via the map $x\mapsto x+a$ are $\CD^1$ (in the sense of $K$). Namely, we need to prove that the functions $(x-a)+(y-a)+a$ and $-(x-a)+a$ are  $\CD^1$ on $\nu_Y(a)^2$ and $\nu_Y(a)$, respectively. This follows immediately from our choice of $U$.

        \noindent (2) 
        Let $\alpha$ be a an $\CM$-definable endomorphism of $(\CF,+)$ fixing $\nu$ setwise. Fix $c\in M$ as in (1) and $\sigma(x)=g(x+c)$.
        Choose $e\in \nu(\widehat \CM)$ with $\dpr(e/\CM)=\dpr(\nu)=n$. By {\gendif}, $\alpha$ is $\CD^1$ at $e$ with respect to the differentiable structure on $\nu(\widehat \CM)$ (i.e., $\sigma\alpha \sigma^{-1}$ is $\CD^1$ at $\sigma(e)$).  Since $\alpha$ is a homomorphism and $\nu$ is a $\CD^1$-group, it is standard to verify that $\alpha$ is a $\CD^1$-function on all of  $\nu$.

       \noindent (3) By Proposition \ref{P:infinit}, for every $a\in \CF(\CM)$, $\lambda_a$ is a endomorphism of $\nu$, so by (2), it is $\nu$- differentiable at $0$, i.e., $\sigma\lambda_a\sigma^{-1}$ is $K$-differentiable at $c=\sigma(0)$. 
\end{proof}

\subsection{Strong internality to SW-fields}
The next theorem is an important step in our proof of Theorem \ref{T: main interp}. First, we need the following easy and well known fact:
\begin{remark}\label{remarkk}
    If $\CL=(L,+,\cdot,\dots)$ is a dp-minimal expansion of a field, then $L$ has no definable infinite subfields. Indeed, if $K$ were such a field then $K$ itself is dp-minimal. And if we had some $u\in L\setminus K$ we could define $T:K^2\to L$ by $(a,b)\mapsto a+bu$. Since $u$ is $K$-linearly independent of $1$, we get that $T$ is a linear injection, so $\dpr(T(K^2))=2$ which is impossible.
\end{remark}

\begin{theorem}\label{prop-main}
 Let $\CM$ be a multi-sorted structure and $K$  an $\CM$-definable SW-uniform field satisfying {\gendif}.  Let $\CF$ be a definable field of finite dp-rank that is locally strongly internal to $K$. Then $\CF$ is definably isomorphic to a finite extension of $K$.
\end{theorem}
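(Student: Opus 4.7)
The approach is to differentiate left-multiplication by elements of $\CF$ on the infinitesimal subgroup $\nu$, obtain a definable ring embedding $\Phi:\CF\hookrightarrow M_n(K)$, and then deduce via elementary finite-dimensional commutative algebra together with Remark~\ref{remarkk} (no infinite proper definable subfields of $K$) that $\CF$ is definably isomorphic to a finite extension of $K$.

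First I invoke Proposition~\ref{P:C1 structure}. Fix a $K$-critical $Y\subseteq\CF$, an $A$-definable injection $g:Y\to K^n$ with $n=\dpr(Y)$, and a generic $c\in Y$ as in that proposition, so that the translated chart $x\mapsto g(x+c)$ endows the type-definable infinitesimal subgroup $\nu\subseteq(\CF,+)$ of Proposition~\ref{P:infinit} with a $\CD^1$-group structure. By Proposition~\ref{P:infinit}(3), $\nu$ is invariant under multiplication by every nonzero $a\in\CF$, so $\lambda_a:\nu\to\nu$ is a well-defined additive endomorphism; for $a\neq 0$, $\lambda_{a^{-1}}$ is a two-sided inverse, so $\lambda_a$ is a bijection of $\nu$. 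Proposition~\ref{P:C1 structure}(2)--(3) then guarantees that $\lambda_a$ is $\CD^1$ on all of $\nu$ (and $\CD^1$ at $0$ in particular).

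Next I define $\Phi:\CF\to M_n(K)$ by $\Phi(a):=D_0\lambda_a$, the derivative at $0$ computed in the chart. Linearity of the derivative gives $\Phi(a+b)=\Phi(a)+\Phi(b)$; the chain rule applied to $\lambda_{ab}=\lambda_a\circ\lambda_b$, together with $\lambda_b(0)=0$, gives $\Phi(ab)=\Phi(a)\Phi(b)$; we have $\Phi(1_\CF)=I$; and for $a\neq 0$ the identity $\Phi(a)\Phi(a^{-1})=I$ forces $\Phi(a)\in\gl_n(K)$, which in particular yields injectivity. Hence $\Phi$ is an $A$-definable injective ring homomorphism, and $\CF$ is definably isomorphic to the subfield $\Phi(\CF)\subseteq M_n(K)$.

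Finally, let $R\subseteq M_n(K)$ be the definable $K$-subalgebra generated by $\Phi(\CF)$. Because $K\cdot I$ is central in $M_n(K)$ and $\Phi(\CF)$ is commutative, $R$ is a commutative finite-dimensional $K$-algebra, hence Artinian, so its semisimple quotient decomposes as $R/\mathrm{nilrad}(R)\cong L_1\times\cdots\times L_s$ with each $L_j$ a finite field extension of $K$. Since $\Phi(\CF)$ is a field, its composition with projection to some factor $L_i$ is an injection, giving a definable field embedding $\iota:\CF\hookrightarrow L_i$; the scalars $k\mapsto kI$ give a second embedding $K\hookrightarrow L_i$. The intersection of these two images inside $L_i$ corresponds, via the second embedding, to a definable subfield of $K$, which by Remark~\ref{remarkk} is either finite or all of $K$; in characteristic zero it contains $\Qq$ and is therefore infinite, so it equals $K$. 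Thus $K\subseteq\iota(\CF)\subseteq L_i$, so $\iota(\CF)$ is an intermediate field of the finite extension $L_i/K$, hence itself a finite extension of $K$, and $\CF$ is definably isomorphic to it. The main obstacle is this last paragraph: constructing $\Phi$ is the technical heart (assembled from the preceding differentiability and infinitesimal-group results), but converting the ring embedding into an actual finite extension of $K$ requires the combination of the Artinian decomposition with Remark~\ref{remarkk}, and is cleanest in characteristic zero (which is the setting of the paper's main applications).
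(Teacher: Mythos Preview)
Your construction of the ring embedding $\Phi:\CF\to M_n(K)$ via $a\mapsto D_0\lambda_a$ is exactly the paper's approach. One small point: your justification of additivity by ``linearity of the derivative'' is a little quick, since addition in $\CF$ pulled back through the chart is the $\CD^1$-group operation on $\nu$, not a priori coordinate addition in $K^n$. The paper makes this step explicit: writing $P$ for the group operation, $\lambda_{a+b}=P\circ(\lambda_a,\lambda_b)$, and $D_{(0,0)}P=(I_n,I_n)$ because $P(x,0)=P(0,x)=x$; the chain rule then gives $\Phi(a+b)=\Phi(a)+\Phi(b)$.

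Where you diverge from the paper is the final step, and here the paper is both simpler and more general. Rather than passing to the $K$-algebra $R$ generated by $\Phi(\CF)$, taking its Artinian decomposition, and projecting to a factor $L_i$, the paper works directly inside $M_n(K)$: let $K_0=\{xI_n:x\in K\}$ and consider $K_0\cap\Phi(\CF)$. This is a definable subfield of $K_0\cong K$, and it is infinite --- in characteristic $0$ because both contain $\Qq\cdot I_n$, and in positive characteristic because both $K$ and $\CF$ are NIP fields and hence contain $\mathbb{F}_p^{\mathrm{alg}}$ by \cite[Corollary~4.5]{KaScWa}. Remark~\ref{remarkk} then forces $K_0\subseteq\Phi(\CF)$, so $\Phi(\CF)$ is a subfield of $M_n(K)$ containing $K_0$ and is therefore a finite extension of $K$. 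Your detour through $R/\mathrm{nilrad}(R)$ is not wrong in characteristic $0$, but it is unnecessary, and as you note it leaves the positive-characteristic case open; the paper's direct argument, with the Kaplan--Scanlon--Wagner input, covers both characteristics uniformly.
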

\begin{proof}
    By passing to an elementary extension, we may assume that $\CM$ is $|T|^+$-saturated.
        Let $Y\subseteq \CF$ be $K$-critical, assume that $Y$ (and a corresponding definable injection  $g:Y\to K^n$) is defined over some small model $\CN$.  By \cite[Proposition 4.6]{SimWal}, we may assume that $g(Y)$ is open. Let $\nu$ be the infinitesimal subgroup of $(\CF,+)$, as given by Proposition \ref{P:infinit}, endowed with its Lie group structure. It will be convenient to evaluate $\nu$ in some $|\CM|^+$-saturated $\widehat \CM\succ \CM$.

        By Proposition \ref{P:C1 structure}(3), for every $z\in \CF(\CM)$, the function $\lambda_z$ is $\CD^1$ at $0$, with respect to this differential structure on $\nu$.
        
 To each $z\in \CF$ we associate, definably, the differential $D_0(\lambda_z)$ identified with the Jacobian matrix  of $\lambda_z$ at $0$, with respect to the differentiable structure on $\nu$ (formally the Jacobian matrix of $\sigma\lambda_z\sigma^{-1}$ at $\sigma(0)$).

We claim that the function $z\mapsto J_z$ is a ring homomorphism from $(\CF,+,\cdot)$ into $(M_n(K),+,\cdot)$. To see that we shall apply the chain rule to our differentiable functions
(see \cite[Chapter 7.1]{vdDries} for the real closed case and \cite[Remark 4.1.ii]{Schneider} for the valuation case\footnote{Note that in the proof of the latter, one must replace the usual operator norm with the $\inf$-valuation on the matrix space.}).
We now recall the arguments from  \cite[Lemma 4.3]{OtPePi}:

To see that field multiplication is sent to matrix multiplication:
First, note that $\lambda_{a\cdot b}=\lambda_a\circ \lambda_b$, and hence by the chain rule, 
$$D_0(\lambda_{a\cdot b})=D_0(\lambda_a\circ\lambda_b)=D_0(\lambda_a)\cdot D_0(\lambda_b), $$ where on the right we have matrix multiplication.

To see that field addition is sent to matrix addition, let $P(x,y)=x+y$ and note that  $\lambda_{a+b}=\lambda_{P(a,b)}=P(\lambda_a,\lambda_b)$. It is easy to see that  $D_{(0,0)}(P)=(I_n,I_n)$, where $I_n$ is the $n\times n$ identity matrix
(since $P(x,0)=P(0,x)=x$).  By the chain rule,
\[D_0(\lambda_{a+b})=D_0(\lambda_{P(a,b)})=D_{(0,0)}(P\circ(\lambda_a,\lambda_b))=D_0(\lambda_a)+D_0(\lambda_b),\] where on the right we have matrix addition.
      
      Thus,  the map $z\mapsto D_0(\lambda_z)$ is a ring homomorphism  sending $1\in \CF$ to the identity matrix $I_n$. Since $\CF$ is a field, the map is injective so we have definably embedded $\CF$ into a definable subring of $M_n(K)$.

        We may now view $\CF$ as a definable subfield of $M_n(K)$. Let $K_0=\{xI_n:x\in K\}$, where now we take the usual scalar multiplication in the algebra of matrices. Note that $K_0\cap \CF$ is an infinite definable subfield of $K$. Indeed, if the characteristic of $K$ is $0$ then it follows since both contain $I_n$. If the characteristic is positive then it follows since both contain $\mathbb{F}_p^{alg}$ by \cite[Corollary 4.5]{KaScWa}. Since $K_0\cong K$ is dp-minimal, it has no infinite definable subfield by Remark \ref{remarkk}, so $K_0\cap \CF=K_0$ i.e. $K_0\subseteq \CF$. Thus $\CF$ is a finite extension of $K_0$.
    \end{proof}

\subsection{1-h-minimal valued fields}
In  \cite{hensel-min,hensel-minII} Cluckers, Halupczok, Rideau-Kikuchi and Vermeulen introduce the class of  \emph{1-h-minimal valued fields} in characteristic 0, encompassing several examples of interest, such as:

\begin{example}\cite[Corollaries 6.2.6,6.2.7]{hensel-min}\cite[Theorem 6.3.4]{hensel-min}\cite[Proposition 6.4.2]{hensel-min}
\begin{enumerate}
\item pure henselian valued fields of characteristic $0$.
\item finite field extensions of $\mathbb{Q}_p$ in the sub-analytic language
\item henselian valued  fields of characteristic $0$ in the valued field language expanded by function symbols from a separated Weierstrass system $\mathcal{A}$ and equipped with analytic $\mathcal{A}$-structure.
\item $T$-convex valued fields expanding a power-bounded o-minimal field.
\item $V$-minimal fields.
\end{enumerate}
\end{example}

The exact definition of 1-h-minimal fields is irrelevant for the present section (see Section \ref{ss:1-h-minimality and its connections}). What will be relevant for us here is that they satisfy {\gendif} (see below).

\begin{corollary}\label{C:field in dp-min h-min}
Let $\CM$ be a multi-sorted structure and $\CF$ a definable field of finite dp-rank. If $\CF$ is locally strongly internal to a definable dp-minimal 1-h-minimal valued field $(K,v)$ then it is definably isomorphic to a finite extension of $K$. In particular, this is true if $K$ is a  pure dp-minimal valued field of characteristic $0$.
\end{corollary}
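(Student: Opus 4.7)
The plan is to observe that Corollary \ref{C:field in dp-min h-min} is essentially a direct application of Theorem \ref{prop-main}: once the field $(K,v)$ is shown to be an SW-uniform field satisfying \gendif, the theorem produces the desired finite extension. So the proof reduces to checking that dp-minimal 1-h-minimal valued fields of characteristic $0$ fit into the hypotheses of Theorem \ref{prop-main}, plus a small additional argument to handle the ``in particular'' clause.

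The SW-uniformity hypothesis is immediate. A 1-h-minimal valued field is, by convention, non-trivially valued, so $(K,v)$ carries a non-trivial definable field topology with a definable basis of balls. Example \ref{E:SW-uniformities}(3) then asserts that any dp-minimal expansion of a non-trivially valued field is an SW-uniformity: the topology has no isolated points, and every infinite definable subset of $K$ has non-empty interior as a standard consequence of dp-minimality. Thus $K$ is SW-uniform.

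The substantive step is the verification of \gendif, i.e.\ that every definable $f:D\to K$ on an open $D\subseteq K^n$ is $\CD^1$ outside a set with empty interior. This is where 1-h-minimality is invoked as a black box: the works \cite{hensel-min,hensel-minII} of Cluckers, Halupczok, Rideau-Kikuchi and Vermeulen develop, for $1$-h-minimal valued fields in characteristic $0$, a strong cell-decomposition theory together with Taylor approximation results and Jacobian property statements which imply, in particular, that any definable multivariate function into $K$ is $\CD^1$ on a dense open subset of its domain. This is precisely \gendif, and Theorem \ref{prop-main} then yields the first assertion of the corollary.

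For the ``in particular'' clause, if $K$ is a pure dp-minimal valued field of characteristic $0$, then the valuation $v$ is henselian (by Johnson's Fact \ref{F:Johnson DP} together with the known rigidity of definable henselian valuations in dp-minimal fields of characteristic $0$), and pure henselian valued fields of characteristic $0$ are among the standard examples of 1-h-minimal fields already listed in the excerpt. Applying the first part of the corollary completes the proof. The main obstacle I anticipate is bibliographical rather than conceptual: locating in \cite{hensel-min,hensel-minII} the precise statement of generic $\CD^1$-differentiability for multivariate definable functions into $K$, in the exact form needed for \gendif, since the results there are typically phrased in terms of cell decomposition, $\CC^1$-parametrisations, Jacobian property or Taylor expansions, rather than the pointwise $\CD^1$ notion used here.
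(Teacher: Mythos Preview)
Your proposal is correct and follows essentially the same approach as the paper: verify that $K$ is an SW-uniformity via Example \ref{E:SW-uniformities}(3), check \gendif\ by appealing to the 1-h-minimality literature, and then apply Theorem \ref{prop-main}; for the addendum, reduce to the fact that pure henselian valued fields of characteristic $0$ are 1-h-minimal. Regarding your anticipated bibliographical obstacle, the paper resolves it by citing \cite[Theorem 5.1.5]{hensel-min} and \cite[Proposition 3.1.1]{hensel-minII} for \gendif, and its proof of the ``in particular'' clause is actually slightly terser than yours (it does not explicitly invoke Johnson to obtain henselianity of the given valuation).
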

\begin{proof}
Every dp-minimal expansion of a valued field is an SW-uniformity by Example \ref{E:SW-uniformities}.  Since $\CF$ is 1-h-minimal, it satisfies {\gendif} by \cite[Theorem 5.1.5]{hensel-min} and \cite[Proposition 3.1.1]{hensel-minII}. We may now apply Theorem \ref{prop-main} with $\CF$ the definable field. Since pure henselian valued fields are 1-h-minimal (by Clause (1) of the above example), the conclusion of the addendum follows. 
\end{proof}

\subsection{Strong internality to strongly minimal fields}\label{ss:strongly internal to strongly minimal}
In Theorem \ref{prop-main} we  classified, under the additional assumption of generic differentiability,  fields locally strongly internal to  SW-uniform fields. Such fields are clearly unstable.  We now turn our attention to  fields  locally strongly internal to a strongly minimal field. In the following let $\mr$ be the Morley rank and $\dM$ be the Morley degree.

\begin{proposition}\label{P:SI to SM}
Let $\CM$ be multi-sorted structure and let $\mathscr{K}$ be a strongly minimal definable field. If $\CF$ is a definable field of finite dp-rank that is locally strongly internal to $\mathscr{K}$ 
then $\CF$ is strongly internal to  $\mathscr{K}$, so in particular it is algebraically closed. If  $\mathscr{K}$ is a pure field  then $\CF$ is definably isomorphic to $\mathscr{K}$.
\end{proposition}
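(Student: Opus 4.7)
The plan is to proceed in three main steps, adapting the strategy of Theorem \ref{prop-main} to the stable setting where we have neither a topology nor a notion of differentiation. Fix a $\mathscr{K}$-critical $Y\subseteq \CF$ together with a definable injection $g\colon Y\to \mathscr{K}^n$. Two preliminary observations drive the argument: first, every additive translate $Y+a$ remains strongly internal to $\mathscr{K}$ via $x\mapsto g(x-a)$, and every multiplicative translate $cY$ ($c\in \CF^\times$) via $x\mapsto g(c^{-1}x)$; second, any finite disjoint union of strongly internal subsets of $\CF$ is strongly internal, by choosing distinct $t_1,\dots,t_k\in \mathscr{K}$ and appending $t_i$ as an extra coordinate on the $i$-th piece to inject into $\mathscr{K}^{n+1}$. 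Thus the crux is to cover $\CF$ by finitely many additive translates of $Y$.

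For this I would first show $\dpr(Y)=\dpr(\CF)$. If $\dpr(Y)<\dpr(\CF)$, then using that $\CF^\times$ acts on $\CF$ preserving $\mathscr{K}$-strong internality, together with the genericity argument of Lemma \ref{L:generic linearity for K} and sub-additivity of dp-rank, one builds a strongly internal subset of $\CF$ of dp-rank strictly larger than $\dpr(Y)$, contradicting $\mathscr{K}$-criticality. Once $\dpr(Y)=\dpr(\CF)$, strong minimality of $\mathscr{K}$ (hence exchange for $\acl$ in $\mathscr{K}^n$) pulls back via $g$ to yield exchange on $Y$, and a standard generic-element argument in NIP groups of finite dp-rank shows that finitely many additive translates of $Y$ cover $\CF$. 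This produces a definable injection $\CF\hookrightarrow \mathscr{K}^{n+1}$, establishing strong internality.

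With strong internality in hand, $\CF$ is definably embedded as a field inside $\mathscr{K}^N$. Since $\mathscr{K}$ is strongly minimal it is stably embedded with $\omega$-stable induced theory; hence subsets of $\mathscr{K}^N$ definable in $\CM$ have Morley rank bounded by $N$, and $\CF$ inherits finite Morley rank. By Macintyre's theorem every infinite field of finite Morley rank is algebraically closed, which gives the first conclusion. When $\mathscr{K}$ is a pure field, $\CF$ is an infinite algebraically closed field interpretable in pure ACF, and Poizat's classical theorem forces $\CF$ to be definably isomorphic to $\mathscr{K}$.

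The principal obstacle is the finite covering step. Theorem \ref{prop-main} achieved the analogous embedding via infinitesimal neighbourhoods and differentiation of $\lambda_z$; here one must substitute stability-theoretic genericity, relying on strong minimality to supply $\acl$-exchange through $g$. Care is also required when invoking Morley rank of subsets of $\mathscr{K}^N$ since $\CM$ itself need not be stable: one computes in the $\omega$-stable structure induced on $\mathscr{K}$, which is legitimate precisely because the strongly minimal set $\mathscr{K}$ is stably embedded.
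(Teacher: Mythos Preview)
Your endgame is correct (Macintyre for algebraic closure, Poizat for the pure case), and you are right that strong internality passes to additive and multiplicative translates and to finite disjoint unions. However, the two central steps of your plan have genuine gaps.

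First, you invoke Lemma~\ref{L:generic linearity for K} to argue that $\dpr(Y)=\dpr(\CF)$, but that lemma requires $D$ to be an SW-uniformity, and SW-uniformities are \emph{unstable}; a strongly minimal field is not one. More importantly, nothing in that lemma produces a strongly internal set of larger dp-rank: it only shows that $Y$ is approximately closed under certain field operations. You offer no substitute argument, and in fact there is no direct route from criticality alone to $\dpr(Y)=\dpr(\CF)$; this equality is exactly the content that must be \emph{proved}.

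Second, even granting $\dpr(Y)=\dpr(\CF)$, your ``standard generic-element argument in NIP groups of finite dp-rank'' for a finite covering by translates is not standard: $\CF$ is not yet known to be $\omega$-stable, and covering by finitely many translates of a set of full dp-rank is not a general NIP fact.

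The paper avoids both obstacles by never leaving the $\omega$-stable world. The key observation you are missing is that, since $\mathscr{K}$ eliminates imaginaries, any \emph{sum} $D_1+D_2$ of strongly internal sets is again strongly internal (as a definable image of $D_1\times D_2$). In particular $Y-Y+Y$ is strongly internal and hence $\omega$-stable, and criticality forces $\mr(Y-Y+Y)=\mr(Y)$. Inside this $\omega$-stable set one runs a local Zil'ber indecomposability argument: the stabilizer $H=\mathrm{Stab}(p)$ of the generic type $p$ of $Y$ sits in $Y-Y$, is definable by $\omega$-stability, and has $\mr(H)=\mr(Y)$. After passing to the connected component, one shows $H$ is closed under $\CF$-multiplication: for any $c\in\CF$ the set $cH+H$ is strongly internal, so if $cH\not\subseteq H$ then connectedness makes $(cH+H)/H$ infinite, whence $\mr(cH+H)>\mr(Y)$, contradicting criticality. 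Thus $H$ is a nonzero ideal of $\CF$, so $H=\CF$, and strong internality of $\CF$ follows immediately. The equality $\dpr(Y)=\dpr(\CF)$ is then a \emph{consequence}, not an input.
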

\begin{proof}
Since $\mathscr{K}$ is strongly minimal, it is dp-minimal and acl satisfies exchange. As a result, dp-rank is equal to the acl-dimension that is exactly Morley rank. Furthermore, by e.g. \cite[Lemmas 8.4.10, 8.4.11]{TZ}, since $\acl(\emptyset)$ is infinite, $\mathscr{K}$ eliminates imaginaries.

Among all definable subsets of $\CF$ strongly internal to $\mathscr{K}$ choose one, call it $Y$, that is $\mathscr{K}$-critical.  Translating $Y$, if needed, we may assume that $0_\CF\in Y$. Since $Y$ is in definable bijection with a definable subset of $\mathscr{K}^n$ then, after shrinking $Y$, we may assume that $Y$ is of finite Morley rank and  $\mathrm{dM}(Y)=1$. We also need the observation that if $D_1,D_2\sub \CF$ are definable subsets strongly internal to $\mathscr{K}$ then so is $D_1\times D_2$, and since $\mathscr{K}$ eliminates imaginaries, so is $(D_1\times D_2)/E$ for any definable equivalence relation on $D_1\times D_2$. In particular, as $\mathscr{K}$ eliminates imaginaries, $D_1+D_2=\{g+h:g\in D_1,\, h\in D_2\}$ is strongly internal to $\mathscr{K}$ (operations are taken in $\CF$).

What follows is, essentially, a local version of Zil'ber's indecomposability theorem, not assuming (a priori) an ambient $\omega$-stable group.

We work inside $Y-Y+Y\supseteq Y$. It is $\omega$-stable (since it is strongly internal to $\mathscr{K}$), and since $Y$ is $\mathscr{K}$-critical  $\dpr(Y-Y+Y)=\dpr(Y)$. So, in fact, $\mr(Y-Y+Y)=\dpr(Y-Y+Y)=\dpr(Y)=\mr(Y)$.

Let $R=\{(g,h)\in Y^2: \mr((g+Y)\triangle (h+Y))<\mr(Y)\}$. The set $R$ is definable since $\mathscr{K}$ is strongly minimal and $Y\sub \mathscr{K}^n$ (see \cite[Lemma 6.2.20]{MaBook}). Moreover, it is an equivalence relation on $Y$ (since $\mathrm{dM}(Y)=1$).

Let $k=\dM(Y+Y)$. We claim that there are at most $k$-many $R$-equivalence classes in $Y$. For  assume that there were at least $k+1$ classes represented by $g_1, \dots, g_{k+1}$. Since $(\mr, \dM)(g_i+Y)=(\mr, \dM)(Y)=(\mr(Y), 1)$, by the definition of $R$ this means  that $\mr(g_i+Y\cap g_j+Y)<\mr(Y)$. Since $g_i+Y \sub Y+Y$ for all $i$, it follows that either $\dM(Y+Y)>k$ or $\mr(Y+Y)>\mr(Y)$. Either option is impossible.

Hence there are only finitely many $R$-classes in $Y$ and since they cover $Y$  one of them has Morley rank $\mr(Y)$. So there is a definable subset $Y_1\subseteq Y$ with $\mr(Y_1)=\mr(Y)$ satisfying that for any $g,h\in Y_1$, $\mr((g+Y)\cap (h+Y)=\mr(Y)$.

Let $p$ the unique generic type of $Y$ and observe that for $g\in Y$, $g+p$ is the unique generic type of  $g+Y$. Let $H=\mathrm{Stab}(p)=\{a\in \CF:a+p=p\}$. Note that, if $a\in H$ then $a+p=p$ so that $(a+Y)\cap Y$ is generic in $Y$, and in particular  $H\subseteq Y-Y\subseteq Y-Y+Y$. Hence $H$ is a type-definable group in $\mathscr{K}$, and therefore $H$ is, in fact, definable by e.g. \cite[Theorem 7.5.3]{MaBook}. By the definition of $Y_1$ above, we have $Y_1-Y_1\sub H$, hence
\[
    \mr(Y)=\mr(Y_1-Y_1)\le \mr(H)\le \mr(Y-Y)=\mr(Y).
\]
So $\mr(H)=\mr(Y)$.

Replace $H$ by $H^0$, its connected component, and let $q$ be its generic type. We claim that $H$ is invariant under $\CF$-multiplication. Let $c\in \CF$. We work now in $cH+H$, that is still strongly internal to $\mathscr{K}$. If $cH\not\subseteq H$ then $H/(cH\cap H)>1$. Since $cH$ is connected as well it must be infinite. On the other hand $H/(cH\cap H)\cong (cH+H)/H$, so if the latter is infinite $\mr(cH+H)>\mr(H)=\mr(Y)$, contradicting the assumption that $Y$ is $\mathscr{K}$-critical.

As a result, $H$ is a non-zero ideal of $\CF$ i.e. $H=\CF$ and hence $\CF$ is an $\omega$-stable field so algebraically closed. If $\mathscr{K}$ is pure then  by \cite{PoiFields}, $H$ is definably isomorphic to $\mathscr{K}$.
\end{proof}

At the level of generality we are working in the statement of Proposition \ref{P:SI to SM} is optimal: \begin{example}
    Let $\mathcal K\models$ ACVF and $\bk$ its residue field. Expand $\bk$ by fusing it (in the sense of \cite{Hr2}) with an algebraically closed field $\CF$ of a different characteristic. By (the proof of) \cite[Propostion 5.9]{HaHa} the resulting expansion of $\CK$ is still  dp-minimal, and $\bk$ is still stably embedded in the expanded structure. So $\CK$ is an SW-uniformity.  However, the field $\CF$ (with its induced structure) is not definably isomorphic to the field $\bk$ (with its expanded structure).
\end{example}

As a corollary we obtain another isomorphism result.

\begin{corollary}\label{dp minimal fields}
Let $\CM$ be a multi-sorted structure and $K$ an infinite dp-minimal pure field of characteristic $0$ definable in $\CM$. If $\CF$ is a field of finite dp-rank which is locally strongly internal to $K$ then $\CF$ is definably isomorphic to  a finite extension of $K$. In particular, any field $\CF$ definable in a pure dp-minimal field $K$ of characteristic $0$ is definably isomorphic to a finite extension of $K$. 
\end{corollary}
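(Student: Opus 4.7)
The plan is to apply Johnson's trichotomy (Fact~\ref{F:Johnson DP}) to the dp-minimal field $K$: being of characteristic $0$, $K$ is either algebraically closed, real closed, or admits a $\emptyset$-definable henselian valuation. Each of the three possibilities will then be handled by a result proved earlier in the paper.

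For the ``in particular'' clause, any field $\CF$ definable in $K$ has its universe inside some $K^n$, so it is tautologically (locally) strongly internal to $K$ via the inclusion, and its dp-rank is bounded by $n$ by subadditivity of dp-rank. Thus it suffices to prove the main statement.

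So assume $\CF$ has finite dp-rank and is locally strongly internal to $K$. In the \emph{algebraically closed} case, $K$ is strongly minimal as a pure field, and Proposition~\ref{P:SI to SM} delivers a definable isomorphism $\CF \cong K$. In the \emph{real closed} case, the order is definable from the field operations, so $K$ is o-minimal, hence an SW-uniformity by Example~\ref{E:SW-uniformities}(2); moreover, \gendif{} holds in pure o-minimal expansions of real closed fields via the classical $\CC^k$-cell decomposition, so Theorem~\ref{prop-main} applies and identifies $\CF$ with a finite extension of $K$. Finally, in the \emph{henselian} case, letting $v$ denote a definable henselian valuation on $K$, the structure $(K,+,\cdot,v)$ is a definitional expansion of $K$ and therefore a pure dp-minimal henselian valued field of characteristic $0$ (and in particular $1$-h-minimal by the cited work of Cluckers--Halupczok--Rideau-Kikuchi--Vermeulen); Corollary~\ref{C:field in dp-min h-min} then yields $\CF$ as a finite extension of $K$.

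All substantive work has already been done in the preceding sections, so the proof is essentially an assembly. The only subtlety is the henselian branch: one must verify that definability of $v$ from the pure field $K$ ensures genuine purity of $(K,v)$ as a valued field, so that the $1$-h-minimality invoked in Corollary~\ref{C:field in dp-min h-min} is indeed available. This is immediate since a definitional expansion does not alter the class of definable sets, and is arguably the only place where a careful reader might pause.
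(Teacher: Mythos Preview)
Your proof is correct and follows essentially the same approach as the paper: apply Johnson's trichotomy, handle the algebraically closed case via Proposition~\ref{P:SI to SM}, and the other two cases via Theorem~\ref{prop-main} (the paper invokes Theorem~\ref{prop-main} directly in both the real closed and henselian cases, noting that \gendif{} holds by o-minimality and $1$-h-minimality respectively, whereas you route the henselian case through Corollary~\ref{C:field in dp-min h-min}, which amounts to the same thing).
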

\begin{proof}

By Johnson's theorem (Fact \ref{F:Johnson DP}), $K$ is either algebraically closed, real closed or admits a definable henselian valuation. If $K$ is algebraically closed, the result follows from Proposition \ref{P:SI to SM}.
The remaining cases follow from Theorem \ref{prop-main} and the fact that definable functions in a pure real closed field or a pure dp-minimal valued filed satisfy {\gendif} (by o-minimality in the former case and by 1-h-minimality in the latter case).  
\end{proof}

\subsection{From a finite-to-finite correspondence to strong internality}
The results above were all proved assuming the existence of a definable injection from an infinite subset of $\CF$ into $K^n$.
As we note here, the injectivity assumption can often be relaxed to a finite-to-finite correspondence.
Let us, first, clarify our terminology: 
\begin{definition}
    Let $X,Y$ be any sets. A relation $C\sub X\times Y$ is a \emph{finite-to-finite correspondence between $X$ and $Y$} if the projections  $\pi_1: C\to X$ and $\pi_2:C\to Y$ are surjective with finite fibres. 
\end{definition}

We also fix some notation, for the following standard notion: 

\begin{efbi}
A definable set $D$ in a (multi-sorted) structure $\CM$ has
\emph{elimination of finite imaginaries} if whenever $\{X_t\subseteq D^n:t\in T\}$ is a definable family of finite sets, uniformly bounded in size  then there exists a definable map $f:T\to D^m$ for some integer $m$, with the property that $f(t_1)=f(t_2)$ if and only if $X_{t_1}=X_{t_2}$.
\end{efbi}

The condition {\efi} is satisfied whenever $D$ expands a definable field (using symmetric functions) or a linear order. It  allows us, under additional assumptions, to replace definable finite-to-finite correspondences by definable bijections:

\begin{lemma}\label{L:almost internal-to -strongly}
Assume that $X$ and $Y$ are definable in some $|T|^+$-saturated (multi-sorted) structure $\CM$ and   there is a finite-to-finite definable correspondence between infinite subsets of $X$ and $Y$.

Assume also that
\begin{enumerate}
    \item $X$ has {\efi}, and 
    \item either $Y$ supports an SW-uniform structure or $Y$ has {\efi}.
\end{enumerate}
Then $X$ is locally strongly internal to $Y$.
\end{lemma}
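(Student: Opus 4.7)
The plan is to first reduce the finite-to-finite correspondence to constant fiber sizes, then in both cases produce a definable finite-to-one map from (a subset of) $X$ into some power of $Y$, and finally refine it to an injection using \efi{} on $X$. By stratifying $C \subseteq X' \times Y'$ by the sizes of the two projection fibers and passing to an infinite stratum, we may assume $\pi_1$ and $\pi_2$ have constant fiber sizes $k$ and $k'$ respectively. Write $C^x := \pi_2(\pi_1^{-1}(x))$ and $C_y := \pi_1(\pi_2^{-1}(y))$.

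If $Y$ has \efi, apply \efi{} on $Y$ to the family $\{C^x\}_{x \in X'}$ of uniformly bounded finite subsets of $Y$ to obtain a definable $\eta : X' \to Y^N$ with $\eta(x_1) = \eta(x_2) \iff C^{x_1} = C^{x_2}$. The map $\eta$ has fibers of size at most $k'$, since any $y \in C^{x_1} = C^{x_2}$ forces $x_1, x_2 \in C_y$. Now apply \efi{} on $X$ to the $\eta$-fibers (a definable family of uniformly bounded finite subsets of $X$) to obtain a definable code $\tau : X' \to X^M$; since distinct fibers are disjoint, the first coordinate $\tau_1$ selects one representative from each fiber, so $X'' := \tau_1(X') \subseteq X'$ is infinite and $\eta|_{X''} : X'' \to Y^N$ is the required definable injection.

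If $Y$ is SW-uniform, the coding $\{C^x\}$ into $Y^N$ is not directly available, so I reverse the roles of the two sorts: apply \efi{} on $X$ to the family $\{C_y\}_{y \in Y'}$ of $k'$-subsets of $X$ to obtain a definable $\alpha : Y' \to X^N$, and let $\alpha_1 : Y' \to X$ be its first coordinate. Since $\alpha_1(y) \in C_y$, the fiber $\alpha_1^{-1}(x) \subseteq C^x$ has size at most $k$, so $\alpha_1$ is a definable finite-to-one map onto an infinite $X_1 \subseteq X$. The crucial step is now to split the finite equivalence relation $E$ on $Y'$ induced by $\alpha_1$ on an infinite definable subset $Y'' \subseteq Y'$; once achieved, $\alpha_1|_{Y''}$ is injective and its inverse provides a definable injection $X'' := \alpha_1(Y'') \hookrightarrow Y$, giving local strong internality with $m = 1$. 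For the splitting, consider $\bar E := \{(y_1, y_2) \in Y'^2 : y_1 \neq y_2,\ \alpha_1(y_1) = \alpha_1(y_2)\}$: its coordinate projections are finite-to-one so $\dpr(\bar E) \leq 1 < 2 = \dpr(Y^2)$, and applying Fact \ref{localhom} gives, outside a lower-rank subset, that $\pi_1|_{\bar E}$ is a local homeomorphism whose local inverses produce definable continuous ``branches'' $\beta_1, \ldots, \beta_{k-1}$ on a neighborhood of a generic $y_0 \in \inte(Y')$, with $[y]_E = \{y, \beta_1(y), \ldots, \beta_{k-1}(y)\}$. Since $\beta_i(y_0) \neq y_0$, Hausdorffness of the SW-uniform topology together with continuity yields a basic open $V_0 \ni y_0$ with $\beta_i(V_0) \cap V_0 = \emptyset$ for every $i$; thus $E$ is trivial on $Y'' := V_0 \cap Y'$, which is infinite because $V_0$ is open and $y_0 \in \inte(Y')$. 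A compactness argument in the saturated $\CM$ renders the choice of $V_0$ uniformly definable.

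The main obstacle is the splitting step in the SW-uniform case: without \efi{} on $Y$ one cannot canonically pick a representative from each $E$-class, so the argument must combine the dp-rank drop $\dpr(\bar E) < \dpr(Y^2)$, the local homeomorphism output of Fact \ref{localhom}, Hausdorffness and generic continuity of the local branches, and finally a saturation step to render the separating open neighborhood $V_0$ uniformly definable.
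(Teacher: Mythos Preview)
Your proof has a recurring gap: you treat elimination of finite imaginaries as if it supplied a definable \emph{selector}, i.e., a choice of one element from each set in a definable family of finite sets. But \efi{} only provides a \emph{code}: a map $f:T\to D^m$ with $f(t_1)=f(t_2)\iff X_{t_1}=X_{t_2}$, and there is no reason the coordinates of $f(t)$ lie in $X_t$. A field, for instance, codes a finite set by its elementary symmetric functions, none of which need belong to the set. So in your \efi{} case the claim that $\tau_1$ ``selects one representative from each fiber'' is unjustified, and in your SW case the assertion ``$\alpha_1(y)\in C_y$'' is equally unjustified. Without this both arguments stall: you are left with a finite-to-one map $\eta:X'\to Y^N$ (resp.\ $\alpha:Y'\to X^N$) and no mechanism to split it---and in general one cannot, as $z\mapsto z^2$ on an algebraically closed field shows.

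The paper avoids this via a minimality trick that never needs a selector. First \efi{} on $X$ codes each $C^y\subseteq X$ in $X^m$, giving a finite-to-one $Y'\to X^m$; among all definable finite-to-one maps from an infinite subset of $Y$ into some $X^m$, take one with $m$ minimal, and a projection-to-$X^{m-1}$ argument forces $m=1$. This produces a finite-to-one $h:Y'\to X$. If $Y$ has \efi{}, coding the fibres $h^{-1}(x)\subseteq Y$ gives directly an \emph{injection} $h(Y')\hookrightarrow Y^k$, since distinct $x$ have disjoint, hence distinctly coded, fibres. If $Y$ is SW-uniform, take $c\in Y'$ generic, use Hausdorffness to find an open $U\ni c$ meeting $h^{-1}(h(c))$ only in $c$, and invoke Proposition~\ref{P:genos} to shrink $U$ to a definable $U_0$ over which $c$ remains generic; then $\{y\in U_0:h^{-1}(h(y))\cap U_0=\{y\}\}$ contains $c$, is infinite, and $h$ is injective on it. Your branch-and-separate argument in the SW case is more elaborate than necessary and, incidentally, assumes all local inverses of $\pi_1\restriction\bar E$ over $y_0$ exist on a common neighbourhood and exhaust the $E$-class there, which requires more care (e.g.\ stratifying by fibre size first) than you indicate.
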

\begin{proof}
Restricting $X$ and $Y$ we may assume that there is  $C\sub X\times Y$, a definable finite-to-finite correspondence between $X$ and $Y$.  For  $x\in X$ and $y\in Y$ denote
$$C_x=\{y\in Y:(x,y)\in C\},\, C^y=\{x\in X:(x,y)\in C\}$$ and note that by $\aleph_0$-saturation, they are uniformly bounded in size by some integer.

We first claim that there is a finite-to-one function from an infinite subset of $Y$ into $X$.
Since $X$ satisfies {\efi}, each $C^y$ is coded by some element of $X^m$, for some integer $m$, thus we obtain a definable finite-to-one function $f$ from a $\pi_2(C)\sub Y$ into $X^m$. Amongst all definable finite-to-one functions from an infinite definable subset of $Y'\subseteq Y$ into $X^m$, choose one, $h$, with $m$ minimal. We claim that necessarily $m=1$.

Otherwise, consider the projection
$\pi:X^m\to X^{m-1}$ onto the first $m-1$ coordinates and let  $W=\pi(X')$, for $X'=h(Y')$.
If there is some $w\in W$ such that $\pi^{-1}(w)$ is infinite then we obtain a finite-to-one map from an infinite subset of $Y$ into $\pi^{-1}(w)$ so also into $X$, contradicting the fact that $m>1$

Otherwise  $\pi^{-1}(w)$ is finite for all $w\in W$. The function $h_1=\pi\circ h$ is again finite-to-one from $Y'$ into $W\sub X^{m-1}$, contradicting the minimality of $m$.

We thus showed the existence of a definable finite-to-one $h:Y'\to X$, for some infinite definable $Y'\sub Y$. Without loss of generality, $Y'=Y.$

Assume that $Y$ is an SW-uniformity, and all the data is definable over $A$.
Let $c\in Y$ be with $\dpr(c/A)=1=\dpr(Y)$. Since the topology on $Y$ is Hausdorff, we can find a relatively open subset $U\subseteq Y$ with
$c\in U$ such that $h^{-1}(h(c))\cap U=\{c\}$. By Proposition \ref{P:genos} there exists some $B\supseteq A$ and a $B$-definable open neighborhood $U_0\subseteq U$ of  $c\in U_0$ such that $\dpr(c/B)=\dpr(c/A)=1$.

 Now let $Y_0=\{y\in U_0:|h^{-1}(h(y))|=1\}$. By the above, $Y_0$ is $B$-definable. As $c\in S$ and $\dpr(c/B)=1$, $Y_0$ must be infinite
 and hence $h\restriction Y_0$ is injective.

 Finally, assume that $Y$ has {\efi}. Each of the fibers of the map  $h$ can be coded by an element of $Y^k$ for some fixed $k$, so we obtain a definable injection from an infinite subset of $X$ into $Y^k$.
\end{proof}

\begin{corollary}\label{C:finite-to-finite enough for interp}
Let $\CM$ be a multi-sorted structure, $(K,+,\cdot)$ an SW-uniform field satisfying {\gendif} and $\CF$  a definable field of finite dp-rank. If there exists an infinite definable $S\subseteq\CF$ and a finite-to-finite definable correspondence from $S$ into $K^n$ then $\CF$ is definably isomorphic to a finite extension of $K$.
\end{corollary}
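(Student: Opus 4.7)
The plan is to deduce this corollary by combining two previously established results: Lemma \ref{L:almost internal-to -strongly}, which upgrades a finite-to-finite correspondence to a definable injection (and hence to local strong internality), followed by Theorem \ref{prop-main}, which classifies fields of finite dp-rank locally strongly internal to an SW-uniform field satisfying \gendif.

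First, I would pass to a sufficiently saturated elementary extension, as in the proof of Theorem \ref{prop-main}, and then apply Lemma \ref{L:almost internal-to -strongly} with $X = \CF$ and $Y = K^n$. To check its hypotheses: condition (1), namely \efi\ for $\CF$, holds because $\CF$ is a field (finite subsets of $\CF^k$ are coded by the elementary symmetric functions in each coordinate). For condition (2), note that $K^n$ is not itself an SW-uniformity when $n>1$ since its dp-rank equals $n$; so instead I would verify that $K^n$ satisfies \efi. This reduces to \efi\ for $K$ itself: a uniformly bounded definable family of finite subsets of $(K^n)^k = K^{nk}$ is a family of finite subsets of a power of $K$, so \efi\ of $K$ supplies a code $f : T \to K^l$, and post-composing with a definable injection $K \hookrightarrow K^n$ (for instance $x \mapsto (x,0,\ldots,0)$) produces the required code in a power of $K^n$.

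The conclusion of Lemma \ref{L:almost internal-to -strongly} is then an infinite definable $\CF' \subseteq \CF$ together with a definable injection into $(K^n)^m = K^{nm}$, which is exactly the statement that $\CF$ is locally strongly internal to $K$. Theorem \ref{prop-main} will then immediately yield that $\CF$ is definably isomorphic to a finite extension of $K$, completing the proof. I do not expect any real obstacle here, since this is a short combination of the two principal results of Section \ref{S:def fields}; the only minor verification required is the preservation of \efi\ when passing from $K$ to $K^n$, which is the padding argument sketched above.
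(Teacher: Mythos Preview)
Your approach is correct and essentially identical to the paper's: apply Lemma \ref{L:almost internal-to -strongly} (using that both $\CF$ and $K$, hence $K^n$, satisfy \efi) and then Theorem \ref{prop-main}. One minor point you skip but the paper addresses: the passage to a saturated extension requires the finite-to-finite correspondence to persist there, which---unlike the injection hypothesis in Theorem \ref{prop-main}---needs uniform bounds on the fibres, and the paper secures these via elimination of $\exists^\infty$ for the dp-minimal field $K$.
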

\begin{proof}
Let $f$ be the definable correspondence from $S$ into $K^n$. As $K$ is dp-minimal it eliminates $\exists^{\infty}$ by \cite[Lemma 2.2]{DoGoStrong} and we may thus assume $K$ is $|T|^+$-saturated. Both $\CF$ and $\CK$ are fields so satisfy {\efi}. By Lemma \ref{L:almost internal-to -strongly}, $\CF$ is locally strongly internal to $K$. We are now in the situation to apply Theorem \ref{prop-main} and conclude the proof.
\end{proof}

\section{Reduction to the distinguished sorts}\label{S:the distinguished sorts}

\newcommand{\CB}{\mathscr{B}}
Throughout this section, and until the end of the paper we set:
\begin{assumption}
Let  $\CK=(K,v,\dots)$ denote a dp-minimal expansion of a valued  field.
\end{assumption}


In this section we aim to show that if $\CK$ is either $P$-minimal, $C$-minimal or power bounded $T$-convex then every infinite field $\CF$ interpretable in $\CK$ is locally strongly internal to one of the distinguished sorts, $K$, $\Gamma$, $\bk$ or $K/\CO$. In practice, at this level of generality, we show a weaker result (replacing the definable bijection in the definition of strong internality  with a finite-to-finite correspondence), still sufficient for our needs. We start with a general observation assuring that $\CF$ is locally strongly internal to a unary imaginary sort:

\begin{lemma}\label{1-dim}
Let $\CM$ be an arbitrary $\aleph_0$-saturated structure, and $E$ a definable equivalence relation on $X\sub M^k$ with infinitely many classes. Then there exists an infinite definable quotient $X'/E'$, with $X'\sub M$ and a  finite-to-finite definable correspondence  between $X'/E'$ and an infinite subset of   $X/E$.
\end{lemma}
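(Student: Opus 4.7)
The plan is to proceed by induction on $k$. For $k=1$ take $X':=X$ and $E':=E$. For $k\ge 2$, project $X$ onto its first coordinate via $\pi\colon X\to M$, set $Y:=\pi(X)\sub M$, and for each $y\in Y$ let $X_y:=\{z\in M^{k-1}:(y,z)\in X\}$ carry the equivalence $E_y$ induced by $E$. If some $y^*\in Y$ has $|X_{y^*}/E_{y^*}|$ infinite, apply the inductive hypothesis to $(X_{y^*},E_{y^*})$ and compose the resulting correspondence with the canonical injection $X_{y^*}/E_{y^*}\hookrightarrow X/E$, $[z]_{E_{y^*}}\mapsto [(y^*,z)]_E$, which is injective because $E_{y^*}$ is precisely the restriction of $E$ to $\{y^*\}\times X_{y^*}$.

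Otherwise every $|X_y/E_y|$ is finite, and by $\aleph_0$-saturation uniformly bounded by some integer $n$. I would then define $g\colon Y\to [X/E]^{\le n}$ by $g(y):=\{[(y,z)]_E:z\in X_y\}$ and the equivalence $y_1\sim y_2\iff g(y_1)=g(y_2)$ on $Y$. Since $X/E=\bigcup_{y\in Y}g(y)$, finiteness of $Y/\sim$ would force $|X/E|\le n\cdot|Y/\sim|<\infty$, so $Y/\sim$ is infinite. It remains to extract from the natural incidence $R:=\{([y]_\sim,c):c\in g(y)\}\sub (Y/\sim)\times(X/E)$ a finite-to-finite correspondence; note that the fibres of $R$ over $Y/\sim$ have size $\le n$, while the fibres over $X/E$ may be arbitrarily large.

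The main obstacle is this last point, which I plan to resolve through an auxiliary sublemma proved by induction on $n$: in any $\aleph_0$-saturated structure, given a definable infinite set $Z$ and a definable infinite $H\sub [Z]^{\le n}$, there exist definable infinite $H_0\sub H$ and $T\sub Z$ for which the membership relation is a finite-to-finite correspondence between $H_0$ and $T$. The case analysis is on the popularity $\mathrm{pop}(c):=|\{S\in H:c\in S\}|$: if it is uniformly bounded by some $m$ then the incidence on $H\times\bigcup H$ is already $(\le n,\le m)$-finite-to-finite; otherwise $\aleph_0$-saturation produces some $c^*\in Z$ with $\mathrm{pop}(c^*)=\infty$, and the injection $S\mapsto S\setminus\{c^*\}$ identifies the infinite family $\{S\in H:c^*\in S\}$ with an infinite subfamily of $[Z\setminus\{c^*\}]^{\le n-1}$, to which the inductive hypothesis applies; pulling back yields the desired $H_0\sub H$ and $T\sub Z\setminus\{c^*\}$. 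Applying this sublemma to $H:=\bar g(Y/\sim)\sub [X/E]^{\le n}$ in $\CM^{eq}$ (which is $\aleph_0$-saturated with $\CM$) and pulling back along $\bar g$ yields an infinite definable $Y_0\sub Y/\sim$ together with a finite-to-finite correspondence to an infinite subset of $X/E$; taking $X'$ to be the $\sim$-saturated preimage of $Y_0$ in $Y\sub M$ and $E':=\sim\!\!\restriction_{X'}$ completes the inductive step.
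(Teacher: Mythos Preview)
Your proof is correct and follows essentially the same approach as the paper's. The paper frames the argument as a minimality contradiction (choosing $n$ minimal with $X'\sub M^n$ admitting a finite-to-finite correspondence to $X/E$, then projecting off one coordinate), whereas you do a direct induction on $k$ projecting onto the first coordinate; your sublemma on families $H\sub [Z]^{\le n}$ is exactly the paper's inline Case~1/Case~2 popularity analysis, just cleanly isolated.
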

\begin{proof}
We fix $n$ minimal with respect to the following property:  There is a finite-to-finite definable correspondence between $X'/E'$ and an infinite subset of $X/E$, such that $X'\sub M^n$.
 Our goal is to prove that that $n=1$.
    
    Thus we may already assume that $X\sub M^n$ for this minimal $n$, and towards contradiction we assume that $n>1$.
    Let $\pi: X'\to M^{n-1}$ be the projection onto the first $n-1$ coordinates. Let $W=\pi(X')$. If for some $w\in W$ the set $X'_w/E'$ is infinite, where $X'_w:=X'\cap \pi^{-1}(w)$, we get a contradiction to  the minimality of $n$ (since we can definably identify $X'_w$ with a subset of $M$ and then obtain a one-to-one map from $X'_w/E'$ into $X/E$).

So  we assume that $|X'_w/E|$ is finite for all $w\in W$. As $\CM$ is $\aleph_0$-saturated, Without loss of generality, we may assume that for some fixed $s\in \mathbb N$, for all $w\in W$, $|X_w/E|=s$.
\vspace{.1cm}

We now define on $W$ the equivalence relation $$w_1 E_1 w_2 \Leftrightarrow X_{w_1}/E=X_{w_2}/E.$$

Because $X/E$ is infinite and each $X_{w_1}/E$ is finite, the quotient $W/E_1$ is infinite. For $\alpha\in W/E_1$ we let $X_\alpha:=X_w/E$, for any $w\in \alpha$. By our assumption, $|X_\alpha|=s$ for all $\alpha\in W/E_1$. Notice that if $s=1$ then, by the definition of $E_1$, we get an injection from $W/E_1$ into $X/E$, contradicting the minimality of $n$ (since $X_1\sub M^{n-1}$). We assume then that $s>1$.

\noindent {\bf Case 1.} For every $\beta \in X/E$, $\beta$ belongs to at most finitely many (finite) sets of the form $X_\alpha$, $\alpha\in W/E_1$.
\vspace{.1cm}

In this case, the set $$C=\{(\alpha, \beta)\in W/E_1\times X/E:\beta \in X_w/E\}$$
is a finite-to-finite definable correspondence between $W/E_1$ and $X/E$, contradicting the minimality of $n$.

\vspace{.1cm}

\noindent {\bf Case 2.} There exists $\beta\in X/E$ which belongs to infinitely many sets of the form $X_\alpha$,  $\alpha\in W/E_1$.

\vspace{.1cm}

Fix such a $\beta$ and consider the infinite definable set $X^\beta=\{w\in W:\beta\in X_w/E\}$. By our assumption, $X^\beta/E_1$ is infinite. So we may replace $W$ by $X^\beta$ and assume that for each $\alpha\in W/E_1$, we have $\beta\in X_\alpha$.

We now replace $X$ by $X_1=X\setminus \beta$ (namely we remove one $E$-class, $\beta$, from $X$). Since we assumed that for every $\alpha$, $|X_\alpha|=s>1$ and $\beta\in X_\alpha$, the set $(X_1)_\alpha=(X_1)_w/E$, for $w\in \alpha$ has one element less, namely consists of $s-1$ elements.
By repeating the process we can finally reach the situation where $|X_\alpha|=1$, for every $\alpha$ and finish as above.\end{proof}

From now on we focus our attention on quotients of the form  $K/E$,  for some definable equivalence relation $E$ on $K$.


\subsection{Inter-algebraicity with the distinguished sorts}\label{ss:unordered}


We first introduce some notation.
We let $\CB^{op}_\gamma$ be the set of  open balls in $K$ of radius $\gamma$ and
 $\CB^{cl}_\gamma$  the set of  closed balls in $K$  of radius $\gamma$. These are clearly definable families of sets and thus as we vary $\gamma\in \Gamma$ we obtain $\CB^{op}$ and $\CB^{cl}$ the families of  open and closed balls, respectively. We let $\CB=\CB^{op}\cup \CB^{cl}$ be the family of all balls. When $\Gamma$ is discrete  every closed ball is also open and thus $\CB^{op}=\CB^{cl}$.

Note that for any $\gamma_1,\gamma_2\in \Gamma$, there is a definable bijection (possibly using an additional parameter) between the set $\CB^{op}_{\gamma_1}$ and $\CB^{op}_{\gamma_2}$ and similarly between $\CB^{cl}_{\gamma_1}$ and $\CB^{cl}_{\gamma_2}$. In particular, there is a definable bijection between every $\CB^{cl}_\gamma$ and $K/\CO$, as the latter is just $\CB^{cl}_0$. Similarly, there is a definable injection of $\bk$ into $\CB^{op}_0$.

Given a set $X\sub K$, {\em a maximal ball} inside $X$ is a ball $b\subseteq X$ such that there does not exist a ball $b'$, $b\subsetneq b'\subseteq X$. Both $b$ and $b'$ can be either closed or open. As for any two balls $b_1,b_2$, either $b_1\cap b_2=\0$ or one of the balls is contained in the other, any two maximal balls in $X$ are necessarily disjoint. 

By the same observation as above, if $X\sub K$ is definable in a structure expanding a valued field and $x_0$ is an interior point of $ X$, then the family of balls $b\sub X$ containing $x$  is a definable chain of balls. If the induced structure on $\Gamma$ is o-minimal (or, more generally,  definably complete) then the infimum of the radii of these balls exists and thus there is a maximal ball $b$ containing $x_0$ in $X$  (which could be closed or open, depending on whether this infimum is attained or not). The family of all maximal balls in $X$ is thus a definable family and if $\Gamma$ is definably complete then its union covers $X$.

In order to study uniformly definable finite sets of balls,   the following additional assumption  regarding the valued field $K$ will be useful.

\begin{min-balln}
  For every $\widehat{\mathcal{K}}\equiv \mathcal K$, if $X\subseteq \widehat K$ is a definable subset  intersecting infinitely many closed $0$-balls then it contains a closed ball of radius $<0$.
\end{min-balln}

\begin{remark}

\begin{enumerate}
    \item It is easy to verify that if $\CK$ satisfies {\minballn} then it satisfies the same statement with $0$ replaced by any $\gamma_0\in \Gamma$.
    \item  Under our assumptions, if $\bk$ is finite then $\Gamma$ is  discrete (by Johnson, \cite[Lemma 4.3.1]{johnsonPhD}, see also \cite[Lemma 2.7]{HaHa} for a concise  overview). Assuming {\minballn}, if $\Gamma$ is discrete then $\bk$ is finite. Indeed, if $\bk$ were infinite then the definable set $\CO\setminus \{0\}$ would contain infinitely many closed balls of radius $1$, but no closed ball of radius $0$, contradicting {\minballn}.
\end{enumerate}
 \end{remark}


\begin{lemma}\label{fin-max} 
If $\CK$ satisfies {\minballn} and $X\sub K$ is definable then for every $\gamma_0\in \Gamma$,  $X$ contains at most finitely many maximal closed balls of radius $\gamma_0$.\end{lemma}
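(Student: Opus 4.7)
The plan is to proceed by contradiction: assume $X$ contains infinitely many maximal closed balls of radius $\gamma_0$, bundle them into a single definable witness set, and then use \minballn\ to produce a strictly larger ball inside $X$, contradicting maximality.

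First, I would verify that the collection of maximal closed balls of radius $\gamma_0$ contained in $X$ forms a uniformly definable family. The family of closed balls of radius $\gamma_0$ is uniformly parameterised (by any center), the condition ``contained in $X$'' is definable, and ``no strictly larger ball is contained in $X$'' is a definable quantification over the (definable) family of all balls. In particular the union
\[
Y \;:=\; \bigcup\{B : B\text{ is a maximal closed ball of radius }\gamma_0\text{ contained in }X\}
\]
is a definable subset of $X$.

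Next, assuming for contradiction that there are infinitely many such maximal balls $\{B_i\}_{i}$, the set $Y$ meets infinitely many distinct closed balls of radius $\gamma_0$ (namely the $B_i$ themselves, which are pairwise disjoint as balls of equal radius). Applying \minballn\ with $0$ replaced by $\gamma_0$ (as permitted by the first clause of the remark following the axiom), we obtain a closed ball $b\subseteq Y$ of radius $\gamma < \gamma_0$.

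The final step uses the ultrametric property. Pick any point $x\in b$; since $x\in Y$, we have $x\in B_i$ for some $i$. The balls $b$ and $B_i$ both contain $x$ and have distinct radii $\gamma < \gamma_0$, so the larger one contains the smaller: $B_i \subsetneq b$. But $b\subseteq Y\subseteq X$, so $B_i$ is strictly contained in a ball lying inside $X$, contradicting the maximality of $B_i$. The main (and essentially only) obstacle is the definability check for $Y$; once that is in hand, the invocation of \minballn\ and the ultrametric comparison are routine.
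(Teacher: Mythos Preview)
The proposal is correct and follows essentially the same approach as the paper: both pass to the definable union of the maximal closed $\gamma_0$-balls in $X$, apply {\minballn} (at radius $\gamma_0$) to obtain a strictly larger ball inside that union, and derive a contradiction to maximality. Your write-up is simply more explicit about the definability of $Y$ and the ultrametric comparison; the paper's version is a one-paragraph sketch of the same argument.
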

\begin{proof} 
Assume towards a contradiction that  $X$ contained infinitely many closed maximal $\gamma_0$-balls. Let $X'\sub X$ be the union of all the closed maximal $\gamma_0$-balls in $X$. The set $X'$ is definable and each of these closed $\gamma_0$-balls is still maximal in $X'$. By {\minballn}, $X'$ must contain a closed ball of radius $<\gamma_0$. But then one of the $\gamma_0$-balls in $X'$ is not maximal.
\end{proof}

Recall that $\Gamma$ has {\em Definable Skolem Functions} if given a definable family $\{X_t:t\in T\}$  of non-empty subsets of $\Gamma$, there is a definable function $c:T\to \Gamma$ such that $c(t)\in X_t$. The following definition is based on \cite{PilPoi}.
\begin{definition} We say that a structure $\CM$ is {\em surgical} if every definable equivalence relation on $M$ has at most finitely many infinite classes.
\end{definition}
\begin{proposition}\label{new-main}
Assume that $\CK=(K,v,\ldots)$ is a sufficiently saturated expansion of a dp-minimal valued field satisfying 
\begin{enumerate} 
\item {\minballn}.
\item  $\Gamma$ is definably complete and has definable Skolem functions.
\item $\bk$ is surgical.
\end{enumerate}

For every definable  $X\subseteq K^n$ and definable equivalence relation $E$ on $K$ with $X/E$ infinite, there exists a definable infinite $T\sub X/E$ and a definable finite-to-finite correspondence between $T$ and a definable subset of $K$, $\Gamma$, $\bk$ or $K/\CO$.
\end{proposition}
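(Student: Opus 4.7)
First, by Lemma~\ref{1-dim}, we may reduce to the case $X\subseteq K$ with $E$ a definable equivalence relation on $X$ such that $X/E$ is infinite. The strategy is to associate canonically to each $E$-class, via the ball geometry and definable Skolem on $\Gamma$, an invariant in one of the distinguished sorts. There are two preliminary reductions: (i)~if infinitely many $E$-classes are finite, their union immediately provides a finite-to-finite correspondence with a subset of $K$, so we may assume cofinitely many classes are infinite; (ii)~since $\CK$ is an SW-uniformity (Example~\ref{E:SW-uniformities}(3)), every infinite definable subset of $K$ has non-empty interior, hence so does every such class $C$, and by definable completeness of $\Gamma$ each interior point of $C$ lies in a unique maximal ball in $C$ (closed or open) with a well-defined radius in $\Gamma$.

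\textbf{Case A: infinitely many classes contain a maximal closed ball.} Restrict to such classes, and for each use definable Skolem on $\Gamma$ to select $\gamma_C$, the radius of some canonical maximal closed ball in $C$. If the definable map $C\mapsto\gamma_C$ has infinite image with finite fibers we are done (correspondence with a subset of $\Gamma$). Otherwise some fiber $T_\gamma$ is infinite; by Lemma~\ref{fin-max} each $C\in T_\gamma$ contains only finitely many maximal closed $\gamma$-balls, and since each ball lies inside a unique $E$-class, the assignment $C\mapsto\{\text{maximal closed }\gamma\text{-balls of }C\}$ is a finite-to-finite correspondence with a subset of $\CB^{cl}_\gamma$; the latter is in definable bijection with $K/\CO$ by scaling with an element of valuation $\gamma$.

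\textbf{Case B: cofinitely many classes have only open maximal balls.} After restriction every maximal ball of every class is open. For each $C$, let $R_C\subseteq\Gamma$ be the set of radii of maximal open balls of $C$, and use Skolem to select $\gamma_C=\min R_C$ (using definable completeness of $\Gamma$; if the minimum fails to be attained for many classes, restrict further). Repeat the map-and-fiber analysis. An infinite fiber $T_\gamma$ now requires extra work: the minimality of $\gamma_C=\gamma$ forces each $C\in T_\gamma$ to contain \emph{no} closed ball of radius strictly less than $\gamma$, for otherwise the maximal ball at an interior point of such a closed ball would be open of radius $<\gamma$, contradicting minimality. Applying the contrapositive of {\minballn} (shifted to radius $\gamma$), each $C$ meets only finitely many closed $\gamma$-balls; in particular the set $\mathcal{B}_C$ of closed $\gamma$-balls containing a maximal open $\gamma$-sub-ball of $C$ is finite. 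If every fiber $T_B=\{C : B\in\mathcal{B}_C\}$ is finite, then $C\mapsto\mathcal{B}_C$ is a finite-to-finite correspondence with a subset of $\CB^{cl}_\gamma\cong K/\CO$. Otherwise some $T_B$ is infinite: identifying the $|\bk|$ open $\gamma$-sub-balls of $B$ with $\bk$, membership in the various $E$-classes induces a definable equivalence relation on a subset of $\bk$ whose classes are indexed by $T_B$; by surgicity of $\bk$ only finitely many such classes are infinite, and discarding them yields a finite-to-finite correspondence between (a subset of) $T_B$ and a subset of $\bk$. The main technical difficulty is precisely this Case~B, combining the minimality of $\gamma_C$ (needed to invoke {\minballn}), the maximal-ball geometry, and the surgicity of $\bk$.
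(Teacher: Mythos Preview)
Your argument follows essentially the same strategy as the paper's: reduce to $X\subseteq K$ via Lemma~\ref{1-dim}, dispose of the finite-class case, use definable completeness to get maximal balls and definable Skolem on $\Gamma$ to select a radius, then split into cases according to whether the radius map is finite-to-one (giving $\Gamma$), and in an infinite fibre use Lemma~\ref{fin-max} for closed maximal balls (giving $K/\CO$) or {\minballn} and surgicity of $\bk$ for open maximal balls (giving $K/\CO$ or $\bk$). The organisation differs slightly --- you split on closed/open maximal balls before selecting the radius, the paper does the reverse --- but the ideas are the same.

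There is, however, one genuine gap in Case~B. You set $\gamma_C=\min R_C$ and rely on this minimality to conclude that $C$ contains no closed ball of radius $<\gamma_C$, which is what makes {\minballn} applicable. But definable completeness only gives $\inf R_C$, and the infimum need not be attained: for instance in a $C$-minimal field the set $B_{>0}(0)\setminus B_{\ge 1}(0)$ has $R_C=(0,1)$. Your parenthetical ``if the minimum fails to be attained for many classes, restrict further'' does not explain what to do when it fails for cofinitely many classes. The paper avoids this entirely: it uses Skolem to pick \emph{any} $r(t)\in R_{E_t}$ and then \emph{shrinks} each class $E_t$ to the union of its maximal balls of radius exactly $r(t)$. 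This does not change $X/E$, and in the shrunk $E_t$ every maximal ball has radius $r(t)$, so automatically $E_t$ contains no ball of radius $<r(t)$ and {\minballn} applies. Adopting this shrinking step fixes your argument.
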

\begin{proof} 
By Lemma \ref{1-dim}, there exists $T\sub X/E$ and a finite-to-finite definable correspondence between  $T$ and $K/E'$, for some definable equivalence relation $E'$ on $K$.
Thus it is sufficient to prove the proposition for  $X\subseteq K$. So, let $E$ be a definable equivalence relation on $X$ with $T=X/E$ infinite and for each $t\in T$ we let $E_t\sub X$ be the corresponding $E$-class.

If there are infinitely many $t\in T$ with $E_t$ finite, by passing to a definable subset of $T$ (and using the fact that $\CK$ eliminates $\exists^\infty$ by \cite[Lemma 2.2]{DoGoStrong}), we may assume that $E_t$ is finite for all $t\in T$. This gives a one-to-finite map between $T$ and $K$ as needed.

We now assume that $E_t$ is infinite for all $t\in T$.
By the definable completeness of $\Gamma$, each $x\in E_t$ is contained in a (unique) maximal ball inside $E_t$. Thus, $E_t$ can be written as a disjoint union of maximal sub-balls, and the map which assigns to each $t\in T$ the set $S_t\sub \CB$ of maximal sub-balls (open or closed) of $E_t$ is definable.

Since $\Gamma$ has definable Skolem functions, we can definably choose for each $t\in T$ a radius $r(t)\in \Gamma$ of one of the balls in $S_t$. By shrinking $X$ and $E$ (but not $K/E$)
we may assume that each $E_t$ is a union of maximal balls, all of fixed radius $r(t)$. \\

\noindent{\bf Case 1} The map $t\mapsto r(t)$ is finite-to-one. \\

This immediately yields a finite-to-one map from $T$ into $\Gamma$. A finite-to-one map is a finite-to-finite correspondence, so we are done. \\

\noindent{\bf Case 2} There exists $\gamma_0\in \Gamma$ such that $T'=r^{-1}(\gamma_0)$ is infinite. \\

By replacing $T$ with  $T'$ we may assume that for all $t\in T$, $E_t$ is a union of maximal balls, all of radius $\gamma_0$.
Using a definable bijection we may assume that $\gamma_0=0$,  and so all maximal balls in $E_t$ are of radius $0$.

Assume first that there are infinitely many $t\in T$ such that one of the maximal balls in $E_t$ is closed. By restricting $T$ we may assume that  for all $t\in T$, one of the maximal balls in $E_t$ is closed.
By Lemma \ref{fin-max}, each $E_t$ contains at most finitely many maximal closed $0$-balls and the map $t\mapsto S_t\sub \CB^{cl}_0=K/\CO$ sending $t\in T$ to the finite set of its closed maximal $0$-balls in $E_t$ is definable. Since the $E_t$ are pairwise disjoint this gives rise to a one-to-finite correspondence between  $T$ and an infinite subset of  $K/\CO$.

Consequently, we may now assume that for every $t\in T$, each maximal ball in $E_t$ is open of radius $0$.
By {\minballn}, each $E_t$ can intersect only finitely many closed $0$-balls (otherwise, it will contain a closed ball of radius $<0$, contradicting the maximality of all the $0$-balls).\\

\noindent{\bf Case 2.a} There exists a fixed closed $0$-ball $b_0$ intersecting infinitely many  classes $E_t$. \\

By translating, we may assume that $b_0=\CO$. By intersecting each $E_t$ with $\CO$ we may assume, by further shrinking $T$,  that every $E_t$ is contained in $\CO$.
 Thus, each $E_t$ is sent by $\mathrm{res}:\CO\to \bk$ into a subset of $\bk$ and for $t_1\neq t_2$, we have $\mathrm{res}(E_{t_1})\cap \mathrm{res}(E_{t_2})=\0$.
This induces a definable equivalence relation on $\bk$ so by our assumption, only finitely many of these classes are infinite. As $\bk$ is a field of finite dp-rank, by \cite[Lemma 2.2]{DoGoStrong} it eliminates $\exists^{\infty}$ so  by reducing $T$, we obtain a definable one-to-finite correspondence between $T$ and an infinite subset of $\bk$.\\

\noindent{\bf Case 2.b} Every closed $0$-ball intersects only finitely many classes $E_t$.
\\

The map sending $t\in T$ to the finite set $F_t\sub \CB^{cl}_{0}$ of closed $0$-balls  intersecting $E_t$ is definable. By our assumption, each ball $b\in F_t$ intersects at most finitely many of the $E_t$, thus the function $t\to F_t$ is finite-to-one, and we obtain a finite-to-finite correspondence between (infinite subsets of) $T$ and $K/\CO$.
\end{proof}

In the rest of this section we examine various settings in which the assumptions of Proposition \ref{new-main} hold.

\subsubsection{The C-minimal case}
Recall the definition of a C-minimal valued field from Section \ref{sss:c-minimal}.

\begin{proposition}\label{P:properties for Cmin}
Let $(K,v,\dots)$ be a C-minimal valued field. Then
\begin{enumerate}

\item $\Gamma$ is definably complete and has definable Skolem functions, and $\bk$ is surgical.
    \item $\CK$ satisfies {\minballn}.
\end{enumerate}

\end{proposition}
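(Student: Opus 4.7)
The plan for \textbf{(1)} is to harvest the structure theorems that come with C-minimality. By Haskell--Macpherson (cf.\ Section~\ref{sss:c-minimal}), the value group $\Gamma$ of a C-minimal valued field is a pure divisible ordered abelian group, and the residue field $\bk$ is strongly minimal (indeed algebraically closed in the pure field language). Definable completeness of $\Gamma$ then follows because any bounded non-empty definable subset of an o-minimal dense linear order is a finite union of intervals, and similarly definable Skolem functions are standard in this setting (one picks left endpoints or midpoints on each cell of the decomposition). Surgicality of $\bk$ is immediate from strong minimality: every infinite definable subset of $\bk$ is cofinite, so two infinite classes of a definable equivalence relation would intersect, forcing at most one infinite class.

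For \textbf{(2)}, the main tool is Holly's theorem (Fact~\ref{F:holly-swiss}): any definable $X\subseteq K$ decomposes as a finite disjoint union $X=\bigsqcup_{i=1}^n (b_i\setminus\bigcup_j b_{i,j})$ of non-nested Swiss cheeses. If $X$ meets infinitely many closed $0$-balls, pigeonhole hands me a single Swiss cheese $SC=b\setminus\bigcup_j b_j$ that does. Since any ball of radius $\ge 0$ is contained in a single closed $0$-ball, the outer ball $b$ must have radius~$<0$. I will then split the holes $b_j$ into \emph{small holes} (radius $\ge 0$, each sitting inside a unique closed $0$-ball $c_j^{\mathrm{sm}}$) and \emph{big holes} (radius $<0$, each containing many $0$-balls but being finitely many in total).

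The next step is to produce a closed $0$-ball $c\subseteq b$ that is entirely contained in $SC$. The finitely many small holes occupy only finitely many $0$-balls, and a $0$-ball meeting $SC$ cannot be engulfed by a big hole. Since infinitely many $0$-balls meet $SC$, for some such $0$-ball $c$ I have $c\neq c_j^{\mathrm{sm}}$ for every $j$ and $c\not\subseteq b_j$ for every big $j$; by the usual ultrametric trichotomy this forces $c\cap b_j=\emptyset$ for every hole $j$, so $c\subseteq SC$. Now pick $a\in c$ and set $\alpha_j:=v(a-z_j)$ where $z_j$ is a chosen center of $b_j$. For small holes $\alpha_j<0$ because $a$ and $z_j$ lie in distinct $0$-balls; for big holes $\alpha_j\le\rho_j<0$ because $a\notin b_j$. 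Setting $\alpha^\ast=\max_j\alpha_j<0$, any $\rho\in(\alpha^\ast,0)$ yields $B_{\ge\rho}(a)\subseteq SC\subseteq X$ of radius~$<0$, as required.

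The main subtlety is the treatment of big holes: individually they contain infinitely many $0$-balls, so one cannot argue as if only finitely many $0$-balls are ever affected. The point is that although each big hole swallows many $0$-balls, the hole itself is a single ball and there are only finitely many of them, so the $0$-balls meeting $SC$ avoid all big holes by construction; this is exactly what the pigeonhole-plus-cardinality estimate in the previous paragraph delivers.
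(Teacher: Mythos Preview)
Your proof is correct and follows essentially the same route as the paper's. Both arguments reduce to a single Swiss cheese $b\setminus\bigcup_i b_i$, locate a closed $0$-ball $b_0$ entirely inside it, and then use the valuations $v(x_0-x_i)$ of a point $x_0\in b_0$ against centers $x_i$ of the holes to enlarge $b_0$ to a ball of negative radius still avoiding all holes. The paper's bookkeeping for finding $b_0$ is slightly slicker: it observes directly that any closed $0$-ball meeting the Swiss cheese but not contained in it must contain some hole $b_i$, so at most $s$ such balls can fail to be contained in $X$. Your small/big hole dichotomy reaches the same conclusion but is not needed---once $b_0$ is known to be disjoint from every $b_i$ (which follows from $b_0\subseteq X$), the inequality $v(x_0-x_i)<0$ drops out uniformly for all holes regardless of their radius, exactly as in the paper. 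One cosmetic point: your final step ``any $\rho\in(\alpha^\ast,0)$'' tacitly assumes there is at least one hole; when there are none, $b$ itself (or any closed subball of negative radius, using density of $\Gamma$ if $b$ is open) already works.
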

\begin{proof}

(1) Follows from the o-minimality of $\Gamma$ and the strong minimality of $\bk$. 

(2) Assume that $X\sub K$ intersects infinitely many closed $0$-balls. We want to show that it contains a closed ball of negative radius. Since $X$ is a finite union of Swiss cheeses  one of them intersects infinitely many closed $0$-balls. So we may assume that $X$ is a Swiss Cheese, namely $X=b\setminus \bigcup\limits_{i=1}^s b_i$.

For every closed $0$-ball $b_0$  intersecting $X$, either $b\sub b_0$ or $b_0\sub b$. In the first case, $b_0$ is the only $0$-ball intersecting $b$ thus $b_0\subsetneq  b$. Note that there are only finitely many closed $0$-balls  intersecting $X$ but  not contained in $X$. Indeed, if $b_0\nsubseteq X$ is such a ball then there is some $1\leq i\leq s$ such that $b_i\sub b_0$ (for otherwise $b_0\sub b_i$ and hence $b_0\cap X=\0$). So there are only $s$-many possible such $0$-balls

Consequently, there exists some closed $0$-ball $b_0=B_{\geq 0}(x_0)\subseteq X$ with $b_0\subsetneq b$. If $s=0$ then $X=b$ is a ball of negative radius, since $b_0$ is closed. Thus, assume that $s\geq 1$, and  
for $1\leq i\leq s$ choose a center $x_i$ of $b_i$. Let $\gamma_1=\max\{v(x_0-x_i):1\leq i\leq s\}$. Since $b_0$ is disjoint from each of the $b_i$, necessarily $v(x_0-x_i)<0$ for all $i$ and hence $\gamma_1<0$. As a result, the open ball $B_{>\gamma_1}(x_0)$ properly contains $B_{\geq 0}(x_0)$ (since $\Gamma$ is dense) and is still disjoint from the $b_i$. In particular, we do not have $b\sub B_{>\gamma_1}(x_0)$, so we must have $B_{>\gamma_1}(x_0)\subsetneq b$.

It follows that $B_{>\gamma_1}(x_0)$ is contained in $X$, thus we found an open ball of negative radius in $X$. 
Finally, to get a closed such ball of negative radius, we use the fact that $\Gamma$ is dense.
\end{proof}

\subsubsection{The P-minimal case}
Recall the notion of P-minimal valued field from Section \ref{sss:p-minimal}. Let $\CK=(K,v,\dots)$ be a P-minimal valued field.


Note that since $\Gamma$ is discrete, $\CB^{op}=\CB^{\cl}$.
It follows from Hensel's Lemma (see Fact \ref{HM} below) that each $P_n$ is open, implying that a definable subset $X\subseteq K$ is infinite if and only if it has non-empty interior. 

We thank D. Macpherson for sketching for us the proof the  proposition below. We first recall:

\begin{fact}\label{HM} \cite[Lemma 2.3]{HasMac}
Let $(K,v)$ be a $p$-adically closed field and let $n\in \mathbb N$ with $n>1$ and $x,y,a\in K$. Suppose that $v(y-x)>2v(n)+v(y-a)$. Then $x-a, y-a$ are in the same coset of $P_n$.
\end{fact}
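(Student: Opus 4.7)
The plan is to reduce the claim to the assertion that $1 + \delta$ is an $n$-th power in $K$ whenever $v(\delta) > 2v(n)$, and then invoke the sharp form of Hensel's lemma. Set $u = y - a$ and $w = x - a$. The degenerate case $y = a$ forces, via the hypothesis, $v(y - x) = \infty$ and hence $x = a$, so there is nothing to prove. Assuming $y \neq a$, the hypothesis $v(y - x) > 2v(n) + v(y - a)$ rewrites as $v(w - u) > 2v(n) + v(u)$, so that
\[
\delta := \frac{w}{u} - 1 = \frac{w - u}{u}
\]
satisfies $v(\delta) > 2 v(n)$. Since $P_n$ is a multiplicative subgroup of $K^\times$, the conclusion that $x - a$ and $y - a$ lie in the same coset of $P_n$ is equivalent to $w/u = 1 + \delta \in P_n$.

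To produce an $n$-th root of $1 + \delta$, I would apply Hensel's lemma to $f(T) = T^n - (1 + \delta) \in \CO[T]$ at the approximate root $T_0 = 1$; note that $1 + \delta \in \CO^\times$ since $v(\delta) > 0$. One computes $f(1) = -\delta$ and $f'(1) = n$, so the hypothesis $v(\delta) > 2 v(n)$ becomes precisely the sharp Henselian condition $v(f(1)) > 2 v(f'(1))$. This yields a unique $z \in \CO$ with $f(z) = 0$ and $v(z - 1) \geq v(\delta) - v(n) > v(n) \geq 0$; in particular $z \in 1 + \m \subseteq K^\times$, and $z^n = 1 + \delta$, giving $w = z^n \cdot u$, as required.

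The main point to be careful about is invoking the sharp form of Hensel's lemma (the $v(f(T_0)) > 2 v(f'(T_0))$ version) rather than the naive Henselianity statement; this is exactly what the numerical constant $2v(n)$ in the hypothesis is tailored to match, and it is available because every $p$-adically closed field is Henselian. No other obstacle is anticipated: the argument is purely computational once the reduction to showing $1 + \delta \in P_n$ has been set up, and of course the statement is a standard ingredient in the analysis of $p$-adic $n$-th power cosets, which is why Haskell--Macpherson cite it as a basic lemma.
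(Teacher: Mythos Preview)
Your argument is correct; the paper itself does not prove this statement but simply records it as a fact cited from Haskell--Macpherson, so there is no ``paper's proof'' to compare against. Your reduction to showing $1+\delta\in P_n$ for $v(\delta)>2v(n)$ and the subsequent application of the sharp form of Hensel's lemma to $T^n-(1+\delta)$ at $T_0=1$ is exactly the standard (and presumably original) route. One cosmetic remark: in the degenerate case $y=a$ the hypothesis $v(y-x)>2v(n)+\infty$ is vacuous rather than forcing $x=a$, but of course the implication is then trivially true either way.
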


\begin{proposition}\label{P:dugald}
Let $\CK=(K,v,\dots)$ be a P-minimal valued field. Then 

\begin{enumerate}
\item $\Gamma$ is definably complete and has definable Skolem functions.

\item $K$ satisfies {\minballn}.
\end{enumerate}

\end{proposition}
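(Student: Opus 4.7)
My plan for (1) is to reduce to Presburger arithmetic: by the remark following the definition of P-minimality, $\Gamma$ is a $\mathbb{Z}$-group and every definable subset of $\Gamma^n$ is Presburger-definable. Definable completeness then reduces to the standard fact that every non-empty Presburger-definable subset of $\Gamma$ bounded below (resp.\ above) is a finite union of arithmetic progressions intersected with intervals, hence has a minimum (resp.\ maximum). Definable Skolem functions for Presburger sets are a well-known feature of the theory: from each cell in the Presburger cell decomposition one chooses a canonical element (e.g., the smallest non-negative element satisfying the relevant congruence, or the largest negative element if no such non-negative element exists).

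For (2), I would start from the cell decomposition available by P-minimality: $X$ is a finite disjoint union of Macintyre cells $X_i = \{x : \gamma_1^i < v(x-a_i) < \gamma_2^i \wedge P_{n_i}(\lambda_i(x-a_i))\}$. By the pigeonhole principle some $X_i$ intersects infinitely many closed $0$-balls, so I may replace $X$ by this $X_i$ and drop subscripts. The key observation is that if $\gamma_1 \in \Gamma$, then $X \subseteq B_{>\gamma_1}(a)$, which by discreteness of $\Gamma$ equals $B_{\geq \gamma_1+1}(a)$ and contains only finitely many closed $0$-balls (at most $|\bar{\mathbf{k}}|^{\max(0,-\gamma_1-1)}$, using that $\bar{\mathbf{k}}$ is finite in the P-minimal setting). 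Hence our hypothesis forces $\gamma_1 = -\infty$.

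To finish, I plan to invoke Hensel's lemma (Fact \ref{HM}). Fix a uniformizer $\pi$ and choose $k \in \mathbb{N}$ large enough that $y := a + \lambda^{-1}\pi^{-nk}$ satisfies $v(y-a) = -nk - v(\lambda) \leq -2v(n)-2$; in particular $v(y-a) < \gamma_2$, and $\lambda(y-a) = (\pi^{-k})^n \in P_n$, so $y \in X$. Applying Fact \ref{HM} to $\lambda(y-a)$ and $\lambda(x-a)$ with base point $0$: for every $x$ with $v(x-y) > 2v(n) + v(y-a)$ we obtain $P_n(\lambda(x-a))$, and since $2v(n) + v(y-a) \geq v(y-a)$ also $v(x-a) = v(y-a) < \gamma_2$. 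Thus $B_{>2v(n)+v(y-a)}(y) \subseteq X$, which by discreteness equals the closed ball $B_{\geq 2v(n)+v(y-a)+1}(y)$ of radius $\leq -1 < 0$, as required. The only step requiring real care is this Hensel computation; the rest is an unpacking of the P-minimal cell decomposition, finiteness of $\bar{\mathbf{k}}$, and discreteness of $\Gamma$. The edge case $\lambda = 0$ (if admitted by the cell presentation) is degenerate and reduces to the pure annulus $\{x : v(x-a) < \gamma_2\}$, in which any $y$ with $v(y-a) \leq -2$ already yields $B_{>v(y-a)}(y) \subseteq X$ of negative closed radius.
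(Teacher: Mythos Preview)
Your argument for (1) is fine and matches the paper's: both reduce to Presburger arithmetic and the fact that bounded-below definable subsets of a $\mathbb{Z}$-group have a minimum.

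For (2) there is a genuine gap. The step ``if $\gamma_1\in\Gamma$ then $B_{\ge\gamma_1+1}(a)$ contains at most $|\bk|^{\max(0,-\gamma_1-1)}$ closed $0$-balls'' tacitly identifies $\Gamma$ with $\mathbb{Z}$. In a saturated P-minimal field $\Gamma$ is only a $\mathbb{Z}$-group, and $\gamma_1$ may be a non-standard negative element; then $B_{\ge\gamma_1+1}(a)$ genuinely contains infinitely many closed $0$-balls, so you cannot conclude $\gamma_1=-\infty$. This breaks the rest of the argument: your explicit element $y=a+\lambda^{-1}\pi^{-nk}$ has $v(y-a)=-nk-v(\lambda)$, which for standard $k$ is a standard element of $\Gamma$, and there is no reason it lies in the interval $(\gamma_1,\gamma_2)$. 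Concretely, if $\gamma_1$ and $\gamma_2$ are both non-standard negative (say $\gamma_2=\gamma_1+10$), the annulus still meets infinitely many $0$-balls, yet every $y$ you construct has $v(y-a)>\gamma_2$ and so $y\notin X$. Your unjustified assertion ``in particular $v(y-a)<\gamma_2$'' fails here as well.

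The paper avoids this by not constructing $y$ explicitly. Instead it observes that, since each standard-radius ball $B_{\ge -m}(a)$ ($m\in\mathbb{N}$) contains only finitely many $0$-balls, the hypothesis forces $X\not\subseteq B_{\ge -m}(a)$ for every $m\in\mathbb{N}$; hence there is $y_0\in X$ with $v(y_0-a)+2v(n)<-2$. Because $y_0$ is already in $X$, both annulus bounds are automatically satisfied, and your Hensel step (which is correct) then shows $B_{\ge -1}(y_0)\subseteq X$. The fix to your argument is exactly this: replace the explicit construction by the existence of an element of $X$ with sufficiently negative (standardly) valuation.
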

\begin{proof}

(1) Both properties follow from the fact that every definable set bounded below has a minimum.

(2)
Let $X\subseteq K$ be a definable subset intersecting infinitely many closed $0$-balls. We need to show that $X$ contains a closed ball of radius $-1$.

Partitioning $X$  into cells and translating by an element of $K$, we may assume that $X$ has the form
\[
\{x\in K: \gamma_1< v(x)<\gamma_2 \wedge P_n(\lambda\cdot x)\},
\]
where $\lambda\in K$, $\gamma_1<\gamma_2\in \Gamma\cup\{\infty,-\infty\}$ and $n\in\mathbb{N}$.
Since $\Gamma$ is discrete and $\bk$ is finite, the assumption that $X$ intersects infinitely many $0$-balls implies that $X$ is not contained in any ball $B_{\geq -m}(0)$  with $m\in \Nn$ (for every such ball intersects only finitely many closed $0$-balls). So for every $k\in \mathbb N$,  there is some $y_0\in X$  such that $v(y_0)<-k$.

We can thus fix   $y_0\in X$, such that $v(y_0)+2v(n)<-2$.
We claim that the closed ball $B_{\geq -1}(y_0)$ is contained in $ X$.

Indeed, assume that $v(x-y_0)\geq -1$. Then, since $v(y_0)<-2$, we have $v(x)=v(y_0)$ and therefore  $\gamma_1< v(x)=v(y_0)<\gamma_2$. Thus it is sufficient to see that $x$ and $y_0$ are in the same $P_n$-coset.

By our choice of $y_0$ we have $v(x-y_0)\geq -1>v(y_0)+2v(n)$, hence by Fact \ref{HM} (with $a=0$ there), $x$ and $y_0$ are in the same $P_n$-coset, so $x\in X$. Thus $B_{\geq -1}(y_0)\sub X$. We have thus shown that $X$ contains at least one closed ball of radius $-1$.
\end{proof}

\subsubsection{The weakly o-minimal case}

By \cite{CheDick}, the theory of real closed convexly valued fields,  known as \emph{Real Closed Valued Field},  RCVF for short, is weakly o-minimal.


\begin{lemma} \label{general-rcvf} Let $(K,v)$ be a real closed valued field. 
If $X\sub K$ is a convex set and every open ball $b\sub X$ has nonnegative radius then $X$ is contained in a single closed ball of radius $0$.
\end{lemma}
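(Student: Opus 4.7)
The plan is to argue by contraposition: assuming $X$ is not contained in a single closed $0$-ball, I will exhibit an open sub-ball of $X$ of negative radius. The key tool is the compatibility between the order and the (convex) valuation on a real closed valued field: if $\CO$ is a convex subring of the ordered field $K$, then every valuative ball is order-convex, and moreover $v(a)>v(b)$ implies $|a|<|b|$ (since then $a/b\in \m$, a convex ideal, so $|a/b|<1$).

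Suppose, toward contradiction, that $X$ is not contained in any closed ball of radius $0$. Then there exist $x,y\in X$ with $v(x-y)<0$; say $x<y$. Since $X$ is order-convex, $[x,y]\subseteq X$. Let $z=(x+y)/2$, which lies in $[x,y]$; since $K$ has characteristic $0$ we have
\[
v(z-x)=v(y-z)=v\!\left(\tfrac{y-x}{2}\right)=v(y-x)=:\gamma<0.
\]

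Consider the open valuative ball $b:=B_{>\gamma}(z)$, which has (negative) radius $\gamma$. I claim $b\subseteq [x,y]\subseteq X$. Indeed, for any $w\in b$ we have $v(w-z)>\gamma=v(z-x)$, so by the compatibility above $|w-z|<|z-x|=|z-y|$; thus $w$ lies strictly between $x$ and $y$, hence in $[x,y]$. This produces an open ball contained in $X$ of negative radius, contradicting the hypothesis. Therefore $X$ is contained in some closed $0$-ball, as claimed.

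There is no real obstacle here; the only point requiring care is the translation between order and valuation, which is immediate from the convexity of $\CO$. Note that the characteristic $0$ assumption is used (painlessly) to guarantee $v(2)=0$ so that the midpoint $z$ inherits the valuation of the diameter.
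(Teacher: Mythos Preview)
Your proof is correct and essentially the same as the paper's: both take two points $x,y\in X$ with $v(x-y)=\gamma<0$, pass to the midpoint, and observe that the open ball $B_{>\gamma}$ centred there is an order-convex set squeezed strictly between $x$ and $y$, hence contained in $X$. The only cosmetic difference is that you phrase the containment via the inequality $|w-z|<|z-x|$, while the paper phrases it as ``the ball is convex, contains the midpoint, and omits $x,y$''. One small remark: real closed fields are automatically of characteristic $0$, and the reason $v(2)=0$ is the convexity of $\CO$ (so $1/2\in\CO$), not characteristic alone.
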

\begin{proof}
We first claim that for each $x_1,x_2\in X$, we must have $v(x_1-x_2)\geq 0$. Indeed, assume that $v(x_1-x_2)=\gamma_0<0$ and let $x_0=\frac{x_1+x_2}{2}$. Then $$v(x_0-x_1)=v(x_0-x_2)=v((x_1-x_2)/2)=\gamma_0.$$ Thus the open ball
$B_{>\gamma_0}(x_0)$ does not contain $x_1,x_2$. Since the ball is convex and $X$ is convex we have $B_{>\gamma_0}(x_0)\sub X$, contradicting the assumption on $X$.
It follows that for every $x_0\in X$, the ball $B_{\geq 0}(x_0)$ contains $X$.
\end{proof}


\begin{lemma}\label{properties wom}
Assume that $\CK=(K,<,v,\ldots)$ is a weakly o-minimal expansion of a real closed valued field such that $\Gamma$ and $\bk$ are o-minimal. Then
\begin{enumerate}
    \item $\Gamma$ is definably complete and has definable Skolem functions, and $\bk$ is surgical.
    \item $\CK$ satisfies {\minballn}.
\end{enumerate}
\end{lemma}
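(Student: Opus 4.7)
The two parts split along very different lines, so I would treat them separately.

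For part (1), both claims about $\Gamma$ are standard features of o-minimality. Any o-minimal ordered abelian group is divisible, and in particular densely ordered, so every nonempty definable subset of $\Gamma$ bounded above has a supremum and every nonempty definable family of sets admits a definable choice (e.g.\ via midpoints of the leftmost convex components, together with endpoints). Surgicality of $\bk$ is also a classical fact about o-minimal structures: if $E$ were a definable equivalence relation on $\bk$ with infinitely many infinite classes, a uniform definable choice of representative (leftmost endpoint of the leftmost convex component, say) together with o-minimal cell decomposition forces two distinct $x,y$ in a common convex component of $\bk$ to be representatives of the same class, a contradiction. I would simply state these two facts with a brief justification or citation rather than reprove them.

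For part (2), the argument is essentially immediate from Lemma \ref{general-rcvf}. Let $X\subseteq K$ be definable and assume it meets infinitely many closed $0$-balls. Weak o-minimality of $\CK$ gives a decomposition of $X$ into finitely many convex sets, so some convex piece $C\subseteq X$ meets infinitely many closed $0$-balls. In particular $C$ is not contained in any single closed $0$-ball, so Lemma \ref{general-rcvf} implies that $C$ contains an open ball $B_{>\gamma}(a)$ with $\gamma<0$. Since $\Gamma$ is a nontrivial o-minimal ordered abelian group it is divisible, hence densely ordered, so we can pick $\gamma'\in\Gamma$ with $\gamma<\gamma'<0$; then $B_{\geq\gamma'}(a)\subseteq B_{>\gamma}(a)\subseteq C\subseteq X$ is a closed ball of negative radius, as required.

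I do not anticipate a genuine obstacle. The only point requiring mild care is making sure that weak o-minimality really gives a decomposition of an arbitrary definable $X\subseteq K$ into \emph{finitely} many convex pieces (this is exactly the definition), and that $\Gamma$ being a nontrivial o-minimal ordered abelian group is densely ordered (from divisibility). Everything else is a direct appeal to Lemma \ref{general-rcvf} and standard o-minimality. I would therefore expect the written proof to be only a few lines long.
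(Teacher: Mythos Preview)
Your proposal is correct and follows essentially the same route as the paper: part (1) is dispatched by invoking o-minimality of $\Gamma$ and $\bk$, and part (2) reduces to a single convex component via weak o-minimality and then appeals directly to Lemma \ref{general-rcvf}. Your extra step at the end---passing from the open ball of negative radius given by Lemma \ref{general-rcvf} to a \emph{closed} ball of negative radius using density of $\Gamma$---is a detail the paper leaves implicit, and your sketch of surgicality is more elaborate than the paper's one-line citation, but neither changes the argument.
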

\begin{proof}
(1) Follows from the o-minimality of $\Gamma$ and of $\bk$.
(2) Let $X\sub K$ be a definable set intersecting infinitely many closed $0$-balls. By weak o-minimality, $X$ is a finite union of convex sets, so one of these intersects infinitely many closed $0$-balls.
By Lemma \ref{general-rcvf}, this component necessarily contains a ball of negative radius.\end{proof}

\begin{remark}\label{R:t-convex sat properties} 
By Section \ref{sss:t-convex}, T-convex power-bounded valued fields satisfy the assumptions of Lemma \ref{properties wom}.
\end{remark}

\subsection{Strong internality to the  distinguished sorts}
We can  finally show that, under suitable assumptions, any interpretable field $\CF$ is locally strongly internal to one of the distinguished sorts. We focus on the case where $\Gamma$ is dense as we do not know the results in the discrete case (and we do not need them for the proof of our main theorem). 

We first show that $K/\CO$ is an SW-uniform structure with respect to a natural topology: 
\begin{definition}
    Let $(K,v)$ be a valued field. The \emph{thick ball topology} on $K/\CO$ is the topology generated by $\{\pi(B): B\in \CB, r(B)<0\}$. Where $\pi: K\to K/\CO$ is the natural projection, and $r(B)$ is the valuative radius of  $B$. 
\end{definition}

\begin{lemma}\label{L:K/O is SW}
    Let $\CK=(K,v,\dots)$ be a dp-minimal valued field with a dense value group. If $K$ satisfies {\minballn} then $K/\CO$  is an SW-uniformity with respect to the thick ball topology. 
\end{lemma}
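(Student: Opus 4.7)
The plan is to verify the three conditions in the definition of an SW-uniformity after first exhibiting an explicit definable uniformity on $K/\CO$ whose induced topology is the thick ball topology.

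The first step is the construction of the uniformity. Writing $\pi : K \to K/\CO$ for the projection, I would define, for each $\gamma \in \Gamma$ with $\gamma < 0$, the entourage $U_\gamma := \{(\pi(x),\pi(y)) : v(x-y) \geq \gamma\}$. This is well defined because any two representatives of the same closed $0$-ball differ by an element of valuation $\geq 0 > \gamma$, and the family is uniformly definable by a single formula with one parameter variable ranging in $\Gamma^{<0}$. Symmetry is immediate, directedness gives $U_{\gamma_1} \cap U_{\gamma_2} = U_{\max(\gamma_1,\gamma_2)}$, and the ultrametric inequality yields $U_\gamma \circ U_\gamma \subseteq U_\gamma$. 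The one place where density of $\Gamma$ is essential is in verifying that $\bigcap_{\gamma<0} U_\gamma$ is the diagonal: distinct points $\pi(x_1) \neq \pi(x_2)$ satisfy $v(x_1-x_2) < 0$, and density supplies a $\gamma$ strictly between $v(x_1-x_2)$ and $0$. The basic neighborhoods of $\pi(x)$ in the induced topology are precisely $U_\gamma[\pi(x)] = \pi(B_{\geq\gamma}(x))$, and using density once more to interpolate between open and closed balls of negative radius, these generate exactly the thick ball topology.

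The next two conditions are brief. For dp-minimality I would invoke the definable surjection $\pi$: any $b \in K/\CO$ lies in $\dcl(a)$ for any preimage $a \in K$, so $\dpr(K/\CO) \leq \dpr(K) = 1$, while non-triviality (and density) of $\Gamma$ ensures $K/\CO$ is infinite. For the absence of isolated points, given a basic neighborhood $\pi(B_{\geq\gamma}(x))$ of $\pi(x)$ with $\gamma < 0$, one picks $z \in K$ with $v(z) = \gamma$ (possible since $v$ is surjective onto $\Gamma$); then $\pi(x+z)$ lies in the neighborhood but differs from $\pi(x)$ because $v((x+z)-x) = \gamma < 0$.

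The last condition is the substantive one, and it is exactly what \minballn was designed to supply. Given a definable infinite $X \subseteq K/\CO$, the preimage $\tilde X = \pi^{-1}(X) \subseteq K$ is a definable set saturated under $\CO$-translations, hence a union of infinitely many closed $0$-balls; \minballn then produces a closed ball $B \subseteq \tilde X$ of negative radius, and $\pi(B) \subseteq X$ is open in the thick ball topology by definition. I do not anticipate any serious obstacle in the argument as a whole; the most delicate point is checking that the uniformity --- constructed using only closed sub-level sets of $v$ --- induces precisely the thick ball topology (which a priori is generated by both open and closed balls of negative radius), and this bookkeeping is handled uniformly by density of $\Gamma$.
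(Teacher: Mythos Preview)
Your proof is correct and covers the same ground as the paper's. The one methodological difference worth noting is in how the uniformity is established: the paper observes that $K/\CO$ is a definable group and invokes \cite[Example 3]{SimWal} to reduce matters to checking that the thick ball topology is a group topology (which follows easily from the behaviour of addition on balls), whereas you build the entourages $U_\gamma$ explicitly and verify the uniformity axioms by hand. Your approach is more self-contained; the paper's is shorter. For ``no isolated points'' the paper argues that each thick ball contains infinitely many $0$-balls (using that density of $\Gamma$ forces $\bk$ infinite), while you produce an explicit second point via surjectivity of $v$---both work. The use of {\minballn} for the interior condition is identical in both arguments.
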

\begin{proof}
    By \cite[Example 3, page 3]{SimWal} to show that the thick ball topology  gives rise to a uniform structure on $K/\CO$ we only have check that it is a group topology. If we denote $\tau_a$ the collection of basic neighbourhoods of $a$ then $\tau_a=\tau_0+a$. Therefore the map $x\mapsto -x$ is a homeomorphism, and to check that addition is continuous it suffices to check continuity at $(0,0)$. This follows from the fact that the pre-image under addition (in $K^2$) of a ball $B\sub K$ containing $0$  is $B\times B$, and since $\pi(B\times B)=\pi(B)\times \pi(B)$ continuity of addition (in $K/\CO$) at $(0,0)$ follows. 
    
    So it remains to check that $K/\CO$ has no isolated points and that every infinite $S\sub K/\CO$ has non-empty interior. The first property follows from the fact that $\Gamma$ is dense (and therefore $\bk$ is infinite): if $a\in K/\CO$ and $a_0\in \pi^{-1}(a)$ is any point then a basic open neighbourhood of $a$ is the image under $\pi$ of a ball $B\ni a_0$ with radius $\gamma>0$. Since $\bk$ is infinite, $B$ contains infinitely many $0$-balls, and so $\pi(B)$ is infinite. The latter property is automatic from {\minballn}. 
\end{proof}

With this last observation in hand, we can finally show: 

\begin{proposition}\label{P:s-i to sorts, c-min and v-min}
Let $\mathcal{K}=(K,v,\dots)$ be either a C-minimal valued field or a weakly o-minimal convexely valued field whose value group and residue field are o-minimal. Any interpretable infinite field is locally strongly internal to either $K$, $K/\CO$, $\Gamma$ or $\bk$.
\end{proposition}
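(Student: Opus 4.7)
The plan is to chain together three results already proved in the paper: the reduction to unary imaginaries (Lemma \ref{1-dim}), the identification of a distinguished sort up to finite-to-finite correspondence (Proposition \ref{new-main}), and the upgrade from such a correspondence to strong internality (Lemma \ref{L:almost internal-to -strongly}). Throughout, pass to a sufficiently saturated elementary extension so that all of these apply.

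Let $\CF$ be an infinite interpretable field, realised as $Y/E$ for some definable $Y \subseteq K^n$ and definable equivalence relation $E$. First apply Lemma \ref{1-dim}: there is an infinite definable subset of $\CF$ in finite-to-finite definable correspondence with an infinite unary quotient $X/E'$, where $X \subseteq K$ and $E'$ is a definable equivalence relation on $X$. Then invoke Proposition \ref{new-main} to match $X/E'$, up to a further finite-to-finite correspondence, with a definable subset of one of the four distinguished sorts $K$, $\Gamma$, $\bk$, $K/\CO$. Its hypotheses hold in both of our settings: in the C-minimal case by Proposition \ref{P:properties for Cmin}, and in the weakly o-minimal convexly valued case by Lemma \ref{properties wom} together with Fact \ref{F:facts on t-convex} and the o-minimality assumptions on $\Gamma$ and $\bk$. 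Composing the two correspondences yields a finite-to-finite definable correspondence between an infinite definable subset of $\CF$ and a definable subset of some distinguished sort $S \in \{K,\Gamma,\bk,K/\CO\}$.

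It remains to upgrade the correspondence to local strong internality via Lemma \ref{L:almost internal-to -strongly}. The set $\CF$, being a definable field, satisfies \efi\ via elementary symmetric polynomials. For the second hypothesis we check that the distinguished sort $S$ is either SW-uniform or has \efi, case by case. The valued field $K$ is SW-uniform by Example \ref{E:SW-uniformities}(3). The value group $\Gamma$ is o-minimal in both cases (divisible in the C-minimal case and o-minimal by assumption in the other), hence SW-uniform by Example \ref{E:SW-uniformities}(1). The residue field $\bk$ is strongly minimal in the C-minimal case and so has \efi\ as a field, while in the weakly o-minimal case it is o-minimal and hence SW-uniform. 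Finally, $K/\CO$ is SW-uniform by Lemma \ref{L:K/O is SW}; its hypotheses of dp-minimality, of density of $\Gamma$, and of \minballn\ all hold, because divisible ordered abelian groups are dense and because both our cases satisfy \minballn\ by Propositions \ref{P:properties for Cmin} and \ref{properties wom}.

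The only subtle point in this chain is verifying that $K/\CO$ actually has an SW-uniform structure, which is why Lemma \ref{L:K/O is SW} was proved with just the right generality, and why density of $\Gamma$ is built into the present statement — this is what excludes the P-minimal case, which must be handled by a separate (and simpler) argument elsewhere in the paper.
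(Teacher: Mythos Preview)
Your proof is correct and follows essentially the same approach as the paper's: verify the hypotheses of Proposition \ref{new-main} via Proposition \ref{P:properties for Cmin} or Lemma \ref{properties wom}, obtain a finite-to-finite correspondence with a distinguished sort, and then upgrade to local strong internality via Lemma \ref{L:almost internal-to -strongly}. Your explicit invocation of Lemma \ref{1-dim} is redundant (it is already absorbed into Proposition \ref{new-main}, which applies to $X\subseteq K^n$), and the paper checks \efi\ for $K$, $\bk$, and $\Gamma$ rather than SW-uniformity, but both choices work and the argument is the same.
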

\begin{proof}
We may assume that $\CK$ is sufficiently saturated. Let $\CF=X/E$, $X\sub K^n$,  be an interpretable infinite field. By Proposition \ref{P:properties for Cmin} (for the C-minimal case) and Lemma \ref{properties wom} (for the weakly o-minimal case), the assumptions of Proposition \ref{new-main} hold.

Consequently there exists an infinite definable subset $S\subseteq \CF$ and a  finite-to-finite definable correspondence between $S$ and either $K$, $K/\CO$, $\Gamma$ or $\bk$.

To obtain strong internality from this finite-to-finite correspondence we aim to apply Lemma \ref{L:almost internal-to -strongly}. To do so, we need to show that the source of the correspondence, $\CF$,  satisfies {\efi} (which is clear) and the target (each of the distinguished sorts) is either an SW-uniformity, or, itself satisfies {\efi}. Since $K$ and $\bk$ are fields they both satisfy {\efi}, and so does  $\Gamma$ by virtue of being  linearly ordered.  This leaves the case of $K/\CO$. As $\CK$ satisfies {\minballn}, $K/\CO$ is an SW-uniformity by Lemma \ref{L:K/O is SW}.
\end{proof}

\section{Eliminating the sorts $K/\CO$ and $\Gamma$}

The results  collected up until this point allow us, given an infinite inpterpretable field $\CF$ to construct a finite-to-finite correspondence between a dp-minimal subset of $\CF$, strongly internal to $K/E$ (for some definable equivalence relation $E$) into one of $K$, $K/\CO$, $\Gamma$ or $\bk$. In the present section we develop the tools allowing  (under suitable assumptions) to eliminate $K/\CO$ and $\Gamma$ from the list.

\subsection{Opaque Equivalence Relations}
Towards studying  $K/\CO$,  we prove a general domination result for generic types in $(K/E)^n$ when $E$ is an \emph{opaque} equivalence relation. Recall;
\begin{definition}\cite[Definition 11.2]{HaHrMac2}
Let $D$ be a definable set in some structure and $E$ a definable equivalence relation on $D$.
\begin{enumerate}
    \item A definable set $X\subseteq D$ \emph{crosses} an $E$-class, $a/E$, if both $X$ and its complement in $D$ intersect $a/E$.
    \item The equivalence relation $E$ is $\emph{opaque}$ if every definable $X\subseteq D$ crosses at most finitely many classes.
\end{enumerate}
We will also refer to $D/E$ being opaque, meaning that $E$ is.
\end{definition}

\begin{example}
\begin{enumerate}
    \item If $(K,v,\dots)$ is $C$-minimal then $K/\CO$ and $K/\m$ are opaque, see \cite[Lemma 11.13(i)]{HaHrMac2}.
    \item If $(M,<,\dots)$ is weakly o-minimal then every definable convex equivalence relation on $K$ is opaque. Indeed, if $X$ consists of $r$ convex sets then since each $E$-class is convex, $X$ crosses at most $2r$-many classes.
    \item If $(K,v,\dots)$ is P-minimal then $K/\CO$ is opaque. This follows, essentially, from  Fact \ref{HM} (and quantifier elimination).
\end{enumerate}
\end{example}

The next lemma can be viewed as a statement on domination by opaque imaginary sorts.

Below, to simplify notation we use $\pi$ to denote each of the projections $D_i\to D_i/E_i$
and let $\pi_k:\prod_{i=1}^k D_i \to \prod_{i=1}^k(D_i/E_i)$ be the natural projection.
\begin{proposition}\label{P:opaque-dim}
Let $\CM$ be a structure of finite dp-rank.
Let $D_1,\ldots, D_n$ be any definable sets, and for each $i=1,\ldots, n$, let $E_i$ be a definable opaque relation on $D_i$, such that the structure on $D_i/E_i$ is dp-minimal. Let $E=\prod_i E_i$ be the equivalence relation on $D= \prod_i D_i$.

Assume that $X\sub P\sub D$ are definable sets such that $\pi_n(X)=\pi_n(P)$ (i.e. $X$ intersects every $E$-class which  $P$ does).  Then $\dpr(\pi_n(P\setminus X))<n$.
\end{proposition}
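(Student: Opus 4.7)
I deduce the statement from a cleaner intermediate claim that is more amenable to induction. For any definable $Y \sub D = D_1\times\cdots\times D_n$, introduce its \emph{generalized crossing set}
\[
C(Y) = \{q \in (D_1/E_1)\times\cdots\times(D_n/E_n) : Y \cap q \neq \emptyset \text{ and } Y \cap q \neq q\},
\]
and aim to prove $\dpr(C(Y)) < n$ for every such $Y$. The proposition follows from this applied to $Y = X$: if $q \in \pi_n(P\setminus X)$ then $\pi_n(X)=\pi_n(P)$ forces $X\cap q\neq\emptyset$, while $X\cap q = q$ would give $q\cap(P\setminus X) = \emptyset$, a contradiction; so $\pi_n(P\setminus X) \sub C(X)$ and $\dpr(\pi_n(P\setminus X)) \leq \dpr(C(X)) < n$.

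The proof of $\dpr(C(Y))<n$ proceeds by induction on $n$. The base case $n=1$ is exactly opacity of $E_1$: $C(Y)$ is the finite set of $E_1$-classes crossed by $Y$. For the inductive step, suppose towards a contradiction $\dpr(C(X)) \geq n$. Using Lemma~\ref{L:itaywom}(2), fix a small model $\CN$ containing the parameters defining $X$ together with a tuple $q = (q_1,\ldots,q_n) \in C(X)$ satisfying $\dpr(q/\CN) = n$. Writing $q' = (q_1,\ldots,q_{n-1})$, sub-additivity of dp-rank combined with dp-minimality of each $D_i/E_i$ forces equality throughout $n = \dpr(q/\CN) \leq \dpr(q_n/\CN q') + \dpr(q'/\CN) \leq 1+(n-1)$ (and symmetrically with $q_n$ and $q'$ swapped), giving $\dpr(q_n/\CN q') = 1$ and $\dpr(q'/\CN q_n) = n-1$.

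The crucial step applies opacity of $E_n$ to the two $\CN q'$-definable subsets of $D_n$,
\[
X^{(q')} := \{b_n \in D_n : \exists (b_1,\ldots,b_{n-1}) \in q',\ (b_1,\ldots,b_n) \in X\}
\quad\text{and}\quad (D\setminus X)^{(q')}.
\]
Each crosses only finitely many $E_n$-classes, all algebraic over $\CN q'$; since $\dpr(q_n/\CN q') = 1$, $q_n$ is not among them. Combined with $X\cap q \neq \emptyset$ and $X\cap q \neq q$ (both from $q\in C(X)$, which also yield $X^{(q')}\cap q_n \neq \emptyset$ and $(D\setminus X)^{(q')}\cap q_n \neq \emptyset$ respectively), this forces $q_n \sub X^{(q')}$ and $q_n \sub (D\setminus X)^{(q')}$. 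Unpacking: for every $c_n \in q_n$, the $\CN c_n$-definable fiber $X_{c_n} = \{(b_1,\ldots,b_{n-1}) : (b_1,\ldots,b_{n-1},c_n) \in X\}$ satisfies both $X_{c_n} \cap q' \neq \emptyset$ and $X_{c_n}\cap q' \neq q'$; that is, $q' \in C(X_{c_n})$.

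The inductive hypothesis applied to $X_{c_n} \sub D_1\times\cdots\times D_{n-1}$ yields $\dpr(C(X_{c_n})) < n-1$ for every $c_n \in q_n$. To exploit this uniformly in $c_n$, introduce the $\CN q_n$-definable intersection
\[
S = \{q^* \in (D_1/E_1)\times\cdots\times(D_{n-1}/E_{n-1}) : \forall c_n \in q_n,\ q^* \in C(X_{c_n})\}.
\]
Then $q' \in S$ by the previous step, and for any fixed $c_n \in q_n$ we have $S \sub C(X_{c_n})$, so $\dpr(S) \leq \dpr(C(X_{c_n})) < n-1$. Hence $\dpr(q'/\CN q_n) \leq \dpr(S) < n-1$, contradicting $\dpr(q'/\CN q_n) = n-1$. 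The main subtlety to watch is that one cannot directly bound $\dpr(q'/\CN c_n)$ via $C(X_{c_n})$ for a single $c_n$, because $c_n \in D_n$ may carry strictly more information than $q_n \in D_n/E_n$ (exchange fails in $D_n$); the set $S$ is introduced precisely in order to stay in the quotient sort $\prod_{i<n}(D_i/E_i)$ and rescue the dp-rank bookkeeping.
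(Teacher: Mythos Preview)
Your proof is correct and proceeds along a genuinely different route from the paper's. The paper argues directly: it isolates the ``bad locus'' $X^*=\{(a,b)\in X: X_a \text{ crosses } b/E_n\}$, shows $\dpr(\pi_n(X^*))<n$ using opacity (the projection of $X^*$ to $D'$ is finite-to-one), and then reduces to the special case $X^*=\emptyset$ by removing $\pi_n^{-1}(\pi_n(X^*))$ from $P$. In that special case every fibre $X_a$ is a union of $E_n$-classes, so for each $\beta\in D_n/E_n$ one has $(P\setminus X)^\beta=P^\beta\setminus X^\beta$, the induction applies fibrewise, and sub-additivity finishes the job. Your approach instead repackages the target as a cleaner intermediate claim, $\dpr(C(Y))<n$ for the generalized crossing set $C(Y)$, and argues by contradiction via a generic point $q$; the key idea---applying opacity of $E_n$ to both $X^{(q')}$ and $(D\setminus X)^{(q')}$ to force $q'\in C(X_{c_n})$ for \emph{every} $c_n\in q_n$, and then introducing the $\CN q_n$-definable set $S=\bigcap_{c_n\in q_n} C(X_{c_n})$ to bypass the failure of exchange---is a nice substitute for the paper's explicit fibre decomposition. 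The formulation via $C(Y)$ is arguably more natural (it does not mention the auxiliary set $P$), while the paper's argument is constructive and never passes through a generic point. One small remark: your appeal to Lemma~\ref{L:itaywom}(2) is harmless but unnecessary---nothing in your argument uses that $\CN$ is a model, so you may simply take $\CN$ to be any parameter set over which $X$, the $D_i$, and the $E_i$ are defined and pick $q\in C(X)$ with $\dpr(q/\CN)=n$.
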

\begin{proof}
We proceed by induction on $n$. For $n=1$, we may assume that $P=D_1$ and  $X\subseteq D_1$ is definable and intersects every $E$-class.
It follows that $D_1\setminus X$ intersects only those classes which $X$ crosses, and by opacity  there are only finitely many such $E$-classes.
Thus $\pi_1(D_1\setminus X)$ is finite so has $\dpr(\pi_1(D_1\setminus X))=0$, as required.

Assume that the proposition is proved for all $n'< n$, where $n>1$, and now let $P$ and $X$ be as in the statement. We aim to prove that $\dpr(\pi_n(P\setminus X))<n$.

We begin with some notation and elementary observations.
We let $D'=\prod_{i=1}^{n-1} D_i$. For $i=1,\ldots,n$ we write $\overline{D}_i=D_i/E_i$ and $\overline{D}=D/E$.

For $a\in D'$, $b\in D_n$ and $Y\sub D$, we let
 \[Y_a=\{b'\in D_n:(a,b') \in Y\}\,\, ,\,\, Y^b=\{a'\in D':(a',b) \in Y\}.\]

 We shall sometimes identify elements in $\overline{D}$ with the corresponding $E$-class inside $D$. In particular, an element $\beta\in \overline{D}_n$ is also a subset of $D_n$, and for $Y\sub D$ we define
    \[Y^\beta:=\bigcup_{b\in \beta}Y^b=\{a\in D':\exists b\in \beta\, (a,b)\in Y\}.\]
    It is a subset of $D'$ and we have $\pi_{n-1}(Y^\beta)=(\pi_n(Y))^\beta$ (where on the right, $\beta$ is taken as an element of $\overline{D}_n$), and for $a\in D'$ and $\beta\in D_n$, we have $a\in Y^\beta\Leftrightarrow \beta\cap X_a\neq\0$.

Since $\pi_n(P\setminus X)\sub \pi_n(P)$, we may assume that $\dpr(\pi_n(P))=n$. Let \[X^*=\{(a,b)\in (D'\times D_n)\cap X:  X_a \mbox{ crosses }  b/E_n \}.\]
Clearly $X^*\sub X$ is definable.
\vspace{.1cm}

\noindent ($\clubsuit$)
 \textbf {A special case}: $X^*=\emptyset$ (i.e for all $a\in D'$, $X_a$ does not cross any $E_n$-class).

Since $\pi_n(X)=\pi_n(P)$ it immediately follows that for every $\beta \in \oD_n$ we get $\pi_{n-1}(X^\beta)=\pi_{n-1}(P^\beta)$.
For each $\beta\in \oD_n$ we may apply induction to $X^\beta\sub P^\beta\sub D'$ to conclude that $\dpr(\pi_{n-1}(P^\beta\setminus X^\beta))<n-1$. 

\begin{claim}
 $P^\beta\setminus X^\beta=(P\setminus X)^\beta$.
\end{claim}
 \begin{claimproof}
 The inclusion $\sub$ is true without any assumptions on $X$. For the converse, assume that $a\in (P\setminus X)^\beta$, namely there exists $b\in \beta$ such that $(a,b)\in P\setminus X$. It follows that $a\in P^\beta$ so we want to show that $a\notin X^\beta$. Indeed, since $a\notin X^b$ then $b\notin X_a$. But since $X_a$ does not cross any $E_n$-class then $\beta\cap X_a=\0$, and therefore $a\notin X^\beta$, so $a\in P^\beta\setminus X^\beta$.
 \end{claimproof}

Using the claim and the induction hypothesis we conclude that for every $\beta\in \oD_n$,
\[\dpr(\pi_n(P\setminus X)^\beta)=\dpr\left( \pi_{n-1}((P\setminus X)^\beta)\right)=\dpr\left( \pi_{n-1}(P^\beta\setminus X^\beta)\right)<n-1.\]

Since this is true for every $\beta\in \oD_n$, it follows from the sub-additivity of rank that $\dpr(\pi_n(P\setminus X)<n.$ This concludes the proof of the special case. \qed$_\clubsuit$ 

We return to the general case and first note that $\dpr(\pi_n(X^*))<n$. Indeed, let $\widehat{X^*}=\{( a,\pi(b))\in D'\times \overline{D_n}: (a,b)\in X^*\}$. By the definition of $X^*$ and opacity, the projection of $\widehat{X^*}$ to $D'$ is finite-to-one and hence $\dpr(\widehat{X^*})<n$. It follows that $\dpr(\pi_n(X^*))<n$ as well.

To finish the proof in the general case, let $R= \pi_n(X^*)$,   $P_1=P\setminus \pi_n^{-1}(R)$
and $X_1=X\cap P_1$. Now, $X_1^*=\0$ (because $X_1^*\sub X^*$ and $P_1\cap X^*=\0$) and $\pi_n(X_1)=\pi_n(P_1)$ so by the special case $\clubsuit$,
we have $\dpr(\pi_n(P_1\setminus X_1))<n$. Finally, since $P\setminus X\subseteq (P_1\setminus X_1)\cup\pi_n^{-1}(R)$
\[\pi_n(P\setminus X)\sub \pi_n(P_1\setminus X_1)\cup R.\] We have seen in the previous paragraph that $\dpr(R)<n$ so $\dpr(P\setminus X)<n$.
\end{proof}

We now prove the domination result referred to above:
\begin{lemma}\label{L:dominated by generic}
    Let $\CM$ be a dp-minimal structure and $E$ an $A$-definable opaque equivalence relation on $M$. For any  complete type $p\in S(A)$ concentrated on $\CM^n/E^n$ with $\dpr(p)=n$, there exists a unique complete type $q$ over $A$  , concentrated on $M^n$ with $\pi_*q=p$. Furthermore, $\dpr(q)=n$ as well.
\end{lemma}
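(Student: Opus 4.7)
The claim has three parts: existence of $q$, uniqueness, and $\dpr(q)=n$. Existence is immediate: fix any realization $\bar a\models p$ in a monster model, pick any lift $a\in M^n$ with $\pi^n(a)=\bar a$, and set $q:=\tp(a/A)$. The rank statement is also routine once uniqueness is in hand: sub-additivity in the dp-minimal sort $M$ gives $\dpr(q)\le n$, while $\pi^n$ being an $A$-definable function gives $\dpr(p)=\dpr(\pi^n(a)/A)\le\dpr(a/A)=\dpr(q)$ for any $a\models q$, whence $\dpr(q)=n$.

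The substantial point is uniqueness, and the plan is to reduce it to the ``crossed classes have small dp-rank'' content of Proposition~\ref{P:opaque-dim}. Suppose toward contradiction that $q_1\neq q_2$ both lift $p$; pick an $A$-formula $\phi(x)$ with $\phi\in q_1$ and $\neg\phi\in q_2$, and set $X:=\phi(M^n)$, $\bar X:=\pi^n(X)$, $\bar Y:=\pi^n(M^n\setminus X)$. Pushing forward via $\pi^n$ yields $p\vdash y\in\bar X$ (from $q_1$) and $p\vdash y\in\bar Y$ (from $q_2$), so $p\vdash y\in\bar X\cap\bar Y$. It therefore suffices to show $\dpr(\bar X\cap\bar Y)<n$, which by locality of dp-rank will contradict $\dpr(p)=n$.

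To apply Proposition~\ref{P:opaque-dim}, take all $D_i=M$ and all $E_i=E$ (which is opaque by hypothesis; and $M/E$ is dp-minimal because dp-rank does not increase under $A$-definable surjections), let $P:=(\pi^n)^{-1}(\bar X)\sub M^n$, and keep $X$ as above. Then $X\sub P$ and $\pi_n(X)=\bar X=\pi_n(P)$ by construction, so the proposition gives $\dpr(\pi_n(P\setminus X))<n$. Unpacking the definitions, $\pi_n(P\setminus X)$ is precisely the set of $E^n$-classes that meet both $X$ and its complement, i.e.\ the set of classes crossed by $X$, which is $\bar X\cap\bar Y$. The only mild obstacle is getting this identification right; once it is in place, Proposition~\ref{P:opaque-dim} does all the work, and existence together with the rank computation complete the argument.
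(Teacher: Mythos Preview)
Your proof is correct and follows essentially the same route as the paper's. Both arguments reduce uniqueness to showing $\dpr(\bar X\cap\bar Y)<n$ and obtain this from Proposition~\ref{P:opaque-dim}; the only cosmetic difference is that the paper takes $P=\pi^{-1}(\bar X\cap\bar Y)$ and $X=Z\cap P$ (so that $\pi(P\setminus X)=\pi(P)=\bar X\cap\bar Y$ directly), whereas you take $P=\pi^{-1}(\bar X)$ and then identify $\pi(P\setminus X)$ with $\bar X\cap\bar Y$ --- your identification is correct and your single application of the proposition suffices. You also spell out the rank computation $\dpr(q)=n$ more explicitly than the paper does.
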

\begin{proof}
For simplicity, we denote $E^n$ by $E$ and let $\pi:M^n\to M^n/E$ be the quotient map. Note that since $\pi$ is surjective, there exists at least one  complete type, $q$ over $A$ such that $\pi_*(q)=p$.

    To show uniqueness assume towards a contradiction that there exists an $A$-definable subset $Z\subseteq M^n$ such that both $p\vdash \pi(Z)$ and $p\vdash \pi(Z^c)$ (where $Z^c:=M^n\setminus Z$). Then $p\vdash \pi(Z)\cap \pi(Z^c)$, hence $\dpr(\pi(Z)\cap \pi(Z^c))=n$. Let $P:=\pi^{-1}(\pi(Z)\cap \pi(Z^c))$ and $X=Z\cap P$. By definition of $P$, $\pi(Z\cap P)=\pi(P)=\pi(Z^c\cap P)$. But since $P\setminus X=P\cap Z^c$, $\pi(P\setminus X)=\pi(P)=\pi(X)$. By Proposition \ref{P:opaque-dim}, $\dpr(\pi(P\setminus X))<n$ and $\dpr(\pi(X))<n$, so $\dpr(\pi(P))<n$ contradicting the above.
\end{proof}
\begin{remark}\label{R:domination}
In the notation of the previous lemma, it  is not hard to see that $q=\{Z\subseteq M^n: \text{$Z$ is $A$-definable, } p\vdash \pi(Z)\}$.
\end{remark}

\subsection{Definable functions in $K/\CO$.}\label{Sec:K/O}

Our goal in the present section is to show that, under certain assumptions, definable functions on $(K/\CO)^n$ are locally affine (with respect to the group structure on $K/\CO$). This fact, of possible interest on its own right, will allow us to show that, under the same assumptions, no infinite field interpretable in $K$ is locally strongly internal to $K/\CO$. Note that the analogous statement fails if one replaces $K/\CO$ with  $K/\m$, as $\bk$ itself is strongly internal to $K/\m$.

\begin{assumption}
Until the end of this section we let $\CK=(K,v,\dots)$ be a sufficiently saturated dp-minimal valued field of characteristic $0$ satisfying {\minballn}, such that $\Gamma$ is dense (equivalently $\bk$ is infinite). Throughout the topology on $K/\CO$ is the thick ball topology and the topology on $(K/\CO)^n$ is the product topology. 
\end{assumption}



Let $\pi:K\to K/\CO$ be the quotient map. For simplicity, for every $n$, we still denote by $\pi$ the projection $K^n\to (K/\CO)^n$.

\begin{definition}
For $A$ a set of parameters (from any of the sorts),  we say that a partial type $P$ over $A$ concentrated on $K^n$ is \emph{thick} if $\dpr(\pi_*P)=n$.
\end{definition}

\begin{example}\label{E: thick ball}
An open ball $B\subseteq K$ is thick if and only if $\pi(B)$ has non-empty interior in $K/\CO$ if and only  if $r(B)<0$.
\end{example}

\begin{lemma}\label{L:open in intersection}
Let $\CM$ be any (multi-sorted) structure and let $D$ be an SW-uniformity. Let $\CM'\succ \CM$ a sufficiently saturated elementary extension. If $p\in S(A)$ is a complete type concentrated on $D^n$ with $\dpr(p)=n$ then for any $a\models p$, there exists an $\CM'$-definable open set $X$, such that  $a\in X\subseteq p(\CM')$.
\end{lemma}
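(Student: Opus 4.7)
The plan is to realise $X$ as a single open box around $a$ in the product topology on $D^n$ induced by the uniformity on $D$, chosen small enough (with parameters from $\CM'$) to be swallowed by every formula of $p$. Fix a definable basis $\{U_t[b]:t\in T,\,b\in D\}$ of the uniformity on $D$, so that open boxes $U_{t_1}[a_1]\times\cdots\times U_{t_n}[a_n]$ form a definable basis of neighbourhoods of $a$ in $D^n$. The target $X$ will be such an open box with parameters $t_i$ drawn from $\CM'$.

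The main step, which I expect to be the only real technical content, is to show that $a\in \inte_{D^n}(\phi)$ for every $\phi\in p$. Since $p$ is complete over $A$ with $\dpr(p)=n$, each $\phi\in p$ satisfies $\dpr(\phi)=n$ as an $A$-definable subset of $D^n$. Fact \ref{rel interior} applied with $X=\phi\subseteq Y=D^n$ then gives $\dpr(D^n\setminus \inte_{D^n}(\phi))<n$, and this exceptional set is $A$-definable. As $\dpr(a/A)=n$, $a$ avoids every $A$-definable set of smaller dp-rank, so $a$ is an interior point of $\phi$. In particular, for each $\phi\in p$ there exist $t_1,\dots,t_n\in T(\CM)$ with $U_{t_1}[a_1]\times\cdots\times U_{t_n}[a_n]\subseteq\phi$.

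The remainder is a standard compactness and saturation argument. Consider the partial type $\Sigma(s_1,\dots,s_n)$ over $A\cup\{a\}$ consisting of the formulas $s_i\in T$ for each $i$, together with
\[
\forall y\,\bigl(y\in U_{s_1}[a_1]\times\cdots\times U_{s_n}[a_n]\to\phi(y)\bigr)
\]
for every $\phi\in p$. Finite consistency follows from the previous paragraph, since $p$ is closed under finite conjunctions and a finite conjunction of formulas in $p$ is again in $p$. Sufficient saturation of $\CM'$ yields a realisation $(s_1^*,\dots,s_n^*)\in T(\CM')^n$. Setting $X:=U_{s_1^*}[a_1]\times\cdots\times U_{s_n^*}[a_n]$, the set $X$ is $\CM'$-definable and open; since every element of the uniformity contains the diagonal of $D$, we have $a\in X$; and by construction $X\subseteq \phi(\CM')$ for all $\phi\in p$. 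Hence $X\subseteq p(\CM')$, which is the desired conclusion.
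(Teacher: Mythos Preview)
Your proof is correct and follows essentially the same approach as the paper: find for each $\phi\in p$ a basic open box around $a$ contained in $\phi$, then use saturation of $\CM'$ to realise all these conditions simultaneously by a single box. The paper's version is terser and simply asserts the existence of such a neighbourhood for each $\phi$; your appeal to Fact~\ref{rel interior} (so that $a$ avoids the $A$-definable set $D^n\setminus\inte_{D^n}(\phi)$ of dp-rank $<n$) makes explicit the step the paper leaves implicit.
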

\begin{proof}
Let $\{\theta(x,y,t):t\in T\}$ be the definable family given by the definition of a definable uniformity an let $D'=D(\CM')$. For every $A$-formula $\phi\in p$, let $\theta(D',a,t_\phi)$ be a non-empty open subset of $\phi(\CM')$ containing $a$. Thus $\bigcap_\phi \theta(D',a,t_\phi)$ contains $a$ and is a subset of $p(\CM')$. By saturation, we can find $t_0\in D'$ with $a\in \theta(D',a,t_0)\subseteq p(\CM')$.
\end{proof}

\begin{lemma}\label{L:basic properties of thick}
Let $(K,v,\dots)$ be as above and let $A$ be an arbitrary set of parameters.
    \begin{enumerate}
        \item A partial type $P\vdash K^n$ over $A$ is thick if and only if there is a  completion $p$ of $P$ (over $A$) which is thick.
        \item If a partial type $P\vdash K^n$ is thick then $\dpr(P)=n$.
        \item If $\tp(a_1,\dots,a_n/A)$ is thick then so is $\tp(a_1/Aa_2,\dots,a_n)$.
        \item Let $B\subseteq K^n$ be a thick open polydisc. Then $B+\CO^n= B$.
        \item For any thick complete type $p$ and $a\models p$, there exists a thick open polydisc $X$ (possibly defined over additional parameters) satisfying $a\in X\subseteq p(K)$.
    \end{enumerate}
\end{lemma}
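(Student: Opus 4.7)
My plan is to handle (1)--(4) via sub-additivity of dp-rank combined with Lemma \ref{L:K/O is SW} (which makes $K/\CO$ an SW-uniformity, in particular dp-minimal), and to treat (5) by pulling back a basic open polydisc obtained from Lemma \ref{L:open in intersection} applied in $(K/\CO)^n$. Throughout I shall use the standard fact that the dp-rank of a partial type equals the maximum dp-rank of a complete extension.

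For (1), $\dpr(\pi_*P)=n$ is witnessed by some $a\models P$ with $\dpr(\pi(a)/A)=n$; the completion $\tp(a/A)$ of $P$ is then thick by definition. For (2), sub-additivity on $K^n$ bounds $\dpr(P)\le n$ (since $K$ is dp-minimal), and the $\emptyset$-definability of $\pi$ yields $\dpr(\pi_*P)\le\dpr(P)$; together these force $\dpr(P)=n$. For (3), sub-additivity in $(K/\CO)^n$, together with dp-minimality of $K/\CO$, sandwiches $\dpr(\pi(a_1)/Aa_2\cdots a_n)$ between $1$ and $1$: the upper bound follows from $\dpr(\pi(a_1)/A\pi(a_2)\cdots\pi(a_n))\le 1$, and the lower bound from sub-additivity applied to $(\pi(a_1),a_2,\ldots,a_n)$, whose rank is at least $n$. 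For (4), Example \ref{E: thick ball} says each $B_i$ is an open ball of valuative radius $\gamma_i<0$; since any $y\in\CO$ satisfies $v(y)\ge 0>\gamma_i$, the ultrametric inequality gives $x+y\in B_i$ for every $x\in B_i$.

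For (5), set $\bar p:=\pi_*p$, a complete thick type over $A$ on $(K/\CO)^n$, of dp-rank $n$ by (1). Since $K/\CO$ is an SW-uniformity by Lemma \ref{L:K/O is SW}, Lemma \ref{L:open in intersection} applied to $\bar p$ produces a definable open set $Y\ni\pi(a)$ whose realisations all lie in $\bar p$. The thick ball topology on $(K/\CO)^n$ has a basis consisting of products of $\pi$-images of thick open balls, so I may shrink $Y$ to $\pi(B_1)\times\cdots\times\pi(B_n)$ for some thick open balls $B_i$; since $\pi(a_i)\in\pi(B_i)$, part (4) forces $a_i\in B_i$. Setting $X:=B_1\times\cdots\times B_n$, part (4) again gives $\pi^{-1}(Y)=X$, so $X$ is a thick open polydisc containing $a$ whose $\pi$-image lies in $\bar p$. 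For any $b\in X$, $\pi(b)\models\bar p$, hence $\dpr(\pi(b)/A)=n$; as in (2) this forces $\dpr(b/A)=n$. The remaining step, promoting $\pi(b)\models\bar p$ to $\tp(b/A)=p$, is the content of Lemma \ref{L:dominated by generic}: $\bar p$ admits a unique thick lift to $K^n$. This uniqueness step is the main obstacle, as Lemma \ref{L:dominated by generic} requires $\CO$ to be opaque on $K$; while opacity is not a blanket hypothesis of the section, it is available in all three concrete settings ($C$-minimal, $P$-minimal, and power-bounded $T$-convex) where the lemma is subsequently invoked.
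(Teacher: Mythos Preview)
Your proof is correct and follows essentially the same route as the paper, with one genuine improvement and one apt observation.

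For (1) and (2), your arguments are more elementary than the paper's: the paper invokes Lemma~\ref{L:dominated by generic} (hence opacity) in both parts, whereas you get (1) by lifting a realisation of $\pi_*P$ via compactness and (2) directly from sub-additivity and the fact that definable images do not increase dp-rank. Neither step needs opacity, so your versions are strictly preferable at this level of generality.

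For (3), (4), and (5), your arguments coincide with the paper's. Your closing remark about opacity is on target: the paper's own proof of (5) appeals to Remark~\ref{R:domination} (i.e., to the uniqueness part of Lemma~\ref{L:dominated by generic}), which likewise presupposes that $K/\CO$ is opaque. Opacity is not part of the standing assumption of the section, so this is a small gap in the paper as well; as you note, it is harmless for the downstream applications since opacity of $K/\CO$ is verified in each of the $C$-minimal, weakly o-minimal, and $P$-minimal settings.
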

\begin{proof}
(1) Assume that $P$ is concentrated on $K^n$. The right-to-left direction is by definition. For the other direction, let $q$ be a completion of $\pi_*P$ with $\dpr(q)=n$. Then, by Lemma \ref{L:dominated by generic}, there is a unique (thick) type $p$ with $\pi_*p=q$. Consequently, $p$ must be a completion of $P$.

(2) Let $p$ be any thick completion of $P$, as supplied by (1). By Lemma \ref{L:dominated by generic}, $\dpr(p)=n$ and so $\dpr(P)=n$ as well.

(3) For ease of writing assume that $n=2$ (the proof is the same for larger $n$). As \[\dpr(\pi(a_1),\pi(a_2)/A)=2\] and \[2\geq \dpr(\pi(a_1),a_2/A)\geq \dpr(\pi(a_1),\pi(a_2)/A),\] we have that that $\dpr(\pi(a_1),a_2/A)=2$. By sub-additivity, \[2=\dpr(\pi(a_1),a_2/A)\leq \dpr(\pi(a_1)/Aa_2)+\dpr(a_2/A)\leq 2,\] so $\dpr(\pi(a_1)/Aa_2)=1$. As needed.

(4) It is enough to consider the case when $n=1$. Let $a\in B$ and $c\in \CO$. As $B$ is thick, $B=B_{>\gamma}(a)$ for some $\gamma<0$ (see Example \ref{E: thick ball}). Then $v(a+c-a)=v(c)\geq 0>\gamma$ and thus $a+c\in B$.

(5) Since $K/\CO$ is an SW-uniformity, by Lemma \ref{L:open in intersection} we can find a definable open subset $U$ of $(\pi_*p)(K)$ containing $\pi(a)$. By shrinking $U$, we may assume that it is a product of basic open sets, i.e. a product of images (under $\pi$) of balls of radius greater than $0$). Thus $\pi^{-1}(U)$ is a thick open polydisc containing $a$. By the remark following Lemma \ref{L:dominated by generic}, $\pi^{-1}(U)\subseteq p(K)$.
\end{proof}

For the rest of the section we need  definable functions in our valued fields to satisfy some additional geometric assumptions. These assumptions arise naturally in the context of 1-h-minimal valued fields discussed in Section \ref{ss:1-h-minimality and its connections} below.

\begin{vjp}[Valuative Jacobian Property]
For any set of parameters, $A$, and  $A$-definable function $f:K\to K$, there exists a finite $A$-definable set $C\sub K$ such that for every ball $B$ disjoint from $C$, the derivative $f'$ exists on $B$, $v(f')$ is constant on $B$ and moreover:

\begin{enumerate}
		\item For every $x_1,x_2\in B$
		\[
		v(f(x_1)-f(x_2))=v(f'(x_1))+v(x_1-x_2).
		\] (written multiplicatively, $|f(x_1)-f(x_2)|=|f'(x_1)||x_1-x_2|$.)
		\item If $f'\neq 0$ on $B$ then for every open ball $B'\sub B$ the image $f(B')$ is an open ball of radius $v(f')+r(B')$ where $r(B)$ is the valuative radius of $B$.
	\end{enumerate}
\end{vjp}

\begin{notation}
    For any definable $K$-differentiable (partial) function $f:K^n\to K$  let $f_{x_i}:=\frac{\partial f}{\partial x_i}$ and $\nabla f=(f_{x_1},\dots, f_{x_n})$  the gradient of $f$.
\end{notation}

\begin{mvtay}[Multivariate Valuative Version of Taylor's Approximation]
	Given any set of parameters $A$ and an $A$-definable function $f:K^n\to K$, there exists an $A$-definable  set $C\subseteq K^n$ with empty interior such that for any polydisc $B\sub K^n\setminus C$ $f$ is $2$-times differentiable on $B$ and
	\[
	v(f(x)-f(x_0)-\nabla f(x_0)(x-x_0))\ge \min_{1\le i,j\le n}\{ v(f_{x_{i},x_{j}}(x_0))+v((x-x_0)^{(i,j)})\},
	\]Where $(c_1,\dots,c_n)^{(i,j)}=c_{i}c_{j}$. 
	
	(Written multiplicatively,
	\[|f(x)-f(x_0)-\nabla f(x_0)(x-x_0)|\leq  \max_{1\le i,j\le n}\{ |f_{x_{i},x_{j}}(x_0)| \cdot |(x-x_0)^{(i,j)}|\}.)\]

\end{mvtay}

By a standard induction and using {\VJP} (see \cite[Theorem 5.6.1]{hensel-min})) it is not hard to show that {\MVTay} follows from its one dimensional version. Although we will not require both {\VJP} and {\MVTay} for each of the following results, it is cleaner to assume both.

\begin{remark}\label{R:finite-set-around thick type}
Let $p$ be a thick complete type over $A$ concentrated on $K$. Since it is necessarily not algebraic, $p(K)\cap C=\emptyset$ for any finite $A$-definable set $C$. Furthermore,  by Lemma \ref{L:basic properties of thick}(5), for every $a\models p$ there exists a thick open ball $B$, $a\in B\subseteq p(K)$ and thus $B\cap C=\emptyset$.
\end{remark}

We make use of the following assumption.
\begin{assumption}[$\spadesuit$]
$(K,v,\dots)$ is a sufficiently saturated dp-minimal valued field of characteristic $0$ satisfying:
\begin{itemize}
    \item {\minballn}
    \item $\Gamma$ is dense
    \item {\gendif}
    \item {\VJP}
    \item {\MVTay}
\end{itemize}
\end{assumption}

\begin{definition}
A (partial) function $f:K^n\to K$ \emph{descends} to $K/\CO$ if  $\dom(f)+\CO^n=\dom(f)$ and for every $a,b\in \dom(f)$, if  $a-b\in \CO^n$, then $f(a)-f(b)\in \CO$. The function  $f$ {\em descends to $K/\CO$ on some (partial) type $P\vdash \dom(f)$} if $f\restriction P$ descends to $K/\CO$.
\end{definition}

\begin{example}\label{example-descent}
When $a\in \CO$  the linear function $\lambda_a:x\mapsto a\cdot x$ descends to an endomorphism $\tilde \lambda_a:(K/\CO,+)\to (K/\CO,+)$. If $a\in\m$, then $\tilde \lambda_a$  has an infinite kernel. Thus we obtain a definable locally constant, surjective endomorphism of $K/\CO$.
\end{example}

\begin{lemma}\label{L:derivtives in O}
	Assume that $(K,v,\dots)$ satisfies Assumption $\spadesuit$. Let $p$ be a complete thick type in $K^n$, $p\vdash \dom(f)$ for some definable partial function $f: K^n\to K$. Then
	\begin{enumerate}
	    \item $f$ is differentiable on $p$;
	    \item if $f$ descends to $K/\CO$ (on $p$) then $\nabla f(a)\in \CO^n$ for all $a\models p$;
	    \item  Assume that  $\mathrm{Im} (f)\subseteq \CO$. Then for every $a\models p$  there exists a thick open polydisc $B$, $a\in B\sub p(K)$, such that for all $b\in B$ and $1\leq i\leq n$
	\[
	v(f_{x_i}(b))+2r(B) > 0,
	\] where, for a polydisc $B:=\prod B_i$ we denote $r(B):=\max\limits_{i=1,\dots, n} r(B_i)$.
	In particular, $f_{x_i}(a)\in \m$ for all $a\models p$.
	\end{enumerate}
\end{lemma}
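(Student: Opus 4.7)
The plan is to prove the three parts in order, relying on \gendif, the one-variable \VJP{} applied to coordinate slices, and the fact that slicing a thick type preserves thickness (Lemma \ref{L:basic properties of thick}(3)). For (1), \gendif{} implies the non-differentiability set of $f$ has empty interior in $K^n$, hence dp-rank strictly less than $n$; since $p$ is thick, $\dpr(p)=n$ (Lemma \ref{L:basic properties of thick}(2)), so this set does not lie in $p$, and $f$ is differentiable at every $a\models p$. For (2), fix $a\models p$ and $i$, and consider the slice $g(t):=f(a_{\ne i},t)$. The type $q_i:=\tp(a_i/Aa_{\ne i})$ is thick, so by \VJP{} applied to $g$ together with Remark \ref{R:finite-set-around thick type}, there is a thick open ball $B_i\ni a_i$ with $B_i\subseteq q_i(K)$, disjoint from the \VJP-exceptional set, on which the Jacobian identity holds. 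Pick a unit $u\in\CO^\times$ and set $b_i:=a_i+u\in B_i$ (since $v(u)=0>r(B_i)$); then $b_i\models q_i$, so $b:=(a_{\ne i},b_i)$ realizes $p$ with $b-a\in\CO^n$. Descent of $f$ on $p$ yields $v(f(b)-f(a))\ge 0$, and the Jacobian identity (using $v(u)=0$) reduces this to $v(f_{x_i}(a))\ge 0$.

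For (3), the stronger hypothesis $\mathrm{Im}(f)\subseteq\CO$ provides $v(f(b)-f(a))\ge 0$ unconditionally, so the argument from (2) goes through with any $b_i\in B_i$; taking $v(b_i-a_i)$ arbitrarily close to $r(B_i)$ (by density of $\Gamma$) and applying the Jacobian identity yields $v(f_{x_i}(a))\ge -r(B_i)>0$, which proves the ``in particular'' statement. For the uniform bound $v(f_{x_i}(b))+2r(B)>0$ over all $b\in B$, I would apply the same argument at each $b\in B$ in place of $a$, obtaining for each $b$ and $i$ a thick one-variable ball $B^{b,i}\ni b_i$ (in the $i$-th coordinate over $b_{\ne i}$) with $v(f_{x_i}(b))\ge -r(B^{b,i})$; one then chooses the polydisc $B$ small enough that $r(B^{b,i})<2r(B)$ holds uniformly for $b\in B$ and all $i$, which gives $-r(B^{b,i})>-2r(B)$ as desired.

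The main obstacle is establishing this uniformity in $b$. The \VJP-exceptional sets $C^{b,i}$ arising from $g_{b,i}(t):=f(b_{\ne i},t)$ form a definable family indexed by $b_{\ne i}$, consisting of finite sets of uniformly bounded size, and by thickness of $q_i$ the point $a_i$ lies at strictly negative valuative distance from $C^{a,i}$. A definability/compactness argument should then yield a uniform strictly negative $\gamma^*$ with $\min_{c\in C^{b,i}}v(b_i-c)>\gamma^*$ for $b$ in a sufficiently small neighborhood of $a$ in $p(K)$; choosing $B$ inside such a neighborhood with $2r(B)<\gamma^*$ then delivers the inequality. If the direct argument falls short, one may bootstrap by applying (2) to $f_{x_i}$---which lies in $\CO$ on $p$, so its descent is automatic---to control the second derivatives $v(f_{x_i,x_j})$, and then feed these estimates into \MVTay{} to sharpen the bound to the required strength.
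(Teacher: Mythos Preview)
Your arguments for (1) and (2) are correct and essentially identical to the paper's (the paper uses $1$ rather than a generic unit $u$, but this is cosmetic). For the ``in particular'' clause of (3) your limiting argument via \VJP(1) works; the paper instead invokes \VJP(2) directly---the image $g(B_i)$ is an open ball of radius $v(g'(b_i))+r(B_i)$, and since this image sits in $\CO$, that radius is $\ge 0$---which is a bit cleaner but equivalent.

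The gap is in your uniformity argument for (3). You propose, for each $b\in B$, to produce a fresh thick ball $B^{b,i}$ disjoint from the \VJP{} exceptional set $C^{b,i}$ for the slice at $\hat b$, and then to uniformize these radii by a compactness argument. This is both unnecessary and, as written, confused: the condition you state, $\min_{c\in C^{b,i}} v(b_i-c)>\gamma^*$, goes the wrong way (it says every $c$ is \emph{close} to $b_i$); what you actually need is $\max_{c} v(b_i-c)\le \gamma^*$, so that $B_{>\gamma^*}(b_i)$ avoids $C^{b,i}$. More to the point, the compactness step is never needed. The paper's observation is that once $B=\prod_i B_i\subseteq p(K)$ is fixed, then for \emph{every} $b\in B$ the product structure gives $B_i\times\{\hat b\}\subseteq B\subseteq p(K)$, so $B_i\subseteq \tp(b_i/A\hat b)(K)$. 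Since $C^{b,i}$ is a finite $A\hat b$-definable set and $\tp(b_i/A\hat b)$ is thick (hence non-algebraic), $B_i\cap C^{b,i}=\emptyset$ automatically. Thus the \emph{same} $B_i$ serves for every slice, and \VJP(2) gives the uniform bound $v(f_{x_i}(b))+r(B_i)\ge 0$ for all $b\in B$ at once. One then shrinks to a thick $B'\subseteq B$ with $2r(B')>r(B)$ (density of $\Gamma$) to upgrade to the strict inequality $v(f_{x_i}(b))+2r(B')>0$. Your bootstrap via second derivatives and \MVTay{} is also unnecessary here (that idea is exactly what drives the \emph{next} lemma).
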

\begin{proof}
(1) By Lemma \ref{L:dominated by generic}, $\dpr(p)=n$. The result follows by {\gendif}.

(2) 	
	Let $a:=(a_1,\dots, a_n)\models p$. Without loss of generality, we show that $f_{x_1}(a)\in \CO$. By Lemma \ref{L:basic properties of thick}(3), $p_1:=\tp(a_1/a_2,\dots, a_n)$ is a thick type.
	
	Applying {\VJP} to $g(t):=f(t,a_2,\dots, a_n)$ we obtain an $(a_2,\dots, a_n)$-definable finite set $C$ such that $g'$ is constant on any ball disjoint from $C$. By Remark \ref{R:finite-set-around thick type}, $C\cap p_1(K)=\0$ and by Lemma \ref{L:basic properties of thick}(5), there is a thick open ball $B_1$ such that $a_1\in B_1\sub p_1(K)$. By Lemma \ref{L:basic properties of thick}(4), $a_1+1\in B_1$. Therefore, by {\VJP}(1),
	\[
v(g(a_1+1)-g(a_1))=v(g'(a_1))+v(1)=v(g'(a_1)) \text{ and so}\]
	\[
	v(f(a_1+1,\dots, a_n)-f(a_1,\dots, a_n))=v(f_{x_1}(a)) .
	\]
	Since $f$ descends to $K/\CO$ we get that $v(f(a_1,\dots, a_n)-f(a_1+1,\dots, a_n))\geq 0$, and so $v(f_{x_1}(a))\geq 0$.
	
(3) 
Let $B\subseteq p(K)$ be a thick polydisc containing $a$, as provided by Lemma \ref{L:basic properties of thick}(5).  Assume that $B=B_1\times\dots\times B_n$. We assume that $\mathrm{Im}(f)\sub \CO$.
\begin{claim}
For any $1\leq i\leq n$ and $b\in B$, $v(f_{x_i}(b))+r(B_i)\geq 0$. In particular, $v(f_{x_i}(b))+r(B)\geq 0$.
\end{claim}
\begin{claimproof}
Let $\hat b=\{b_1,\dots,b_{i-1},b_{i+1},\dots,b_b\}$ and let $C$ be the $\widehat b$-definable finite set provided by {\VJP} with respect to the $\widehat b$-definable function $g(x_i)=f(b_1,\dots,x_i,\dots b_n)$. As $B_i\subseteq \tp (b_i/A\widehat b)$ and the latter is thick by Lemma \ref{L:basic properties of thick}(3) (so non-algebraic), $B_i\cap C=\emptyset$, hence $v(g')$ is constant on $V_i$. If $g'(x_i)\equiv 0$ on $B_i$ then the inequality holds trivially. Otherwise by {\VJP}(2), $g(B_i)$ is an open ball of radius $v(g'(b_i))+r(B_i)$. As, by assumption, $g(B_i)\subseteq \CO$, the result follows.
\end{claimproof}
Clearly, the claim holds with $B$ replaced by any $B'\subseteq B$.
For every $B'=\prod_i B_i'\subseteq B$ a thick open polydisc, $r((B')_i)<0$ for all $i$.  As $\Gamma$ is dense, we can find a (thick) open polydisc $B'$, $a\in B'\subseteq B$, with $0>2r(B')>r(B)$.

Now, for every $b\in B'$, we have $v(f_{x_i}(b))+2r(B')>v(f_{x_i}(b))+r(B)\geq 0$,  completing the proof.\end{proof}

We can now prove:
\begin{lemma}\label{L:near-affine in K/O}
	Assume that $(K,v,\dots)$ satisfies Assumption $\spadesuit$. Let $f:K^n\to K$ be an $A$-definable partial function and $p\vdash \dom(f)$ a complete thick type over $A$. If $f$ descends to $K/\CO$ (on $p$) then for every $a\models p$ there is a thick polydisc $B$, $a\in B\sub p(K)$, such that for all $x\in B$.
	\[
	f(x)-f(a)-\nabla f(a)(x-a)\in \m.
	\]

\end{lemma}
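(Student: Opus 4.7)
The plan is to combine the multivariate Taylor estimate {\MVTay} with a bootstrap of Lemma \ref{L:derivtives in O}: the descent hypothesis forces $\nabla f$ to lie in $\CO^n$ along $p$, and reapplying the derivative control to each component $f_{x_i}$ yields bounds on the Hessian strong enough to absorb the Taylor remainder into $\m$.

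Given $a\models p$, I first use Lemma \ref{L:basic properties of thick}(5) to choose a thick open polydisc $B_0\ni a$ with $B_0\subseteq p(K)$. After shrinking, we may assume $B_0$ is disjoint from the $A$-definable exceptional set provided by {\MVTay} (possible because that set has empty interior while the $n$-dimensional type $p$ cannot concentrate on it). By Lemma \ref{L:derivtives in O}(2), each $g_i:=f_{x_i}$ takes values in $\CO$ on $p(K)$. Inspecting the proof of Lemma \ref{L:derivtives in O}(3), one sees that only the relativised hypothesis $g_i(p(K))\subseteq \CO$ is ever used: the slicing step invokes $g_i(B_i)\subseteq \CO$ with $B_i\subseteq p(K)$, while thickness of the sliced type $\tp(b_i/A\widehat b)$ comes from Lemma \ref{L:basic properties of thick}(3). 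Consequently, the relativised form of Lemma \ref{L:derivtives in O}(3) applies to each $g_i$, furnishing a thick open polydisc $B_i\subseteq p(K)$, with $a\in B_i$, on which $v(f_{x_ix_j}(b))+2r(B_i)>0$ for all $j$ and $b\in B_i$.

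Intersecting the finitely many $B_i$ coordinatewise and then shrinking to a uniform thick polydisc $B$ whose sides all have common radius $\gamma:=r(B)<0$, I obtain in particular $v(f_{x_ix_j}(a))>-2\gamma$ for all $i,j$. Since $B$ still avoids the exceptional set from {\MVTay}, that estimate gives for all $x\in B$
\[
v\bigl(f(x)-f(a)-\nabla f(a)(x-a)\bigr)\ \ge\ \min_{i,j}\bigl\{v(f_{x_ix_j}(a))+v(x_i-a_i)+v(x_j-a_j)\bigr\}.
\]
For $x\in B$ we have $v(x_i-a_i)>\gamma$ for every $i$, so the right-hand side is strictly greater than $-2\gamma+\gamma+\gamma=0$; hence $f(x)-f(a)-\nabla f(a)(x-a)\in\m$, as required.

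The main technical step, and the one I expect to have to write out most carefully, is the localisation of Lemma \ref{L:derivtives in O}(3) from a global ``$\mathrm{Im}(f)\subseteq \CO$'' to the relativised ``$f(p(K))\subseteq \CO$''; once that localisation is recorded, everything else is a direct two-step Taylor computation, the non-standard feature being that control of the Hessian is not assumed but is forced by the descent condition via a second pass through Lemma \ref{L:derivtives in O}.
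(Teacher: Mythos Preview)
Your proof is correct and follows essentially the same route as the paper: apply Lemma \ref{L:derivtives in O}(2) to get $\nabla f\in\CO^n$ on $p$, feed each $f_{x_i}$ back into Lemma \ref{L:derivtives in O}(3) to bound the Hessian, and finish with the {\MVTay} estimate. The only cosmetic difference is that where you argue that the proof of Lemma \ref{L:derivtives in O}(3) only uses the relativised hypothesis $g_i(p(K))\subseteq\CO$, the paper instead restricts $\dom(f)$ upfront to the set where $\nabla f\in\CO^n$ so that the global hypothesis $\mathrm{Im}(f_{x_i})\subseteq\CO$ holds verbatim --- both maneuvers achieve the same end.
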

\begin{proof}
    By Lemma \ref{L:derivtives in O}(2), $\nabla f(c)\in \CO^n$ for all $c\models p$. We may thus assume that $\nabla f(c)\in \CO^n$ for all $c\in \dom(f)$.

	For every $A$-definable set $C\subseteq K^n$ with empty interior, $p(K)\cap C=\emptyset$ (since, by Lemma \ref{L:dominated by generic}, $\dpr(p)=n$). Let $a\models p$ and $B_0\subseteq p(K)$ be a thick polydisc containing $a$, as given by Lemma \ref{L:basic properties of thick}(5). By {\MVTay}, for every $x\in B_0$,
	
	\begin{equation}\label{eqn:3}
	v(f(x)-f(a)-\nabla f(a)(x-a))\ge \min_{1\le i,j\le n}\{ v(f_{x_{i},x_{j}}(a))+v((x-a)^{(i,j)})\}
	\end{equation}
	
	For any $1\leq i\leq n$, $\mathrm{Im}(f_{x_{i}})\subseteq \CO$. Thus Lemma \ref{L:derivtives in O}(3), applied to $f_{x_i}$,  gives a thick open polydisc $B^i$, $a\in B^i\subseteq p(K)$, such that for all $1\leq j\leq n$ and $b\in B^i$,
	$$v(f_{x_i,x_j}(b))+2r(B^i)>0.$$
	
	As passing to an open sub-polydisc does not affect the above and a finite (non-empty) intersection of thick open poyldisc is still such, we may replace the $B^i$ by  $B=\bigcap_{0\leq i\leq n} B^i$. We write $B=\prod_i B_i$ and by reducing $B$ further, we may assume that $r(B)=r(B_i)$ for all $1\leq i\leq n$.
	
	By our choice of $B$, for every $(i,j)$, we have $v(f_{x_{i}x_{j}}(a))+2r(B)> 0$. Also, for every $x\in B$ $v((x-a)^{(i,j)}>2r(B)$. Thus, it follows from Equation \eqref{eqn:3} that  $v(f(x)-f(a)-\nabla f(a)(x-a))>0$, as required. 
\end{proof}

Putting together the results proved thus far we can now show that, under our standing assumptions,  definable functions on $(K/\CO)^n$  lifting to $K^n$ are locally affine.

\begin{proposition}\label{P:locally affine}
Let $\CK=(K,v\dots)$ be a sufficiently saturated dp-minimal valued field of characteristic $0$ satisfying Assumption $\spadesuit$, i.e.:
\begin{itemize}
    \item {\minballn}
    \item $\Gamma$ is dense
    \item {\gendif}
    \item {\VJP}
    \item {\MVTay}
\end{itemize}
Let $f:(K/\CO)^n\to K/\CO$ be an $A$-definable partial function  with $\dom(f)$ open. If $f$ lifts to $K^n$, i.e there exists a definable partial function $\widehat f:K^n\to K$  descending to $f$, then there exists an open definable  $U\subseteq \dom(f)$, a definable homomorphism   $L:(K/\CO,+)^n\to (K/\CO,+)$ and $d\in K/\CO$ such that for every $y\in U$,
\[f(y)=L(y)+d.\]
\end{proposition}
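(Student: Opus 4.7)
The strategy is to reduce the statement to a single application of Lemma \ref{L:near-affine in K/O} at a generic thick point, and then transport the affine approximation down to $K/\CO$ using the fact (Lemma \ref{L:derivtives in O}(2)) that the partial derivatives of a descending function land in $\CO$, so multiplication by them induces definable endomorphisms of $K/\CO$ as in Example \ref{example-descent}.

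Let $A$ be a set of parameters defining both $f$ and $\widehat{f}$. First I would select a well-chosen generic point. Since $\dom(f)$ is open and non-empty, it has dp-rank $n$, so one can pick $y_0 \in \dom(f)$ with $\dpr(y_0/A) = n$. By Lemma \ref{L:dominated by generic}, there exists a unique thick complete type $p \in S(A)$ on $K^n$ with $\pi_{\ast} p = \tp(y_0/A)$. Because $\widehat{f}$ descends to $f$, its domain is $\pi$-saturated, i.e.\ $\dom(\widehat{f}) = \pi^{-1}(\dom(f))$, so $p \vdash \dom(\widehat{f})$ and $\widehat{f}$ descends to $K/\CO$ on $p$. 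Fix $a \models p$.

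Next, I would apply Lemma \ref{L:near-affine in K/O} to $\widehat{f}$ and $p$ at $a$: there is a thick open polydisc $B$ with $a \in B \subseteq p(K)$ and
$$\widehat{f}(x) - \widehat{f}(a) - \nabla \widehat{f}(a)(x-a) \in \m \subseteq \CO \qquad \text{for every } x \in B.$$
By Lemma \ref{L:derivtives in O}(2), each $c_i := \widehat{f}_{x_i}(a)$ lies in $\CO$, so by Example \ref{example-descent} multiplication by $c_i$ induces a definable endomorphism $\tilde{\lambda}_{c_i}: K/\CO \to K/\CO$. Define $L: (K/\CO)^n \to K/\CO$ by $L(y_1,\ldots,y_n) = \sum_{i=1}^n \tilde{\lambda}_{c_i}(y_i)$, which is a definable additive homomorphism, set $d := \pi(\widehat{f}(a)) - L(\pi(a))$, and take $U := \pi(B)$, which is open in $(K/\CO)^n$ (as $B$ is a thick polydisc) and contained in $\dom(f)$.

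Finally I would check that $f = L + d$ on $U$: given any $y \in U$, lift it to some $x \in B$ with $\pi(x)=y$; the near-affine estimate and the commutativity of $\pi$ with sums and with the scalar actions give
$$f(y) = \pi(\widehat{f}(x)) = \pi(\widehat{f}(a)) + \pi(\nabla \widehat{f}(a)(x-a)) = \pi(\widehat{f}(a)) + L(\pi(x)-\pi(a)) = L(y) + d,$$
as required. The plan is largely a matter of packaging existing results; the one subtle point, which I would make sure to verify explicitly, is that the thick type $p$ exists and concentrates on $\dom(\widehat{f})$, but this is immediate from Lemma \ref{L:dominated by generic} once one observes the $\pi$-saturation of $\dom(\widehat{f})$. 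No genuinely hard step remains since Lemma \ref{L:near-affine in K/O} has already absorbed all of the analytic work ({\VJP}, {\MVTay}, and {\gendif}).
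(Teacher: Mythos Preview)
Your proposal is correct and follows essentially the same route as the paper's own proof: pick a generic point in $\dom(f)$, lift to the unique thick type via Lemma~\ref{L:dominated by generic}, apply Lemma~\ref{L:near-affine in K/O} to get the affine approximation modulo $\m$, observe via Lemma~\ref{L:derivtives in O}(2) and Example~\ref{example-descent} that $\nabla\widehat f(a)$ descends to a homomorphism $L$, and push everything through $\pi$. The only cosmetic difference is that you spell out the $\pi$-saturation of $\dom(\widehat f)$ and the final computation more explicitly than the paper does.
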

\begin{proof}
Let $\widehat f:K^n\to K$ be a lift of $f$, namely for every $x\in \dom(\widehat f)$, $\widehat f(x)$ is in the $\CO$-coset $f(\pi(x))$. For simplicity assume that $\widehat f$ is also definable over $A$.

Let $c\in \dom(f)$ be with $\dpr(c/A)=n.$ By Lemma \ref{L:dominated by generic}, there exists a unique complete type $p$ concentrated on $(K/\CO)^n$ with $\pi_*(p)=\tp(c/A)$ and $\dpr(p)=n$. Clearly, $\widehat f$ descends to $K/\CO$ on $p$.

Let $\alpha\models p$. By Proposition \ref{L:near-affine in K/O}, there is a thick poyldisc $B$, $\alpha\in B\subseteq p(K)$ such that for all $x\in B$
\[\widehat f(x)-\widehat f(\alpha)-(\nabla \widehat f(\alpha))(x-\alpha)\in \m.\]

By  Lemma \ref{L:derivtives in O}(2), $\nabla \widehat f(\alpha)=(a_1,\ldots, a_n)\in \CO^n$. As noted in Example \ref{example-descent}, each linear map $x\mapsto a_i x$ descends to an endomorphism of $K/\CO\to K/\CO$  and thus $\nabla f(\alpha)$ descends to a group homomorphism  $L:(K/\CO)^n\to K/\CO$.
 Notice that $U=\pi(B)$ is an open subset of $(K/\CO)^n$. Since $\pi(\m)=0$,  for all $y\in U$,
$f(y)=L(y)+(f(\pi(\alpha))-L(\pi(\alpha))).$
\end{proof}

Combined with Corollary \ref{C:not-internal-to locally affine} (and Lemma \ref{L:K/O is SW}), we obtain:
\begin{corollary}\label{eliminating K/O}
Let $(K,v,\dots)$ be a valued field satisfying  Assumptions $\spadesuit$  and with the additional property that every definable partial function from $(K/\CO)^n$ into $K/\CO$ can be lifted to a definable (partial) function on $K^n$. If $\CF$ is an interpretable field in $K$ then $\CF$ is not locally strongly internal to $K/\CO$.
\end{corollary}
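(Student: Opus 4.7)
The plan is to deduce this corollary as an essentially formal consequence of Proposition \ref{P:locally affine}, combined with Corollary \ref{C:not-internal-to locally affine} applied to the abelian group $(K/\CO,+)$ and Lemma \ref{L:K/O is SW}. The three ingredients fit together almost exactly by design, so the main task is to verify the hypotheses of Corollary \ref{C:not-internal-to locally affine} in the present setting.

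First I would note that the group operation on $K$ descends to a definable abelian group structure on $K/\CO$. Since Assumption $\spadesuit$ includes {\minballn} and the density of $\Gamma$, Lemma \ref{L:K/O is SW} applies and shows that $K/\CO$, equipped with the thick ball topology, is an SW-uniformity supporting this definable abelian group structure.

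Next I would verify the ``locally affine'' hypothesis of Corollary \ref{C:not-internal-to locally affine} for $G=K/\CO$. Let $f\colon (K/\CO)^m\to K/\CO$ be any definable partial function with open domain. By the additional lifting hypothesis assumed in the statement of the present corollary, there is a definable partial function $\widehat f\colon K^m\to K$ descending to $f$. Since the full Assumption $\spadesuit$ holds, Proposition \ref{P:locally affine} applies to $f$ and produces an open definable set $U\subseteq\dom(f)$, a definable group homomorphism $L\colon (K/\CO)^m\to K/\CO$ and an element $d\in K/\CO$ such that $f(y)=L(y)+d$ for every $y\in U$. This is exactly the hypothesis of Corollary \ref{C:not-internal-to locally affine}.

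Finally, working in the multi-sorted structure $\CK^{eq}$, interpretability in $\CK$ coincides with definability in that ambient structure, so any interpretable infinite field $\CF$ is in particular a definable infinite field in the sense of Corollary \ref{C:not-internal-to locally affine}. Applying that corollary with $G=K/\CO$ immediately yields that $\CF$ is not locally strongly internal to $K/\CO$, as required. The argument is essentially a packaging of the three cited results, so I do not anticipate any real obstacle beyond confirming that the ``definable'' assumption in Corollary \ref{C:not-internal-to locally affine} is naturally interpreted in the multi-sorted setting we are working in here.
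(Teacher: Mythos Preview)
Your proof is correct and matches the paper's approach exactly: the paper's proof is literally the one-line remark that the corollary follows by combining Proposition \ref{P:locally affine} with Corollary \ref{C:not-internal-to locally affine} (and Lemma \ref{L:K/O is SW}), and you have simply spelled out that combination in detail.
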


It remains to investigate the assumption that definable functions on $(K/\CO)^n$ can be lifted (definably) to  $K^n$. Consider the following assumption on a valued field $(K,v)$:
\begin{bcen}
For $A\sub K$,  every $A$-definable closed ball has an $A$-definable element.
\end{bcen}

\begin{remark}\label{R: sufficent for star}
\begin{enumerate}
    \item If $\dcl(A)$ is an elementary substructure for any such $A$, i.e. if $\CK$ has definable Skolem functions, then {\BCen} holds.
    \item If $K$ has residue characteristic $0$, then it is enough for $\acl(A)$ to be an elementary substructure. Indeed, as noted in \cite{HrKa},  every ball is closed under (finite) averages of elements, so whenever a ball contains a finite $A$-definable set it also contains a point in $\dcl(A)$.
\end{enumerate}
\end{remark}

\begin{lemma}\label{L: Lifts to K}
    Let $(K,v\dots)$ be a valued field satisfying {\BCen} and let $A\sub K$. Then every $A$-definable partial function $f:(K/\CO)^n\to K/\CO$ lifts to an $A$-definable partial function $K^n\to K$.
\end{lemma}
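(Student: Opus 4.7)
The approach is a uniformization of \BCen{} via compactness. Let $D := \pi^{-1}(\dom f) \sub K^n$, which is $A$-definable. For each $x \in D$, the fiber $B_x := \{y \in K : \pi(y) = f(\pi(x))\}$ is an $Ax$-definable closed ball of valuative radius $0$, so by \BCen{} it contains an element of $\dcl(Ax)$. Equivalently, for each such $x$ there is an $A$-formula $\psi(y,\bar z)$ such that $\psi(y,x)$ isolates a unique element of $B_x$. For each $A$-formula $\psi(y, \bar z)$ set
$$E_\psi := \bigl\{x \in D : \exists!y\,\psi(y,x) \text{ and the unique such } y \text{ satisfies } \pi(y) = f(\pi(x))\bigr\},$$
an $A$-definable subset of $D$. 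By the preceding observation, $D = \bigcup_\psi E_\psi$ as $\psi$ ranges over all $A$-formulas.

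Passing, if necessary, to a sufficiently saturated elementary extension in which \BCen{} still holds, compactness then yields finitely many formulas $\psi_1, \ldots, \psi_k$ with $D = E_{\psi_1} \cup \cdots \cup E_{\psi_k}$. Defining
$$\widehat f(x) := \text{the unique } y \text{ with } \psi_{i(x)}(y,x), \qquad i(x) := \min\{i : x \in E_{\psi_i}\},$$
produces an $A$-definable partial function $\widehat f : K^n \to K$ with $\dom(\widehat f) = D$ satisfying $\pi \circ \widehat f = f \circ \pi$. Since this last equality, together with the equality of domains, is a first-order assertion over $A$, the same formula defines the required lift back in the original model.

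The only potential obstacle is ensuring that \BCen{} transfers to a sufficiently saturated elementary extension so that the compactness step is legitimate. In the settings in which the lemma is applied in the sequel, this is automatic via Remark \ref{R: sufficent for star}: both the existence of definable Skolem functions and the property that $\acl$-closures form elementary substructures (in residue characteristic $0$) are preserved under elementary extensions.
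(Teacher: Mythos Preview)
Your argument is correct and follows the same idea as the paper's proof: use \BCen{} to pick, for each $a$, an $Aa$-definable point in the target ball, then assemble these into a single $A$-definable function. The paper's proof is extremely terse and simply writes ``The function $\widehat f:K^n\to K$ defined by $\widehat f(a)=F_a$ satisfies the requirements,'' leaving the uniformization step entirely implicit. Your compactness argument is exactly what is needed to make that step rigorous, and you are right to flag that it requires \BCen{} (or at least the relevant instance of it) to hold in a saturated extension. As you note, this is unproblematic in the two applications (V-minimal and power-bounded $T$-convex), since there \BCen{} is derived from theory-level properties---definable Skolem functions or $\acl$ being an elementary substructure in residue characteristic $0$---that automatically transfer. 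So your proof is a faithful and more careful elaboration of the paper's; there is no genuine difference in strategy.
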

\begin{proof}
    As a first stage we lift $f$ to an $A$-definable function $f_0:K^n\to K/\CO$, by setting $f_0=f\circ \pi$. By {\BCen}, for every $a\in K^n$ there is an $aA$-definable element $F_a\in f_0(a)$. 
    The function $\widehat f:K^n\to K$ defined by $\widehat f(a)=F_a$ satisfied the requirements. 
\end{proof}

\subsection{1-h-minimality, V-minimality and $T$-convex structures}\label{ss:1-h-minimality and its connections}
Our goal in this section is to describe various model theoretic settings, mainly in equi-characteristic $0$,  in which the assumptions {\gendif}, {\VJP} and {\MVTay}, from Section \ref{Sec:K/O} are satisfied. As we shall see,  these assumptions hold for  V-minimal and power bounded $T$-convex structures, two settings in which our main result, Theorem \ref{T: main interp}, on interpretable fields, is proved.

A natural context in which these assumptions hold is that of  Hensel-minimal valued fields, introduced by Cluckers, Halupzcok and Rideau-Kikuchi (in equi-characteristic $0$) in \cite{hensel-min}.  As is shown in
\cite[\S 6.3]{hensel-min}, power-bounded $T$-convex theories are $1$-h-minimal, and by \cite[Proposition 6.4.2]{hensel-min} so are the V-minimal fields of Hrushovski-Kazhdan \cite{HrKa}.
We give a brief overview of what we need in order to introduce these various notions.

The following definition is equivalent to $1$-h-minimality in equicharacteristic $0$  (\cite[Theorem 2.9.1]{hensel-min}):

\begin{definition}\label{D:1hmin}
	Let   $T$ be a theory extending that of valued fields of equi-characteristic $0$. $T$ is called {\em $1$-h-minimal} if for every $\CK=(K,v,\dots)\models T$ and  every parameter set $A\sub K\cup \mathrm{RV}$ and an $A$-definable $f: K\to K$ :
	\begin{enumerate}
		\item There exists a finite $A$-definable set $C$ such that for any ball $B$ disjoint from $C$ there exists $\mu_B\in \Gamma$ such that $v(f(x)-f(x'))=\mu_B+v(x-x')$ for all $x,x'\in B$. Written multiplicatively: $|f(x)-f(x')|=\mu_B|x-x'|.$
		\item The set $\{d\in K: f^{-1}(d) \text{ is infinite}\}$ is finite.
	\end{enumerate}
\end{definition}

If we assume that $\CK\models T$ is, additionally, dp-minimal (so, as noted in Example \ref{E:SW-uniformities},  an SW-uniformity) then condition (2) above is equivalent to having no definable locally constant functions with infinite image. By \cite[Proposition 5.2]{SimWal}, this is equivalent in this setting to $\acl(\cdot)$ satisfying the exchange principle.

Collecting the results from \cite{hensel-min} we conclude:
\begin{fact}\label{F:1hmin}
	Let $T$ be a $1$-h-minimal theory.  Then every model of $T$ satisfies {\gendif}, {\VJP} and {\MVTay}.
\end{fact}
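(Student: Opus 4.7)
The plan is to observe that each of the three axioms \gendif, \VJP, and \MVTay either coincides with, or is a straightforward consequence of, standard theorems for $1$-h-minimal theories established in \cite{hensel-min, hensel-minII}; thus the proof is essentially a matter of marshalling the right references and supplying a short induction for the multivariate Taylor statement.

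For \gendif, I would cite \cite[Theorem 5.1.5]{hensel-min} together with \cite[Proposition 3.1.1]{hensel-minII}, which are already quoted in the proof of Corollary \ref{C:field in dp-min h-min} above. Together they state that in a $1$-h-minimal theory of equi-characteristic zero, any definable function $f: K^n \to K$ is $\mathcal{D}^1$ off a definable set of empty interior, which is exactly \gendif.

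For \VJP, I would combine Definition \ref{D:1hmin}(1), which supplies a finite $A$-definable set $C$ and, on each ball $B$ disjoint from $C$, a constant $\mu_B$ with $v(f(x_1) - f(x_2)) = \mu_B + v(x_1 - x_2)$, with the univariate generic differentiability from \cite[Theorem 5.1.5]{hensel-min} (enlarging $C$ if necessary to include the non-$\mathcal{D}^1$-points). Taking the difference quotient at any $x \in B$ forces $\mu_B = v(f'(x))$, so $v(f')$ is constant on $B$ and clause~(1) of \VJP holds. Clause~(2) is immediate: if $f' \ne 0$ on $B$ and $B' \subseteq B$ is an open sub-ball, then for $x, x' \in B'$, $v(f(x) - f(x')) = v(f') + v(x - x')$, so $f(B')$ is exactly an open ball of radius $v(f') + r(B')$. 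This is \cite[Theorem 5.6.1]{hensel-min}, already invoked earlier in the paper.

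For \MVTay, the univariate version is standard: given $f: K \to K$, apply \VJP to $f$ to obtain a finite exceptional set $C_1$, and then apply \VJP to $f'$ to obtain a finite $C_2$; on any ball $B$ disjoint from $C_1 \cup C_2$, expanding the difference of $f$ and its linear approximation using the valuative mean-value identity yields $v(f(x) - f(x_0) - f'(x_0)(x - x_0)) \ge v(f''(x_0)) + 2 v(x - x_0)$. The multivariate case then follows by a standard induction on $n$, applying the one-variable estimate coordinate by coordinate along the straight-line path from $x_0$ to $x$, and collecting the resulting finitely many exceptional fibres into an $A$-definable set $C \subseteq K^n$ with empty interior. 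The non-archimedean triangle inequality turns the sum of error terms into the maximum, giving the stated inequality. The main (minor) obstacle is just the bookkeeping of the exceptional set $C$ through the coordinate induction; no deep ingredient is needed beyond what \cite{hensel-min} already supplies in the one-dimensional case.
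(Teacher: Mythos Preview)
Your approach is essentially the same as the paper's: both reduce the fact to citing results from \cite{hensel-min}. The paper's proof is simply three direct citations: \gendif\ is Theorem~5.1.5, \VJP\ is Corollary~3.1.6, and \MVTay\ is Theorem~5.6.1.

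You have swapped the last two references. You attribute \VJP\ to \cite[Theorem~5.6.1]{hensel-min}, but that is the multivariate Taylor theorem; the valuative Jacobian property is \cite[Corollary~3.1.6]{hensel-min}. The earlier remark in the paper ``using \VJP\ (see \cite[Theorem~5.6.1]{hensel-min})'' that you point to is saying that Theorem~5.6.1 is where \MVTay\ is proved by induction using \VJP, not that Theorem~5.6.1 is \VJP. Your extra sketching is harmless, though note that your argument for clause~(2) of \VJP\ only shows $f(B')$ is \emph{contained} in an open ball of the stated radius; equality (surjectivity onto that ball) requires an additional argument, which is why one cites the result rather than re-deriving it.
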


\begin{proof} All references are  to \cite{hensel-min}:   {\gendif} is Theorem 5.1.5, {\MVTay} is Theorem 5.6.1, and {\VJP} is Corollary 3.1.6\end{proof}

Recall the definition of V-minimal valued fields from Section \ref{sss:v-minimal}.

\begin{proposition}\label{V-minimal} 
Every V-minimal theory is $1$-h-minimal, and in addition satisfies {\minballn} and  {\BCen}. \end{proposition}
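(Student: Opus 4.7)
The plan is to dispose of each of the three claims using material developed earlier in the paper. First, that every V-minimal theory is $1$-h-minimal is already in the literature: it is \cite[Proposition 6.4.2]{hensel-min}, so I would just cite it. Second, since V-minimal theories are by definition C-minimal, {\minballn} comes for free from Proposition \ref{P:properties for Cmin}(2), which was proved in the full C-minimal generality.

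The only part requiring an argument is {\BCen}. Given $A \subseteq K$ and an $A$-definable closed ball $b$, I would apply axiom (3) of the definition of V-minimality to the $A$-definable singleton $\{b\}$ to obtain a point $c \in b \cap \acl(A)$. To upgrade this to a $\dcl(A)$-point I would invoke the averaging observation recorded in Remark \ref{R: sufficent for star}(2): since $b$ is $A$-invariant, the finite $\aut(K/A)$-orbit of $c$ is a finite $A$-definable subset of $b$, and because V-minimal theories extend ACVF$_{0,0}$ the residue characteristic is $0$, so the arithmetic mean of the orbit still lies in $b$ and provides the required $A$-definable element.

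No serious obstacle is anticipated; the whole proposition is essentially bookkeeping that combines one external reference, one earlier result of the paper, and a standard trick. The only moment requiring care is checking that the averaging genuinely lands inside $b$, which reduces to noting that $v(n)=0$ in residue characteristic zero and then applying the non-archimedean inequality to $\frac{1}{n}\sum_i (c_i - c_1)$, where $c_1,\dots,c_n$ is the orbit of $c$.
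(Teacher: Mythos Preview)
Your proposal is correct and matches the paper's proof essentially step for step: the same citation for $1$-h-minimality, the same appeal to Proposition \ref{P:properties for Cmin} for {\minballn}, and the same use of axiom (3) of V-minimality followed by averaging in residue characteristic $0$ for {\BCen}. The only difference is that you spell out the orbit and averaging argument in slightly more detail than the paper does.
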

\begin{proof} The fact that V-minimal theories are $1$-h-minimal is \cite[Proposition 6.4.2]{hensel-min}.

By Proposition \ref{P:properties for Cmin}, every V-minimal theory satisfies {\minballn}.
As for {\BCen}, notice that property (3) in its definition implies that every $A$-definable closed ball $b$ contains an $A$-definable finite set. If the residue characteristic is $0$ then the average of this finite set gives an $A$-definable point in $b$.
\end{proof} 

To summarize,  every V-minimal theory satisfies all the assumptions of Proposition \ref{P:locally affine} and in addition, by Remark \ref{R: sufficent for star}, every definable function on $(K/\CO)^n$ lifts to a definable function on $K^n$.

\vspace{.3cm}

Recall the definition of T-convex power-bounded valued fields from Section \ref{sss:t-convex}.

\begin{proposition}\label{L:Tconv to 1h}
	A power bounded $T$-convex theory $T_{conv}$ is $1$-h-minimal, and in addition satisfies {\minballn}. If we add to the language a constant outside of $\CO$ then it also satisfies {\BCen}.
\end{proposition}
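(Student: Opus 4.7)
The statement splits into three parts, which I would handle in turn.

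The first --- that power-bounded $T_{\mathrm{conv}}$ is $1$-h-minimal --- is the content of \cite[\S 6.3]{hensel-min}, where the preparation axiom characterising $1$-h-minimality is verified from the cell decomposition in $T$-convex fields. I would simply cite this.

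For \minballn, the plan is an immediate appeal to what is already in place: by Fact \ref{F:facts on t-convex}, a power-bounded $T$-convex field is weakly o-minimal with o-minimal residue field and o-minimal value group, so the hypotheses of Lemma \ref{properties wom} are met (as recorded in Remark \ref{R:t-convex sat properties}), and clause (2) of that lemma is exactly \minballn.

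The substance lies in \BCen\ after adjoining a constant $c\in K\setminus \CO$, where my plan is to use power-boundedness to produce $A$-definable elements of prescribed valuation. Power-boundedness makes $\Gamma$ a vector space over the exponent field $\Lambda$ of $T$, and the power functions $x\mapsto x^\lambda$ ($\lambda\in\Lambda$) are $\emptyset$-definable on $K^{>0}$. Once $c$ is named, $v(c)$ is a nonzero $\emptyset$-definable element of $\Gamma$, so for any $A$-definable $\gamma\in \Lambda\cdot v(c)$ the exponent $\lambda=\gamma/v(c)\in \Lambda\cap\dcl(A)$ yields $|c|^{\lambda}\in\dcl(A)$ of valuation $\gamma$. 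Given an $A$-definable closed ball $B=B_{\ge\gamma}(a_0)$, I would fix such an $A$-definable $d$ with $v(d)=\gamma$; the ball $B$ then corresponds, via scaling by $d$, to an $A$-definable element of $\bk \cong \CO/\m$, and o-minimality of $\bk$ supplies definable Skolem functions which lift this element back to an $A$-definable representative of $B$.

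The main obstacle I foresee is twofold: first, ensuring that every $A$-definable $\gamma\in\Gamma$ actually lies in $\Lambda\cdot v(c)$ (which I expect to follow from the fact that $\dcl(A)\cap\Gamma$ is a $\Lambda$-subspace of $\Gamma$ together with the vector-space structure); second, checking that the lift from $\bk$ back to $K$ stays inside $\dcl(A)$. I expect both issues to collapse into the cleaner statement that, after $c$ is named, $\dcl(A)$ forms a $T_{\mathrm{conv}}$-elementary substructure of $K$, which should follow from the standard o-minimal fact that $\dcl$ yields elementary substructures in the $T$-reduct together with a direct verification that the predicate for $\CO$ is respected once $c$ is present. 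From such an elementary-substructure statement, \BCen\ is immediate.
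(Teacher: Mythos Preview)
Your treatment of the first two parts matches the paper exactly: $1$-h-minimality is cited from \cite{hensel-min}, and \minballn\ is read off from Remark \ref{R:t-convex sat properties} via Lemma \ref{properties wom}.

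For \BCen\ the paper takes a much shorter route than your sketch: it simply cites van den Dries \cite[Remark 2.4]{vdDries-Tconvex} for the fact that $T_{\mathrm{conv}}$ with a constant for an element outside $\CO$ has definable Skolem functions, and then invokes Remark \ref{R: sufficent for star}(1). Your hands-on approach via power functions has a real gap at the first obstacle you flag: the value group $\Gamma$ is a $\Lambda$-vector space but is typically \emph{not} one-dimensional, so an $A$-definable $\gamma\in\Gamma$ need not lie in $\Lambda\cdot v(c)$; the observation that $\dcl(A)\cap\Gamma$ is a $\Lambda$-subspace does nothing to force it into the line spanned by $v(c)$. Your fallback --- that once $c$ is named, $\dcl(A)$ is already a $T_{\mathrm{conv}}$-elementary substructure --- is the correct statement, but note that this is precisely the definable-Skolem-functions result of van den Dries, so in the end you would be citing the same theorem the paper does rather than giving an independent argument.
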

\begin{proof} The fact that $T_{conv}$ is $1$-h-minimal is \cite[Theorem 6.3.4]{hensel-min}),  based on the work of Yin \cite{Yin}.
By Remark \ref{R:t-convex sat properties}, it satisfies {\minballn}. By v.d.Dries \cite[Remark 2.4]{vdDries-Tconvex}, if we add a constant for an element outside of $\CO$ then the theory has definable Skolem functions, so in particular satisfies {\BCen}.\end{proof}

So,
every power bounded $T$-convex theory satisfies all the assumptions of Proposition \ref{P:locally affine} and in addition, every definable function on $(K/\CO)^n$ lifts to a definable function on $K^n$.

\subsection{Eliminating $K/\CO$}\label{S:K/O}

Putting the observations from Section \ref{ss:1-h-minimality and its connections} together with Proposition \ref{P:locally affine} and Proposition  \ref{C:not-internal-to locally affine}
we obtain:
\begin{proposition}\label{V+convex:Eliminating K/O}
Assume that $\CK=(K,v\dots)$ is either V-minimal or power bounded $T$-convex and that $\CF$ is an infinite interpretable field. Then $\CF$ is not locally strongly internal to $K/\CO$.
\end{proposition}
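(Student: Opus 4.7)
The plan is to verify that in each of the two cases the hypotheses of Corollary~\ref{eliminating K/O} are satisfied, at which point the conclusion is immediate. Recall that Corollary~\ref{eliminating K/O} requires Assumption~$\spadesuit$ (namely {\minballn}, density of $\Gamma$, {\gendif}, {\VJP}, {\MVTay}) together with the property that every definable partial function from $(K/\CO)^n$ into $K/\CO$ lifts to a definable (partial) function on $K^n$.

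For the $V$-minimal case, Proposition~\ref{V-minimal} asserts $1$-h-minimality together with {\minballn} and {\BCen}. By Fact~\ref{F:1hmin}, $1$-h-minimality delivers {\gendif}, {\VJP}, and {\MVTay}; $V$-minimality already imposes equi-characteristic zero with divisible (hence dense) value group. The lifting property follows from {\BCen} via Lemma~\ref{L: Lifts to K}. Hence Corollary~\ref{eliminating K/O} applies directly.

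For the power bounded $T$-convex case, Proposition~\ref{L:Tconv to 1h} supplies $1$-h-minimality and {\minballn} directly, but only yields {\BCen} after expanding the language by a constant outside $\CO$. This expansion is harmless: adding a constant only enlarges the class of definable sets, so if $\CF$ were locally strongly internal to $K/\CO$ in the original language, the witnessing definable infinite subset and injection would remain definable in the expansion. Thus it suffices to rule out local strong internality in the expansion. Once the constant is added, the verification proceeds as in the $V$-minimal case: Fact~\ref{F:1hmin} gives {\gendif}, {\VJP}, {\MVTay}; the underlying real closed field is of characteristic zero; the value group is an ordered vector space over the field of exponents by Fact~\ref{F:facts on t-convex} and hence dense; and {\BCen} together with Lemma~\ref{L: Lifts to K} yields the lifting property.

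I do not anticipate any substantial obstacle: the heavy lifting has already been carried out in Section~\ref{Sec:K/O} (where Proposition~\ref{P:locally affine} and Corollary~\ref{eliminating K/O} are established) and in Section~\ref{ss:1-h-minimality and its connections} (where $1$-h-minimality and {\BCen} are verified for the two families of theories). The only minor subtlety is the language-expansion trick in the $T$-convex case, justified by the trivial monotonicity of the property ``locally strongly internal to $K/\CO$'' under expansion of language.
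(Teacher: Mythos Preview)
Your proof is correct and follows the same route as the paper: the paper's argument is literally the one-line remark that ``Putting the observations from Section~\ref{ss:1-h-minimality and its connections} together with Proposition~\ref{P:locally affine} and [Corollary]~\ref{C:not-internal-to locally affine} we obtain'' the result, and you have simply unpacked those observations in detail. Your explicit handling of the language-expansion step in the $T$-convex case (to secure {\BCen}) is slightly more careful than the paper's summary, which glosses over it, but the underlying idea is identical.
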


\begin{remark} \begin{enumerate}
 \item The above shows that no infinite field is definable in the induced structure on $K/\CO$, however a field isomorphic to $\bk$ is {\em interpretable}  in this structure. Indeed, if $\gamma\in \Gamma$ is negative then the quotient $B_{\geq \gamma}(0)/B_{>\gamma}(0)$ is in definable bijection with $\bk$, and is easily seen to be interpretable in $K/\CO$.
    \item In an earlier version of this article \cite{HaPetRCVF}, we proved directly (not using the machinery of 1-h-minimality) that in any $T$-convex valued field (not necessarily power-bounded)  no interpretable field is locally strongly internal to $K/\CO$.
\end{enumerate}
\end{remark}
\vspace{.2cm}

In the P-minimal setting, it is easier to eliminate $K/\CO$. We first prove some general results.

\begin{lemma}\label{L:infinite ball in discrete}
Assume that $\CK=(K,v,\dots)$ satisfies {\minballn} and assume that $\bk$ is finite (and hence $\Gamma$ is discrete). If $X\subseteq K$ is an infinite definable set containing infinitely many closed $0$-balls then for every $k\in \mathbb{N}$, $X$ contains a ball of radius $-k$.

In particular, if $\CK$ is $\aleph_0$-saturated then $X$ contains a ball of radius $\gamma$ satisfying $\gamma<-k$ for all $k\in \mathbb{N}$.
\end{lemma}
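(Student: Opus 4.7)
The plan is a straightforward iteration of \minballn, exploiting that $\Gamma$ is discrete and $\bk$ is finite. Fix $k \geq 1$. The crux is to show that $X$ intersects infinitely many closed $(-(k-1))$-balls. Suppose for contradiction that $X$ meets only finitely many, say $B_1,\dots,B_m$; then $X \subseteq B_1 \cup \cdots \cup B_m$. But each closed $(-(k-1))$-ball is a disjoint union of exactly $|\bk|^{\,k-1}$ closed $0$-balls (this uses only that $\Gamma$ is discrete with residue field of cardinality $|\bk|$: a $(-(k-1))$-ball is, up to translation, $\pi^{-(k-1)}\CO$, and $\pi^{-(k-1)}\CO/\CO \cong \CO/\pi^{k-1}\CO$ has size $|\bk|^{k-1}$). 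Consequently $X$ would contain at most $m \cdot |\bk|^{\,k-1}$ closed $0$-balls, contradicting the hypothesis.

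Having verified that $X$ intersects infinitely many closed $(-(k-1))$-balls, apply the generalization of \minballn noted in the remark following the axiom (with $\gamma_0 = -(k-1)$) to conclude that $X$ contains a closed ball $B$ of radius $< -(k-1)$. By discreteness of $\Gamma$, the radius of $B$ is at most $-k$, and $B$ contains a closed sub-ball of radius exactly $-k$. This proves the main assertion.

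For the ``in particular'' clause, assume $\CK$ is $\aleph_0$-saturated. Consider the partial type in a variable $\gamma$ of sort $\Gamma$
\[
p(\gamma) \;:=\; \{\,\gamma < -n : n \in \mathbb{N}\,\} \;\cup\; \{\,\exists x\in K\ (B_{\geq \gamma}(x) \subseteq X)\,\}.
\]
Every finite subset of $p$ is consistent: if the finite subset involves only $\gamma < -n$ for $n \leq N$, take $\gamma := -(N+1)$, which satisfies the order conditions, and invoke the first part of the lemma to produce the ball of radius $-(N+1)$ inside $X$ witnessing the existential. By $\aleph_0$-saturation $p$ is realized, yielding the desired $\gamma$.

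The only step requiring any care is the pigeonhole count of closed $0$-sub-balls of a $(-(k-1))$-ball; no significant obstacle is expected, as the axiom \minballn already furnishes the inductive engine and its extension to any base radius $\gamma_0$ was recorded in the preceding remark.
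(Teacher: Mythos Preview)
Your proof is correct and reaches the same conclusion, but the route differs slightly from the paper's. The paper argues iteratively: apply \minballn\ to get one $(-1)$-ball inside $X$, remove it (its complement in $X$ still contains infinitely many $0$-balls since $\bk$ is finite), apply \minballn\ again to get a second disjoint $(-1)$-ball, and so on---thus $X$ \emph{contains} infinitely many $(-1)$-balls, and the whole argument can be repeated one level down. Your argument instead jumps directly to level $-(k-1)$ via a single pigeonhole count (a $(-(k-1))$-ball holds only $|\bk|^{k-1}$ closed $0$-balls), obtaining that $X$ \emph{intersects} infinitely many $(-(k-1))$-balls, and then invokes \minballn\ once at that level. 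Your version is more direct; the paper's yields the marginally stronger intermediate fact that $X$ actually contains infinitely many $(-k)$-balls for every $k$, though this is not needed for the lemma as stated. The saturation argument for the ``in particular'' clause is the same in both.
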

\begin{proof}
By {\minballn}, and discreteness of $\Gamma$, $X$ contains at least one ball of radius $-1$. After removing from $X$ a single ball of radius $-1$ it still contains infinitely many $0$-balls so we can find in $X$ a second ball of radius $-1$. Continuing in this manner we find infinitely many balls in $X$ of radius $-1$.

It follows that if $X$ contains infinitely many balls of radius $0$ then it also contains infinitely many balls of radius $-1$. Repeating the process we obtain in $X$ infinitely many balls of radius $-k$, for every $k\in \mathbb N$.

If $K$ is $\aleph_0$-saturated then the existence of a ball of radius $\gamma$ with $\gamma<-k$ for all $k\in \mathbb{N}$ follows. 
\end{proof}

\begin{lemma}\label{L:no Einf in KO}
Let $\CK=(K,v,\dots)$ be an $\aleph_0$-saturated dp-minimal valued field with $\bk$ finite. If $\CK$ satisfies {\minballn}  then for every infinite definable  $X\subseteq K/\CO$ there exists an infinite definable family of subsets of $X$ containing arbitrarily large finite sets.

\end{lemma}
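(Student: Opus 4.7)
The plan is to pull $X$ back to $K$ via the projection $\pi \colon K \to K/\CO$ and exhibit, inside $Y := \pi^{-1}(X)$, definable closed balls of arbitrarily small negative radius; projecting these balls back to $K/\CO$ produces the required finite subsets of $X$, arranged in a uniformly definable family.

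First, observe that $Y$ is an infinite definable subset of $K$ and, since $X \subseteq K/\CO = \CB^{cl}_0$, $Y$ is a union of closed $0$-balls, infinitely many of them because $X$ is infinite. Hence Lemma \ref{L:infinite ball in discrete} applies: for every $k\in \mathbb{N}$, $Y$ contains a closed ball of radius $-k$. Because $\Gamma$ is discrete (as $\bk$ is finite), such a ball decomposes into exactly $|\bk|^{k}$ closed $0$-balls, so its image under $\pi$ is a subset of $X$ of size $|\bk|^{k} \geq 2^{k}$.

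Next, consider the uniformly definable family
\[
\mathcal{F} \;:=\; \bigl\{\,\pi(B_{\geq \gamma}(a)) \;:\; (a,\gamma) \in K \times \Gamma,\ \gamma<0,\ B_{\geq \gamma}(a) \subseteq Y\,\bigr\}.
\]
By the same discreteness computation, every member of $\mathcal{F}$ is a finite subset of $X$, and by the previous paragraph $\mathcal{F}$ contains a member of cardinality $|\bk|^{k}$ for every $k\in \mathbb{N}$. Since members of distinct cardinalities are distinct, $\mathcal{F}$ itself is infinite. This yields the conclusion.

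The main obstacle is essentially absent: the arithmetic content has already been absorbed into Lemma \ref{L:infinite ball in discrete}, and all that remains is to phrase the family as an honest $\CK$-definable family (which is automatic, being parameterized by $(a,\gamma) \in K \times \Gamma$) and to observe that the discreteness of $\Gamma$ turns a ball of radius $-k$ into a set of $|\bk|^{k}$ cosets of $\CO$. One minor stylistic choice is whether to use the ``infinite radius'' ball provided by the $\aleph_0$-saturation clause of Lemma \ref{L:infinite ball in discrete}; we prefer the finite-$k$ formulation, since it makes the growth of the finite subsets explicit and does not require invoking saturation for this step.
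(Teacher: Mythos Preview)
Your argument is correct and follows essentially the same strategy as the paper: pull back to $Y=\pi^{-1}(X)$, invoke Lemma~\ref{L:infinite ball in discrete} to find balls of radius $-k$ inside $Y$ for every $k$, and push their images forward to $X$. The paper differs only in that it uses the saturation clause of Lemma~\ref{L:infinite ball in discrete} to fix a single centre $a$ with $B_{\geq -k}(a)\subseteq Y$ for all $k$, whereas you let both the centre and the radius vary; your choice avoids the appeal to saturation and is just as good.

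One small correction: your assertion that ``every member of $\mathcal{F}$ is a finite subset of $X$'' is false in general. By $\aleph_0$-saturation there exist $\gamma\in\Gamma$ with $\gamma<-k$ for all $k\in\mathbb N$, and Lemma~\ref{L:infinite ball in discrete} in fact guarantees that some $B_{\geq\gamma}(a)$ with such a non-standard $\gamma$ lies in $Y$; its $\pi$-image is then an \emph{infinite} member of $\mathcal{F}$. This does not harm the proof---the lemma only asks for a family containing arbitrarily large finite sets, and the members with $\gamma=-k$ already supply those---but the sentence as written is incorrect and should be deleted or weakened.
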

\begin{proof}
First, recall that since $\CK$ is dp-minimal and $\bk$ is finite we know that $\Gamma$ is discrete. Let $\pi:K\to K/\CO$ be the natural projection and let $Y=\pi^{-1}(X)$. Since $X$ is infinite, $Y$ contains infinitely many closed balls of radius $0$ and by Lemma \ref{L:infinite ball in discrete} and saturation, there exists $a\in Y$ such that $B_{-k}(a)\subseteq Y$ for all $k\in\mathbb{N}$. Since $\Gamma$ is discrete and $\bk$ is finite, for each $k\in \mathbb{N}$, $ B_{-k}(a)$ contains only finitely many, say $n_k$, closed balls of radius $0$, but by the choice of $a$ we get that $\sup_{k\in \mathbb{N}} n_k=\omega$.  The definable sets $\pi(B_{-k}(a))\subseteq X$, $k\in \mathbb N$,  satisfy the requirements.
\end{proof}

\begin{corollary}\label{P-minimal-no K/O} 
Let $\CK=(K,v,\dots)$ be a P-minimal valued field and  $\CF$  an infinite  field interpretable in $\CK$. Then $\CF$ is not locally strongly internal to $K/\CO$. Moreover, there is no definable finite-to-finite correspondence, with bounded fibers, from an infinite subset of $\CF$ into $K/\CO$.  
\end{corollary}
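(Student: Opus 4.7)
The strategy is a contradiction argument using elimination of $\exists^\infty$ on $\CF$---the property that any uniformly definable family of finite subsets of $\CF$ has uniformly bounded cardinality---which holds because $\CF$ is an infinite field of finite dp-rank interpretable in the dp-minimal $\CK$.

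For the ``moreover'' clause, assume a definable finite-to-finite correspondence $C\subseteq S\times K/\CO$ exists, with $S\subseteq\CF$ infinite and the fibers of both projections of $C$ uniformly bounded by some $N\in\Nn$. Work in a sufficiently saturated elementary extension. The image $\pi_2(C)\subseteq K/\CO$ is definable and infinite, so Lemma \ref{L:no Einf in KO} provides a definable family $\{Z_t\}_{t\in T}$ of subsets of $\pi_2(C)$ whose finite members have arbitrarily large cardinality. Set $V_t:=\pi_1\bigl(C\cap\pi_2^{-1}(Z_t)\bigr)\subseteq S$. The fiber bound produces $|Z_t|/N\le|V_t|\le N|Z_t|$, so $\{V_t\}$ is a uniformly definable family of subsets of $\CF$ whose finite members have arbitrarily large cardinality, contradicting elimination of $\exists^\infty$ on $\CF$.

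For the main statement, assume a definable injection $f:S\hookrightarrow(K/\CO)^n$ with $S\subseteq\CF$ infinite, and generalize Lemma \ref{L:no Einf in KO} to powers of $K/\CO$ by induction on $n$: every infinite definable $Y\subseteq(K/\CO)^n$ admits a definable family of subsets with arbitrarily large finite members. The base $n=1$ is the lemma. For $n>1$, project $Y$ via $\pi:(K/\CO)^n\to(K/\CO)^{n-1}$; if some fiber $Y_{y'}$ is infinite, apply the base case inside it. Otherwise all fibers are finite, so $\pi(Y)$ is infinite and definable, and in an $\aleph_1$-saturated extension it has cardinality at least $\aleph_1$. Since $\pi(Y)=\bigcup_{M\in\Nn}\pi(Y)^{\le M}$ where $\pi(Y)^{\le M}:=\{y'\in\pi(Y):|Y_{y'}|\le M\}$, and a countable union of finite sets is countable, some $\pi(Y)^{\le M_0}$ must be infinite; restricting $Y$ to $\pi^{-1}(\pi(Y)^{\le M_0})$ yields a uniform fiber bound $M_0$, and the inductive hypothesis applied to the image lifts to the required family on the restricted $Y$, with sizes changing by a factor between $1$ and $M_0$. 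Applied to $Y=f(S)$ and transferred through $f^{-1}$, this gives a definable family of subsets of $S$ with arbitrarily large finite members, again contradicting elimination of $\exists^\infty$ on $\CF$.

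The principal obstacle is justifying elimination of $\exists^\infty$ on $\CF$: Lemma \ref{L:no Einf in KO} explicitly demonstrates that the unary imaginary sort $K/\CO$ itself does \emph{not} satisfy this property, so the argument must lean essentially on the field structure of $\CF$. The property does hold for infinite fields of finite dp-rank as a consequence of the general theory of strongly dependent structures, ruling out the very pathologies exhibited by the sort $K/\CO$.
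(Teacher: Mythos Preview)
Your proof is correct and, for the ``moreover'' clause, follows the paper's argument essentially verbatim: pull back the family of arbitrarily large finite subsets of $K/\CO$ supplied by Lemma~\ref{L:no Einf in KO} through the correspondence, and contradict elimination of $\exists^\infty$ on $\CF$ (which holds by \cite[Lemma 2.2]{DoGoStrong} since $\CF$ has finite dp-rank).

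For the main statement your treatment is actually more careful than the paper's. Local strong internality gives an injection of an infinite $S\subseteq\CF$ into $(K/\CO)^n$ for some $n$, whereas Lemma~\ref{L:no Einf in KO} is stated only for subsets of $K/\CO$. The paper invokes the lemma directly without comment; you fill this in with an explicit induction on $n$, splitting on whether some fiber of the coordinate projection is infinite (then apply the base case in that fiber) or all fibers are finite (then bound the fiber size uniformly via $\aleph_1$-saturation and lift the family from the inductive hypothesis on the image). This is the right way to close the gap, and the size estimates $|W_t|\le |Y_t|\le M_0|W_t|$ are exactly what is needed. One cosmetic point: you should record that the hypotheses of Lemma~\ref{L:no Einf in KO} are met in the $P$-minimal setting ($\bk$ finite and \minballn{} by Proposition~\ref{P:dugald}), as the paper does.
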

\begin{proof}
We may assume that $\CK$ is $\aleph_0$-saturated.
Assume towards a contradiction that $\CF$ is strongly internal to $K/\CO$. By Proposition \ref{P:dugald}, $\CK$ satisfies {\minballn}, so by Proposition \ref{L:no Einf in KO}, there is a definable family of subsets of $\CF$ containing arbitrarily large finite sets. However, $\CF$ has finite dp-rank, so it eliminates $\exists^{\infty}$ by \cite[Lemma 2.2]{DoGoStrong}. Contradiction.

If there were a definable finite-to-finite correspondece $C\sub  \CF\times K/\CO$ with $T:=\pi_2(C)$ (the projection of $C$ into $K/\CO$) infinite then for any finite $L\sub T$ also $\pi_2^{-1}(L)\sub \CF$ is finite, and by saturation there is $m\in \Nn$ such that $|\pi_2^{-1}(L)|\le m|L|$. We can now, using Lemma  \ref{L:no Einf in KO}, reach the same contradiction as in the previous paragraph.  
\end{proof}

\subsection{Eliminating the sort $\Gamma$}

\begin{proposition}\label{no Gamma} 
Assume that $T$ is either V-minimal, power bounded T-convex, or P-minimal and let $\CF$ be an interpretable field in $\CK=(K,v,\dots)\models T$. Then $\CF$ is not locally strongly internal to $\Gamma$. Moreover, in the P-minimal case there is no definable finite-to-finite definable correspondence, with bounded fibers, from an infinite subset of $\CF$ into $\Gamma$.
\end{proposition}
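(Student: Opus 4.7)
The plan is to treat the V-minimal and power-bounded $T$-convex cases uniformly via Corollary \ref{C:not-internal-to locally affine}, and to dispatch the P-minimal case by an independent argument based on elimination of $\exists^\infty$.

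For V-minimal $\CK$, the value group $\Gamma$ is a divisible ordered abelian group, and clause (1) of the definition of V-minimality forces its induced structure to be pure, i.e.\ that of an ordered $\mathbb{Q}$-vector space. For power-bounded $T$-convex $\CK$, Fact \ref{F:facts on t-convex}(3) identifies $\Gamma$ with a pure ordered vector space over the ordered field of powers of $K$. In both cases $\Gamma$ is o-minimal, hence an SW-uniformity by Example \ref{E:SW-uniformities}(2). Moreover, in a pure ordered vector space every definable set is a finite Boolean combination of half-spaces, so every definable partial function $\Gamma^n \to \Gamma$ with open domain agrees on some open definable subset with a map of the form $L(y)+e$, where $L:\Gamma^n\to\Gamma$ is a group homomorphism. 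Consequently $(\Gamma,+)$ satisfies the hypotheses of Corollary \ref{C:not-internal-to locally affine}, and thus no infinite definable (and in particular no interpretable) field is locally strongly internal to $\Gamma$.

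In the P-minimal case, Corollary \ref{C:not-internal-to locally affine} is not directly applicable since $\Gamma$ is a $\mathbb{Z}$-group and hence discrete. We argue instead by contradiction: suppose that $C\subseteq S\times T$ is a definable finite-to-finite correspondence with fibres of size bounded by some $m\in\mathbb{N}$, where $S\subseteq\CF$ and $T\subseteq \Gamma^n$ are both infinite. Since $T$ is an infinite Presburger-definable subset of $\Gamma^n$, an easy induction on $n$ (projecting to the first coordinate: if the image is infinite, truncate it by initial segments; otherwise some fibre is infinite and one iterates) produces a uniformly definable family $\{T_r\}_{r\in\Gamma}$ of finite subsets of $T$ with $|T_r|\to\infty$. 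Define $S_r := \bigcup_{b\in T_r} C^b\subseteq S$. Each $S_r$ is a finite subset of $\CF$ of size at most $m|T_r|$, while the surjectivity of the second projection $\pi_2:C\to T$ combined with the fibre bound $|C^b|\le m$ yields $|S_r|\ge |T_r|/m$. Hence $\{S_r\}_{r\in\Gamma}$ is a definable family of finite subsets of $\CF$ of unbounded cardinality, contradicting elimination of $\exists^\infty$ in $\CK$, which holds by \cite[Lemma 2.2]{DoGoStrong} since $\CK$ is dp-minimal.

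The main subtlety in the first part is establishing the purity of $\Gamma$ so that its definable functions are piecewise affine; once this is in hand, the application of Corollary \ref{C:not-internal-to locally affine} is formal. The P-minimal part is genuinely simpler (as anticipated in the introduction): it relies only on the presence of arbitrarily large uniformly definable finite sets in Presburger, together with the elimination of $\exists^\infty$ for the ambient dp-minimal structure $\CK$.
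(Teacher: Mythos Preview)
Your proof is correct and follows essentially the same route as the paper: for the V-minimal and power-bounded $T$-convex cases you identify $\Gamma$ as a pure ordered vector space (hence o-minimal, hence an SW-uniformity with piecewise-affine definable functions) and invoke Corollary~\ref{C:not-internal-to locally affine}; for the P-minimal case you produce a uniformly definable family of arbitrarily large finite subsets of $\Gamma$, pull it back through the correspondence, and contradict elimination of $\exists^\infty$ in $\CF$. The paper does the same, using the explicit cell form $a\le x\le b\wedge P_n(x)$ with $a,b$ in distinct Archimedean components to build the family in $\Gamma$ directly. Two small remarks: the statement only concerns correspondences into $\Gamma$, so your inductive reduction from $\Gamma^n$ is not needed (and your sketch ``truncate by initial segments'' would need more care to work for $n>1$); and in your counting argument the lower bound $|S_r|\ge |T_r|/m$ uses the bound on $|C_a|$ (fibres of the first projection), not on $|C^b|$ as you wrote.
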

\begin{proof}
If $T$ is either V-minimal or power bounded T-convex then the induced structure on $\Gamma$ is that of an ordered vector space.
Indeed, for V-minimal $\CK$ this follows from the requirement that the structure induced on RV is the one induced from the pure valued field language, combined with quantifier elimination for ACVF (\cite{holly}).  For T-convex $\CK$ this is \cite[Theorem B]{vdDries-Tconvex}. 

By quantifier elimination for ordered  vector spaces (over an ordered field) it follows that in both of the above cases every definable function in $\Gamma$  is piecewise affine \cite[Chapter 1, Corollary 7.8]{vdDries}. The result follows by Proposition \ref{C:not-internal-to locally affine}.

For the P-minimal case,  every subset of $\Gamma^n$ is definable in the Presburger language $(\Gamma,+,-,0,<,P_n)$. Let $X\subseteq \Gamma$ be an infinite definable subset.
By quantifier elimination we may assume $X$ has the  form $a\leq x\leq b\wedge P_n(x)$, for some $n$, where $a$ and $b$ are not in the same Archimedean component. The family $\{a'<x<b'\wedge P_n(x): a<a'<b'<b\}$, is an infinite definable family of subsets of $X$ containing arbitrarily large finite sets. We may now conclude as in Corollary \ref{P-minimal-no K/O}.
\end{proof}

\section{Classifying interpretable fields}

In Section \ref{S:def fields} we characterised, under various assumptions, fields \emph{definable} in dp-minimal fields (of characteristic 0). We now combine  these results,  as well as the tools developed in the last two sections, to prove our main theorem.

\begin{theorem}\label{T: main thoerem}
	Let $\CK=(K,v, \dots)$ be a dp-minimal valued field and let $\CF$ be an infinite field interpretable in $\CK$. Then: 
	\begin{enumerate}
		\item If $\CK$ is P-minimal and satisfies {\gendif}, then $\CF$ is definably isomorphic to a finite extension of $K$. 
		\item If $\CK$ is a power bounded $T$-convex field then $\CF$ is definably isomorphic to one of $K$, $K(\sqrt{-1})$, $\textbf{k}$ or $\textbf{k}(\sqrt{-1})$. 
		\item If $\CK$ is $V$-minimal then $\CF$ is definably isomorphic to $K$ or $\textbf{k}$. 
	\end{enumerate}
\end{theorem}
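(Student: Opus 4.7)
The strategy is to use the machinery developed in Sections \ref{S:the distinguished sorts}--\ref{S:K/O} to show that $\CF$ must be (locally) strongly internal to either $K$ or $\bk$, and then invoke the classification results from Section \ref{S:def fields} to identify $\CF$ up to definable isomorphism.

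The first step is to produce a weak form of internality from $\CF$ to one of the distinguished sorts $K,\Gamma,\bk,K/\CO$. In the V-minimal and power bounded $T$-convex cases, this is precisely the conclusion of Proposition \ref{P:s-i to sorts, c-min and v-min}, giving local strong internality. In the P-minimal case, Lemma \ref{1-dim} reduces the question to quotients of unary sets, and Proposition \ref{new-main}, whose hypotheses are verified by Proposition \ref{P:dugald}, yields a finite-to-finite definable correspondence between an infinite subset of $\CF$ and one of the four distinguished sorts.

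Next, I would eliminate the sorts $\Gamma$ and $K/\CO$. Proposition \ref{no Gamma} takes care of $\Gamma$ in all three cases. For $K/\CO$, Proposition \ref{V+convex:Eliminating K/O} handles the V-minimal and power bounded $T$-convex cases (drawing on the $1$-h-minimality analysis of Section \ref{ss:1-h-minimality and its connections}), while Corollary \ref{P-minimal-no K/O} handles the P-minimal case. In the P-minimal case the residue field $\bk$ is finite, so it cannot be a target either, leaving only $K$; Lemma \ref{L:almost internal-to -strongly} then upgrades the finite-to-finite correspondence to local strong internality to $K$.

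At this point $\CF$ is locally strongly internal to either $K$ or $\bk$, and the identification is via the theorems of Section \ref{S:def fields}. If $\CF$ is locally strongly internal to $K$, then in the V-minimal and power bounded $T$-convex cases the field $\CK$ is $1$-h-minimal and Corollary \ref{C:field in dp-min h-min} applies; in the P-minimal case Theorem \ref{prop-main} applies thanks to the assumed \gendif. In each case $\CF$ is a finite extension of $K$, which collapses to $K$ when $K$ is algebraically closed (V-minimal) and to $K$ or $K(\sqrt{-1})$ when $K$ is real closed ($T$-convex). If $\CF$ is locally strongly internal to $\bk$, then in the V-minimal case the V-minimality axiom on RV-structure forces $\bk$ to be a pure algebraically closed (hence strongly minimal) field, and Proposition \ref{P:SI to SM} identifies $\CF$ with $\bk$; in the power bounded $T$-convex case, Fact \ref{F:facts on t-convex} gives that $\bk$ is o-minimal (so satisfies \gendif and carries an SW-uniformity), and Theorem \ref{prop-main} applied to $\CF$ inside $\bk$ yields $\CF\cong \bk$ or $\bk(\sqrt{-1})$.

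The main obstacle is verifying, for each of the three classes of valued fields, that the hypotheses of the various reduction lemmas genuinely hold, especially for the elimination of $K/\CO$ which requires the $1$-h-minimality machinery of Section \ref{S:K/O} in the V-minimal and $T$-convex cases and a separate saturation-based argument in the P-minimal case.
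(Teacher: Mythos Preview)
Your proposal is correct and follows essentially the same route as the paper's proof: reduce to the distinguished sorts via Proposition~\ref{P:s-i to sorts, c-min and v-min} (V-minimal and $T$-convex) or Proposition~\ref{new-main} with Proposition~\ref{P:dugald} (P-minimal), eliminate $\Gamma$ and $K/\CO$ via Proposition~\ref{no Gamma}, Proposition~\ref{V+convex:Eliminating K/O}, and Corollary~\ref{P-minimal-no K/O}, and then apply Theorem~\ref{prop-main} (or its corollaries) for $K$ and, respectively, Proposition~\ref{P:SI to SM} or Theorem~\ref{prop-main} for $\bk$. The only cosmetic difference is that where you invoke Lemma~\ref{L:almost internal-to -strongly} followed by Theorem~\ref{prop-main} in the P-minimal case, the paper packages these two steps into Corollary~\ref{C:finite-to-finite enough for interp}.
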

\begin{proof}
We may assume that $\CK$ is sufficiently saturated and let $\CF$ be an interpretable infinite field. 

(1) In order to show that $\CF$ is locally strongly internal to one of the distinguished sorts, we use Proposition \ref{new-main} (indeed $\CK$ satisfies the requirement by Proposition \ref{P:dugald}). Consequently there exists an infinite definable  $S\subseteq \CF$ and a definable finite-to-finite correspondence between $S$ and an infinite subset of one of  $K$, $K/\CO$, $\Gamma$ or $\bk$. Since $S$ is infinite, we can eliminate the case of $\bk$. The sorts $K/\CO$ and $\Gamma$  are eliminated by Corollary \ref{P-minimal-no K/O} and Proposition \ref{no Gamma}, respectively.
    
    It follows that  $\CF$ is locally strongly internal to  $K$ and thus,  by Corollary \ref{C:finite-to-finite enough for interp}, $\CF$ is definably isomorphic to a finite extension of $K$.
    
(2+3) By Proposition \ref{P:s-i to sorts, c-min and v-min} (and Remark \ref{R:t-convex sat properties} for the T-convex case), $\CF$ is locally strongly internal to either $K$, $K/\CO$, $\Gamma$ or $\bk$. The cases of $K/\CO$ and $\Gamma$ are eliminated by Corollary \ref{P-minimal-no K/O} and Proposition \ref{no Gamma}, respectively. 

In the T-convex case, the field $K$ is an SW-uniform (valued) field and $\bk$ is o-minimal and hence an SW-uniform field as well.  Thus, by  Theorem \ref{prop-main}, $\CF$ is definably isomorphic to a finite extension of $K$ or $\bk$.

In the V-minimal case, if $\CF$ is locally strongly internal to the SW-uniform field $K$ then Theorem \ref{prop-main} implies that it is definably isomorphic to a finite extension of $K$, so to $K$ itself (as it is algebraically closed).

If $\CF$ is locally strongly internal 
$\bk$ then, since $\bk$ is a pure algebraically closed field, we conclude by Proposition \ref{P:SI to SM}  that it is definably isomorphic to $\bk$.
\end{proof}

\begin{remark}
     As any $p$-adically closed field satisfies {\gendif} (it is enough to check it for finite extensions of the p-adics, where we may apply \cite[Theorem 1.1 and Section 5]{SccdDries}), the above answers Pillay's question on fields interpretable in $\qp$.
\end{remark}

As a corollary we obtain, using the work of Hempel and Palac\'in \cite{Hempel-Palacin}, a theorem about definable division rings.
\begin{corollary}
Let $\CK=(K,v,\dots)$ be a a valued field and $D$ an infinite interpretable division ring.
\begin{enumerate}
    \item If $\CK$ is a power-bounded T-convex valued field then $D$ is definably isomorphic $K$, $K(\sqrt{-1})$, or the quaternions over $K$, or to  $\bk$,  $\bk(\sqrt{-1}))$, or the quaternions over $\bk$.
    \item If $\CK$ is V-minimal then  $D$ is definably isomorphic to either $K$ or $\bk$.
\end{enumerate}
\end{corollary}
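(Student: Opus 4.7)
The plan is to reduce the division ring case to the already-established field case by passing to the center. By the theorem of Hempel and Palac\'in \cite{Hempel-Palacin}, in any NIP (hence dp-minimal) structure, an infinite interpretable division ring $D$ is finite-dimensional over its center $Z(D)$, and $Z(D)$ is therefore an infinite interpretable field in $\CK$. Applying Theorem \ref{T: main thoerem} to $Z(D)$, one obtains: in the V-minimal case, $Z(D)$ is definably isomorphic to $K$ or $\bk$; in the power-bounded $T$-convex case, $Z(D)$ is definably isomorphic to one of $K$, $K(\sqrt{-1})$, $\bk$, $\bk(\sqrt{-1})$.

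Once $Z(D)$ is identified, the classification of $D$ as a finite-dimensional central division algebra over $Z(D)$ is purely algebraic. Any finite-dimensional central division algebra over an algebraically closed field coincides with the base field, so in the V-minimal case $D=Z(D)\in\{K,\bk\}$, and similarly in the T-convex case if $Z(D)\in\{K(\sqrt{-1}),\bk(\sqrt{-1})\}$. In the remaining T-convex cases $Z(D)\in\{K,\bk\}$ is real closed, and Frobenius's theorem says that the only finite-dimensional central division algebras over a real closed field $F$ are $F$, $F(\sqrt{-1})$, and the quaternion algebra $\mathbb{H}_F$. This produces exactly the six possibilities listed in the statement.

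The final step, and the only place where one must be careful, is to upgrade the abstract algebra isomorphism provided by Frobenius to a \emph{definable} one. Because $[D:Z(D)]=n$ is finite, a $Z(D)$-linear basis of $D$ can be chosen definably (linear independence is a first order condition over $Z(D)$, and the existence of a basis in a saturated model together with compactness lets one fix such a basis by parameters), and the multiplication of $D$ is then encoded by its structure constants in $Z(D)^{n^3}$. Each candidate algebra $F$, $F(\sqrt{-1})$, $\mathbb{H}_F$ admits a canonical set of structure constants, and matching them is a quantifier-free condition over $Z(D)$; the abstract Frobenius isomorphism exhibits such a match, whence the isomorphism is definable. Composing with the definable isomorphism $Z(D)\to K$ (or $\bk$, etc.) supplied by Theorem \ref{T: main thoerem} yields the required definable isomorphism of $D$ with one of the algebras listed. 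The main potential obstacle is the appeal to Hempel--Palac\'in; the rest is an essentially formal combination of Frobenius with the main theorem.
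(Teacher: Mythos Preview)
Your approach matches the paper's exactly: cite Hempel--Palac\'in to get $D$ finite over its center, then apply Theorem \ref{T: main thoerem} to the center; the paper's proof is two sentences and leaves the algebraic classification implicit, while you spell it out.

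One minor slip worth flagging: $F(\sqrt{-1})$ is not a \emph{central} division algebra over a real closed $F$, since its center is $F(\sqrt{-1})$; the finite-dimensional central division algebras over a real closed $F$ are only $F$ and $\mathbb{H}_F$. This does not affect your final list --- the possibility $D\cong K(\sqrt{-1})$ already arises from the case $Z(D)\cong K(\sqrt{-1})$ handled in the previous sentence --- but the Frobenius sentence as written is not quite correct. Either drop the word ``central'' there (Frobenius classifies all finite-dimensional associative division algebras over $F$, and then you note that centrality rules out $F(\sqrt{-1})$ when $Z(D)=F$), or keep ``central'' and remove $F(\sqrt{-1})$ from that list.
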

\begin{proof}
As $D$ is of finite dp-rank, by \cite[Theorem 2.9]{Hempel-Palacin}, $D$ is a finite extension of its center, an interpretable field $\CF$. The result follows.
\end{proof}

\subsection{Concluding remarks}
We conclude the paper with a few remarks on the scope of our main results. If an o-minimal field is not power-bounded, by Miller, \cite{miller-dichotomoty},   it is necessarily exponential, i.e.  defines a homomorphism $\exp$ from the additive group of the fields to the multiplicative group of positive elements.

\begin{lemma}
    Let $\CK=(K,v)$ be an exponential $T$-convex valued field. Then there exists an infinite field, $\CF$, interpretable in $\CK$ that is not definably isomorphic to  either one of $K$, $K(\sqrt{-1})$, $\bk$ or $\bk(\sqrt{-1})$. 
\end{lemma}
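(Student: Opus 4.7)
The plan is to exploit the definable exponential function to endow the value group $\Gamma$ with an interpretable field structure, something impossible in the power-bounded setting (where, by \cite[Theorem B]{vdDries-Tconvex}, $\Gamma$ is a pure ordered vector space).

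The first step is to establish a canonical definable group isomorphism $\overline{\exp}\colon K/\CO \to \Gamma$. Since $\CO$ is closed under every $\emptyset$-$\CL$-definable continuous function, in particular $\exp(\CO)\subseteq \CO$. Conversely, any $y\in\CO^{\times,>0}$ satisfies $1/N\le y\le N$ for some standard $N\in\Nn$, hence $|\log y|\le \log N \in\CO$. Together these give $\exp(\CO)=\CO^{\times,>0}$, so $\exp$ descends to a definable group isomorphism
\[
\overline{\exp}\colon K/\CO \longrightarrow K^{>0}/\CO^{\times,>0} \;\cong\; \Gamma,
\]
with no counterpart in the power-bounded case.

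The second step is to transport the richer multiplicative structure of $K^{>0}$ across this identification. Using the functional equation $\exp(a)\exp(b)=\exp(a+b)$ together with a suitable definable section (or pseudo-section) of $\overline{\exp}$, I would define a binary operation on $\Gamma\cong K/\CO$ that, combined with its ordered abelian group structure, makes it into an interpretable field $\CF$. The exponential is precisely what allows additive and multiplicative data on $K$ to be mixed in a way that descends modulo $\CO$; in a power-bounded $T$-convex setting such descent fails, consistent with Proposition \ref{V+convex:Eliminating K/O}.

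Finally, I would verify that $\CF$ is not definably isomorphic to any of $K$, $K(\sqrt{-1})$, $\bk$, or $\bk(\sqrt{-1})$. Since $\CF$ is locally strongly internal to $\Gamma$, a definable field isomorphism $\CF\cong K$ or $\CF\cong K(\sqrt{-1})$ would give a definable bijection from an infinite subset of $\Gamma$ into $K^n$, contradicting the topological/order-theoretic incompatibility between the value group and $K$. A definable isomorphism $\CF\cong \bk^{(i)}$ would yield a definable bijection $\Gamma\to \bk^n$, contradicting the orthogonality of the value group and residue field sorts in $T$-convex structures. The main obstacle is the second step: making the multiplication on $\Gamma$ explicit and verifying the field axioms via a careful descent argument using the functional equation of $\exp$, which is exactly the feature that distinguishes the exponential from the power-bounded setting.
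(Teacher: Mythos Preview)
Your Step~1 is essentially correct: the identity $\exp(\CO)=\CO^{\times,>0}$ does hold (for $y\ge 1$ in $\CO$ one has $0\le \log y\le y$, so $\log y\in\CO$ by convexity; the case $0<y<1$ follows by inversion), and hence $\overline{\exp}\colon K/\CO\to\Gamma$ is a definable group isomorphism.

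The real problem is Step~2: you never construct a second binary operation on $\Gamma$ (or on $K/\CO$), and I do not see how your ingredients produce one. The functional equation $\exp(a)\exp(b)=\exp(a+b)$ only tells you that $\overline{\exp}$ is a group homomorphism; once you pass to $K/\CO$ and $\Gamma$ each side carries a single natural group operation, and $\overline{\exp}$ simply identifies them. A section of $\overline{\exp}$ is just its inverse, so it contributes nothing new, and a section of $K\to K/\CO$ (or of $v$) does not yield a well-defined multiplication: for instance $v(s(\gamma_1)+s(\gamma_2))$ is generically $\min(\gamma_1,\gamma_2)$, and attempts to use $x\mapsto x^y=\exp(y\log x)$ only produce a well-defined $\CO$-module structure on $K/\CO$, not a ring structure. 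In short, you have identified the correct intuition---that in the exponential case $\Gamma$ carries far more induced structure than in the power-bounded case---but you have not exhibited a field, and the sketch as written does not close.

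The paper proceeds quite differently and avoids this obstacle entirely. It does not try to put a field structure on $\Gamma$; instead it uses $\exp$ to build a \emph{new, strictly larger} definable convex valuation ring. Fixing $a\in K$ with $v(a)<0$, one sets $\CO_a:=\exp(a\CO)-\exp(a\CO)$; convexity and the homomorphism property of $\exp$ on $(K,+)$ show $\CO_a$ is a proper convex subring of $K$ strictly containing $\CO$. The required interpretable field is its residue field $\bk_a=\CO_a/\m_a$. This $\bk_a$ is not definably isomorphic to $\bk$ because $\bk$ is o-minimal while $\bk_a$ contains a proper definable convex subring (the image of $\CO$), and it is not definably isomorphic to $K$ by a short argument using definable Skolem functions and weak o-minimality. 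So the exponential is used not to enrich $\Gamma$, but to manufacture a second $T$-convex valuation whose residue field is visibly different from the original one.
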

\begin{proof} Fix $a\in K$ with $v(a)<0$. 
    Consider $\CO_a:=\exp(a\CO)-\exp(a\CO)$. 
     Since $a\CO$ is convex and $\exp$ is monotone the set  $\exp(a\CO)$ is a convex subset of the positive elements in $K$, and therefore $\CO_a$ is also convex. Since $\exp(a\CO)=\exp(a)+\exp(\CO)=\exp(a)+\CO\subsetneq K^{>0}$, it follows that  $\CO_a\ne K$. Since $a\CO$ is an additive subgroup, $\CO_a$ is closed under multiplication. This is obvious for elements of $\exp(a\CO)$ amd the general case follows from convexity of $\CO_a$.   Convexity and closure under multiplication imply that $\CO_a$ is an additive subgroup.   So $\CO_a$ is a convex subring of $K$ containing $\CO$,  hence it is a valuation subring of $K$. Let $\m_a\sub \m$ be its maximal ideal and $\bk_a$ the associated residue field.

    The field $\bk_a$ is real closed (as the residue field of a real closed valued field). It is not isomorphic to $\bk$, because the latter is o-minimal whereas $\bk_a$ is not. Indeed, the image of $\CO$ under the residue map is a convex subring of $\bk_a$. Also, $\bk_a$ is not definably isomorphic to $K$. Indeed, suppose on the contrary that there was a definable injection, $\psi$, from $\bk_a$ into $K$. Since $\CK$ has definable Skolem functions (\cite[Remark 2.7]{vdDries-Tconvex}), such a function $\psi$ would imply the existence of a definable function $\Psi: K\to \CO_a$ such that $\Psi(x)\in \psi(x)/\m_a$ for all $x\in K$. By assumption, if  $x\neq y$ then $\psi(x)/\m_a\neq \psi(y)/\m_a$. So the image of $\Psi$ is discrete, contradicting the weak o-minimality of $\CK$. 
\end{proof}

We do not know whether there are any fields interpretable in $\CK$ other than $K$ itself, the residue fields associated with definable valuation rings and their algebraic closure.  

\appendix 
\section{}\label{A:itay-proof}
We now prove Lemma \ref{L:itaywom}. First we remind the statement: 

\textbf{Lemma}
\emph{    Let $\CM$ be a structure of finite dp-rank and $\mathbb{U}\succ \CM$ a monster model.
    \begin{enumerate}
    \item Let $D$ be an SW-uniformity in $\CM$ and let $b_1,\dots,b_n$ be some tuples in $\mathbb{U}$. For every $\CM$-definable $X$, there exists $a\in X$, with $\dpr(a/M)=\dpr(X)$, such that $\dpr(ab_i/M)=\dpr(a/M)+\dpr(b_i/M)$ for all $1\leq i\leq n$.
    \item For $A\sub \mathbb U$ and $a\in \CM^n$,  there exists a small model $\CN\prec \CM$,  $A\subseteq N$, such that $\dpr(a/A)=\dpr(a/N)$.
\end{enumerate}
}
\begin{proof}
    (1) We proceed by induction on $k=\dpr(X)$. Assume that $k=\dpr(X)=1$.
    
    For any $1\leq i\leq n$, if we set $\dpr(b_i/A)=n_i$ then there are mutually indiscernible sequences over $M$, $\la  I_{i,j}: 0\leq j\leq n_i\ra$, such that none of the $I_{i,j}$ are indiscernible over $Mb_i$.

Since $X$ is infinite and $\CM$ is a model there exists a global type $p\in S(\mathbb{U})$ concentrated on $X$ and invariant over $\CM$. Indeed, take any non-algebraic type concentrated on $X$. It is finitely satisfiable over $\CM$ so we may choose any global type extending it that is still finitely satisfiable over $\CM$ (so invariant over $\CM$). In particular $p$ is $\CN$-invariant for any small $\CN\succ \CM$. Let $\CN\succ \CM$ be a model  containing all the $I_{i,j}$ and let $J=\la a_i:i<\omega \ra$ be a sequence satisfying $a_i\models p|Na_{<i}$, for all $i<\omega$.

Consequently, for all $1\leq i\leq n$, $\{I_{i,0},\dots, I_{i,n_i},J\}$ are $M$-mutually indiscernible but each one is not $Mb_ia_0$ indiscernible. As a result $\dpr(a_0b_i/A)\geq n_i+1$. On the other hand, by subadditivity, $\dpr(a_0b_i/A)\leq \dpr(a_0/A)+\dpr(b_i/A)=1+\dpr(b_i/A)=1+n_i$.

Now, let $k>1$.  By Remark \ref{R:product of one-dim in SW}, we may assume that $X=X_1\times\dots\times X_k$ with $k=\dpr(X)$ and $\dpr(X_i)=1$ for all $1\leq i\leq k$. By the induction hypothesis we can find $a'=(a_2,\dots,a_k) \in X_2\times\dots\times X_k$ such that for all $1\leq i\leq n$, $\dpr(a'b_i/A)=(k-1)+\dpr(b_i/A)$. Now let $b_i'=(a',b_i)$, and using the case $k=1$ we find $a_1\in X_1$ such that for all $1\leq i\leq n$, $\dpr(a_1b_i'/A)=1+\dpr(b_i'/A)=k+\dpr(b_i/A)$.
    
    (2) Let $\la I_t:t<\kappa\ra$ be mutually indiscernible sequences over $A$ witnessing that $\dpr(a/A)\geq \kappa$, i.e. each $I_t$ is not indiscernible over $Aa$, and let $\CM'$ be some small model with $A\subseteq M'$. By \cite[Lemma 4.2]{SiBook}, there exists a mutually indiscernible sequence $\la J_t:t<\kappa\ra$ over $M'$ such that $\tp(I_t:t<\kappa/A)=\tp(J_t:t<\kappa/A)$. Let $\sigma$ be automorphism of $\mathbb{U}$ fixing $A$ and mapping the $J_t$'s to the $I_t$'s. Thus $\la I_t:t<\kappa\ra$ are mutually indiscernible over $N:=\sigma(M')$ and each one is still not indiscernible over $Aa$ so not over $Na$ as well.
\end{proof}

\bibliographystyle{plain}
\bibliography{Bibfiles/harvard}
\end{document}